\newtheorem{thm}{Theorem}[section]
\newtheorem{cor}[thm]{Corollary}
\newtheorem{prop}[thm]{Proposition}
\newtheorem{lem}[thm]{Lemma}
\newtheorem{obs}[thm]{Observation}
\theoremstyle{definition}
\newtheorem{defn}[thm]{Definition}
\theoremstyle{remark}
\newtheorem{rem}[thm]{Remark}
\let\c@equation\c@thm
\numberwithin{equation}{section}
\title[Asymptotically Large Free Semigroups]{Asymptotically large free semigroups in Zariski dense discrete subgroups of Lie groups}
\author{Aleksander Skenderi}
\address{Department of Mathematics, University of Wisconsin-Madison}
\email{askenderi@wisc.edu}
\begin{document}
\begin{abstract}
Let $G$ be a connected algebraic semisimple real Lie group with finite center and no compact factors, and let $\Gamma$ be a Zariski dense discrete subgroup of $G$. We show that $\Gamma$ contains free, finitely generated subsemigroups whose critical exponents are arbitrarily close to that of $\Gamma$. Furthermore, these subsemigroups are Zariski dense in $G$ and $P$-Anosov in the sense of Kassel--Potrie \cite{KP}. This shows that no gap phenomenon holds for critical exponents of discrete subsemigroups of Lie groups, which is in contrast with Leuzinger's critical exponent gap theorem for infinite covolume discrete subgroups of Lie groups with Kazhdan's property (T), proven in 2003 \cite{Leu}.
\par 
As an application, we prove that the critical exponent is lower semicontinuous in the Chabauty topology, in the following sense: if a sequence of Zariski dense discrete subgroups $\{\Gamma_{n}\}$ of $G$ converges in the Chabauty topology to a Zariski dense discrete subgroup $\Gamma$, then $\liminf_{n \to \infty} \delta(\Gamma_{n}) \geq \delta(\Gamma)$.
\end{abstract}
\dedicatory{Dedicated to the memory of my father, Vasil Skenderi}
\maketitle  
\tableofcontents
\section{Introduction}
\subsection{Motivation and Statements of Main Results}
Let $G$ be a connected algebraic semisimple real Lie group with finite center and no compact factors, and let $X$ denote the symmetric space of $G$. Given a discrete subgroup $\Gamma < G$, an important quantity associated with the action of $\Gamma$ on the symmetric space $X$ is the \emph{critical exponent of} $\Gamma$. To define this object, fix a $G$-invariant metric $d_{X}$ induced from a Riemannian metric on $X$, and fix a basepoint $o \in X$. The critical exponent of $\Gamma$, denoted by $\delta(\Gamma)$, is the abscissa of convergence of the \emph{Poincar\'e series}
\begin{align*}
Q_{\Gamma}(s) := \sum_{\gamma \in \Gamma} e^{-sd_{X}(o,\gamma o)},
\end{align*} that is,
\begin{align*}
\delta(\Gamma) := \inf \{s > 0 : Q_{\Gamma}(s) < \infty \} \in [0, \infty].
\end{align*} Equivalently, the critical exponent of $\Gamma$ is given by 
\begin{align*}
\delta(\Gamma) = \limsup_{T \to \infty} \frac{1}{T} \log \# \{ \gamma \in \Gamma : d_{X}(o, \gamma o) \leq T \},
\end{align*} and thus it measures the exponential growth rate of the $\Gamma$-orbits in $X$. Since $\Gamma$ acts on $X$ by isometries, the critical exponent is independent of the choice of basepoint $o \in X$, hence is well-defined. 
\par 
A natural question to ask is whether one can compute the critical exponent. In the case when $\Gamma$ is a \emph{lattice in} $G$ (that is, the locally symmetric space $\Gamma \backslash X = \Gamma \backslash G / K$ has finite volume), the critical exponent of $\Gamma$ coincides with the \emph{volume growth entropy of} $X$, which is defined by 
\begin{align*}
h_{\mathrm{vol}}(X) := \lim_{R \to \infty} \frac{1}{R} \log \mathrm{vol}(B_{R}(o)).
\end{align*} Here $B_{R}(o)$ is the ball of radius $R$ centered at $o \in X$ and $\mathrm{vol}$ denotes the Riemannian volume on $X$. We remark that the volume growth entropy is independent of the choice of basepoint $o \in X$.
\par 
In the case when $\Gamma$ is a infinite covolume discrete subgroup of $G$ (that is, $\Gamma \backslash X$ has infinite volume), the critical exponent may take different values depending on the particular subgroup. For instance, Sullivan \cite{Sul1} constructed a sequence of convex cocompact discrete subgroups of $\mathrm{SL}(2, \mathbb{C}) \cong \mathrm{Isom}(\mathbb{H}_{\mathbb{R}}^{3})$ whose critical exponents are arbitrarily close to $2$ (the value attained by lattices). See also section 4 of \cite{KK} for such examples in real hyperbolic spaces of all dimensions and Theorem A of \cite{DL} for examples in complex hyperbolic spaces of dimensions $2$ and $3$.
\par 
However, when the Lie group $G$ has Kazhdan's property (T), the situation is considerably different. This was first noticed by Corlette \cite{Cor}, who established a remarkable gap theorem for the critical exponents of infinite covolume discrete subgroups of isometries of the quaternionic and octonionic hyperbolic spaces. Denote by $\mathbb{H}_{\mathbb{H}}^{n}$ the $n$-dimensional quaternionic hyperbolic space and $\mathbb{H}_{\mathbb{O}}^{2}$ the Cayley hyperbolic plane (also known as the octonionic projective plane). These are connected, contractible, negatively curved Riemannian manifolds with normalized sectional curvatures between $-4$ and $-1$. The simple Lie groups of real rank one Sp$(n,1)$ and $\mathrm{F}_{4}^{-20}$ are the orientation-preserving isometry groups of $\mathbb{H}_{\mathbb{H}}^{n}$ and $\mathbb{H}_{\mathbb{O}}^{2}$, respectively. Corlette's renowned gap theorem states the following.
\begin{thm} [Corlette, Theorem 4.4 of \cite{Cor}]
\label{CorletteGapThm} ~\
\begin{itemize}
    \item[(1)] If $\Gamma \subset \mathrm{Sp}(n,1)$, $n \geq 2$, is a discrete subgroup, then $\delta(\Gamma) = 4n+2$ or $\delta(\Gamma) \leq 4n$. Moreover, $\delta(\Gamma) = 4n+2$ if and only if $\Gamma$ is a lattice.
    \item[(2)] If $\Gamma \subset \mathrm{F}_{4}^{-20}$ is a discrete subgroup, then $\delta(\Gamma) = 22$ or $\delta(\Gamma) \leq 16$. Moreover, $\delta(\Gamma) = 22$ if and only if $\Gamma$ is a lattice. 
\end{itemize}
\end{thm}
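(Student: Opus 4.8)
The plan is to relate the critical exponent $\delta(\Gamma)$ to the bottom $\lambda_0(\Gamma\backslash X)$ of the $L^2$-spectrum of the Laplacian on $\Gamma\backslash X$, and then to feed this into the classification of the spherical unitary dual of $G$, which is the rank-one incarnation of property (T). Write $X$ for the relevant symmetric space ($\mathbb{H}_{\mathbb{H}}^{n}$, resp.\ $\mathbb{H}_{\mathbb{O}}^{2}$) and $h := h_{\mathrm{vol}}(X)$, so $h = 4n+2$ for $\mathrm{Sp}(n,1)$ and $h = 22$ for $\mathrm{F}_{4}^{-20}$. First I would record the elementary bound $\delta(\Gamma) \le h$, valid for every discrete $\Gamma < G$: a packing argument --- disjoint metric balls of a fixed radius $r_0 > 0$ about the orbit $\Gamma \cdot o$, all contained in $B(o, T + r_0)$, with $r_0$ produced by proper discontinuity --- gives $\#\{\gamma : d_X(o,\gamma o) \le T\} \le |\mathrm{Stab}_\Gamma(o)| \cdot \mathrm{vol}(B(o, T+r_0)) / \mathrm{vol}(B(o, r_0))$, whence $\delta(\Gamma) \le h_{\mathrm{vol}}(X)$. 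This already yields $\delta(\Gamma) \le 4n+2$ (resp.\ $\le 22$), gives one implication of the ``moreover'' (a lattice has $\delta(\Gamma) = h$), and reduces the main assertion to excluding the open interval $(4n, 4n+2)$ (resp.\ $(16,22)$).

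Next I would establish the Patterson--Sullivan spectral dictionary: \emph{if $\delta := \delta(\Gamma) \ge h/2$, then $\lambda_0(\Gamma\backslash X) = \delta(h-\delta)$}. In real hyperbolic space this identity is due to Sullivan, and its extension to the remaining rank-one symmetric spaces is contained in \cite{Cor}. I would fix a $\Gamma$-conformal density $(\mu_x)_{x \in X}$ on $\partial X$ of dimension $\delta$ (a Patterson--Sullivan density), set $\phi(x) := \mu_x(\partial X) = \int_{\partial X} e^{-\delta \beta_\xi(x, o)} \, d\mu_o(\xi)$, and verify --- using that each Busemann function has constant Laplacian equal, up to sign, to $h_{\mathrm{vol}}(X)$, the common mean curvature of horospheres in a two-point homogeneous space --- that $\phi$ is a positive, $\Gamma$-invariant eigenfunction of the Laplacian with eigenvalue $\delta(h-\delta)$, which therefore descends to $\Gamma\backslash X$. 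The bound $\lambda_0(\Gamma\backslash X) \ge \delta(h-\delta)$ then follows from the Rayleigh-quotient identity applied to test functions $\phi \cdot g$ with $g \in C_c^\infty(\Gamma\backslash X)$, while the reverse inequality follows by cutting $\phi$ off at radius $R \to \infty$ and bounding the resulting gradient error by a shadow-lemma/orbit-counting estimate that is summable precisely when $\delta \ge h/2$. (If $\delta < h/2$ then $\delta < h/2 \le 4n$, resp.\ $\le 16$, and there is nothing to prove.)

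Finally I would invoke property (T) of $G$ in the sharp form given by Kostant's determination of the spherical unitary dual: the set of Laplace eigenvalues of spherical unitary representations of $G$ equals $\{0\} \cup [c, \infty)$, where $c = 8n$ for $\mathrm{Sp}(n,1)$ and $c = 96$ for $\mathrm{F}_{4}^{-20}$, the isolated value $0$ corresponding to the trivial representation. Since the Laplacian on $L^2(\Gamma\backslash X)$ is the Casimir operator acting on the $K$-fixed part of $L^2(\Gamma\backslash G)$, the direct-integral decomposition of the latter forces $\mathrm{spec}_{L^2}(\Delta_{\Gamma\backslash X}) \subseteq \{0\} \cup [c,\infty)$, and it shows that $0$ belongs to this spectrum if and only if $L^2(\Gamma\backslash G)$ contains a nonzero $G$-invariant vector --- upgrading weak containment of the trivial representation to actual containment by property (T) --- that is, if and only if $\Gamma$ is a lattice. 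Now suppose $\delta := \delta(\Gamma) \in [h/2, h)$. By the previous step $\delta(h-\delta) = \lambda_0(\Gamma\backslash X) \in \mathrm{spec}_{L^2}(\Delta_{\Gamma\backslash X})$, and since $\delta \in [h/2,h)$ forces $\delta(h-\delta) \ne 0$, we conclude $\delta(h-\delta) \ge c$; solving this quadratic on the branch $\delta \ge h/2$ gives $\delta \le \tfrac{1}{2}(h + \sqrt{h^2 - 4c})$, which is $4n$ for $\mathrm{Sp}(n,1)$ and $16$ for $\mathrm{F}_{4}^{-20}$. Combined with the case $\delta < h/2$ this is the stated gap, and the equivalence ``$\delta(\Gamma) = h \iff \Gamma$ is a lattice'' results from combining $\delta(h-\delta) = 0$ (forced when $\delta = h$) with the property-(T) dichotomy just described.

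I expect the middle step to be the main obstacle: establishing $\lambda_0(\Gamma\backslash X) = \delta(h-\delta)$ for an \emph{arbitrary} discrete subgroup of the non-constant-curvature spaces $\mathbb{H}_{\mathbb{H}}^{n}$ and $\mathbb{H}_{\mathbb{O}}^{2}$, i.e.\ pushing Sullivan's real-hyperbolic arguments through the anisotropic geometry near infinity. The two delicate points are (a) checking that the Patterson--Sullivan integral is genuinely an eigenfunction with the predicted eigenvalue, which requires understanding how the Laplacian interacts with the visual/Busemann structure on $\partial X$ (the Furstenberg boundary $G/P$, carrying its Carnot--Carath\'eodory metric rather than a Riemannian one), and (b) making the cutoff estimate work with no geometric-finiteness hypothesis, where the subtlety is the joint contribution of both restricted root spaces to the volume growth of shadows. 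By contrast, the orbit-counting bound of the first step and the extraction of the thresholds $4n$ and $16$ from Kostant's list are essentially formal.
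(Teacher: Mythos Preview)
The paper does not prove this theorem; it is quoted verbatim as Theorem~4.4 of Corlette~\cite{Cor} and used only as background motivation for the author's own results. There is therefore no ``paper's own proof'' to compare your proposal against.

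That said, your outline is a faithful sketch of Corlette's actual argument in~\cite{Cor}: the three ingredients are exactly (i) the elementary packing bound $\delta(\Gamma)\le h_{\mathrm{vol}}(X)$, (ii) the rank-one Sullivan--Corlette identity $\lambda_{0}(\Gamma\backslash X)=\delta(h-\delta)$ for $\delta\ge h/2$, and (iii) Kostant's computation of the spherical unitary dual, which for $\mathrm{Sp}(n,1)$ and $\mathrm{F}_{4}^{-20}$ gives the isolated trivial representation and hence the spectral gap $\{0\}\cup[c,\infty)$. Your identification of step~(ii) as the delicate part is accurate: extending Sullivan's argument to the quaternionic and octonionic spaces, where the boundary carries a sub-Riemannian rather than conformal structure, is precisely what Corlette carries out. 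The numerical check $\tfrac{1}{2}(h+\sqrt{h^{2}-4c})=4n$ (resp.\ $16$) is correct.
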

Inspired by Corlette's result, Leuzinger later showed in \cite{Leu} that a similar gap phenomenon holds for any infinite covolume discrete subgroup of a semisimple real Lie group $G$ having Kazhdan's property (T) (this is equivalent to $G$ having no simple factors which are isogenous to SO$(n,1)$ or SU$(n,1)$, the isometry groups of the real and complex hyperbolic spaces, respectively).
\begin{thm} [Leuzinger, Main Theorem (Dichotomy) of \cite{Leu}]
\label{Leuzingergapphenomenon}
Let $G$ be a connected semisimple real Lie group with finite center, with no compact factors, and with Kazhdan's property (T). Let $\Gamma$ be a discrete subgroup of $G$, and let $X$ be the symmetric space of $G$. There exists a constant $\epsilon = \epsilon(G) > 0$, depending only on $G$ and not on $\Gamma$, such that the following holds:
\begin{itemize}
    \item[(1)] The discrete subgroup $\Gamma$ is a lattice in $G$ if and only if
    \begin{align*}
    \delta(\Gamma) = h_{\mathrm{vol}}(X).
    \end{align*}
    \item[(2)] The discrete subgroup $\Gamma$ has infinite covolume in $G$ if and only if
    \begin{align*}
    \delta(\Gamma) \leq h_{\mathrm{vol}}(X) - \epsilon.
    \end{align*}
\end{itemize}
\end{thm}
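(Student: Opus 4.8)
The plan is to route everything through the bottom $\lambda_0(\Gamma\backslash X)$ of the $L^2$-spectrum of the Laplacian on the locally symmetric space $\Gamma\backslash X$: I will bound $\delta(\Gamma)$ in terms of $\lambda_0(\Gamma\backslash X)$ and use Kazhdan's property (T) of $G$ to bound $\lambda_0(\Gamma\backslash X)$ away from zero uniformly in $\Gamma$. First I would reduce the whole statement to the single implication
\begin{align*}
\Gamma \text{ has infinite covolume in } G \ \Longrightarrow\ \delta(\Gamma)\le h_{\mathrm{vol}}(X)-\epsilon(G)
\end{align*}
for a constant $\epsilon(G)>0$ depending only on $G$. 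Granting this together with the classical fact that $\delta(\Gamma)=h_{\mathrm{vol}}(X)$ whenever $\Gamma$ is a lattice (proved via lattice-point counting / orbit equidistribution and not requiring property (T)), both implications in (1) and both implications in (2) follow at once, since a discrete subgroup of $G$ is either a lattice or of infinite covolume.

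\emph{Step 1: property (T) gives a uniform spectral gap.} Here I claim there is $\epsilon_0=\epsilon_0(G)>0$ with $\lambda_0(\Gamma\backslash X)\ge\epsilon_0$ for every discrete $\Gamma<G$ of infinite covolume. Since $\mathrm{vol}(\Gamma\backslash G)=\infty$, constant functions are not in $L^2(\Gamma\backslash G)$, so the natural unitary representation of $G$ on $L^2(\Gamma\backslash G)$ has no nonzero invariant vectors; by property (T), applied with a Kazhdan set and constant fixed once and for all for $G$, it therefore has no almost invariant vectors, and since property (T) makes the trivial representation isolated in $\widehat{G}$, the trivial representation does not lie in the support of the direct integral decomposition of $L^2(\Gamma\backslash G)$. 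Passing to $K$-fixed vectors identifies $L^2(\Gamma\backslash X)$ with $L^2(\Gamma\backslash G)^{K}$ and the Laplacian with $-\Omega$ ($\Omega$ the Casimir operator, in a suitable normalization); on a spherical unitary representation $\pi_\nu$ with Harish-Chandra parameter $\nu$ the operator $-\Omega$ acts by the scalar $\|\rho\|^2-\langle\nu,\nu\rangle$, and property (T) bounds $\langle\nu,\nu\rangle$ away from $\|\rho\|^2$ over all nontrivial such $\pi_\nu$, so this scalar is $\ge\epsilon_0(G)>0$. Integrating over the decomposition gives $\lambda_0(\Gamma\backslash X)=\inf_v\langle-\Omega v,v\rangle\ge\epsilon_0(G)$.

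\emph{Step 2: a spectral gap forces a critical-exponent gap.} Next I would show that for every discrete $\Gamma<G$ with $\delta(\Gamma)\ge\tfrac12 h_{\mathrm{vol}}(X)$ one has $\lambda_0(\Gamma\backslash X)\le\delta(\Gamma)\bigl(h_{\mathrm{vol}}(X)-\delta(\Gamma)\bigr)$ (if $\delta(\Gamma)<\tfrac12 h_{\mathrm{vol}}(X)$ the conclusion is immediate). Writing $p_t^{Y}$ for the heat kernel of $Y\in\{X,\Gamma\backslash X\}$, one has $p_t^{\Gamma\backslash X}(o,o)=\sum_{\gamma\in\Gamma}p_t^{X}(o,\gamma o)$, and the sharp Gaussian heat-kernel estimates on the symmetric space (Anker--Ji) furnish a lower bound $p_t^{X}(o,y)\ge c(t,d_X(o,y))\,\varphi_0(y)\,e^{-\|\rho\|^2 t-d_X(o,y)^2/4t}$ with $c$ of polynomial type and $\varphi_0$ the Harish-Chandra spherical function, which satisfies $\varphi_0(y)\ge c'\,e^{-\|\rho\|\,d_X(o,y)}$ because $\langle\rho,H^+\rangle\le\|\rho\|\,\|H^+\|$ by Cauchy--Schwarz (where $y=\exp(H^+)o$). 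Feeding in the weighted orbit-count bound coming from $\delta(\Gamma)=\limsup_T\tfrac1T\log\#\{\gamma\in\Gamma: d_X(o,\gamma o)\le T\}$, summing over the orbit points with $d_X(o,\gamma o)$ of order at most $t$, and optimizing the truncation radius, one gets $p_t^{\Gamma\backslash X}(o,o)\ge e^{-(\delta(\Gamma)(h_{\mathrm{vol}}(X)-\delta(\Gamma))+o(1))\,t}$ along a sequence $t\to\infty$; since $\lambda_0(\Gamma\backslash X)=-\lim_{t\to\infty}\tfrac1t\log p_t^{\Gamma\backslash X}(o,o)$ and $\lambda_0(X)=\|\rho\|^2=\tfrac14 h_{\mathrm{vol}}(X)^2$, the claimed inequality follows. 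Combining with Step 1, infinite covolume forces $\epsilon_0(G)\le\delta(\Gamma)\bigl(h_{\mathrm{vol}}(X)-\delta(\Gamma)\bigr)$, hence $\delta(\Gamma)\le\tfrac12 h_{\mathrm{vol}}(X)+\sqrt{\tfrac14 h_{\mathrm{vol}}(X)^2-\epsilon_0(G)}$, so the reduction above holds with $\epsilon(G)=\tfrac12 h_{\mathrm{vol}}(X)-\sqrt{\tfrac14 h_{\mathrm{vol}}(X)^2-\epsilon_0(G)}>0$.

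I expect the main obstacle to be Step 2 in higher rank: it rests on the sharp two-sided Gaussian bounds for heat kernels on symmetric spaces and, crucially, on the direction-dependence of $\varphi_0$, where only the Cauchy--Schwarz inequality $\langle\rho,H^+\rangle\le\|\rho\|\,\|H^+\|$, and not an equality, is available --- which is precisely why one is left with an inequality rather than the clean rank-one identity $\lambda_0(\Gamma\backslash X)=\delta(\Gamma)(h_{\mathrm{vol}}(X)-\delta(\Gamma))$. A secondary difficulty is the uniformity in Step 1: the Kazhdan set and constant, hence $\epsilon_0(G)$ and $\epsilon(G)$, must be chosen independently of $\Gamma$, which is exactly what property (T) --- as opposed to a group-by-group spectral gap --- guarantees. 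The remaining input, that lattices satisfy $\delta(\Gamma)=h_{\mathrm{vol}}(X)$, is classical and may be cited.
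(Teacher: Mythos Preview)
The paper does not prove this theorem at all: Theorem~\ref{Leuzingergapphenomenon} is stated with attribution to Leuzinger and cited from \cite{Leu}, with no proof given. It is quoted only as background and motivation for the paper's main result (that no such gap holds for subsemigroups). So there is no ``paper's own proof'' to compare your proposal against.

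That said, your sketch is essentially the strategy of Leuzinger's original argument: route the problem through $\lambda_0(\Gamma\backslash X)$, use property~(T) to get a uniform lower bound on $\lambda_0$ for infinite-covolume quotients, and then convert the spectral gap into a critical-exponent gap via an inequality of the form $\lambda_0(\Gamma\backslash X)\le\delta(\Gamma)(h_{\mathrm{vol}}(X)-\delta(\Gamma))$. Your identification of the main difficulty (the direction-dependence of $\varphi_0$ in higher rank forcing an inequality rather than an equality) is also accurate. If you want a self-contained treatment, you would need to flesh out Step~2 carefully---the heat-kernel/Poincar\'e-series comparison you outline is delicate, and Leuzinger's paper handles it via a combination of Anker--Ji type estimates and an argument of Corlette---but as a proposal it is on the right track and matches the literature rather than the present paper.
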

In previous work \cite{S}, the author showed that the counterpart to Corlette's Theorem \ref{CorletteGapThm} does not hold for the class of \emph{discrete subsemigroups} of Sp$(n,1)$ or $\mathrm{F}_{4}^{-20}$ by showing that, for any lattice in Sp$(n,1)$ or $\mathrm{F}_{4}^{-20}$, there exist finitely generated free subsemigroups of critical exponent arbitrarily close to, but strictly less than, that of the lattice; this result also follows from earlier work of Yang \cite{Y} who used different methods. Inspired by the seminal work of Bishop--Jones \cite{BJ}, the author established this result by working in the broader context of convergence groups with expanding coarse-cocycles, introduced earlier by Blayac--Canary--Zhu--Zimmer in \cite{BCZZ1}. Since any discrete subgroup of a rank one Lie group acts as a convergence group on its limit set, the author was able to apply his general result to the specific cases of lattices in Sp$(n,1)$ or $\mathrm{F}_{4}^{-20}$. Furthermore, these results also apply to the class of discrete subgroups of higher-rank Lie groups, known as \emph{transverse groups}. Roughly speaking, these are discrete subgroups with a well-defined limit set in an appropriate flag variety of $G$, with the additional property that the action of the discrete subgroup on its limit set is a convergence group action. 
\par 
While the class of transverse groups is broad enough to include many interesting types of groups (such as all discrete subgroups of rank one Lie groups, Anosov and relatively Anosov groups in higher rank Lie groups, and their subgroups), they are still very special in the sense that a generic discrete subgroup in higher rank need not act as a convergence group on its limit set. Hence, the author's results in \cite{S} do not apply for arbitrary discrete subgroups in higher rank Lie groups.
\par 
The purpose of this paper is to show that, under the very mild hypothesis that the discrete subgroup $\Gamma$ of $G$ is \emph{Zariski dense} (for example, any lattice in $G$ is Zariski dense by Borel's density theorem), we can find free, finitely generated, Zariski dense subsemigroups of $\Gamma$ whose critical exponents are arbitrarily close to that of $\Gamma$. Postponing some definitions, our main result is the following:
\begin{thm} [Theorem \ref{MainTheorem2}]
\label{MainTheorem1}
Let $G$ be a connected algebraic semisimple real Lie group with finite center and no compact factors, and let $\Gamma < G$ be a Zariski dense discrete subgroup. For every $0 < \delta < \delta (\Gamma)$, $\epsilon > 0$ sufficiently small, and $B \geq 1$, there exists a free, finitely generated subsemigroup $\Omega = \Omega_{\delta, \epsilon,B} \subset \Gamma$ with the following properties:
\begin{itemize}
    \item[(1)] Every element of $\Omega$ is either $\epsilon$-contracting or $2 \epsilon$-contracting.
    \item[(2)] The semigroup $\Omega$ is Zariski dense in $G$. 
    \item[(3)] The critical exponent of $\Omega$ satisfies 
    \begin{align*}
    \delta(\Omega) \geq \delta.
    \end{align*}
    \item[(4)] The semigroup $\Omega$ is $P$-Anosov. In fact,
    \begin{align*}
    \min_{\alpha \in \Delta} \alpha (\kappa (g)) \geq B |g|_{S},
    \end{align*} for all $g \in \Omega$, where $|\cdot|_{S}$ denotes the word-length with respect to the finite generating set $S$ that freely generates $\Omega$.
\end{itemize}
\end{thm}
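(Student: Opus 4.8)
The plan is to combine a counting argument with a ping-pong construction, using the theory of $\epsilon$-contracting elements from the earlier sections (in particular the Abels--Margulis--Soifer type surjectivity lemma and the ping-pong lemma for $\epsilon$-contracting elements). \textbf{Step 1: counting and reduction to contracting elements.} Fix $\delta < \delta' < \delta(\Gamma)$. From the $\limsup$ formula for the critical exponent there is a sequence $T_i \to \infty$ with $\#\{\gamma \in \Gamma : d_X(o, \gamma o) \le T_i\} \ge e^{\delta' T_i}$. I would then invoke the AMS-type statement for $\epsilon$-contracting elements: there is a finite set $F \subset \Gamma$, depending only on $G$, $\Gamma$ and $\epsilon$, such that for every $\gamma \in \Gamma$ there are $f_1, f_2 \in F$ with $f_1 \gamma f_2$ being $\epsilon$-contracting. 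Sorting the $e^{\delta' T_i}$ elements above by the pair $(f_1, f_2)$ and keeping the largest class (whose elements stay distinct since $\Gamma$ is a group) gives $\ge c\, e^{\delta' T_i}$ distinct $\epsilon$-contracting elements of $\Gamma$, each with $d_X(o, \cdot\, o) \le T_i + O(1)$, where $c>0$ and the $O(1)$ depend only on $G,\Gamma,\epsilon$.

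\textbf{Step 2: pigeonhole and ping-pong.} Cover $G/P$ and the relevant space of transverse pairs by finitely many balls of a small radius $\rho = \rho(\epsilon, G)$, with $M = M(\epsilon, G)$ balls in total. By pigeonhole, $\ge (c/M)\, e^{\delta' T_i} =: n_i$ of the elements from Step~1 share a common ball $B^+ \subset G/P$ containing all their attracting flags, with all their repelling data uniformly far from $B^+$; by the quantitative transversality built into the definition of $\epsilon$-contracting, this is exactly the hypothesis of the ping-pong lemma. Applying it, the set $S_i$ of these $n_i$ elements freely generates a subsemigroup $\Omega \subset \Gamma$ in which every generator is $\epsilon$-contracting and every longer word is $2\epsilon$-contracting (property~(1)), and there is $C = C(\epsilon)>0$ with $\min_{\alpha \in \Delta} \alpha(\kappa(g)) \ge C\,|g|_{S_i}$ for all $g \in \Omega$: each letter contributes a definite amount (comparable to $\log(1/\epsilon)$) to $\min_\alpha \alpha \circ \kappa$, and the uniform transversality keeps these contributions from cancelling up to a bounded-per-letter error, which is dominated by the per-letter gain once $\epsilon$ is small. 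This gives property~(4), so in particular $\Omega$ is $P$-Anosov.

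\textbf{Step 3: critical exponent.} Each generator moves $o$ by $\le T_i + O(1)$, so a word of length $k$ moves $o$ by $\le k(T_i + O(1))$, and by freeness there are at least $n_i^k$ such words. Taking $R = k(T_i + O(1)) \to \infty$ in the definition of $\delta(\Omega)$ gives $\delta(\Omega) \ge \frac{\log n_i}{T_i + O(1)} \ge \frac{\delta' T_i + \log(c/M)}{T_i + O(1)}$, whose right-hand side tends to $\delta'$ as $i \to \infty$. Since $\delta' > \delta$, fixing $i$ (hence $T_i$) large enough yields $\delta(\Omega) > \delta$, which is property~(3).

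\textbf{Step 4: Zariski density, and the main obstacle.} Finally I would secure property~(2). Adjoining to $S_i$ a bounded number of additional $\epsilon$-contracting generators of size $O(1)$ affects neither freeness, nor the Cartan estimate of Step~2 (after adjusting $C$), nor the asymptotics of Step~3. So I would fix once and for all a finite subset of $\Gamma$ generating a Zariski dense subsemigroup (such a set exists because $\Gamma$ is Zariski dense), replace its elements by suitable powers and conjugates so that they become $\epsilon$-contracting with attracting and repelling data fitted into the ping-pong configuration of Step~2 --- using that $\Gamma$ acts minimally on its limit set in $G/P$ to conjugate the attracting flags into $B^+$ --- and append them to $S_i$. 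The delicate point, which I expect to be the main obstacle in assembling the proof, is exactly this reconciliation of the three requirements (exponentially many generators, all in one ping-pong configuration, and Zariski density), and in particular ensuring that conjugating the fixed Zariski-dense family into position does not spoil the quantitative contraction and transversality estimates --- this is controlled by taking the initial elements sufficiently contracting relative to the (bounded) conjugating elements. The deeper work, of course, lies upstream in the earlier sections: developing the notion of $\epsilon$-contracting element \emph{intrinsically} on $G/P$ --- the quantitative transversality, the AMS-type surjectivity lemma, and the ping-pong lemma with its linear lower bound on $\min_\alpha \alpha \circ \kappa$ --- without embedding $G/P$ into a product of projective spaces; granted that machinery, the four steps above are the plan.
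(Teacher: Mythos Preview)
Your proposal has a genuine gap at Step~2, and it also relies on machinery that the paper does not provide.

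First, the paper's earlier sections do \emph{not} contain an Abels--Margulis--Soifer type surjectivity lemma for $\epsilon$-contracting elements; Section~4 only contains the definition, a criterion (Lemma~4.2), closure under products (Proposition~4.3), and shadow inclusion (Proposition~4.5). So your Step~1 invokes a black box that is not available. The paper instead finds contracting elements directly: via Quint's growth indicator it locates a unit vector $u\in\mathfrak{a}^{++}$ with $\psi_\Gamma(u)>\delta$, then restricts to elements whose Cartan projection lies in a narrow open cone $\mathcal{C}\ni u$ inside $\mathfrak{a}^{++}$ and whose $KA^+K$ data cluster near a fixed transverse pair $(x,y)\in\Lambda(\Gamma)\times\Lambda(\Gamma)^-$ (the sets $\Gamma_{\mathcal{C},x,y,n,\epsilon}$). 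Proposition~5.6 then shows these are $2\epsilon$-contracting once $n$ is large.

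Second, and more seriously, your ping-pong claim does not yield freeness. Your pigeonhole puts all attracting flags into a single ball $B^+$ and all repelling data uniformly far from $B^+$. That configuration is exactly the hypothesis of Proposition~4.3, which gives that every word is $\epsilon$- or $2\epsilon$-contracting---but it does \emph{not} give freeness, because the shadows $\mathcal{S}_{2\epsilon}(\zeta)\subset\overline{B_\epsilon(x_\zeta^+)}$ of distinct generators all sit inside the same small ball and a priori overlap. Classical ping-pong needs \emph{disjoint} attracting regions, which you do not have. The paper's freeness argument (proof of Theorem~7.1) requires the shadows $\{\mathcal{S}_{2\epsilon}(\zeta)\}_{\zeta\in S}$ to be pairwise disjoint, and arranging this is the technical heart of the paper: one embeds $\mathcal{S}_{2\epsilon}(\zeta)$ into a symmetric-space shadow $\mathcal{O}_R(o,\zeta o)$ (Lemma~6.4), and then uses that the Cartan projections lie in a \emph{narrow} cone $\mathcal{C}$ to bound, via Lemma~6.5(2), how many elements of a given annulus can have intersecting $\mathcal{O}_R$-shadows. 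This count is $\lesssim e^{(\delta(\Gamma)+1)\lambda n}$ with $\lambda=\lambda(\mathcal{C})$ small, so one can discard overlaps and still retain exponentially many generators (Proposition~6.7). None of this is visible in your outline, and without it the semigroup need not be free. The narrow-cone restriction is also what makes your Step~2 claim about $\min_\alpha\alpha(\kappa(g))$ work: an arbitrary $\epsilon$-contracting element can have $\alpha(\kappa(g))$ small for some $\alpha$, and the paper's additivity estimate (Lemma~7.4) uses the Iwasawa cocycle together with the controlled position of $(x_g^+,\mathcal{Z}_{x_g^-})$, not a generic ``per-letter gain''.
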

An immediate consequence of our result is that the counterpart to Leuzinger's Theorem \ref{Leuzingergapphenomenon} does not hold for the class of Zariski dense discrete \emph{subsemigroups} of $G$. 
\subsection{Ideas of the Proof} 
The proof of Theorem \ref{MainTheorem1} is rather technical at times, so for the reader's convenience we will attempt to convey some of the main ideas and strategies behind it.
\par 
In general, estimating the critical exponent of a discrete subsemigroup $\Omega \subset \Gamma$ is a challenging task. Indeed, it requires having some understanding of the real numbers $s > 0$ for which the Poincar\'e series
\begin{align*}
Q_{\Omega}(s) := \sum_{\gamma \in \Omega} e^{-s d_{X}(o, \gamma o)}
\end{align*} diverges. In particular, if there are many non-trivial relations among the elements of the semigroup $\Omega$, then there is no apparent approach to evaluating this series. However, if the semigroup $\Omega$ is \emph{freely generated} by some finite generating set $S$, then, writing $S^{m}$ for all the words in $\Omega$ of word length $m \geq 1$, we have
\begin{align*}
Q_{\Omega}(s) = \sum_{\gamma \in \Omega} e^{-s d_{X}(o, \gamma o)} = \sum_{m=1}^{\infty} \sum_{\gamma \in S^{m}} e^{-s d_{X}(o,\gamma o)}.
\end{align*} Thus to compute $Q_{\Omega}(s)$, it suffices to understand the sums
\begin{align*}
\sum_{\gamma \in S^{m}} e^{-s d_{X}(o, \gamma o)}, 
\end{align*} for all $m \geq 1$. Fix $0 < \delta < \delta(\Gamma)$. The point now is that, since the metric $d_{X}$ is $G$-invariant, one can reduce the problem of showing that the above series diverges at $s = \delta$ (i.e., that $\delta(\Omega) \geq \delta$) to showing that the finite generating set $S$ which freely generates $\Omega$ has the further property that the finite sum
\begin{align*}
\sum_{\zeta \in S} e^{-\delta d_{X}(o, \zeta o)}
\end{align*} is sufficiently large. Therefore, we are in search of a finite subset $S \subset \Gamma$ having the following properties:
\begin{enumerate}
    \item\label{item:1} The sum $\sum_{\zeta \in S} e^{-\delta d_{X}(o, \zeta o)}$ is sufficiently large.
    \item\label{item:2} There are no relations when multiplying positive powers of elements of $S$.
    \item\label{item:3} The Zariski closure of the semigroup $\Omega$ generated by $S$, which is a subgroup of $G$, (see for instance Lemma 6.15 of \cite{BQ1} for a proof in the linear case) is contained in no proper closed subgroup of $G$.
\end{enumerate} Using Quint's growth indicator function, we are able to find (many) cones in the symmetric space $X$ of $G$ containing a large number of elements of the $\Gamma$-orbit of $o \in X$; the precise statement is Corollary \ref{LargeGrowthInd}. We will look for the subset $S$ among these elements in such a way that items (\ref{item:2}) and (\ref{item:3}) above are also satisfied. 
\par 
One can think of item (\ref{item:2}) as saying that the orbit of the base point $o \in X$ under the semigroup $\Omega$ generated by $S$ is a tree. Indeed, the vertices are the orbit points $\Omega \cdot o$ and two vertices $\eta o$ and $\gamma o$ are connected by an edge if and only if $\eta = \gamma \zeta$ for some $\zeta \in S$. To establish this tree-like structure, the idea is to perform a similar construction as in our earlier work \cite{S}, which was in turn inspired by the seminal work of Bishop--Jones \cite{BJ}. A fundamental difficulty in this regard is finding a good notion of ``shadows'' of elements of $\Omega$ in the Furstenberg boundary $\mathcal{F} = G/P$. Precisely, we want our shadows to satisfy the following: for all $m \geq 1$, if $\eta = \gamma \zeta$ where $\gamma \in S^{m}$ and $\zeta \in S$, then some shadow of $\eta$ is contained in some shadow of $\gamma$. Furthermore, if $\eta' = \gamma \zeta'$ for some $\zeta' \in S \smallsetminus \{\zeta\}$, then the shadows of $\eta$ and $\eta'$ are disjoint. The key difficulty here is that the various notions of shadows currently available in the literature do not (at least to the author) seem to be amenable to such delicate requirements.
\par 
For this reason, we study certain loxodromic elements of semisimple Lie groups, which we call $\epsilon$-$\emph{contracting elements}$ (see Definition \ref{epsiloncontractingelement} for the precise definition). We remark that these elements were first introduced by Guichard--Wienhard in their work on Anosov representations; see Definition 5.6 of \cite{GW} (which was itself inspired by the important work of Abels--Margulis--Soifer \cite{AMS}). However, our work appears to be the first to develop many fundamental properties of these elements and to demonstrate their usefulness in studying arbitrary Zariski dense discrete subgroups of Lie groups. We emphasize that our use of $\epsilon$-contracting elements is motivated by the monumental works of Abels--Margulis--Soifer \cite{AMS} and Benoist \cite{B1}, \cite{B2}, which heavily used the notion of $\epsilon$-proximal elements. These elements are well-behaved under taking products (Proposition \ref{contractingsemigroup}) and have a natural associated notion of ``shadow'' in the Furstenberg boundary $\mathcal{F}$ (Definition \ref{contractingshadow}), which is similarly very well-behaved (Proposition \ref{shadowinclusion} and Lemma \ref{contractsymspaceinclusion}). 
\par 
For this definition to be useful in our context, we need to show that sufficiently many of the elements we found using Quint's growth indicator function are $\epsilon$-contracting and, moreover, are ``well-positioned'' with respect to each other so as to apply the aforementioned results. It is for this reason that sets of the form $\Gamma_{\mathcal{C},x,y,n,\epsilon}$, introduced in Section \ref{asymptoticphenomena}, play such a central role in this paper. Additionally, by establishing that these sets are Zariski dense in $G$ for all $n \geq 1$ sufficiently large and $\epsilon > 0$ sufficiently small, we will be able to deduce item (\ref{item:3}) above. 
\par 
To conclude this discussion, we mention that another key feature of our proof is that it avoids the use of Tits representations \cite{Ti}, except insofar as we use the results of Benoist \cite{B1}, \cite{B2} and Quint \cite{Q1}, \cite{Q2}, which make use of these representations. Tits representations have been used in many seminal works (for instance, \cite{AMS}, \cite{B1}, \cite{B2}, \cite{BQ1}, \cite{Q1}, and \cite{Q2}) to embed $\mathcal{F}$ into a product of projective spaces associated with these representations and then study the $\Gamma$-action on $\mathcal{F}$ via the action of its linear representations on the product of projective spaces. By instead using $\epsilon$-contracting elements and a ``north-south dynamics'' result about the $G$-action on $\mathcal{F}$ (Proposition \ref{northsouthflagvars}), we are able to study the action of $\Gamma$ on $\mathcal{F}$ without appealing to Tits representations. We hope this approach will have future applications, or potentially lead to simplifications of results currently in the literature. To highlight the fruitfulness of our approach, we discuss three applications to arbitrary Zariski dense discrete subgroups, which use both our main result Theorem \ref{MainTheorem1} and the techniques of its proof.
\subsection{Applications}
The first application is a group-theoretic consequence of Theorem \ref{MainTheorem1} and Leuzinger's Theorem \ref{Leuzingergapphenomenon}. Let $G$ be as in Theorem \ref{MainTheorem1}, assume further that $G$ has Kazhdan's property (T), and let $\Gamma < G$ be a lattice. By Leuzinger's Theorem \ref{Leuzingergapphenomenon}, there exists a constant $\epsilon = \epsilon(G) > 0$ depending only on the Lie group $G$ so that, for any infinite covolume discrete subgroup $\Lambda$ of $G$, we have $\delta(\Lambda) \leq \delta(\Gamma) - \epsilon$. By Theorem \ref{MainTheorem1}, there exists a free, finitely generated subsemigroup $\Omega \subset \Gamma$ with finite generating set $S \subset \Gamma$ so that 
\begin{align*}
\delta(\Gamma) - \epsilon < \delta(\Omega) \leq \delta(\Gamma).
\end{align*} Let $\Delta := \langle S \rangle$ be the group generated by the set $S$. Then $\Delta$ is a discrete subgroup of $G$ contained in $\Gamma$ whose critical exponent satisfies
\begin{align*}
\delta(\Gamma) - \epsilon < \delta(\Omega) \leq \delta(\Delta) \leq \delta(\Gamma).
\end{align*} Leuzinger's Theorem \ref{Leuzingergapphenomenon} therefore implies that $\delta(\Delta) = \delta(\Gamma)$ and $\Delta$ is a lattice with finite index in $\Gamma$. In particular, there is some relation among the elements of $S$ when one is allowed to multiply by negative powers of these elements, even though there is no relation when multiplying together exclusively positive powers of these elements.

The second application, which is more significant, establishes that the critical exponent is lower semicontinuous in the Chabauty topology if the Chabauty limit of a sequence of Zariski dense discrete subgroups is also a Zariski dense discrete subgroup. Precisely, using Theorem \ref{MainTheorem1}, the properties of $\epsilon$-contracting elements that we have developed, and other aspects of the proof Theorem \ref{MainTheorem1}, we show the following.
\begin{thm} [Theorem \ref{lowersemicontinuity2}]
\label{lowersemicontinuity1}
Let $G$ be a connected algebraic semisimple real Lie group with finite center and no compact factors. Let $\{\Gamma_{n}\}$ be a sequence of Zariski dense discrete subgroups of $G$ which converges in the Chabauty topology to a Zariski dense discrete subgroup $\Gamma$ of $G$. Then,
\begin{align*}
\liminf_{n \to \infty} \delta(\Gamma_{n}) \geq \delta(\Gamma).
\end{align*}
\end{thm}
To highlight the significance of this result, we provide a new proof of a result concerning the Chabauty convergence of infinite covolume Zariski dense discrete subgroups in semisimple Lie groups with Kazhdan's property (T); see Corollary \ref{Infinitecovolumeconvergence}. As far as the author is aware, previously known proofs relied on the very deep superrigidity theorems of Margulis \cite{Mar} (in higher rank) and Corlette \cite{Cor2} and Gromov-Schoen \cite{GS} (in the exotic rank one cases). By contrast, our proof only uses Theorem \ref{lowersemicontinuity1} and Theorem \ref{Leuzingergapphenomenon}, thereby providing a new, simpler, proof.
\subsection{Outline of the Paper}
In section \ref{preliminariesonLiegroups}, we recall some standard facts about the structure theory of semisimple Lie groups as well as dynamics on flag varieties. In section \ref{backgroundondiscretesubgroups}, we recall certain asymptotic objects associated to discrete subgroups of Lie groups introduced by Benoist \cite{B2} and Quint \cite{Q1} and provide proofs of certain basic properties of Quint's growth indicator function. In section \ref{contractingelementssection}, we introduce $\epsilon$-contracting elements, their shadows, and establish some properties of these objects that will be relevant in what follows. Section \ref{asymptoticphenomena} is perhaps the most technical part of the paper, where the majority of the preliminary results needed to find the generating sets of our semigroups are established. In section \ref{symspaceshadowssection}, we relate the shadows of $\epsilon$-contracting elements to so-called ``symmetric space shadows,'' which have already been used extensively in the literature. This is needed in order for us to establish the freeness of our semigroup. In section \ref{culminationsection}, we combine the results of the preceding sections to prove Theorem \ref{MainTheorem1}. Lastly, in section \ref{lowersemicontsection} we prove Theorem \ref{lowersemicontinuity1} and use it to give a new proof of Corollary \ref{Infinitecovolumeconvergence}.
\subsection*{Acknowledgements}
Firstly, I would like to thank my advisors Andrew Zimmer and Sebastian Hurtado-Salazar for all their help. I am especially grateful to Andrew for listening to all of my half-baked ideas, being very generous with his ideas and time during so many conversations, and for encouraging me to work on this project and strengthen the initial results of this paper. I am very thankful for Sebastian for so many helpful conversations, his interest in my work, and for encouraging me to pursue this project. Moreover, Sebastian suggested that my methods could be used to prove Theorem \ref{lowersemicontinuity1}, which was indeed the case.

I thank Mikołaj Frączyk for interesting conversations and correspondences. I thank Fernando Al Assal for nice discussions and listening to several of my inchoate ideas. I am very grateful to Dongryul M. Kim for numerous helpful discussions. In particular, Dongryul informed me of Proposition \ref{continuityonviewpoints}, which enabled me to deduce Lemma \ref{diametertozero}. I also thank Hee Oh for helpful comments on an earlier version of this paper. 

This material is based upon work supported by the National Science Foundation under Grant No. DMS-2037851 and Grant No. DMS-2230900.
\section{Preliminaries on Semisimple Lie Groups}
\label{preliminariesonLiegroups}
\subsection{Basic structure theory of semisimple Lie groups}
Let $G$ be a connected algebraic semisimple real Lie group without compact factors and with finite center and let $\mathfrak{g}$ denote its Lie algebra. Let $b$ denote the Killing form of $\mathfrak{g}$ and fix a Cartan involution $\tau$ of $\mathfrak{g}$; that is, an involution of $\mathfrak{g}$ for which the bilinear pairing $\langle \cdot, \cdot \rangle$ on $\mathfrak{g}$ defined by $\langle X, Y \rangle := -b(X, \tau (Y))$ is an inner product. Then $\mathfrak{g}$ decomposes as $\mathfrak{g} = \mathfrak{k} \oplus \mathfrak{p}$, where $\mathfrak{k}$ and $\mathfrak{p}$ are the $1$ and $-1$ eigenspaces of $\tau$. The subalgebra $\mathfrak{k}$ is a maximal compact Lie subalgbra of $\mathfrak{g}$ and we denote by $K \subset G$ the maximal compact Lie subgroup of $G$ whose Lie algebra is $\mathfrak{k}$.

Fix a maximal abelian subalgebra $\mathfrak{a} \subset \mathfrak{p}$, known as a \emph{Cartan subalgebra}, which is unique up to conjugation. The Lie algebra $\mathfrak{g}$ then decomposes as 
\begin{align*}
    \mathfrak{g} = \mathfrak{g}_{0} \oplus \bigoplus_{\alpha \in \Sigma} \mathfrak{g}_{\alpha},
\end{align*} which is called the \emph{restricted root space decomposition} associated to $\mathfrak{a}$; in this decomposition, for $\alpha \in \mathfrak{a}^{*}$, we define
\begin{align*}
    \mathfrak{g}_{\alpha} := \{X \in \mathfrak{g} : [H,X] = \alpha (H)X \ \ \mathrm{for \ all} \ H \in \mathfrak{a} \},
\end{align*} and call
\begin{align*}
    \Sigma := \{ \alpha \in \mathfrak{a}^{*} \smallsetminus \{0\} : \mathfrak{g}_{\alpha} \neq 0 \}
\end{align*} the set of \emph{restricted roots}. Now fix an element $H_{0} \in \mathfrak{a}$ so that $\alpha (H_{0}) \neq 0$ for all $\alpha \in \Sigma$, and let
\begin{align*}
\Sigma^{+} := \{ \alpha \in \Sigma : \alpha (H_{0}) > 0 \} \ \ \mathrm{and} \ \ \Sigma^{-} := - \Sigma^{+}.
\end{align*} Notice that $\Sigma = \Sigma^{+} \sqcup \Sigma^{-}$. We write $\Delta \subset \Sigma^{+}$ for the set of \emph{simple restricted roots}, which, by definition, consists of all the elements of $\Sigma^{+}$ which cannot be written as a non-trivial linear combination of elements in $\Sigma^{+}$. As $\Sigma$ is an abstract root system on $\mathfrak{a}^{*}$, it follows that $\Delta$ is a basis of $\mathfrak{a}^{*}$ and every $\alpha \in \Sigma^{+}$ is a non-negative integral linear combination of elements in $\Delta$. See for instance Chapter II of Knapp's book \cite{Kn} for more details.
\subsubsection{The Weyl Group, Cartan Projection, and Jordan Projection}
 The \emph{Weyl group} of $\mathfrak{a}$ is given by $\mathcal{W} := N_{K}(\mathfrak{a}) / Z_{K}(\mathfrak{a})$, where $N_{K}(\mathfrak{a}) \subset K$ is the normalizer of $\mathfrak{a}$ in $K$ and $Z_{K}(\mathfrak{a}) \subset K$ is the centralizer of $\mathfrak{a}$ in $K$. The Weyl group is a finite group generated by reflections of $\mathfrak{a}$ (with respect to the inner product $\langle \cdot, \cdot \rangle$) about the kernels of the simple restricted roots in $\Delta$. Hence, $\mathcal{W}$ acts transitively on the set of \emph{Weyl chambers}, which are the closures of the connected components of 
\begin{align*}
    \mathfrak{a} - \bigcup_{\alpha \in \Sigma} \ker \alpha.
\end{align*} We call the Weyl chamber 
\begin{align*}
    \mathfrak{a}^{+} := \{X \in \mathfrak{a} : \alpha (X) \geq 0 \ \ \mathrm{for \ all} \ \alpha \in \Delta \},
\end{align*} the \emph{positive Weyl chamber}. We set $A := \exp (\mathfrak{a})$, $A^{+} := \exp (\mathfrak{a}^{+})$, and $\mathfrak{a}^{++} = \mathrm{int}(\mathfrak{a}^{+})$. In the Weyl group $\mathcal{W}$, there is a unique element $w_{0}$, called the \emph{longest element}, with the property that $w_{0}(\mathfrak{a}^{+}) = - \mathfrak{a}^{+}$. Thus the longest element allows us to define an involution $\iota : \mathfrak{a} \rightarrow \mathfrak{a}$, $H \mapsto - w_{0} \cdot H$, which is called the \emph{opposition involution}. It induces an involution of $\Sigma$ preserving $\Delta$, denoted by $\iota^{*}$, defined by $\iota^{*}(\alpha) = \alpha \circ \iota$ for all $\alpha \in \Delta$. Moreover, if $k_{0} \in N_{K}(\mathfrak{a})$ denotes a representative of the longest element $w_{0} \in \mathcal{W}$, then
\begin{align}
\label{OppInvAct}
\mathrm{Ad}(k_{0}) \mathfrak{g}_{\alpha} = \mathfrak{g}_{-\iota^{*}(\alpha)}
\end{align} for all $\iota \in \Sigma$.
\par 
Let $\kappa : G \rightarrow \mathfrak{a}^{+}$ denote the \emph{Cartan projection}, that is $\kappa (g) \in \mathfrak{a}^{+}$ is the unique element so that 
\begin{align*}
    g = k \exp (\kappa (g)) \ell
\end{align*} for some $k, \ell \in K$. We note that $k, \ell \in K$ need not be unique. Such a decomposition of $g \in G$ is called a $KA^{+}K$ decomposition (see Theorem $7.39$ of \cite{Kn}). Notice that since $\iota(-\mathfrak{a}^{+}) = \mathfrak{a}^{+}$, we have $\iota(\kappa(g)) = \kappa(g^{-1})$ for all $g \in G$. Using the Cartan projection, we can define the map $\lambda : G \rightarrow \mathfrak{a}^{+}$ known as the \emph{Jordan projection} by 
\begin{align*}
\lambda(g) := \lim_{n \to \infty} \frac{\kappa(g^{n})}{n}.
\end{align*}
Let $H$ be a connected real algebraic Lie group, that is, $H$ is not assumed to be semisimple. Such an algebraic group admits a \emph{Levi decomposition}
\begin{align*}
H = L \ltimes R_{u}(H),
\end{align*} where $L$ is a reductive subgroup of $H$ known as the \emph{Levi subgroup} and $R_{u}(H)$ is the unipotent radical of $H$. See Chapter 6 of \cite{OV} for further details. 
\subsubsection{The Symmetric Space of $G$, Parabolic Subgroups, and Flag Varieties} Let $X := G/K$ be the \emph{symmetric space} of $G$ and fix a basepoint $o = [K] \in X$. Fix a $K$-invariant norm $||\cdot||$ on $\mathfrak{a}$ induced from the Killing form and let $d_{X}$ denote the $G$-invariant symmetric Riemannian metric on $X$ defined by 
\begin{align*}
d_{X}(go,ho) = ||\kappa(g^{-1}h)||
\end{align*} for all $g,h \in G$. We will use the following estimates on the norm of the differences of Cartan projections.
\begin{lem} [Lemma 2.3 of \cite{Ka}]
\label{differencecartanprojections}
For all $g,h \in G$, we have
\begin{align*}
||\kappa(gh) - \kappa(h)|| \leq ||\kappa(g)|| \ \ \mathrm{and} \ \ ||\kappa(gh) - \kappa(g)|| \leq ||\kappa(h)||.
\end{align*}
\end{lem}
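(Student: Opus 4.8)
The plan is to exploit the fact that $d_X(g o, h o) = \|\kappa(g^{-1}h)\|$ defines a genuine $G$-invariant metric on $X = G/K$, together with the triangle inequality. For the first inequality, I would write $\|\kappa(gh) - \kappa(h)\|$ and try to interpret both terms as distances between orbit points of a common basepoint. Concretely, $\|\kappa(h)\| = d_X(o, h^{-1} o) = d_X(o, ho)$ (using $\iota(\kappa(x)) = \kappa(x^{-1})$ and $\|\iota(\cdot)\| = \|\cdot\|$, which follows since the norm is $K$-invariant and $\iota$ is realized by an element of $N_K(\mathfrak a)$), and similarly $\|\kappa(gh)\| = d_X(o, gh\,o)$. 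So the naive triangle inequality gives $\bigl|\,\|\kappa(gh)\| - \|\kappa(h)\|\,\bigr| \le d_X(ho, gh\,o) = d_X(o, g o) = \|\kappa(g)\|$. The subtlety is that the statement asks for a bound on $\|\kappa(gh) - \kappa(h)\|$, the norm of the \emph{difference of the two vectors} in $\mathfrak a^+$, not the difference of their norms; so this first attempt is not quite enough and I need a genuinely vector-valued refinement.

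To get the vector-valued estimate, the key step is a Lipschitz property of the Cartan projection itself: for all $x, y \in G$,
\begin{align*}
\|\kappa(xy) - \kappa(x)\| \le \|\kappa(y)\|.
\end{align*}
I would prove this by the standard argument comparing the $KA^+K$ decompositions. Write $x = k \exp(\kappa(x))\ell$ and $y = k' \exp(\kappa(y))\ell'$ with $k,\ell,k',\ell' \in K$. Then $xy = k \exp(\kappa(x)) \, (\ell k') \exp(\kappa(y)) \, \ell'$, and since $\ell k' \in K$ is a fixed element, one reduces to showing that for any $m \in K$ and any $u, v \in \mathfrak a^+$, the Cartan projection of $\exp(u)\, m\, \exp(v)$ lies within distance $\|v\|$ of $u$ (after accounting for the outer $K$-factors, which don't affect $\kappa$). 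This in turn follows from the fact that left multiplication of a point of $X$ by $\exp(u) m$ moves $\exp(v)\cdot o$ (which is at distance $\|v\|$ from $o$) and $o$ itself by the same isometry, so the distances $d_X(\exp(u)m\cdot o, \,\exp(u)m\exp(v)\cdot o)$ and $d_X(o, \exp(v)\cdot o) = \|v\|$ agree; combined with the flat-geometry estimate in a maximal flat through $o$ that $\|\kappa(g)\|$ for $g = \exp(u)\cdot(\text{something at bounded distance})$ is controlled. Alternatively — and this is cleaner — one invokes the known fact (e.g.\ from Kassel's survey or Benoist--Quint) that $\kappa$ is $1$-Lipschitz for $d_X$ in the sense above; since the lemma is quoted verbatim from \cite{Ka}, I would simply cite the structure-theoretic proof there, but the mechanism is the triangle inequality for $d_X$ pushed to the vector level via the Cartan decomposition.

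Granting the Lipschitz estimate $\|\kappa(xy) - \kappa(x)\| \le \|\kappa(y)\|$, both displayed inequalities follow immediately. For the second, $\|\kappa(gh) - \kappa(g)\| \le \|\kappa(h)\|$ is the Lipschitz estimate with $x = g$, $y = h$. For the first, apply it with $x = gh$ and $y = h^{-1}$: then $xy = g$, so $\|\kappa(g) - \kappa(gh)\| \le \|\kappa(h^{-1})\| = \|\iota(\kappa(h))\| = \|\kappa(h)\|$, using once more that the norm is $\iota$-invariant. I expect the main obstacle to be precisely the passage from the scalar triangle inequality (easy) to the vector-valued Lipschitz bound for $\kappa$: this requires a careful comparison of $KA^+K$ decompositions and an argument that the "angular" part cannot drift too much, which is where the convexity properties of $\mathfrak a^+$ and the geometry of maximal flats in $X$ enter. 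Everything after that is a one-line substitution.
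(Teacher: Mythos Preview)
The paper does not supply its own proof of this lemma; it simply quotes it from Kassel \cite{Ka}. So there is nothing to compare against, and your task reduces to giving a self-contained argument.

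Your proposal has one concrete error. In the last paragraph you claim to deduce the \emph{first} inequality $\|\kappa(gh)-\kappa(h)\|\le\|\kappa(g)\|$ by applying your Lipschitz estimate with $x=gh$ and $y=h^{-1}$. But that substitution yields $\|\kappa(g)-\kappa(gh)\|\le\|\kappa(h^{-1})\|=\|\kappa(h)\|$, which is the \emph{second} inequality again, not the first. The correct way to pass from one to the other is via the opposition involution applied to inverses: from $\|\kappa(h^{-1}g^{-1})-\kappa(h^{-1})\|\le\|\kappa(g^{-1})\|$ (your right-Lipschitz estimate with $x=h^{-1}$, $y=g^{-1}$) and $\kappa(z^{-1})=\iota(\kappa(z))$ together with the $\iota$-invariance of the norm, you get $\|\kappa(gh)-\kappa(h)\|\le\|\kappa(g)\|$.

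Beyond that slip, the heart of the matter --- the vector-valued Lipschitz bound $\|\kappa(xy)-\kappa(x)\|\le\|\kappa(y)\|$ --- is not actually proved in your write-up. You reduce correctly to bounding $\|\kappa(\exp(u)\,m\,\exp(v))-u\|$ for $m\in K$, but the sentence that follows only controls the \emph{scalar} distance $d_X(\exp(u)m\cdot o,\exp(u)m\exp(v)\cdot o)=\|v\|$, which is again just the triangle inequality for norms, not for the vectors themselves. A genuine proof needs an additional ingredient: either Kostant's convexity theorem (the $\mathfrak a$-component of $\exp(u)\,m\,\exp(v)$ lies in the convex hull of $\{u+w\cdot v:w\in\mathcal W\}$), or the representation-theoretic route where one bounds $\chi(\kappa(gh)-\kappa(g))$ for every fundamental weight $\chi$ via submultiplicativity of operator norms and then recovers the Euclidean bound. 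Saying you would ``simply cite the structure-theoretic proof'' in \cite{Ka} is of course circular here, since that \emph{is} the lemma.
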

The normalizer in $G$ of the nilpotent subalgebra
\begin{align*}
\mathfrak{n} = \bigoplus_{\alpha \in \Sigma^{+}} \mathfrak{g}_{\alpha}
\end{align*} is the \emph{standard minimal parabolic subgroup}, denoted by $P$. The \emph{standard opposite minimal parabolic subgroup} $P^{-} := k_{0}Pk_{0}$ is the normalizer in $G$ of 
\begin{align*}
\mathfrak{n}^{-} = \bigoplus_{\alpha \in \Sigma^{-}} \mathfrak{g}_{\alpha}.
\end{align*} The quotient space $\mathcal{F} := G/P$ is called the \emph{Furstenberg boundary} or \emph{full flag variety} of $G$. We set $\mathcal{F}^{-} := G/P^{-}$ for the \emph{opposite full flag variety}. We can identify $\mathcal{F}^{-}$ with $\mathcal{F}$ via the map $gP^{-} \mapsto gk_{0}P$, although we will not do this in practice so as to avoid any possible confusion. 
\par 
We say that two flags $F_{1} \in \mathcal{F}$ and $F_{2} \in \mathcal{F}^{-}$ are \emph{transverse} if the pair $(F_{1}, F_{2})$ is contained in the open dense $G$-orbit of $(P,P^{-})$ in $\mathcal{F} \times \mathcal{F}^{-}$. In the literature, it is also common to say that the flags $F_{1}$ and $F_{2}$ are in \emph{general position}. For any flag $F \in \mathcal{F}$ (respectively, $F \in \mathcal{F}^{-}$), let $\mathcal{Z}_{F}$ denote the set of flags in $\mathcal{F}^{-}$ (respectively in $\mathcal{F}$) that are $\textbf{not}$ transverse to $F$. Since the $G$-orbit of $(P,P^{-})$ in $\mathcal{F} \times \mathcal{F}^{-}$ is open and dense, the set $\mathcal{Z}_{F}$ is a closed subset with empty interior. Moreover, $\mathcal{Z}_{F} = \mathcal{Z}_{F'}$ if and only if $F = F'$.
\par 
Let $L := P \cap P^{-}$ be the Levi subgroup of $P$, and set $M := K \cap P \subset L$. Then subgroup $MA$ of $G$ is precisely the stabilizer in $G$ of the point $(P,P^{-}) \in \mathcal{F} \times \mathcal{F}^{-}$.
\par 
We now recall what it means for an element $g \in G$ to be \emph{loxodromic}, and provide an equivalent characterization of this property. A particular type of loxodromic element, which we call an $\epsilon$-\emph{contracting element} and define in Definition \ref{epsiloncontractingelement}, will play a major role in our work.
\begin{defn}
An element $g$ of $G$ is said to be \emph{loxodromic} if $\lambda(g)$ belongs to the interior $\mathfrak{a}^{++}$ of the positive Weyl chamber $\mathfrak{a}^{+}$.
\end{defn}
The following lemma provides a characterization of loxodromic elements in terms of their actions on the Furstenberg boundary $\mathcal{F}$.
\begin{lem} [Lemma 6.39 in \cite{BQ1}]
\label{loxodromiccharacterization2}
Let $G$ be a connected algebraic semisimple real Lie group. An element $g$ of $G$ is loxodromic if and only if it has an attracting fixed point $x_{g}^{+}$ in the Furstenberg boundary $\mathcal{F}$ of $G$.  
\end{lem}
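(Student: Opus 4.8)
The plan is to reduce both implications to a single concrete computation: the derivative of the action of $g$ on $\mathcal{F}$ at a fixed point, carried out in the open Bruhat cell. Write $N := \exp(\mathfrak{n})$ and $N^{-} := \exp(\mathfrak{n}^{-})$, so $P = MAN$ and $v \mapsto \exp(v)P$ is a diffeomorphism from $\mathfrak{n}^{-}$ onto the open dense cell $N^{-}\cdot eP \subset \mathcal{F}$; I will work in these coordinates, in which $T_{eP}\mathcal{F}$ is identified with $\mathfrak{n}^{-}$. The basic fact to exploit is that if $g \in P$, then $g$ fixes $eP$, preserves the cell near $eP$, and its derivative there is
\begin{align*}
D(g)_{eP}\colon \mathfrak{n}^{-}\to\mathfrak{n}^{-}, \qquad v\longmapsto \Pi_{\mathfrak{n}^{-}}\!\big(\mathrm{Ad}(g)\,v\big),
\end{align*}
where $\Pi_{\mathfrak{n}^{-}}\colon\mathfrak{g}\to\mathfrak{n}^{-}$ is the projection with kernel $\mathfrak{p} = \mathfrak{g}_{0}\oplus\mathfrak{n}$; moreover if $g\in L$ then $g$ acts on the whole cell \emph{linearly}, by $v\mapsto \mathrm{Ad}(g)v$ (both statements follow from $\mathfrak{g} = \mathfrak{n}^{-}\oplus\mathfrak{p}$ and the fact that $L$ normalizes $N^{-}$). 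The ``only if'' direction is then a direct eigenvalue computation; the ``if'' direction amounts to reversing it, the subtlety being the boundary case of the eigenvalue estimate.

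For ``only if'', suppose $g$ is loxodromic. Using the real Jordan decomposition and conjugating, assume the hyperbolic part of $g$ is $\exp(Y)$ with $Y\in\mathfrak{a}$; loxodromicity means $Y = \lambda(g)\in\mathfrak{a}^{++}$. Since $Y$ is regular, the elliptic and unipotent parts of $g$ commute with $\exp(Y)$, hence preserve every eigenspace of $\mathrm{ad}(Y)$, hence lie in $P\cap P^{-} = L$; as $L = MA$ has no nontrivial unipotent element and $M$ is a maximal compact subgroup of $L$, the unipotent part is trivial and (after a further conjugation inside $L$, which fixes $\exp Y$) the elliptic part lies in $M$. Thus $g = \exp(Y)\,m$ with $m\in M\subset L\subset P$, so $g$ acts on the cell $\cong\mathfrak{n}^{-}$ linearly by $\mathrm{Ad}(g)$, which on $\mathfrak{g}_{-\alpha}$ ($\alpha\in\Sigma^{+}$) equals $e^{-\alpha(Y)}\,\mathrm{Ad}(m)|_{\mathfrak{g}_{-\alpha}}$, an operator of norm $e^{-\alpha(Y)} < 1$ since $m\in K$. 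Hence $\|\mathrm{Ad}(g)^{n}|_{\mathfrak{n}^{-}}\|\to 0$, so $g^{n}\to eP$ uniformly on compact subsets of the cell and $x_{g}^{+} := eP$ is an attracting fixed point.

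For ``if'', suppose $g$ has an attracting fixed point; conjugating, assume it is $eP$, so $g\in P$ and write $g = mau$ in the Langlands decomposition $P = MAN$. By the formula above $D(g)_{eP}$ equals $v\mapsto\mathrm{Ad}(ma)\big(\Pi_{\mathfrak{n}^{-}}(\mathrm{Ad}(u)v)\big)$; since $\mathrm{Ad}(u)$ is unipotent and, modulo $\mathfrak{p}$, strictly lowers root height on $\mathfrak{n}^{-}$, while $\mathrm{Ad}(ma)$ preserves each $\mathfrak{g}_{-\alpha}$, the operator $D(g)_{eP}$ is block upper triangular for the root-height filtration of $\mathfrak{n}^{-}$, with diagonal blocks $\mathrm{Ad}(ma)|_{\mathfrak{g}_{-\alpha}}$ of spectral radius $e^{-\alpha(\log a)}$. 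An attracting fixed point has no unstable direction, so no eigenvalue of $D(g)_{eP}$ exceeds $1$ in modulus, giving $\alpha(\log a)\geq 0$ for all $\alpha\in\Sigma^{+}$, i.e.\ $\log a\in\mathfrak{a}^{+}$. To upgrade this to $\log a\in\mathfrak{a}^{++}$, suppose $\Theta := \{\alpha\in\Delta : \alpha(\log a) = 0\}$ is nonempty; then $a$ lies in the center of the Levi $L_{\Theta}$ of the standard parabolic $P_{\Theta}\supseteq P$ of type $\Theta$, the fibration $G/P\to G/P_{\Theta}$ is $g$-equivariant with $g$-fixed base point, and restricting $g$ to the fiber $P_{\Theta}/P$ — the flag variety of $L_{\Theta}$ — yields an element with trivial $A$-component (since $a$ and $R_{u}(P_{\Theta})$ act trivially on the fiber) that nevertheless has an attracting fixed point there; an induction on $\dim G$ reduces this to the base case $\log a = 0$, i.e.\ $g$ of the form $mu$ with $m\in M$, $u\in N$, which must be excluded. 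Once $\log a\in\mathfrak{a}^{++}$, the self-map $v\mapsto u^{-1}(ma)^{-1}v\,(ma)$ of $N$ is a contraction (conjugation by $(ma)^{-1}$ scales $\mathfrak{g}_{\alpha}$ by a factor of modulus $e^{-\alpha(\log a)}<1$), so it has a fixed point $(u')^{-1}$, and $u' g (u')^{-1} = ma\in MA$; since $\lambda$ is a conjugacy invariant and $\lambda(ma) = \log a$ (as $(ma)^{n} = m^{n}\exp(n\log a)$ is already a $KA^{+}K$ decomposition), we get $\lambda(g) = \log a\in\mathfrak{a}^{++}$, so $g$ is loxodromic.

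The step I expect to be the main obstacle is precisely this upgrade from $\log a\in\mathfrak{a}^{+}$ to $\log a\in\mathfrak{a}^{++}$ — equivalently, showing that an element of the form $mu$ with $m\in M$, $u\in N$ (an elliptic--times--unipotent element) cannot have a genuinely, uniformly attracting fixed point on a flag variety. The derivative estimate only yields eigenvalue moduli $\leq 1$, and for a general smooth map an attracting fixed point can perfectly well have derivative of spectral radius $1$ (even with all eigenvalues of modulus exactly $1$, a smooth map can spiral inward); what rescues us is the \emph{algebraicity} of the action. In the marginal case one checks $|\det D(g)_{eP}| = 1$ (indeed $\mathrm{Ad}(g)$ has all eigenvalues of modulus one, since $(mu)^{n}\in m^{n}N$ and $\|\mathrm{Ad}((mu)^{n})\|$ grows only polynomially in $n$), and more is true: the Jacobian cocycle of $g$ along its orbits is recurrent rather than contracting, so no open set is uniformly contracted onto $eP$. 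Making this volume/recurrence argument rigorous — or, alternatively, establishing directly the rigidity statement that a rational self-map cannot uniformly contract a neighborhood onto a fixed point unless its linear part has spectral radius strictly less than $1$ — is the crux; the remaining ingredients (structure of parabolic subgroups, the Bruhat decomposition, the unstable manifold theorem, the contraction-mapping conjugation into $MA$) are standard.
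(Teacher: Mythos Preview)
The paper does not supply its own proof of this lemma --- it is quoted directly from \cite{BQ1} and invoked only as a black box (in Lemma~\ref{criterionforcontraction} and Proposition~\ref{attractingrepellinglocation}) --- so there is no argument in the paper to compare against. What you have written is essentially the standard route via the Jordan decomposition and the open Bruhat cell, and both directions are sound apart from the point you yourself flag.

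A minor simplification in the ``only if'' direction: no further conjugation inside $L$ is needed to place the elliptic part in $M$. Any elliptic element of $MA$ automatically has trivial $A$-component, since on each $\mathfrak{g}_{\alpha}$ the operator $\mathrm{Ad}(m'a')$ has eigenvalues of modulus $e^{\alpha(\log a')}$, and ellipticity forces $\alpha(\log a')=0$ for all $\alpha$, hence $a'=e$.

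Regarding the gap you isolate in the ``if'' direction (upgrading $\log a \in \mathfrak{a}^{+}$ to $\log a \in \mathfrak{a}^{++}$): it is not a gap for the purposes of this paper, and arguably not a gap at all. Every time the lemma is applied here, the fixed point has been produced by the contraction mapping theorem from an $\epsilon$-Lipschitz self-map of a closed ball with $\epsilon<1$; the derivative at that fixed point then has operator norm at most $\epsilon<1$, and your eigenvalue computation yields $e^{-\alpha(\log a)}<1$ for every $\alpha\in\Sigma^{+}$ with no boundary case to exclude. This hyperbolic reading of ``attracting fixed point'' (spectral radius of the derivative strictly less than $1$) is also the one used in \cite{BQ1}.

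If you nonetheless want the statement under the weaker, purely topological definition of ``attracting'', then the gap is genuine and your volume heuristic is not quite enough as stated. Your fibration idea, however, does close it once pushed one step further: restricting to the fiber $P_{\{\alpha\}}/P$ over any simple root $\alpha\in\Theta$ gives a $g$-invariant copy of $\mathbb{P}^{1}(\mathbb{R})$ (the flag variety of the rank-one Levi), on which the induced action of $g=mau$ is by an element of the form $\pm\left(\begin{smallmatrix}1&*\\0&1\end{smallmatrix}\right)$ (the $a$- and $R_{u}(P_{\{\alpha\}})$-parts act trivially on the fiber, exactly as you say). Such an element is either the identity, an involution with two non-attracting fixed points, or a parabolic translation with a single fixed point that is not attracting; in each case the intersection of the basin of attraction with the fiber is $\{eP\}$, contradicting the assumption that $eP$ is topologically attracting in $\mathcal{F}$. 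This replaces your ``rigidity of rational maps'' argument with a one-line rank-one computation.
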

Any loxodromic element $g \in G$ also has a repelling fixed point on the opposite full flag variety $\mathcal{F}^{-}$, which we will denote by $x_{g}^{-}$.
\subsubsection{The Iwasawa decomposition and Iwasawa cocycle}
Let $N := \exp (\mathfrak{n})$. The \emph{Iwasawa decomposition} states that the map 
\begin{align*}
K \times A \times N &\rightarrow G, \\
(k,a,n) &\mapsto kan
\end{align*} is a diffeomorphism; see for instance Chapter 6, Proposition 6.46 of \cite{Kn}. Using the Iwasawa decomposition, Quint \cite{Q2} defined the \emph{Iwasawa cocycle}
\begin{align*}
B : G \times \mathcal{F} \rightarrow \mathfrak{a},
\end{align*} where, for any $(g,F) \in G \times \mathcal{F}$, $B(g,F) \in \mathfrak{a}$ is the unique element furnished by the Iwasawa decomposition such that
\begin{align*}
    gk \in K \exp (B(g,F)) N,
\end{align*} and where $k \in K$ is any element so that $F = kP$. This map is a well-defined cocycle, that is, for all $g,h \in G$ and $F \in \mathcal{F}$, we have
\begin{align*}
B(gh,F) = B(g,hF) + B(h,F).
\end{align*} The Iwasawa cocycle is a higher-rank analog of the well-known Busemann cocycle in the setting of rank-one symmetric spaces (hence the letter `B' to denote this map). See Remark 6.30 of \cite{BQ1} for a nice geometric interpretation of the Iwasawa cocycle. We will need the following estimate, due to Quint, which relates the Iwasawa cocycle to the Cartan projection.
\begin{lem} [Lemma 6.5 of \cite{Q2}]
\label{BusemannCartanEstimate}
For every $\epsilon > 0$, there exists $C = C(\epsilon) > 0$ so that the following holds: if $g = ka \ell$ is a $KA^{+}K$ decomposition and $F \in \mathcal{F}$ is such that $d(F, \mathcal{Z}_{\ell^{-1} P^{-}}) > \epsilon$, then 
\begin{align*}
||B(g,F) - \kappa (g)|| < C.
\end{align*}
\end{lem}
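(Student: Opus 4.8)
The plan is to reduce the estimate to the case where $g$ is replaced by $a:=\exp(\kappa(g))\in A^{+}$ acting on a flag which stays in a fixed compact part of the big Bruhat cell, and then to control the Iwasawa cocycle there by a compactness argument combined with the contraction properties of $A^{+}$ on $\exp(\mathfrak{n}^{-})$.

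\textbf{Step 1 (reduction to $A^{+}$).} Write $g=ka\ell$ with $k,\ell\in K$ and $a=\exp(\kappa(g))$. For any $\kappa'\in K$ and any flag $F=k'P$ we have $\kappa'k'\in K=K\exp(0)N$, so $B(\kappa',F)=0$; applying the cocycle relation $B(gh,F)=B(g,hF)+B(h,F)$ twice gives $B(g,F)=B(k,a\ell F)+B(a,\ell F)+B(\ell,F)=B(a,\ell F)$. Put $F':=\ell F$. Since $K$ acts by isometries on the ($K$-invariant) metric space $(\mathcal{F},d)$ and $\mathcal{Z}$ is $G$-equivariant ($\ell\mathcal{Z}_{\ell^{-1}P^{-}}=\mathcal{Z}_{P^{-}}$), we get $d(F',\mathcal{Z}_{P^{-}})=d(F,\mathcal{Z}_{\ell^{-1}P^{-}})>\epsilon$. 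Thus it suffices to find $C=C(\epsilon)$ with $\|B(a,F')-\log a\|<C$ for all $a\in A^{+}$ and all $F'\in\mathcal{F}$ with $d(F',\mathcal{Z}_{P^{-}})>\epsilon$.

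\textbf{Step 2 (coordinatizing the cell).} The flags transverse to $[P^{-}]$ form the open dense cell $\exp(\mathfrak{n}^{-})\cdot[P]$, and $v\mapsto vP$ is a diffeomorphism of $\bar{N}:=\exp(\mathfrak{n}^{-})$ onto it. The set $\{F'\in\mathcal{F}: d(F',\mathcal{Z}_{P^{-}})\ge\epsilon\}$ is a compact subset of this cell, hence the image of a compact $\Omega_{\epsilon}\subset\bar{N}$ depending only on $\epsilon$; so we may write $F'=vP$ with $v\in\Omega_{\epsilon}$. \textbf{Step 3 (contraction by $A^{+}$).} For $a\in A^{+}$ write $av=(ava^{-1})\,a$. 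Since $\mathrm{Ad}(a)$ acts on $\mathfrak{g}_{\alpha}$ by the scalar $e^{\alpha(\log a)}$ with $\alpha(\log a)\le0$ for every $\alpha\in\Sigma^{-}$, conjugation by $a$ is non-expanding on $\bar{N}$; working in coordinates of the second kind adapted to the lower central series of $\bar{N}$ (on each graded piece $\mathrm{Ad}(a)$ scales by a factor in $(0,1]$), one checks $a\,\Omega_{\epsilon}\,a^{-1}\subseteq\Omega_{\epsilon}'$ for a fixed compact $\Omega_{\epsilon}'\subset\bar{N}$, uniformly in $a\in A^{+}$. \textbf{Step 4 (computation).} Using the cocycle relation, $aP=P$, and $B(a,P)=\log a$,
\[
B(a,vP)=B(av,P)-B(v,P)=B(ava^{-1},P)+\log a-B(v,P),
\]
so $B(a,vP)-\log a=B(ava^{-1},P)-B(v,P)$. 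For $w\in\bar{N}$ the element $B(w,P)$ is precisely the $\mathfrak{a}$-component of the Iwasawa decomposition of $w$, which depends continuously on $w$; since $v\in\Omega_{\epsilon}$ and $ava^{-1}\in\Omega_{\epsilon}'$, both terms are bounded by a constant depending only on $\epsilon$, and one takes $C(\epsilon):=1+\sup_{w\in\Omega_{\epsilon}\cup\Omega_{\epsilon}'}\|B(w,P)\|$.

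The main obstacle is Step 3: one must establish, uniformly over the \emph{non-compact} group $A^{+}$, that conjugation does not push $\Omega_{\epsilon}$ out to infinity inside $\bar{N}$. On the Lie-algebra level this is transparent since all negative root spaces are contracted by $\mathrm{Ad}(A^{+})$, but because $\bar{N}$ is non-abelian in general one needs the filtration (lower central series) argument — equivalently, the observation that in exponential coordinates of the second kind the conjugation map multiplies each coordinate by a factor in $(0,1]$. Everything else is a routine combination of the cocycle identity, the vanishing of $B$ on $K$, equivariance of $\mathcal{Z}$, and continuity of the Iwasawa projection on compact sets.
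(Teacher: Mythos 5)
The paper does not prove this lemma; it imports it verbatim as Lemma 6.5 of Quint's \emph{Mesures de Patterson--Sullivan en rang sup\'erieur}. So there is no ``paper's own proof'' to compare against, and the question is simply whether your argument is a correct self-contained proof. It is, modulo one cosmetic slip at the very end: you bound $\|B(a,vP)-\log a\|=\|B(ava^{-1},P)-B(v,P)\|$ by the \emph{sum} of the two norms, so the constant should be $C(\epsilon)=1+2\sup_{w\in\Omega_\epsilon\cup\Omega_\epsilon'}\|B(w,P)\|$ (a factor of $2$ is missing in your final line), but this obviously does not affect anything.

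Your route is genuinely different from Quint's. Quint proves this estimate by passing through the irreducible proximal Tits representations $(\rho_\alpha)_{\alpha\in\Delta}$: one writes $\alpha(B(g,F))$ and $\alpha(\kappa(g))$ in terms of norms of vectors and operators in the $V_\alpha$'s and then uses the standard $\mathrm{GL}(V)$ estimate for the difference between $\log\|\rho_\alpha(g)v\|/\|v\|$ and $\log\|\rho_\alpha(g)\|$ when $v$ stays away from the ``bad'' hyperplane. Your argument instead works entirely inside $G$, using only the Iwasawa cocycle identity, the vanishing of $B$ on $K$ and its $K$-equivariance, the identification of the big Bruhat cell with $\exp(\mathfrak{n}^-)$, and the elementary observation that $\mathrm{Ad}(a)$ is non-expanding on $\mathfrak{n}^-$ for $a\in A^+$ with respect to the inner product $\langle X,Y\rangle=-b(X,\tau Y)$ (under which the root spaces are orthogonal). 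This is exactly the kind of intrinsic, representation-free argument the paper advocates in its introduction, where it explicitly flags that its reliance on Quint's lemmas is the remaining place where Tits representations enter and asks whether intrinsic proofs exist. Your proof answers that question affirmatively for this particular ingredient.

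One small simplification worth noting: in Step 3 you do not need coordinates of the second kind or the lower central series of $\bar N$. Since $\exp:\mathfrak{n}^-\to\bar N$ is already a global diffeomorphism and conjugation by $a$ corresponds under $\exp$ to the linear map $\mathrm{Ad}(a)|_{\mathfrak{n}^-}$, which has operator norm $\le 1$ for $a\in A^+$ (by orthogonality of the negative root spaces and $\alpha(\log a)\le 0$ for $\alpha\in\Sigma^-$), you may simply take $\Omega_\epsilon'=\exp(\overline{B_R(0)})$ where $R=\sup\{\|X\|:\exp X\in\Omega_\epsilon\}$, and then $a\,\Omega_\epsilon\,a^{-1}\subset\Omega_\epsilon'$ for all $a\in A^+$ with no further filtration argument. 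The non-abelianness of $\bar N$ is a red herring once you fix first-kind exponential coordinates.
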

\subsubsection{Anosov semigroups} In this section, we discuss the notion of Anosov semigroups  introduced by Kassel and Potrie in \cite{KP}. Among other things, the authors wished to extend the notion of Anosov representations of discrete subgroups of Lie groups -- initially introduced by Labourie in \cite{La} and further developed by Guichard--Wienhard in \cite{GW} -- to semigroups. However, it is not clear how to adapt the original definition of Anosov representations in this more general setting. Instead, motivated by the notion of dominated splittings for linear cocycles, Kassel--Potrie came up with the definition detailed below.
\par 
Let $\Lambda$ be a semigroup (which may or may not have an identity element $\mathrm{id}$) with a finite generating subset $S$. That is, any element of $\Lambda$ can be written as a product of elements of $S$. For $\gamma \in \Lambda \smallsetminus \{\mathrm{id}\}$, define the \emph{word length of} $\gamma$ \emph{with repsect to} $S$ to be
\begin{align*}
|\gamma|_{S} := \min \{k \geq 1 : \gamma =s_{1} \cdots s_{k}, \ \mathrm{where} \ s_{i} \in S \ \mathrm{for \ all} \ 1 \leq i \leq k \},
\end{align*} and set $|\mathrm{id}|_{S} := 0$. Note that if $S'$ is another finite generating set of $\Lambda$, then there exists a constant $M \geq 1$ so that
\begin{align}
\label{genset}
M^{-1} |\gamma|_{S'} \leq |\gamma|_{S} \leq M |\gamma|_{S'}
\end{align}
for all $\gamma \in \Lambda$.
\begin{defn}
Let $G$ be a connected semisimple real Lie group. A semigroup homomorphism $\rho : \Lambda \rightarrow G$ is said to be P-\emph{Anosov} if there exist constants $C,c > 0$ so that 
\begin{align}
\label{AnosovSemi}
\alpha (\kappa (\rho (\gamma))) \geq C |\gamma|_{S} - c 
\end{align} for all $\gamma \in \Lambda$ and $\alpha \in \Delta$.
\end{defn} If $\Lambda \subset G$ is a semigroup and satisfies (\ref{AnosovSemi}) with $\rho$ being the inclusion $\Lambda \hookrightarrow G$, then we say that $\Lambda$ is a $P$-\emph{Anosov subsemigroup of} $G$. Note that by (\ref{genset}), this definition is independent of the choice of finite generating set for $\Lambda$.
\subsection{Dynamics on flag varieties}
We recall the following well-known and important result about north-south dynamics on flag varieties. We restrict our attention to the case of the full flag varieties $\mathcal{F} = G/P$ and $\mathcal{F}^{-} = G/P^{-}$, although this result also holds for partial flag varieties. The version we present below is from \cite{CZZ2}, although there are many places in the literature where variants of this result have appeared; see for instance Lemma 2.4 of \cite{KOW2}.
\begin{prop} [Proposition $2.3$ of \cite{CZZ2}]
\label{northsouthflagvars}
Suppose $F^{\pm} \in \mathcal{F}^{\pm}$, $\{g_{n}\}_{n \geq 1}$ is a sequence in $G$, and $g_{n} = k_{n} e^{\kappa(g_{n})} \ell_{n}$ is a $KA^{+}K$ decomposition for each $n \geq 1$. The following are equivalent:
\begin{itemize}
    \item[(1)] $k_{n}P \rightarrow F^{+}$, $\ell_{n}^{-1}P^{-} \rightarrow F^{-}$, and $\alpha(\kappa(g_{n})) \rightarrow \infty$ for all $\alpha \in \Delta$.
    \item[(2)] $g_{n}(F) \rightarrow F^{+}$ for all $F \in \mathcal{F} \smallsetminus \mathcal{Z}_{F^{-}}$, and this convergence is uniform on compact subsets of $\mathcal{F} \smallsetminus \mathcal{Z}_{F^{-}}$.
    \item[(3)] $g_{n}^{-1}(F) \rightarrow F^{-}$ for all $F \in \mathcal{F}^{-} \smallsetminus \mathcal{Z}_{F^{+}}$, and this convergence is uniform on compact subsets of $\mathcal{F}^{-} \smallsetminus \mathcal{Z}_{F^{+}}$. 
    \item[(4)] There are open sets $\mathcal{U}^{\pm} \subset \mathcal{F}^{\pm}$ such that $g_{n}(F) \rightarrow F^{+}$ for all $F \in \mathcal{U}^{+}$ and $g_{n}^{-1}(F) \rightarrow F^{-}$ for all $F \in \mathcal{U}^{-}$. 
\end{itemize}
Moreover, when the above holds, for any $\epsilon > 0$ and any compact subsets $K_{1} \subset \mathcal{F} \smallsetminus \mathcal{Z}_{F^{-}}$ and $K_{2} \subset \mathcal{F} \smallsetminus \mathcal{Z}_{F^{+}}$, we have that $g_{n}|_{K_{1}}$ and $g_{n}^{-1}|_{K_{2}}$ are $\epsilon$-Lipschitz for all $n$ sufficiently large. 
\end{prop}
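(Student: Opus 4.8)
The plan is to reduce the whole statement to the elementary dynamics of a single Cartan element $e^{\kappa(g_n)}$ on the big Bruhat cell, transported around by the compact factors $k_n,\ell_n$, and to establish the cycle $(1)\Rightarrow(2)$, $(1)\Rightarrow(3)$, $(2)\Rightarrow(4)$, $(3)\Rightarrow(4)$, $(4)\Rightarrow(1)$. The engine is the following observation. Identify the open dense cell $\mathcal{F}\smallsetminus\mathcal{Z}_{P^-}$ with $\mathfrak{n}^-=\bigoplus_{\alpha\in\Sigma^+}\mathfrak{g}_{-\alpha}$ via $Y\mapsto\exp(Y)P$, and fix $K$-invariant Riemannian metrics on $\mathcal{F}$ and $\mathcal{F}^-$. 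In these coordinates an element $e^{H}$ with $H\in\mathfrak{a}^+$ acts \emph{linearly}, by $Y\mapsto\mathrm{Ad}(e^{H})Y=\sum_{\alpha\in\Sigma^+}e^{-\alpha(H)}Y_{-\alpha}$. Since every $\alpha\in\Sigma^+$ is a nonnegative integral combination of $\Delta$ with at least one positive coefficient, $\alpha(H)\ge\min_{\beta\in\Delta}\beta(H)$ for $H\in\mathfrak{a}^+$; hence if $\mu_n:=\min_{\beta\in\Delta}\beta(\kappa(g_n))\to\infty$ then $e^{\kappa(g_n)}\to P$ uniformly on every compact subset of the cell, with rate $O(e^{-\mu_n})$, and the same bound on the differential shows $e^{\kappa(g_n)}$ is eventually $\epsilon$-Lipschitz on any fixed compact subset of the cell. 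The dual statement holds for $e^{-\kappa(g_n)}$ near $P^-$ on $\mathcal{F}^-$.

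For $(1)\Rightarrow(2)$ I would first record a ``moving into the cell'' lemma: if $K_1\subset\mathcal{F}\smallsetminus\mathcal{Z}_{F^-}$ is compact and $\ell_n^{-1}P^-\to F^-$, then for large $n$ the set $\ell_nK_1$ lies in a single compact subset of $\mathcal{F}\smallsetminus\mathcal{Z}_{P^-}$. This holds because $\ell_n$ is an isometry, so $d(\ell_nF,\mathcal{Z}_{P^-})=d(F,\mathcal{Z}_{\ell_n^{-1}P^-})$, and if this distance failed to stay bounded below one could extract $F_n\in K_1$ converging to some $F_\infty\in K_1$ together with points of $\mathcal{Z}_{\ell_n^{-1}P^-}$ converging to the same $F_\infty$; since the non-transversality locus in $\mathcal{F}\times\mathcal{F}^-$ is closed, this forces $F_\infty\in\mathcal{Z}_{F^-}$, a contradiction. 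Granting this, write $\ell_nF=\exp(Y_n)P$ with the $Y_n$ eventually in a fixed compact subset of $\mathfrak{n}^-$; then $g_nF=k_n\exp(\mathrm{Ad}(e^{\kappa(g_n)})Y_n)P$ and $d(g_nF,F^+)\le d(e^{\kappa(g_n)}\ell_nF,P)+d(k_nP,F^+)$, and both terms tend to $0$ uniformly in $F$ by the engine and by hypothesis. The factorization $g_n=k_n\circ e^{\kappa(g_n)}\circ\ell_n$ with the Lipschitz clause of the engine yields the ``moreover'' for $g_n|_{K_1}$; the statement for $g_n^{-1}|_{K_2}$ and the implication $(1)\Rightarrow(3)$ follow by the same argument with the roles of $P$ and $P^-$ interchanged, using $g_n^{-1}=(\ell_n^{-1}k_0)\,e^{\iota(\kappa(g_n))}\,(k_0^{-1}k_n^{-1})$ and $\iota(\kappa(g_n))=\kappa(g_n^{-1})$. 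The implications $(2)\Rightarrow(4)$ and $(3)\Rightarrow(4)$ are immediate on taking $\mathcal{U}^+=\mathcal{F}\smallsetminus\mathcal{Z}_{F^-}$ and $\mathcal{U}^-=\mathcal{F}^-\smallsetminus\mathcal{Z}_{F^+}$.

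The substance is $(4)\Rightarrow(1)$, which I would prove by a subsequence argument. Note $F^+=\lim_n g_nF$ for any fixed $F$ in the (nonempty) open set $\mathcal{U}^+$, so $F^+$ — and likewise $F^-$ — is independent of any subsequence; it thus suffices to show every subsequence of $\{g_n\}$ has a further subsequence along which $(1)$ holds with this data. Along a subsequence assume $k_n\to k$, $\ell_n\to\ell$ in $K$ and $\alpha(\kappa(g_n))\to t_\alpha\in[0,\infty]$ for every $\alpha\in\Delta$, and set $\Theta=\{\alpha\in\Delta:t_\alpha<\infty\}$. Picking $F\in\mathcal{U}^+\smallsetminus\mathcal{Z}_{\ell^{-1}P^-}$ (possible since $\mathcal{U}^+$ is open and $\mathcal{Z}_{\ell^{-1}P^-}$ is closed with empty interior) and running the computation of $(1)\Rightarrow(2)$ verbatim, one gets $\ell_nF=\exp(Y_n)P$ with $Y_n\to Y_0\in\mathfrak{n}^-$ and hence $\mathrm{Ad}(e^{\kappa(g_n)})Y_n\to Y_\infty:=\sum e^{-t_\alpha}(Y_0)_{-\alpha}$, the sum over $\alpha\in\Sigma^+$ all of whose simple components lie in $\Theta$; therefore $F^+=\lim g_nF=k\exp(Y_\infty)P$. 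As $F$ ranges over the open set $\mathcal{U}^+\smallsetminus\mathcal{Z}_{\ell^{-1}P^-}$ the vector $Y_0$ ranges over an open subset of $\mathfrak{n}^-$, so the linear map $Y_0\mapsto Y_\infty$ must vanish identically — that is, $\Theta=\emptyset$ — for otherwise $\exp(Y_\infty)P$, hence $F^+$, would be non-constant. Thus $\alpha(\kappa(g_n))\to\infty$ for all $\alpha\in\Delta$, $Y_\infty=0$, and $F^+=kP=\lim k_nP$; the symmetric argument applied to $g_n^{-1}$ and $\mathcal{U}^-$ gives $F^-=\lim\ell_n^{-1}P^-$, so $(1)$ holds along the subsequence, and the subsequence principle finishes the proof.

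The main obstacle, and the only genuinely non-routine point, is this last dichotomy: ruling out a \emph{partial} escape of the Cartan projection to infinity. The mechanism is that when only some simple roots of $\kappa(g_n)$ blow up, the ``attracting set'' of $g_n$ in $\mathcal{F}$ is a positive-dimensional subvariety — a fibre of the projection to the partial flag variety attached to $\Theta$ — rather than a point, which is incompatible with an open set being carried onto a single point; everything else is the standard Bruhat-cell contraction estimate once the ``moving into the cell'' lemma is in place.
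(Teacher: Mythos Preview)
The paper does not give its own proof of this proposition; it is quoted from \cite{CZZ2}, with the remark that the ``moreover'' clause follows from the argument in Appendix~A there. So there is nothing in the paper to compare against, and I will assess your argument on its merits.

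Your engine and the implications $(1)\Rightarrow(2)$, $(1)\Rightarrow(3)$, $(4)\Rightarrow(1)$ are correct, and the treatment of the Lipschitz clause is fine. There is, however, a genuine gap: the claimed implications $(2)\Rightarrow(4)$ and $(3)\Rightarrow(4)$ are \emph{not} immediate. Condition~$(4)$ demands \emph{both} an open $\mathcal{U}^{+}\subset\mathcal{F}$ on which $g_nF\to F^{+}$ \emph{and} an open $\mathcal{U}^{-}\subset\mathcal{F}^{-}$ on which $g_n^{-1}F\to F^{-}$. Condition~$(2)$ supplies only the first of these and says nothing directly about the $g_n^{-1}$-action on $\mathcal{F}^{-}$; your choice $\mathcal{U}^{-}=\mathcal{F}^{-}\smallsetminus\mathcal{Z}_{F^{+}}$ presupposes~$(3)$. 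With those two links removed, your diagram does not close up: you have $(4)\Rightarrow(1)\Rightarrow(2)$ and $(4)\Rightarrow(1)\Rightarrow(3)$, but no route back from $(2)$ or $(3)$ alone.

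The repair is to prove $(2)\Rightarrow(1)$ directly (and symmetrically $(3)\Rightarrow(1)$). The $\mathcal{U}^{+}$-half of your $(4)\Rightarrow(1)$ argument already gives, along any subsequence with $k_n\to k$ and $\ell_n\to\ell$, that $\Theta=\emptyset$ and $F^{+}=kP$; what is missing is $\ell^{-1}P^{-}=F^{-}$. For this, observe that for every $n$ the set $\mathcal{Z}_{\ell_n^{-1}P^{-}}=\ell_n^{-1}\mathcal{Z}_{P^{-}}$ is mapped by $g_n$ into $k_n\mathcal{Z}_{P^{-}}$ (since $e^{\kappa(g_n)}$ preserves $\mathcal{Z}_{P^{-}}$), hence stays at distance at least $d(P,\mathcal{Z}_{P^{-}})>0$ from $k_nP$. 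If $G^{-}:=\ell^{-1}P^{-}\neq F^{-}$ then $\mathcal{Z}_{G^{-}}\not\subset\mathcal{Z}_{F^{-}}$ (otherwise $\mathcal{F}\smallsetminus\mathcal{Z}_{F^{-}}\subset\mathcal{F}\smallsetminus\mathcal{Z}_{G^{-}}$, hence $\mathcal{Z}_{G^{-}}\subset\mathcal{Z}_{F^{-}}$ between $K$-translates of the same variety forces equality, and then $G^{-}=F^{-}$ by the remark that $\mathcal{Z}_y$ determines $y$). Pick $F_0\in\mathcal{Z}_{G^{-}}\smallsetminus\mathcal{Z}_{F^{-}}$ and a compact neighbourhood $K_0\subset\mathcal{F}\smallsetminus\mathcal{Z}_{F^{-}}$; since $\mathcal{Z}_{\ell_n^{-1}P^{-}}\to\mathcal{Z}_{G^{-}}$ in the Hausdorff metric, $K_0$ meets $\mathcal{Z}_{\ell_n^{-1}P^{-}}$ for large $n$, and at such points $d(g_nF,F^{+})$ is bounded below. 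This contradicts the \emph{uniform} convergence in~$(2)$ on $K_0$. Note that the uniformity is essential here; pointwise convergence on $\mathcal{F}\smallsetminus\mathcal{Z}_{F^{-}}$ would not suffice.
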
 The ``moreover'' part is not explicitly stated in Proposition $2.3$ of \cite{CZZ2}, but follows from the proof of this proposition provided in Appendix A of \cite{CZZ2}. The following proposition will be very useful in helping us find $\epsilon$-contracting elements later on in the paper (see Proposition \ref{limitsetcrosslimitset3}).
\begin{prop}
\label{attractingrepellinglocation}
Suppose $F^{+} \in \mathcal{F}$ and $F^{-} \in \mathcal{F}^{-}$ are transverse, and $\{g_{n}\}_{n \geq 1}$ is a sequence of elements of $G$ with $KA^{+}K$ decompositions $g_{n} = k_{n}e^{\kappa(g_{n})}\ell_{n}$ for each $n \geq 1$. If
\begin{align*}
k_{n}P \rightarrow F^{+}, \ \ \ell_{n}^{-1}P^{-} \rightarrow F^{-}, \ \ \mathrm{and} \ \ \alpha(\kappa(g_{n})) \rightarrow \infty \ \ \mathrm{for \ all} \ \ \alpha \in \Delta,
\end{align*} then $g_{n}$ is loxodromic for all $n$ sufficiently large, $x_{g_{n}}^{+} \rightarrow F^{+}$, and $x_{g_{n}}^{-} \rightarrow F^{-}$.
\end{prop}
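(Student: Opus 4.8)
The plan is to run a Banach fixed point argument on a small ball around $F^{+}$ in $\mathcal{F}$, using the north--south dynamics supplied by Proposition \ref{northsouthflagvars}, and then to run the symmetric argument on $\mathcal{F}^{-}$ for the repelling point. First I would note that the hypotheses are exactly item (1) of Proposition \ref{northsouthflagvars} for the sequence $\{g_{n}\}$, so all of items (2), (3), (4) and the ``moreover'' clause hold with these $F^{\pm}$. Since $F^{+}$ is transverse to $F^{-}$ we have $F^{+} \notin \mathcal{Z}_{F^{-}}$, and as $\mathcal{Z}_{F^{-}}$ is closed there is $r_{0} > 0$ with $\overline{B}(F^{+},r) \subset \mathcal{F} \smallsetminus \mathcal{Z}_{F^{-}}$ for all $0 < r \leq r_{0}$. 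Fix such an $r$. By the uniform convergence in item (2), for all $n$ large $g_{n}(\overline{B}(F^{+},r)) \subset B(F^{+},r)$, and by the ``moreover'' clause applied with $\epsilon = 1/2$ and compact set $\overline{B}(F^{+},r)$, the map $g_{n}|_{\overline{B}(F^{+},r)}$ is $(1/2)$-Lipschitz for all $n$ large. Thus for $n$ large $g_{n}$ is a contraction of the complete metric space $\overline{B}(F^{+},r)$ into itself, so the Banach fixed point theorem gives a unique fixed point $p_{n} \in \overline{B}(F^{+},r)$; moreover $p_{n} = g_{n}(p_{n}) \in B(F^{+},r)$ and $g_{n}^{k}(y) \to p_{n}$ for every $y \in \overline{B}(F^{+},r)$, so $p_{n}$ is an attracting fixed point of $g_{n}$ on $\mathcal{F}$.

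By Lemma \ref{loxodromiccharacterization2}, the existence of an attracting fixed point shows $g_{n}$ is loxodromic for all $n$ large. I would then invoke the standard fact that the attracting fixed point of a loxodromic element is unique: a loxodromic $g$ has north--south dynamics on $\mathcal{F}$ (so $g^{k}$ converges to the constant map $x_{g}^{+}$ uniformly on compacts of $\mathcal{F} \smallsetminus \mathcal{Z}_{x_{g}^{-}}$, e.g.\ by applying Proposition \ref{northsouthflagvars} to the sequence $g^{k}$), and since $\mathcal{Z}_{x_{g}^{-}}$ has empty interior, any neighborhood witnessing another attracting fixed point $q$ meets $\mathcal{F} \smallsetminus \mathcal{Z}_{x_{g}^{-}}$, forcing $q = x_{g}^{+}$. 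Hence $p_{n} = x_{g_{n}}^{+}$. Finally, since the construction above works for every $0 < r \leq r_{0}$ and produces $x_{g_{n}}^{+} \in \overline{B}(F^{+},r)$ for all $n$ large (with the threshold depending on $r$), we conclude $x_{g_{n}}^{+} \to F^{+}$.

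For the repelling points I would repeat the argument on $\mathcal{F}^{-}$: item (3) of Proposition \ref{northsouthflagvars} together with its ``moreover'' clause say that $g_{n}^{-1}$ sends $\mathcal{F}^{-} \smallsetminus \mathcal{Z}_{F^{+}}$ toward $F^{-}$ with arbitrarily small Lipschitz constant on compact subsets. Since $F^{-}$ is transverse to $F^{+}$ we have $F^{-} \notin \mathcal{Z}_{F^{+}}$, so the same Banach argument on a small ball $\overline{B}(F^{-},r) \subset \mathcal{F}^{-} \smallsetminus \mathcal{Z}_{F^{+}}$ yields, for $n$ large, a unique attracting fixed point $q_{n} \in \overline{B}(F^{-},r)$ of $g_{n}^{-1}$ acting on $\mathcal{F}^{-}$. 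Since $g_{n}$ is already known to be loxodromic, its repelling fixed point $x_{g_{n}}^{-} \in \mathcal{F}^{-}$ is precisely the attracting fixed point of $g_{n}^{-1}$ on $\mathcal{F}^{-}$, and the same empty-interior uniqueness argument forces $q_{n} = x_{g_{n}}^{-}$; letting $r \to 0$ gives $x_{g_{n}}^{-} \to F^{-}$.

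The main obstacle is organizing the Banach argument so that it simultaneously yields loxodromy (via Lemma \ref{loxodromiccharacterization2}) and pins down the fixed point's location, and — the more delicate point — arguing that the fixed point produced in the small ball is the \emph{genuine} attracting (resp.\ repelling) fixed point of the loxodromic element, not merely some fixed point. This reduces to the uniqueness of the attracting fixed point, which I would extract from north--south dynamics of loxodromic elements and the empty interior of $\mathcal{Z}_{x_{g}^{-}}$ (resp.\ $\mathcal{Z}_{x_{g}^{+}}$). A minor bookkeeping point — checking that $g_{n}^{-1}$ satisfies the relevant hypotheses on $\mathcal{F}^{-}$ — is handled directly by item (3) of Proposition \ref{northsouthflagvars}, so there is no need to re-derive a $KA^{+}K$ decomposition of $g_{n}^{-1}$.
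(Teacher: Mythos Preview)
Your proposal is correct and follows essentially the same approach as the paper: both use Proposition \ref{northsouthflagvars} to get that $g_{n}$ maps a small closed ball around $F^{+}$ into itself with small Lipschitz constant, invoke the contraction mapping principle to produce an attracting fixed point there (hence loxodromy via Lemma \ref{loxodromiccharacterization2}), run the symmetric argument on $\mathcal{F}^{-}$ for the repelling point, and then shrink the radius to obtain convergence. The only cosmetic difference is that the paper works on the larger compact set $\mathcal{F} \smallsetminus N_{\epsilon}(\mathcal{Z}_{F^{-}})$ rather than the ball $\overline{B}(F^{+},r)$, and it treats the identification $p_{n}=x_{g_{n}}^{+}$ as immediate, whereas you spell out the uniqueness via the empty interior of $\mathcal{Z}_{x_{g}^{-}}$; neither changes the substance of the argument.
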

\begin{proof}
Set 
\begin{align*}
d(F^{+}, \mathcal{Z}_{F^{-}}) := \inf \{d(F^{+},z) : z \in \mathcal{Z}_{F^{-}} \} > 0,
\end{align*} and likewise define $d(F^{-}, \mathcal{Z}_{F^{+}})$, which, since $F^{+}$ and $F^{-}$ are transverse, is also positive. Let
\begin{align*}
0 < \epsilon < \frac{1}{4} \min \Big \{d(F^{+}, \mathcal{Z}_{F^{-}}), d(F^{-}, \mathcal{Z}_{F^{+}}) \Big \}
\end{align*} be arbitrary. Then
\begin{align*}
\overline{B_{\epsilon}(F^{+})} \subset \mathcal{F} \smallsetminus N_{\epsilon}(\mathcal{Z}_{F^{-}}) \ \ \ \mathrm{and} \ \ \ \overline{B_{\epsilon}(F^{-})} \subset \mathcal{F}^{-} \smallsetminus N_{\epsilon}(\mathcal{Z}_{F^{+}}).
\end{align*} By Proposition \ref{northsouthflagvars}, there exists an integer $N \geq 1$ so that for all $n \geq N$, we have 
\begin{align*}
g_{n} \big( \mathcal{F} \smallsetminus N_{\epsilon}(\mathcal{Z}_{F^{-}}) \big) &\subset \overline{B_{\epsilon}(F^{+})}, \\
g_{n}^{-1} \big( \mathcal{F}^{-} \smallsetminus N_{\epsilon}(\mathcal{Z}_{F^{+}}) \big) &\subset \overline{B_{\epsilon}(F^{-})},
\end{align*} and moreover the restrictions 
\begin{align*}
g_{n}|_{\mathcal{F} \smallsetminus N_{\epsilon}(\mathcal{Z}_{F^{-}})} \ \ \ \mathrm{and} \ \ \ g_{n}^{-1}|_{ \mathcal{F}^{-} \smallsetminus N_{\epsilon}(\mathcal{Z}_{F^{+}})}
\end{align*}
are both $\epsilon$-Lipschitz. In particular, for all $n \geq N$, the element $g_{n}$ has an attracting fixed point in $\overline{B_{\epsilon}(F^{+})}$ and a repelling fixed point in $\overline{B_{\epsilon}(F^{-})}$ (namely, the attracting fixed point for $g_{n}^{-1}$). In other words, for $n \geq N$, we see that $g_{n}$ is loxodromic, $d(x_{g_{n}}^{+}, F^{+}) \leq \epsilon$, and $d(x_{g_{n}}^{-}, F^{-}) \leq \epsilon$. As this holds for all $\epsilon > 0$ sufficiently small, this concludes the proof. 
\end{proof}
\section{Background on Discrete Subgroups of Semisimple Lie Groups}
\label{backgroundondiscretesubgroups}
Let $\Gamma < G$ be a Zariski dense discrete subgroup of $G$. In this section, we recall certain objects which play crucial roles in understanding asymptotic properties of the discrete subgroup $\Gamma$.
\subsection{Benoist's limit cone and the limit set}
It is well known that the set $\Gamma_{\mathrm{lox}}$ of loxodromic elements of $\Gamma$ is still Zariski dense in $G$; see for instance Theorem 6.36 of \cite{BQ1} for a proof. In \cite{B2}, Benoist studied certain asymptotic properties of $\Gamma$ by analyzing the image of $\Gamma_{\mathrm{lox}}$ under the Jordan projection $\lambda : G \rightarrow \mathfrak{a}^{+}$. In particular, he introduced a fundamental object in this regard, known as the \emph{Benoist limit cone} (or \emph{limit cone} for short). 
\begin{defn} [Benoist's Limit Cone]
\label{limitcone}
The \emph{limit cone} of $\Gamma$ is the smallest closed cone $\mathcal{L}_{\Gamma}$ in $\mathfrak{a}^{+}$ containing $\lambda(\Gamma_{\mathrm{lox}})$. In other words, $\mathcal{L}_{\Gamma}$ is the closure of the union of the half-lines spanned by the Jordan projections of the loxodromic elements of $\Gamma$:
\begin{align*}
\mathcal{L}_{\Gamma} := \overline{\bigcup_{g \in \Gamma_{\mathrm{lox}}} \mathbb{R}^{+} \lambda(g)}.
\end{align*}
\end{defn}
We remark that the word \emph{cone} does not presume that $\mathcal{L}_{\Gamma}$ is convex, nor that it has non-empty interior. That these properties do in fact hold is a deep result of Benoist:
\begin{thm} [Benoist, Theorem 1.2 of \cite{B2}]
\label{convexnonemptyinterior}
Let $G$ be a connected algebraic semisimple real Lie group and let $\Gamma$ be a discrete subgroup of $G$. Then the limit cone $\mathcal{L}_{\Gamma}$ is a convex cone with non-empty interior.
\end{thm}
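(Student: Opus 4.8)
The plan is to establish the two assertions separately: \emph{convexity} rests on a product estimate for Jordan projections of loxodromic elements in general position, while \emph{non-emptiness of the interior} rests on there being enough independent loxodromic directions, and this is the step where Zariski density is indispensable.

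\textbf{Convexity.} Fix $g,h\in\Gamma_{\mathrm{lox}}$. Since $\mathcal{L}_\Gamma$ is a closed cone, it suffices to show $(1-t)\lambda(g)+t\lambda(h)\in\mathcal{L}_\Gamma$ for all $t\in[0,1]$. First I would arrange the fixed-point data of $g$ and $h$ to be in general position (each attracting point transverse to the other's repelling point): the set of pairs $(\gamma_1,\gamma_2)\in G\times G$ for which $\gamma_1 g\gamma_1^{-1}$ and $\gamma_2 h\gamma_2^{-1}$ have this property is Zariski open and non-empty, hence, $\Gamma\times\Gamma$ being Zariski dense and $\Gamma_{\mathrm{lox}}$ Zariski dense, it is met inside $\Gamma\times\Gamma$; conjugation does not change the Jordan projection, so we may assume $x_g^{\pm},x_h^{\pm}$ are in general position from the start. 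The heart of the argument is the estimate
\begin{align*}
\lambda(g^n h^m)=n\,\lambda(g)+m\,\lambda(h)+O(1)\qquad (n,m\to\infty),
\end{align*}
together with loxodromicity of $g^n h^m$ for all large $n,m$. To prove this I would run the usual north--south/ping-pong argument: with $g^n=k_n e^{\kappa(g^n)}\ell_n$, where $k_nP\to x_g^+$, $\ell_n^{-1}P^-\to x_g^-$ and $\kappa(g^n)=n\lambda(g)+O(1)$ (and similarly for $h^m$), Proposition \ref{northsouthflagvars} shows $g^n h^m$ maps a fixed neighbourhood of $x_h^+$ uniformly near $x_g^+$ and its inverse maps a neighbourhood of $x_g^-$ near $x_h^-$; Proposition \ref{attractingrepellinglocation} then yields loxodromicity with $x_{g^nh^m}^{\pm}$ near $x_g^+$, $x_h^-$. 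For the Cartan projection I would combine Lemma \ref{differencecartanprojections} (subadditivity) with Lemma \ref{BusemannCartanEstimate}: since $x_h^+$ stays uniformly far from $\mathcal{Z}_{x_g^-}$, the Iwasawa cocycle $B(g^n,x_h^+)$ lies within $O(1)$ of $\kappa(g^n)$, and the cocycle relation assembles $B(g^n h^m,\cdot)$ into $n\lambda(g)+m\lambda(h)+O(1)$, which pins $\kappa(g^n h^m)$, hence $\lambda(g^n h^m)$, into the same window. Dividing $\lambda(g^n h^m)\in\mathcal{L}_\Gamma$ by $n+m$ and letting $n,m\to\infty$ with $m/(n+m)\to t$, closedness of the cone gives $(1-t)\lambda(g)+t\lambda(h)\in\mathcal{L}_\Gamma$ for every rational, hence every real, $t\in[0,1]$. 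As $g,h$ were arbitrary, $\mathcal{L}_\Gamma$ is convex.

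\textbf{Non-empty interior.} Being a convex cone, $\mathcal{L}_\Gamma$ has non-empty interior iff it spans $\mathfrak{a}$. Suppose not; then some $\psi\in\mathfrak{a}^*\setminus\{0\}$ annihilates $\lambda(g)$ for every $g\in\Gamma_{\mathrm{lox}}$. Expanding $\psi$ in the basis of $\mathfrak{a}^*$ formed by the highest weights $\chi_\alpha$, $\alpha\in\Delta$, of the proximal algebraic (Tits) representations $\rho_\alpha$ of $G$, and using that $\chi_\alpha(\lambda(g))=\log\lambda_1(\rho_\alpha(g))$ for loxodromic $g$ ($\lambda_1$ the spectral radius), the vanishing $\psi(\lambda(g))=0$ translates into a fixed non-trivial multiplicative relation $\prod_{\alpha\in\Delta}\lambda_1(\rho_\alpha(g))^{c_\alpha}=1$ holding simultaneously for all $g\in\Gamma_{\mathrm{lox}}$, with $(c_\alpha)\in\mathbb{Q}^{\Delta}\setminus\{0\}$. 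Since $\Gamma$ is Zariski dense, each $\rho_\alpha(\Gamma)$ is Zariski dense, strongly irreducible and proximal, and no such relation among leading eigenvalues across distinct proximal representations can hold; this is exactly where the Tits representations and Benoist's analysis of Zariski dense subgroups enter. The contradiction shows $\mathcal{L}_\Gamma$ spans $\mathfrak{a}$, hence has non-empty interior.

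\textbf{Main obstacle.} In the convexity half the delicate point is making the error in $\lambda(g^n h^m)=n\lambda(g)+m\lambda(h)+O(1)$ genuinely uniform in both $n$ and $m$ --- this is precisely what the uniform north--south dynamics and Lipschitz control of Proposition \ref{northsouthflagvars}, combined with the cocycle estimate of Lemma \ref{BusemannCartanEstimate}, are built to supply, but the bookkeeping is substantial. The genuinely hard part, however, is the non-empty interior: it is not a soft argument, and it hinges on the fact that a Zariski dense subgroup admits no non-trivial multiplicative relation among the leading eigenvalues of its elements across the proximal Tits representations. Equivalently, one must produce, for each simple root, a loxodromic element of $\Gamma$ whose Jordan projection is not confined to a hyperplane, and show these directions span; quantifying this is the crux, and the reason the theorem is attributed to Benoist.
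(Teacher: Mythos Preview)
The paper does not prove this theorem. It is stated purely as a citation of Benoist's result, introduced by the sentence ``That these properties do in fact hold is a deep result of Benoist,'' and the reader is referred to \cite{B2} for the proof. So there is no ``paper's own proof'' to compare your proposal against.

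As for your outline on its own merits: the convexity half is a faithful sketch of Benoist's argument, and your use of Propositions \ref{northsouthflagvars}, \ref{attractingrepellinglocation} and Lemma \ref{BusemannCartanEstimate} to control $\lambda(g^nh^m)$ is the right mechanism. One small point: you invoke Zariski density of $\Gamma\times\Gamma$ to put $g$ and $h$ in general position, but convexity of $\mathcal{L}_\Gamma$ actually holds for any subsemigroup, not just Zariski dense ones; Benoist handles the general-position step by replacing $h$ with a suitable conjugate found via a simultaneous-transversality lemma that does not require Zariski density.

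For the non-empty interior, you correctly identify that this is where Zariski density is essential and that the crux is ruling out a non-trivial linear relation $\psi(\lambda(g))=0$ for all $g\in\Gamma_{\mathrm{lox}}$. However, your proposed contradiction---``no such relation among leading eigenvalues across distinct proximal representations can hold''---is asserted rather than proved, and you acknowledge as much. This is not a soft consequence of irreducibility or proximality of the individual $\rho_\alpha(\Gamma)$; Benoist's actual argument goes through a much stronger statement, namely that the additive subgroup of $\mathfrak{a}$ generated by $\lambda(\Gamma_{\mathrm{lox}})$ is dense in $\mathfrak{a}$, which he proves via a delicate analysis of simultaneous proximality and the Zariski-connectedness of $G$. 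Your sketch points in the right direction but stops exactly at the step that earns the theorem its attribution.
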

The limit cone also has the following important properties:
\begin{itemize}
    \item[(a)] The limit cone $\mathcal{L}_{\Gamma}$ contains $\lambda(\Gamma)$. 
    \item[(b)] The limit cone $\mathcal{L}_{\Gamma}$ is the asymptotic cone of the image of $\Gamma$ under the Cartan projection, that is, 
    \begin{align*}
    \mathcal{L}_{\Gamma} = \{v \in \mathfrak{a}^{+} \ : \ \exists\{g_{n}\} \subset \Gamma, \ \exists \{t_{n}\} \subset \mathbb{R} \ \mathrm{with} \ t_{n} \searrow 0 \ \mathrm{so \ that} \ \lim_{n \to \infty} t_{n} \kappa(g_{n}) = v \}.
    \end{align*}
\end{itemize}
We refer the reader to Benoist's original work \cite{B2} for proofs of these results. 
\begin{rem}
All of the above definitions and results remain unchanged if $\Gamma$ is only assumed to be a Zariski dense discrete \emph{subsemigroup} of $G$, and not necessarily a subgroup of $G$. See \cite{B2} for details.
\end{rem}
In the same paper, Benoist introduced and studied the \emph{limit set} $\Lambda(\Gamma) \subset \mathcal{F}$ of a Zariski dense discrete subgroup $\Gamma < G$. It is a classical object when $G$ has real rank one and for $\Gamma < \mathrm{SL}(n, \mathbb{R})$, $n \geq 3$, it was introduced and studied earlier by Guivarc'h in \cite{Gu}. Among other things, Benoist showed that it is the unique non-empty, perfect, $\Gamma$-invariant closed subset of $\mathcal{F}$ on which $\Gamma$ acts minimally. Let $\nu$ denote the $K$-invariant probability measure on $\mathcal{F} = G/P$. Formally, the limit set is defined as follows:
\begin{defn} [Limit point and limit set] 
\label{limitset} ~\
\begin{itemize} 
    \item[(a)] A point $x \in \mathcal{F}$ for which there exists a sequence $\{g_{n}\}_{n \geq 1}$ of elements in $G$ so that $(g_{n})_{*}\nu \overset{*}{\rightharpoonup} \delta_{x}$ (the Dirac mass at $x$) is called a \emph{limit point} of this sequence.
    \item[(b)] A \emph{limit point of} $\Gamma$ \emph{in} $\mathcal{F}$ is any point as in item (a), with the additional constraint that the sequence $\{g_{n}\}_{n \geq 1}$ is a sequence of elements in $\Gamma$.
    \item[(c)] The \emph{limit set of} $\Gamma$ \emph{in} $\mathcal{F}$, denoted $\Lambda(\Gamma)$, is the set of all limit points of $\Gamma$ in $\mathcal{F}$.
\end{itemize}
\end{defn} Recall that if $g \in G$ satisfies $\alpha(\kappa(g)) > 0$ for all $\alpha \in \Delta$, then the flag $k_{g}P \in \mathcal{F}$ is independent of the choice of $KA^{+}K$ decomposition $g = k_{g}e^{\kappa(g))} \ell_{g}$ of the element $g$, hence is well-defined. Using Lemma 3.5 in \cite{B2}, one can show that the limit set of $\Gamma$ coincides with the set of accumulation points of sequences of the form $\{k_{\gamma_{n}}P\}_{n \geq 1}$ where $\{\gamma_{n}\}_{n \geq 1} \subset \Gamma$ is such that $\min_{\alpha \in \Delta} \alpha (\kappa (\gamma_{n})) \rightarrow \infty$. Lastly, we remark that we may similarly define \emph{the limit set of} $\Gamma$ \emph{in} $\mathcal{F}^{-}$, which we denote by $\Lambda(\Gamma)^{-}$. All of the properties and characterizations of $\Lambda(\Gamma)$ hold as well for $\Lambda(\Gamma)^{-}$.
\subsection{Quint's growth indicator function} 
Given an open cone $\mathcal{C} \subset \mathfrak{a}^{+}$, set
\begin{align*}
\Gamma_{\mathcal{C}} = \{\gamma \in \Gamma : \kappa(\gamma) \in \mathcal{C} \}.
\end{align*}
In \cite{Q1}, Quint introduced his \emph{growth indicator function}, which he later used in \cite{Q2} to study Patterson--Sullivan measures for Zariski dense discrete subgroups of higher-rank Lie groups. It is a higher-rank analog of the critical exponent; precisely, it is the function $\psi_{\Gamma} : \mathfrak{a}^{+} \rightarrow \mathbb{R} \cup \{- \infty\}$ defined by
\begin{align*}
\psi_{\Gamma}(v) = ||v|| \cdot \inf_{\mathcal{C} \ni v} \tau_{\mathcal{C}},
\end{align*} where the infimum is taken over all open cones $\mathcal{C} \subset \mathfrak{a}^{+}$ containing $v$ and where 
\begin{align*}
\tau_{C} := \limsup_{T \to \infty} \frac{1}{T} \log \# \{\gamma \in \Gamma : \kappa(\gamma) \in \mathcal{C}, \ ||\kappa(\gamma)|| \leq T \}.
\end{align*} Equivalently, $\tau_{C}$ is the abscissa of convergence of the Poincar\'e series 
\begin{align*}
    Q_{\Gamma_{\mathcal{C}}}(s) := \sum_{\gamma \in \Gamma_{\mathcal{C}}} e^{-s ||\kappa(\gamma)||},
\end{align*} that is, 
\begin{align*}
\tau_{\mathcal{C}} = \inf \{s > 0 : Q_{\Gamma_{\mathcal{C}}}(s) < \infty \}.
\end{align*} We remark that these definitions are independent of the choice of norm $||\cdot||$ on $\mathfrak{a}^{+}$, hence the growth indicator is well-defined. Quint showed in \cite{Q1}
that $\psi_{\Gamma}$ is a concave upper semi-continuous function satisfying 
\begin{align*}
\mathcal{L}_{\Gamma} = \{v \in \mathfrak{a}^{+} : \psi_{\Gamma}(v) \geq 0 \},
\end{align*}
and moreover $\psi_{\Gamma} > 0$ on $\mathrm{int}(\mathcal{L}_{\Gamma})$. Note also that $\psi_{\Gamma} \equiv - \infty$ outside of $\mathcal{L}_{\Gamma}$. Recall that the \emph{critical exponent} of $\Gamma$ is given by 
\begin{align*}
\delta(\Gamma) &:= \inf \Big\{s > 0 : Q_{\Gamma}(s) := \sum_{\gamma \in \Gamma} e^{-s||\kappa(\gamma)||} < \infty \Big\} \\
&= \limsup_{T \to \infty} \frac{1}{T} \log \# \{\gamma \in \Gamma : ||\kappa(\gamma)|| \leq T \}.
\end{align*}
From the definitions of the growth indicator and critical exponent, it is immediate that, for any unit vector $v \in \mathfrak{a}^{+}$, $\psi_{\Gamma}(v) \leq \delta(\Gamma)$. The following result, due to Quint \cite{Q1}, shows that there is at least one direction in the positive Weyl chamber where the growth indicator attains this upper bound (in fact, there is a unique such direction, but we will not need this). We provide a proof since it is short and may be helpful for readers less familiar with the growth indicator function. In what follows, we let $\mathbb{S} := \{w \in \mathfrak{a} : ||w|| = 1 \}$.
\begin{prop}
\label{MaximizingGrowthInd}
There exists a unit vector $v \in \mathfrak{a}^{+}$ so that $\psi_{\Gamma}(v) = \delta(\Gamma)$.
\end{prop}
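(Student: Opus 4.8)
The plan is to exploit the equivalence between the critical exponent $\delta(\Gamma)$ as an abscissa of convergence and as an exponential growth rate, together with a compactness/covering argument on the sphere $\mathbb{S} \cap \mathfrak{a}^{+}$. First I would record the two elementary facts already in place: for every unit vector $v \in \mathfrak{a}^{+}$ one has $\psi_{\Gamma}(v) \le \delta(\Gamma)$, and $\psi_{\Gamma}(v) = \|v\| \cdot \inf_{\mathcal{C} \ni v} \tau_{\mathcal{C}}$ with the infimum over open cones; I would also use that $\tau_{\mathcal{C}}$ is the abscissa of convergence of $Q_{\Gamma_{\mathcal{C}}}(s)$. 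Since $\psi_\Gamma$ is upper semi-continuous on the compact set $\mathbb{S} \cap \mathfrak{a}^{+}$, it attains a maximum there at some unit vector $v_0$; the entire task is to show that maximum equals $\delta(\Gamma)$, i.e. to rule out that $\psi_\Gamma(v) \le \delta(\Gamma) - \eta$ for all unit $v$ and some $\eta > 0$.

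The key step is a finite covering argument. Suppose for contradiction that $\sup_{v \in \mathbb{S} \cap \mathfrak{a}^{+}} \psi_{\Gamma}(v) = \delta(\Gamma) - \eta$ with $\eta > 0$. For each unit vector $v \in \mathfrak{a}^{+}$, by definition of $\psi_\Gamma$ there is an open cone $\mathcal{C}_v \ni v$ with $\|v\|\,\tau_{\mathcal{C}_v} < \delta(\Gamma) - \eta/2$; more precisely I would choose $\mathcal{C}_v$ small enough that $\tau_{\mathcal{C}_v} \le \psi_\Gamma(v)/\|v\| + \eta/4$ uniformly in the sense that for every unit $w \in \mathcal{C}_v$ one still has $\tau_{\mathcal{C}_v} < \delta(\Gamma) - \eta/4$ (using $\|w\| = 1$ and that $\psi_\Gamma \le \delta(\Gamma)-\eta$ on the whole sphere, so the cone-level exponent $\tau_{\mathcal{C}_v}$ is controlled). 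The open sets $\mathcal{C}_v$ cover the compact set $\mathbb{S} \cap \mathfrak{a}^{+}$ (vectors outside $\mathcal{L}_\Gamma$ are harmless since $\tau_{\mathcal{C}} = -\infty$ there for small enough cones, or one simply covers those directions by cones disjoint from $\mathcal{L}_\Gamma$), so finitely many $\mathcal{C}_{v_1}, \dots, \mathcal{C}_{v_k}$ suffice. Every $\gamma \in \Gamma$ has $\kappa(\gamma)/\|\kappa(\gamma)\| \in \mathbb{S} \cap \mathfrak{a}^{+}$, hence lies in some $\Gamma_{\mathcal{C}_{v_i}}$, giving
\begin{align*}
Q_{\Gamma}(s) = \sum_{\gamma \in \Gamma} e^{-s\|\kappa(\gamma)\|} \le \sum_{i=1}^{k} Q_{\Gamma_{\mathcal{C}_{v_i}}}(s).
\end{align*}
Each summand on the right converges for $s > \tau_{\mathcal{C}_{v_i}}$, and by construction $\tau_{\mathcal{C}_{v_i}} < \delta(\Gamma) - \eta/4$, so the finite sum converges for $s = \delta(\Gamma) - \eta/8 < \delta(\Gamma)$. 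This forces $\delta(\Gamma) \le \delta(\Gamma) - \eta/8$, a contradiction. Therefore $\psi_\Gamma$ attains the value $\delta(\Gamma)$ at some unit vector, which is the claim.

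The main obstacle I anticipate is making the uniform choice of cones rigorous: the definition $\psi_\Gamma(v) = \|v\| \inf_{\mathcal{C} \ni v}\tau_{\mathcal{C}}$ controls $\tau_{\mathcal{C}}$ for cones shrinking to the single direction $v$, but in the covering argument I need each $\mathcal{C}_{v_i}$ to have $\tau_{\mathcal{C}_{v_i}}$ strictly below $\delta(\Gamma)$ while still being a genuine open neighborhood of $v_i$ in $\mathfrak{a}^+$. This is where upper semi-continuity of $\psi_\Gamma$ and the standing assumption $\psi_\Gamma \le \delta(\Gamma) - \eta$ on the whole sphere are essential: the latter says that for every direction $w$ near $v_i$ there is \emph{some} small cone around $w$ with exponent below $\delta(\Gamma) - \eta/2$, and a second application of compactness (or a direct monotonicity/subadditivity estimate for $\tau_{\mathcal{C}}$ under unions of cones, $\tau_{\mathcal{C}_1 \cup \mathcal{C}_2} = \max\{\tau_{\mathcal{C}_1}, \tau_{\mathcal{C}_2}\}$) lets one enlarge to a fixed finite cover with a uniform exponent bound. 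Once that bookkeeping is done the contradiction is immediate. An alternative, slicker route avoiding some of this would be to invoke the known identity $\delta(\Gamma) = \sup_{v \in \mathfrak{a}^+ \setminus \{0\}} \psi_\Gamma(v)/\|v\|$ from Quint's work together with upper semi-continuity on the compact sphere to get attainment directly; I would mention this but prefer to give the self-contained covering proof.
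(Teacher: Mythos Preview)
Your proposal is correct and follows essentially the same compactness--covering argument as the paper's proof. The paper streamlines the bookkeeping you worry about: rather than first invoking upper semi-continuity to secure a uniform gap $\eta$, it simply picks for each unit $v$ some $\delta_v < \delta(\Gamma)$ and an open cone $\mathcal{C}_v \ni v$ with $Q_{\Gamma_{\mathcal{C}_v}}(\delta_v) < \infty$, extracts a finite subcover $\mathcal{C}_{v_1},\dots,\mathcal{C}_{v_k}$, and then sets $\delta := \max_i \delta_{v_i} < \delta(\Gamma)$ --- this dissolves the ``obstacle'' you anticipate, since no uniform control over $\tau_{\mathcal{C}_v}$ is needed before passing to the finite cover.
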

\begin{proof}
If not, then $\psi_{\Gamma}(v) < \delta(\Gamma)$ for all $v \in \mathbb{S} \cap \mathfrak{a}^{+}$. This means that for every $v \in \mathbb{S} \cap \mathfrak{a}^{+}$, there exists $0 < \delta_{v} < \delta(\Gamma)$ and an open cone $\mathcal{C}_{v} \subset \mathfrak{a}^{+}$ containing $v$ such that 
\begin{align*}
    Q_{\Gamma_{\mathcal{C}_{v}}}(\delta_{v}) = \sum_{\gamma \in \Gamma_{\mathcal{C}_{v}}} e^{-\delta_{v} ||\kappa(\gamma)||} < \infty.
\end{align*} Since $\mathbb{S} \cap \mathfrak{a}^{+}$ is compact, there exist finitely many $v_{1}, \dots, v_{k} \in \mathbb{S} \cap \mathfrak{a}^{+}$ so that 
\begin{align*}
    \mathbb{S} \cap \mathfrak{a}^{+} \subset \mathbb{S} \cap \bigcup_{i=1}^{k} \mathcal{C}_{v_{i}},
\end{align*} hence $\mathfrak{a}^{+} \subset \bigcup_{i=1}^{k} \mathcal{C}_{v_{i}}$. But then setting $\delta := \max_{1 \leq i \leq k} \delta_{v_{i}} < \delta(\Gamma)$, we obtain 
\begin{align*}
    Q_{\Gamma}(\delta) = \sum_{\gamma \in \Gamma} e^{-\delta ||\kappa(\gamma)||} \leq \sum_{i=1}^{k} Q_{\Gamma_{\mathcal{C}_{v}}} (\delta_{v}) < \infty,
\end{align*} which is impossible. This concludes the proof.
\end{proof}
\begin{cor}
\label{LargeGrowthInd}
For any $0 < \delta < \delta(\Gamma)$, there exists a unit vector $u \in \mathfrak{a}^{++}$ so that $\psi_{\Gamma}(u) > \delta$.
\end{cor}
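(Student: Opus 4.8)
The plan is to bootstrap from Proposition~\ref{MaximizingGrowthInd} using the structural properties of $\psi_{\Gamma}$ recorded above: its concavity, its positive homogeneity of degree one (immediate from the formula $\psi_{\Gamma}(v) = ||v|| \cdot \inf_{\mathcal{C} \ni v} \tau_{\mathcal{C}}$, since the open cones containing $v$ coincide with those containing $\lambda v$ for $\lambda > 0$), the identity $\mathcal{L}_{\Gamma} = \{v \in \mathfrak{a}^{+} : \psi_{\Gamma}(v) \geq 0\}$, and the positivity $\psi_{\Gamma} > 0$ on $\mathrm{int}(\mathcal{L}_{\Gamma})$, the latter set being non-empty by Benoist's Theorem~\ref{convexnonemptyinterior}. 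We may assume $\delta(\Gamma) > 0$, since otherwise no $\delta$ with $0 < \delta < \delta(\Gamma)$ exists and there is nothing to prove.

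First I would apply Proposition~\ref{MaximizingGrowthInd} to obtain a unit vector $v \in \mathfrak{a}^{+}$ with $\psi_{\Gamma}(v) = \delta(\Gamma)$; in particular $v \in \mathcal{L}_{\Gamma}$. The subtlety is that $v$ may lie on the boundary of $\mathcal{L}_{\Gamma}$ and/or fail to lie in $\mathfrak{a}^{++}$, so upper semicontinuity is useless (it points the wrong way) and one must instead perturb $v$ toward an interior direction and control the value of $\psi_{\Gamma}$ via concavity. Accordingly, fix any $w \in \mathrm{int}(\mathcal{L}_{\Gamma})$; then $\psi_{\Gamma}(w) > 0$ is finite, and, since $\mathcal{L}_{\Gamma} \subset \mathfrak{a}^{+}$ and the interior operation is monotone, $\mathrm{int}(\mathcal{L}_{\Gamma}) \subset \mathrm{int}(\mathfrak{a}^{+}) = \mathfrak{a}^{++}$.

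Next, for $t \in (0,1]$ set $u_{t} := (1-t)v + tw$. Since $\mathcal{L}_{\Gamma}$ is convex, $v \in \mathcal{L}_{\Gamma}$, and $w \in \mathrm{int}(\mathcal{L}_{\Gamma})$, the standard segment property of convex sets gives $u_{t} \in \mathrm{int}(\mathcal{L}_{\Gamma})$ for every $t \in (0,1]$; in particular $u_{t} \in \mathfrak{a}^{++}$ and $u_{t} \neq 0$. Concavity of $\psi_{\Gamma}$ on $\mathfrak{a}^{+}$ then yields
\begin{align*}
\psi_{\Gamma}(u_{t}) \geq (1-t)\psi_{\Gamma}(v) + t\psi_{\Gamma}(w) = (1-t)\delta(\Gamma) + t\psi_{\Gamma}(w),
\end{align*}
and, writing $u := u_{t}/||u_{t}||$ and using positive homogeneity of $\psi_{\Gamma}$ and of $||\cdot||$ together with $||u_{t}|| \leq (1-t) + t||w||$, we obtain
\begin{align*}
\psi_{\Gamma}(u) = \frac{\psi_{\Gamma}(u_{t})}{||u_{t}||} \geq \frac{(1-t)\delta(\Gamma) + t\psi_{\Gamma}(w)}{(1-t) + t||w||}.
\end{align*}
The right-hand side tends to $\delta(\Gamma) > \delta$ as $t \to 0^{+}$, so for all sufficiently small $t > 0$ we have $\psi_{\Gamma}(u) > \delta$. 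For such $t$, the vector $u$ is a unit vector lying in $\mathrm{int}(\mathcal{L}_{\Gamma})$ (which is a cone, being the interior of a cone), hence $u \in \mathfrak{a}^{++}$, completing the argument.

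Since every step is either a direct citation of a result quoted above or an elementary fact about convex cones and positively homogeneous concave functions, I do not expect any real obstacle here; the only thing that needs a little care is that the maximizing direction $v$ of Proposition~\ref{MaximizingGrowthInd} need not itself be interior to $\mathcal{L}_{\Gamma}$ or to $\mathfrak{a}^{+}$, which is exactly why one perturbs toward $\mathrm{int}(\mathcal{L}_{\Gamma})$ and why the one-sided concavity estimate, rather than semicontinuity, is the right tool.
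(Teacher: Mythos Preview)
Your proof is correct and follows essentially the same approach as the paper's: take the maximizer $v$ from Proposition~\ref{MaximizingGrowthInd}, perturb toward some $w \in \mathrm{int}(\mathcal{L}_\Gamma)$, and use concavity together with homogeneity of $\psi_\Gamma$. The only differences are cosmetic---the paper normalizes by scaling $w$ (choosing $s>0$ so that $tv+(1-t)sw$ is a unit vector) rather than dividing by $\|u_t\|$ at the end, and it verifies $u \in \mathfrak{a}^{++}$ by directly checking $\alpha(u)>0$ rather than via the segment property for convex sets.
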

\begin{proof}
Let $0 < \delta < \delta(\Gamma)$ be arbitrary. By Proposition \ref{MaximizingGrowthInd}, there exists a unit vector $v \in \mathfrak{a}^{+}$ so that $\psi_{\Gamma}(v) = \delta(\Gamma)$. If $v \in \mathfrak{a}^{++}$, then we are done. So suppose that $v \in \mathfrak{a}^{+} \smallsetminus \mathfrak{a}^{++}$. Since $\Gamma$ is assumed to be Zariski dense in $G$, the interior of the Benoist limit cone is nonempty, hence we may fix some $w \in \mathrm{int}(\mathcal{L}_{\Gamma})$. Since the growth indicator function is homogeneous and is also strictly positive on $\mathrm{int}(\mathcal{L}_{\Gamma})$, we have 
\begin{align}
\label{scalinggrowthind}
r \cdot \psi_{\Gamma}(w) = \psi_{\Gamma}(rw) > 0,
\end{align}
for all $r>0$. Fix $\frac{\delta}{\delta(\Gamma)} < t < 1$ and now let $s > 0$ be such that the element 
\begin{align*}
u := tv + (1-t)sw
\end{align*}
of $\mathfrak{a}^{+}$ has norm one. Notice that $u \in \mathfrak{a}^{++}$. Indeed, for any $\alpha \in \Delta$, we have
\begin{align*}
\alpha (u) = t \cdot \alpha (v) + (1-t)s \cdot \alpha (w) = (1-t)s \cdot \alpha (w) > 0
\end{align*} as $w \in \mathrm{int}(\mathcal{L}_{\Gamma}) \subset \mathfrak{a}^{++}$. From (\ref{scalinggrowthind}) and the concavity and homogeneity of $\psi_{\Gamma}$, we obtain
\begin{align*}
    \psi_{\Gamma}(u) \geq t \cdot \psi_{\Gamma}(v) + (1-t)s \cdot \psi_{\Gamma}(w) > t \cdot \delta (\Gamma) > \delta,
\end{align*} as desired.
\end{proof}

\section{Uniformly Contracting Loxodromic Elements}
\label{contractingelementssection}
Motivated by the work of Abels--Margulis--Soifer \cite{AMS} and Benoist \cite{B1}, \cite{B2} where the notion of $\epsilon$-proximal elements and related concepts proved very fruitful, we develop properties of loxodromic elements we call $\epsilon$-\emph{contracting elements} of $G$. These elements were first introduced by Guichard--Wienhard (Definition 5.6 of \cite{GW}), but our work appears to be the first to develop basic properties of these elements (analogous to those of $\epsilon$-proximal elements) and utilize them heavily to study arbitrary Zariski dense discrete subgroups of Lie groups. One particularly nice feature of these objects is that they will allow us to study the action of the Lie group $G$ on its Furstenberg boundary $\mathcal{F}$ intrinsically, in the sense that we avoid embedding $\mathcal{F}$ into the product of projective spaces associated to the highest weight Tits representations \cite{Ti}. 
\par 
As before, if $g \in G$ is loxodromic, we let $x_{g}^{+}$ denote its unique attracting fixed point in $\mathcal{F} = G/P$ and $x_{g}^{-}$ its unique repelling fixed point in $\mathcal{F}^{-} = G/P^{-}$. For any subset $Z$ of $\mathcal{F}$ and $\epsilon > 0$, we let 
\begin{align*}
N_{\epsilon}(Z) := \big \{x \in \mathcal{F} : \inf_{z \in Z} d(x,z) < \epsilon \big \}
\end{align*}
denote the open $\epsilon$-neighborhood of $Z$ in $\mathcal{F}$. For a point $x \in \mathcal{F}$, we write $B_{\epsilon}(x)$ for the open ball of radius $\epsilon$ centered at $x$ in $\mathcal{F}$.
\begin{defn} [$\epsilon$-contracting element]
\label{epsiloncontractingelement}
Given $\epsilon > 0$, we say that a loxodromic element $g \in G$ is $\epsilon$-\emph{contracting} if the following three conditions are satisfied:
\begin{itemize}
    \item[(a)] $d \big(x_{g}^{+}, \mathcal{Z}_{x_{g}^{-}} \big) := \inf \big\{d(x_{g}^{+}, z) : z \in \mathcal{Z}_{x_{g}^{-}} \big \} \geq 2 \epsilon$, 
    \item[(b)] $g \big( \mathcal{F} \smallsetminus N_{\epsilon} (\mathcal{Z}_{x_{g}^{-}}) \big) \subset \overline{B_{\epsilon}(x_{g}^{+})}$,
    \item[(c)] $g \big|_{\mathcal{F} \smallsetminus N_{\epsilon} (\mathcal{Z}_{x_{g}^{-}})}$ is $\epsilon$-Lipschitz.
\end{itemize}
\end{defn}
The following lemma is an analog of a result of Benoist (Lemma 6.2 in \cite{B1}). It provides sufficient conditions for an element $g \in G$ to be $\epsilon$-contracting and moreover provides control over the location of its attracting fixed point $x_{g}^{+} \in \mathcal{F}$ and the set of elements $\mathcal{Z}_{x_{g}^{-}} \subset \mathcal{F}$ which are not transverse to its repelling fixed point $x_{g}^{-} \in \mathcal{F}^{-}$. 
\begin{lem}
\label{criterionforcontraction}
Let $g \in G \smallsetminus \{\mathrm{id}\}$, $x^{+} \in \mathcal{F}$, $x^{-} \in \mathcal{F}^{-}$, and $0 < \epsilon < 1$. Suppose that 
\begin{align*}
d(x^{+}, \mathcal{Z}_{x^{-}}) \geq 6 \epsilon, \ \  g \big( \mathcal{F} \smallsetminus N_{\epsilon}(\mathcal{Z}_{x^{-}}) \big) \subset \overline{B_{\epsilon}(x^{+})}, \ \ \mathrm{and} \ \ g \big|_{\mathcal{F} \smallsetminus N_{\epsilon}(\mathcal{Z}_{x^{-}})} \ \mathrm{is \ } \epsilon \mathrm{-Lipschitz}.
\end{align*} Then $g$ is $2 \epsilon$-contracting, $d(x_{g}^{+}, x^{+}) \leq \epsilon$, and $d_{\mathrm{Haus}}(\mathcal{Z}_{x_{g}^{-}}, \mathcal{Z}_{x^{-}}) < \epsilon$.
\end{lem}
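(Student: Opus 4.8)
The plan is to verify the three defining conditions of a $2\epsilon$-contracting element for $g$, first establishing the location of the attracting fixed point $x_g^+$ and the repelling fixed point $x_g^-$ (equivalently, of $\mathcal{Z}_{x_g^-}$), and then bootstrapping the given Lipschitz and image bounds from the sets built around $x^+, x^-$ to the sets built around $x_g^+, x_g^-$. First I would show $g$ is loxodromic and that $d(x_g^+, x^+) \le \epsilon$: since $g$ maps the compact set $\overline{B_\epsilon(x^+)} \subset \mathcal{F} \smallsetminus N_\epsilon(\mathcal{Z}_{x^-})$ (the inclusion uses $d(x^+,\mathcal{Z}_{x^-}) \ge 6\epsilon > \epsilon + \epsilon$, giving room for the $\epsilon$-ball plus a buffer) into $\overline{B_\epsilon(x^+)}$ and is $\epsilon$-Lipschitz there with $\epsilon < 1$, it is a contraction of this complete metric space into itself, so it has a unique fixed point $x_0 \in \overline{B_\epsilon(x^+)}$; this $x_0$ is attracting by the contraction property, hence $x_0 = x_g^+$ by Lemma \ref{loxodromiccharacterization2}, and in particular $g$ is loxodromic. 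The same argument applied to $g^{-1}$ requires knowing where $g^{-1}$ contracts; here I would instead locate $x_g^-$ via Proposition \ref{attractingrepellinglocation} or directly: the hypotheses say $g$ pushes everything outside $N_\epsilon(\mathcal{Z}_{x^-})$ into $\overline{B_\epsilon(x^+)}$, which is exactly north–south behavior with repelling locus near $\mathcal{Z}_{x^-}$, so $x_g^-$ lies within $\epsilon$ of $x^-$ in $\mathcal{F}^-$ and hence $d_{\mathrm{Haus}}(\mathcal{Z}_{x_g^-}, \mathcal{Z}_{x^-}) < \epsilon$ by continuity of $z \mapsto \mathcal{Z}_z$.

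Granting $d(x_g^+, x^+) \le \epsilon$ and $d_{\mathrm{Haus}}(\mathcal{Z}_{x_g^-}, \mathcal{Z}_{x^-}) < \epsilon$, I would then verify the three conditions of Definition \ref{epsiloncontractingelement} with $\epsilon$ replaced by $2\epsilon$. For (a): by the triangle inequality and the two Hausdorff/point estimates,
\begin{align*}
d(x_g^+, \mathcal{Z}_{x_g^-}) \ge d(x^+, \mathcal{Z}_{x^-}) - d(x_g^+, x^+) - d_{\mathrm{Haus}}(\mathcal{Z}_{x_g^-}, \mathcal{Z}_{x^-}) \ge 6\epsilon - \epsilon - \epsilon = 4\epsilon = 2(2\epsilon),
\end{align*}
which is condition (a) at scale $2\epsilon$. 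For (b): since $d_{\mathrm{Haus}}(\mathcal{Z}_{x_g^-}, \mathcal{Z}_{x^-}) < \epsilon$, we have $N_{2\epsilon}(\mathcal{Z}_{x_g^-}) \supset N_\epsilon(\mathcal{Z}_{x^-})$, hence $\mathcal{F} \smallsetminus N_{2\epsilon}(\mathcal{Z}_{x_g^-}) \subset \mathcal{F} \smallsetminus N_\epsilon(\mathcal{Z}_{x^-})$, so $g$ maps this smaller set into $\overline{B_\epsilon(x^+)} \subset \overline{B_{2\epsilon}(x_g^+)}$ using $d(x_g^+,x^+) \le \epsilon$; that is condition (b) at scale $2\epsilon$. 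For (c): the same inclusion of domains shows $g$ restricted to $\mathcal{F} \smallsetminus N_{2\epsilon}(\mathcal{Z}_{x_g^-})$ is a restriction of an $\epsilon$-Lipschitz map, hence is $\epsilon$-Lipschitz and in particular $2\epsilon$-Lipschitz, which is condition (c) at scale $2\epsilon$.

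The routine inclusions in the second paragraph are the easy part; the main obstacle is the first paragraph, specifically pinning down $\mathcal{Z}_{x_g^-}$ and the bound $d_{\mathrm{Haus}}(\mathcal{Z}_{x_g^-}, \mathcal{Z}_{x^-}) < \epsilon$. The subtlety is that the hypotheses only give forward dynamics of $g$; to control the repelling data one must either invoke a north–south dynamics characterization (Proposition \ref{northsouthflagvars}, applied to a sequence such as $g^n$ or to $g$ itself via its $KA^+K$ data) to identify $g^{-1}$'s contracting behavior on $\mathcal{F}^-$, or argue intrinsically that the complement of the basin $\mathcal{F} \smallsetminus N_\epsilon(\mathcal{Z}_{x^-})$ — which is forward-invariant-ish and shrinks under iteration — forces $\mathcal{Z}_{x_g^-}$ to sit within an $\epsilon$-neighborhood of $\mathcal{Z}_{x^-}$ and vice versa. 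Converting an $\epsilon$-closeness of the flags $x_g^-, x^- \in \mathcal{F}^-$ into an $\epsilon$-closeness (rather than merely a $C\epsilon$-closeness) of the non-transversality loci $\mathcal{Z}_{x_g^-}, \mathcal{Z}_{x^-}$ will require either a Lipschitz bound on the map $z \mapsto \mathcal{Z}_z$ with constant $1$, or a small adjustment of constants absorbed into the $6\epsilon$ hypothesis; I would handle this by choosing the metrics so the map is $1$-Lipschitz, or alternatively rephrase the lemma's slack ($6\epsilon$ versus $4\epsilon$ actually needed) to absorb a universal constant.
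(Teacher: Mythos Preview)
Your second paragraph --- bootstrapping the three defining conditions of a $2\epsilon$-contracting element from the $(x^+,x^-)$ data to the $(x_g^+,x_g^-)$ data via the triangle inequality and the domain inclusion --- is exactly how the paper proceeds, and is correct once the two estimates $d(x_g^+,x^+)\le\epsilon$ and $d_{\mathrm{Haus}}(\mathcal{Z}_{x_g^-},\mathcal{Z}_{x^-})<\epsilon$ are in hand. Your location of $x_g^+$ via the Banach contraction on $\overline{B_\epsilon(x^+)}$ also matches the paper.

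Where you take an unnecessary detour is in handling $\mathcal{Z}_{x_g^-}$. You propose first to locate $x_g^-\in\mathcal{F}^-$ (invoking Proposition~\ref{attractingrepellinglocation}, which however is a statement about sequences, not a single element) and then to transfer to $\mathcal{Z}_{x_g^-}$ via continuity of $z\mapsto\mathcal{Z}_z$; this is what leads you to worry about a Lipschitz constant for that map and about whether the $6\epsilon$ slack absorbs it. The paper bypasses $x_g^-$ and $\mathcal{F}^-$ entirely and works directly with $\mathcal{Z}_{x_g^-}\subset\mathcal{F}$: since $g$ maps $\mathcal{F}\smallsetminus N_\epsilon(\mathcal{Z}_{x^-})$ into $\overline{B_\epsilon(x^+)}\subset\mathcal{F}\smallsetminus N_\epsilon(\mathcal{Z}_{x^-})$ and is an $\epsilon$-contraction there, every point $y\in\mathcal{F}\smallsetminus N_\epsilon(\mathcal{Z}_{x^-})$ satisfies $g^n y\to x_g^+$. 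For a loxodromic element the basin of attraction of $x_g^+$ is $\mathcal{F}\smallsetminus\mathcal{Z}_{x_g^-}$, so this forces
\[
\mathcal{Z}_{x_g^-}\cap\big(\mathcal{F}\smallsetminus N_\epsilon(\mathcal{Z}_{x^-})\big)=\emptyset,
\qquad\text{i.e.,}\qquad
\mathcal{Z}_{x_g^-}\subset N_\epsilon(\mathcal{Z}_{x^-}),
\]
with no reference to $\mathcal{F}^-$ and no Lipschitz constant to control. Your own third paragraph gestures at precisely this ``intrinsic basin'' argument as an alternative; that is the one to commit to, and it dissolves the obstacle you flag. (The paper then simply asserts the full Hausdorff bound from this one-sided containment and proceeds.)
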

\begin{proof}
By assumption, the restriction of $g$ to $\mathcal{F} \smallsetminus N_{\epsilon}(\mathcal{Z}_{x^{-}})$ is an $\epsilon$-Lipschitz map. Since $0 < \epsilon < 1$, it follows that $g$ has an attracting fixed point in $\mathcal{F}$. Hence $g$ is loxodromic by Lemma \ref{loxodromiccharacterization2}. As
\begin{align*}
g \big( \mathcal{F} \smallsetminus N_{\epsilon}(\mathcal{Z}_{x^{-}}) \big) \subset \overline{B_{\epsilon}(x^{+})},
\end{align*}
we have $d(x_{g}^{+}, x^{+}) \leq \epsilon$. Since $g^{n}y \rightarrow x_{g}^{+}$ as $n \to \infty$ for all $y \in \mathcal{F} \smallsetminus N_{\epsilon}(\mathcal{Z}_{x^{-}})$, we have
\begin{align*}
    \mathcal{Z}_{x_{g}^{-}} \cap (\mathcal{F} \smallsetminus N_{\epsilon}(\mathcal{Z}_{x^{-}})) = \emptyset,
\end{align*} that is, $d_{\mathrm{Haus}}(\mathcal{Z}_{x_{g}^{-}}, \mathcal{Z}_{x^{-}}) < \epsilon$. By assumption, $d(x^{+}, \mathcal{Z}_{x^{-}}) \geq 6 \epsilon$. It follows that $d(x_{g}^{+}, \mathcal{Z}_{x_{g}^{-}}) \geq 4 \epsilon$ and moreover
\begin{align*}
g \big( \mathcal{F} \smallsetminus N_{2\epsilon}(\mathcal{Z}_{x_{g}^{-}}) \big) \subset g \big( \mathcal{F} \smallsetminus N_{\epsilon}(\mathcal{Z}_{x^{-}}) \big) \subset \overline{B_{\epsilon}(x^{+})} \subset \overline{B_{2 \epsilon}(x_{g}^{+})}.
\end{align*} Lastly, $g|_{\mathcal{F} \smallsetminus N_{2 \epsilon}(\mathcal{Z}_{x_{g}^{-}})}$ is $\epsilon$-Lipschitz, hence trivially $2 \epsilon$-Lipschitz. This shows that $g$ is $2 \epsilon$-contracting, as desired. 
\end{proof}
The following proposition, which is motivated by Proposition 6.4 in Benoist's paper \cite{B1} (and, more broadly, by Benoist's notion of $\epsilon$-Schottky semigroups and subgroups used very fruitfully in his papers \cite{B1} and \cite{B2}), will be very important in our construction of asymptotically large free subsemigroups of $\Gamma$.
\begin{prop}
\label{contractingsemigroup}
Fix $\epsilon > 0$ and suppose that $g_{1}, \dots, g_{l}$ are $\epsilon$-contracting elements of $G$ such that, for all $1 \leq i \neq j \leq l$, we have
\begin{align*}
    d(x_{g_{i}}^{+}, \mathcal{Z}_{x_{g_{j}}^{-}}) \geq 6 \epsilon.
\end{align*} Then every element of the semigroup generated by the set $S := \{g_{1}, \dots, g_{l}\}$ is either $\epsilon$-contracting or $2 \epsilon$-contracting. Moreover, if $g = h_{1} \cdots h_{k}$ where $h_{i} \in S$ for all $1 \leq i \leq k$, then 
\begin{align*}
    d(x_{g}^{+}, x_{h_{1}}^{+}) \leq \epsilon \ \ \mathrm{and} \ \ d_{\mathrm{Haus}}\big(\mathcal{Z}_{x_{g}^{-}}, \mathcal{Z}_{x_{h_{k}}^{-}} \big) < \epsilon.
\end{align*}
\end{prop}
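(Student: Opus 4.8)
The plan is to induct on the word length $k=|g|_{S}$, proving at each stage both the contracting conclusion and the two ``moreover'' estimates. The base case $k=1$ is the hypothesis itself (each $g_{i}$ is $\epsilon$-contracting, and $d(x_{g_{i}}^{+},x_{g_{i}}^{+})=0$, $d_{\mathrm{Haus}}(\mathcal{Z}_{x_{g_{i}}^{-}},\mathcal{Z}_{x_{g_{i}}^{-}})=0$). For the inductive step I would write $g=h_{1}g'$ with $g'=h_{2}\cdots h_{k}$ and feed everything through a composition lemma, just as one does for $\epsilon$-Schottky semigroups à la Benoist.

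\textbf{The composition engine.} The heart of the argument is the following claim, which I would prove simultaneously by induction on $k$: for $g=h_{1}\cdots h_{k}$, writing $U_{g}:=\mathcal{F}\smallsetminus N_{\epsilon}(\mathcal{Z}_{x_{h_{k}}^{-}})$, one has $g(U_{g})\subseteq\overline{B_{\epsilon}(x_{h_{1}}^{+})}$ and $g|_{U_{g}}$ is $\epsilon^{k}$-Lipschitz (in particular $\epsilon$-Lipschitz). For the inductive step, note $U_{g}=U_{g'}$ since $g$ and $g'$ have the same last letter; by induction $g'(U_{g'})\subseteq\overline{B_{\epsilon}(x_{h_{2}}^{+})}$, and the crucial geometric point is that $\overline{B_{\epsilon}(x_{h_{2}}^{+})}\subseteq\mathcal{F}\smallsetminus N_{\epsilon}(\mathcal{Z}_{x_{h_{1}}^{-}})$, which holds because $d(x_{h_{2}}^{+},\mathcal{Z}_{x_{h_{1}}^{-}})\geq 2\epsilon$ --- if $h_{1}=h_{2}$ this is condition (a) in the definition of $\epsilon$-contracting, and if $h_{1}\neq h_{2}$ it follows with room to spare from the $6\epsilon$-separation hypothesis. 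Hence $g(U_{g})=h_{1}\big(g'(U_{g'})\big)\subseteq h_{1}\big(\mathcal{F}\smallsetminus N_{\epsilon}(\mathcal{Z}_{x_{h_{1}}^{-}})\big)\subseteq\overline{B_{\epsilon}(x_{h_{1}}^{+})}$, and the Lipschitz constants multiply. From this engine one extracts: since $\epsilon<1$, $g$ has an attracting fixed point, hence is loxodromic by Lemma \ref{loxodromiccharacterization2}, and $x_{g}^{+}\in\overline{B_{\epsilon}(x_{h_{1}}^{+})}$, so $d(x_{g}^{+},x_{h_{1}}^{+})\leq\epsilon$; moreover $\overline{B_{\epsilon}(x_{h_{1}}^{+})}$ is forward $g$-invariant (it lies in $U_{g}$), so every point of $U_{g}$ converges to $x_{g}^{+}$ under iteration, whence $U_{g}\cap\mathcal{Z}_{x_{g}^{-}}=\emptyset$, i.e. $\mathcal{Z}_{x_{g}^{-}}\subseteq N_{\epsilon}(\mathcal{Z}_{x_{h_{k}}^{-}})$, which gives $d_{\mathrm{Haus}}(\mathcal{Z}_{x_{g}^{-}},\mathcal{Z}_{x_{h_{k}}^{-}})<\epsilon$ by the same reasoning as in the proof of Lemma \ref{criterionforcontraction}. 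This establishes the two ``moreover'' assertions.

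\textbf{The contracting conclusion.} If $h_{1}\neq h_{k}$, then by the ``moreover'' estimates and the $6\epsilon$-hypothesis, $d(x_{g}^{+},\mathcal{Z}_{x_{g}^{-}})\geq d(x_{h_{1}}^{+},\mathcal{Z}_{x_{h_{k}}^{-}})-d(x_{g}^{+},x_{h_{1}}^{+})-d_{\mathrm{Haus}}(\mathcal{Z}_{x_{g}^{-}},\mathcal{Z}_{x_{h_{k}}^{-}})>6\epsilon-2\epsilon=4\epsilon$; also $\mathcal{Z}_{x_{h_{k}}^{-}}\subseteq N_{\epsilon}(\mathcal{Z}_{x_{g}^{-}})$ forces $\mathcal{F}\smallsetminus N_{2\epsilon}(\mathcal{Z}_{x_{g}^{-}})\subseteq U_{g}$, so the engine gives $g\big(\mathcal{F}\smallsetminus N_{2\epsilon}(\mathcal{Z}_{x_{g}^{-}})\big)\subseteq\overline{B_{\epsilon}(x_{h_{1}}^{+})}\subseteq\overline{B_{2\epsilon}(x_{g}^{+})}$ with a $2\epsilon$-Lipschitz restriction, i.e. $g$ is $2\epsilon$-contracting. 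The remaining case $h_{1}=h_{k}$ I expect to be the main obstacle: here the crude triangle estimate collapses to $d(x_{g}^{+},\mathcal{Z}_{x_{g}^{-}})\geq 2\epsilon-2\epsilon=0$, which is useless for condition (a) of either $\epsilon$- or $2\epsilon$-contracting. Powers $g_{i}^{m}$ of a single generator are still dispatched directly by iterating the defining inclusions (they remain $\epsilon$-contracting since $x_{g_{i}^{m}}^{\pm}=x_{g_{i}}^{\pm}$ and $\overline{B_{\epsilon}(x_{g_{i}}^{+})}\subseteq\mathcal{F}\smallsetminus N_{\epsilon}(\mathcal{Z}_{x_{g_{i}}^{-}})$), so the genuine difficulty is a word such as $g_{i}\cdots g_{j}\cdots g_{i}$ with a strictly different middle letter. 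My plan for this case is to peel the maximal common prefix and suffix, writing $g=h^{a}\,u\,h^{b}$ with $a,b\geq 1$ and $u$ either empty (then $g=h^{a+b}$, done) or a word whose first and last letters differ from $h$; apply the already-treated distinct-ends case to $uh^{b}$ to see it is $2\epsilon$-contracting with $x_{uh^{b}}^{+}$ within $\epsilon$ of the non-$h$ first letter of $u$ and $d_{\mathrm{Haus}}(\mathcal{Z}_{x_{uh^{b}}^{-}},\mathcal{Z}_{x_{h}^{-}})<\epsilon$; and then analyze $g=h^{a}(uh^{b})$ via a ``product of an $\epsilon$-contracting element with a $2\epsilon$-contracting element'' estimate, exploiting that $d(x_{uh^{b}}^{+},\mathcal{Z}_{x_{h}^{-}})\geq 6\epsilon-\epsilon=5\epsilon$ places $\overline{B_{2\epsilon}(x_{uh^{b}}^{+})}$ well inside $\mathcal{F}\smallsetminus N_{\epsilon}(\mathcal{Z}_{x_{h}^{-}})$.

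\textbf{Where the difficulty lies.} The delicate point throughout the $h_{1}=h_{k}$ case is to keep the separation condition (a) --- $d(x_{g}^{+},\mathcal{Z}_{x_{g}^{-}})\geq 2\epsilon$ (resp. $\geq 4\epsilon$) --- from degenerating after the common letter $h$ has been reintroduced, since the attracting fixed point and the obstruction set $\mathcal{Z}_{x_{g}^{-}}$ both drift by up to $\epsilon$ away from those of $h$. I expect this to force a careful accounting of exactly how far they drift (the engine shows the drift is in fact governed by the exponentially small Lipschitz constant $\epsilon^{k}$, which should be the slack one trades on), and I expect the constants $2\epsilon$ and $6\epsilon$ in the definition and hypotheses to be precisely calibrated so that this accounting closes; it is plausible that in the intended application the $\epsilon$-contracting generators come with extra self-separation, which would make this last case uniform with the first. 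The remaining verifications --- loxodromicity, the location estimates, and conditions (b) and (c) in all cases --- are routine consequences of the engine together with $\epsilon<1$, and present no real difficulty.
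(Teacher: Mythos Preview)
Your engine and the $h_1\neq h_k$ case match the paper's proof essentially line for line: the paper first reduces to words with $h_i\neq h_{i+1}$ (by noting that powers of an $\epsilon$-contracting element remain $\epsilon$-contracting, since the fixed points do not move), then derives exactly your inclusion $g\big(\mathcal{F}\smallsetminus N_\epsilon(\mathcal{Z}_{x_{h_k}^-})\big)\subset\overline{B_\epsilon(x_{h_1}^+)}$ and invokes Lemma~\ref{criterionforcontraction} when $h_1\neq h_k$. The real divergence is the case $h_1=h_k$, which you flag as the main obstacle and attack with an elaborate prefix/suffix peeling scheme that you yourself leave unfinished.

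The paper treats $h_1=h_k$ as the \emph{easier} case, dispatched in one line. When $h_1=h_k$ your engine reads
\[
g\big(\mathcal{F}\smallsetminus N_\epsilon(\mathcal{Z}_{x_{h_1}^-})\big)\subset\overline{B_\epsilon(x_{h_1}^+)}
\]
with an $\epsilon$-Lipschitz restriction, and the paper simply declares that this already makes $g$ $\epsilon$-contracting (not merely $2\epsilon$-contracting) with $d(x_g^+,x_{h_1}^+)\leq\epsilon$ and $d_{\mathrm{Haus}}(\mathcal{Z}_{x_g^-},\mathcal{Z}_{x_{h_1}^-})<\epsilon$. There is no peeling, no product-of-mixed-$\epsilon$-contractions lemma, and no appeal to the exponential Lipschitz constant $\epsilon^k$. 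Your worry that the triangle-inequality estimate for condition~(a) collapses to $2\epsilon-2\epsilon=0$ is not one the paper engages with; it asserts the conclusion without separately verifying~(a), in the same spirit as the proof of Lemma~\ref{criterionforcontraction} (where the one-sided inclusion $\mathcal{Z}_{x_g^-}\subset N_\epsilon(\mathcal{Z}_{x^-})$ is taken to yield the full Hausdorff bound). So your peeling detour is not what the paper does; the intended argument for this case is a one-liner, and whatever scruple you have about condition~(a) applies equally to the paper's own exposition rather than calling for a new mechanism.
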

\begin{proof}
Notice that if $h \in G$ is $\epsilon$-contracting, then so is $h^{n}$ for any $n \geq 1$. Thus it suffices to show that if $g := h_{1} \cdots h_{k}$ where $h_{1}, \dots, h_{k} \in S$ and $h_{i} \neq h_{i+1}$ for all $1 \leq i \leq k-1$, then $g$ is either $\epsilon$-contracting or $2\epsilon$-contracting. By hypothesis, each $h_{i}$ is $\epsilon$-contracting and moreover
\begin{align*}
    d(x_{h_{i+1}}^{+}, \mathcal{Z}_{x_{h_{i}}^{-}}) \geq 6 \epsilon
\end{align*} for all $1 \leq i \leq k-1$. Hence
\begin{align*}
    h_{k} \Big(\mathcal{F} \smallsetminus N_{\epsilon}\Big(\mathcal{Z}_{x_{h_{k}}^{-}}\Big) \Big) \subset \overline{B_{\epsilon}(x_{h_{k}}^{+})} \subset \mathcal{F} \smallsetminus N_{\epsilon}\Big(\mathcal{Z}_{x_{h_{k-1}}^{-}} \Big).
\end{align*} By induction, we see that 
\begin{align*}
    h_{2} \cdots h_{k} \Big(\mathcal{F} \smallsetminus N_{\epsilon}\Big(\mathcal{Z}_{x_{h_{k}}^{-}}\Big) \Big) \subset \mathcal{F} \smallsetminus N_{\epsilon}\Big(\mathcal{Z}_{x_{h_{1}}^{-}}\Big).
\end{align*} It follows that
\begin{align}
\label{productnorthsouth}
g \Big(\mathcal{F} \smallsetminus N_{\epsilon}\Big(\mathcal{Z}_{x_{h_{k}}^{-}}\Big) \Big) = h_{1} h_{2} \cdots h_{k} \Big(\mathcal{F} \smallsetminus N_{\epsilon}\Big(\mathcal{Z}_{x_{h_{k}}^{-}}\Big) \Big) \subset \overline{B_{\epsilon}(x_{h_{1}}^{+})}.
\end{align} There are now two cases to consider.
\par 
$\textbf{Case 1}:$ Suppose first that $h_{1} \neq h_{k}$. Then 
\begin{align*}
d(x_{h_{1}}^{+}, \mathcal{Z}_{x_{h_{k}}^{-}}) \geq 6 \epsilon, \ \ g\big|_{\mathcal{F} \smallsetminus N_{\epsilon}\big(\mathcal{Z}_{x_{h_{k}}^{-}} \big)} \ \ \mathrm{is} \ \ \epsilon-\mathrm{Lipschitz}, \ \ \mathrm{and} \ (\ref{productnorthsouth}) \ \mathrm{holds.}
\end{align*}
By Lemma \ref{criterionforcontraction}, we conclude that $g$ is a $2 \epsilon$-contracting element with 
\begin{align*}
d(x_{g}^{+}, x_{h_{1}}^{+}) \leq \epsilon \ \ \ \mathrm{and} \ \ \ d_{\mathrm{Haus}}\big(\mathcal{Z}_{x_{g}^{-}}, \mathcal{Z}_{x_{h_{k}}^{-}} \big) < \epsilon.
\end{align*}
$\textbf{Case 2}:$ Suppose instead that $h_{1} = h_{k}$. Then we can rewrite (\ref{productnorthsouth}) as 
\begin{align*}
g \Big(\mathcal{F} \smallsetminus N_{\epsilon}\Big(\mathcal{Z}_{x_{h_{1}}^{-}}\Big) \Big) = h_{1} h_{2} \cdots h_{k-1}h_{1} \Big(\mathcal{F} \smallsetminus N_{\epsilon}\Big(\mathcal{Z}_{x_{h_{1}}^{-}}\Big) \Big) \subset \overline{B_{\epsilon}(x_{h_{1}}^{+})},
\end{align*} from which it follows that $g = h_{1} h_{2} \cdots h_{k-1} h_{1}$ is $\epsilon$-contracting with
\begin{align*}
d(x_{g}^{+}, x_{h_{1}}^{+}) \leq \epsilon \ \ \ \mathrm{and} \ \ \ d_{\mathrm{Haus}}\big(\mathcal{Z}_{x_{g}^{-}}, \mathcal{Z}_{x_{h_{1}}^{-}} \big) < \epsilon.
\end{align*}
This concludes the proof. 
\end{proof}
Inspired by the work of Benoist \cite{B1}, \cite{B2} and Quint \cite{Q1}, \cite{Q2}, we consider the ``shadows'' of $\epsilon$-contracting elements on the Furstenberg boundary $\mathcal{F}$ of $G$ (see also \cite{BCZZ1} for a somewhat similar notion of shadows in the context of a convergence group acting on a compact metrizable space). We emphasize that, in contrast with other notions of shadows present in the literature, these shadows are \emph{only defined for $\epsilon$-contracting elements}.
\begin{defn} [Shadow]
\label{contractingshadow}
Let $g \in G$ be an $\epsilon$-contracting element. For $r > 0$, the $r$-\emph{shadow} of $g$ is defined to be
\begin{align*}
\mathcal{S}_{r}(g) = g \big( \mathcal{F} \smallsetminus N_{r} (\mathcal{Z}_{x_{g}^{-}}) \big).
\end{align*}
\end{defn} Notice that, by definition, we have $\mathcal{S}_{r}(g) \subset \overline{B_{\epsilon}(x_{g}^{+})}$ for all $r \geq \epsilon$. 
\begin{prop} 
\label{shadowinclusion}
With the same hypotheses and notation as in Proposition \ref{contractingsemigroup}, let $m \geq 1$ and $\gamma \in S^{m}$ be arbitrary. If $\eta = \gamma \zeta$ for some $\zeta \in S$, then 
\begin{align*}
\mathcal{S}_{2 \epsilon} (\eta) \subset \mathcal{S}_{4 \epsilon}(\gamma).
\end{align*}
\end{prop}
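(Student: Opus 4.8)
The plan is to strip off the prefix $\gamma$ and reduce the claim to a statement about the single generator $\zeta$, then feed in the control on non-transversality sets supplied by Proposition \ref{contractingsemigroup}. Write $\gamma = h_{1} \cdots h_{m}$ with each $h_{i} \in S$, so $\eta = h_{1} \cdots h_{m} \zeta$. Since $\gamma$ acts as a homeomorphism of $\mathcal{F}$ and $\eta = \gamma \zeta$, we have
\[
\mathcal{S}_{2\epsilon}(\eta) = \gamma\bigl(\zeta\bigl(\mathcal{F} \smallsetminus N_{2\epsilon}(\mathcal{Z}_{x_{\eta}^{-}})\bigr)\bigr) \quad \text{and} \quad \mathcal{S}_{4\epsilon}(\gamma) = \gamma\bigl(\mathcal{F} \smallsetminus N_{4\epsilon}(\mathcal{Z}_{x_{\gamma}^{-}})\bigr),
\]
so $\mathcal{S}_{2\epsilon}(\eta) \subset \mathcal{S}_{4\epsilon}(\gamma)$ is equivalent to $\zeta\bigl(\mathcal{F} \smallsetminus N_{2\epsilon}(\mathcal{Z}_{x_{\eta}^{-}})\bigr) \subset \mathcal{F} \smallsetminus N_{4\epsilon}(\mathcal{Z}_{x_{\gamma}^{-}})$.

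Two inputs drive the rest. First, Proposition \ref{contractingsemigroup} bounds the non-transversality set $\mathcal{Z}_{x_{g}^{-}}$ of a word $g$ by that of its last letter: applied to $\eta$ it gives $d_{\mathrm{Haus}}(\mathcal{Z}_{x_{\eta}^{-}}, \mathcal{Z}_{x_{\zeta}^{-}}) < \epsilon$, and applied to $\gamma$ it gives $d_{\mathrm{Haus}}(\mathcal{Z}_{x_{\gamma}^{-}}, \mathcal{Z}_{x_{h_{m}}^{-}}) < \epsilon$. The former yields $N_{\epsilon}(\mathcal{Z}_{x_{\zeta}^{-}}) \subset N_{2\epsilon}(\mathcal{Z}_{x_{\eta}^{-}})$, hence $\mathcal{F} \smallsetminus N_{2\epsilon}(\mathcal{Z}_{x_{\eta}^{-}}) \subset \mathcal{F} \smallsetminus N_{\epsilon}(\mathcal{Z}_{x_{\zeta}^{-}})$. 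Second, since $\zeta$ is $\epsilon$-contracting, $\zeta\bigl(\mathcal{F} \smallsetminus N_{\epsilon}(\mathcal{Z}_{x_{\zeta}^{-}})\bigr) \subset \overline{B_{\epsilon}(x_{\zeta}^{+})}$. Chaining, $\zeta\bigl(\mathcal{F} \smallsetminus N_{2\epsilon}(\mathcal{Z}_{x_{\eta}^{-}})\bigr) \subset \overline{B_{\epsilon}(x_{\zeta}^{+})}$, so it remains to verify $\overline{B_{\epsilon}(x_{\zeta}^{+})} \subset \mathcal{F} \smallsetminus N_{4\epsilon}(\mathcal{Z}_{x_{\gamma}^{-}})$, i.e.\ $d(x_{\zeta}^{+}, \mathcal{Z}_{x_{\gamma}^{-}}) \geq 5\epsilon$. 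Using $\mathcal{Z}_{x_{\gamma}^{-}} \subset N_{\epsilon}(\mathcal{Z}_{x_{h_{m}}^{-}})$ and the triangle inequality, this holds provided $d(x_{\zeta}^{+}, \mathcal{Z}_{x_{h_{m}}^{-}}) \geq 6\epsilon$; when $\zeta \neq h_{m}$ this is exactly the standing separation hypothesis of Proposition \ref{contractingsemigroup} (with $\zeta = g_{i}$ and $h_{m} = g_{j}$, $i \neq j$), and the chain of inclusions closes.

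The one step requiring care — and the main obstacle — is the degenerate case in which $\zeta$ is the final letter $h_{m}$ of $\gamma$, which does occur in the free semigroup (for instance $\eta = \zeta^{2}$, $\gamma = \zeta$). There the $6\epsilon$-separation between \emph{distinct} generators no longer applies, and closing the argument requires knowing $d(x_{\zeta}^{+}, \mathcal{Z}_{x_{\zeta}^{-}}) \geq 6\epsilon$ — strictly more than the lower bound $\geq 2\epsilon$ given by Definition \ref{epsiloncontractingelement}(a) — i.e.\ the separation hypothesis of Proposition \ref{contractingsemigroup} also in the case $i = j$; this is available for the generators $g_{i}$ in the situation where the proposition is applied, their attracting and repelling flags being confined to small prescribed neighborhoods of a fixed transverse pair whose mutual distance is bounded below independently of $\epsilon$, and then the same chain of inclusions goes through verbatim. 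Everything else is routine bookkeeping with the triangle inequality, the monotonicity of $r \mapsto \mathcal{F} \smallsetminus N_{r}(\mathcal{Z}_{x^{-}})$, and the definition of an $\epsilon$-contracting element, the substantive content being entirely Proposition \ref{contractingsemigroup}.
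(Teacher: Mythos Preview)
Your argument is essentially identical to the paper's: strip off $\gamma$, use the Hausdorff bounds from Proposition \ref{contractingsemigroup} to pass from $\mathcal{Z}_{x_{\eta}^{-}}$ to $\mathcal{Z}_{x_{\zeta}^{-}}$ and from $\mathcal{Z}_{x_{\gamma}^{-}}$ to $\mathcal{Z}_{x_{h_{m}}^{-}}$, and then route the image through $\overline{B_{\epsilon}(x_{\zeta}^{+})}$ using the $\epsilon$-contracting property of $\zeta$. You are in fact more careful than the paper about the case $\zeta = h_{m}$: the paper simply writes ``By assumption, we have $d(x_{\zeta}^{+}, \mathcal{Z}_{x_{h_{m}}^{-}}) \geq 6\epsilon$'' without distinguishing this case, whereas you correctly observe that the stated hypotheses of Proposition \ref{contractingsemigroup} only give this for $i \neq j$, and that the missing diagonal case is supplied in the intended application (Proposition \ref{keyproposition}), where all generators have attracting and repelling data confined to small neighborhoods of a fixed transverse pair $(x,y)$ with $d(x,\mathcal{Z}_{y}) > 8\epsilon$.
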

\begin{proof}
Since $\eta = \gamma \zeta$, to show that $\mathcal{S}_{2 \epsilon}(\eta) \subset \mathcal{S}_{4 \epsilon}(\gamma)$, it is equivalent to prove that 
\begin{align}
\label{inclusion1}
\zeta (\mathcal{F} \smallsetminus N_{2 \epsilon}(\mathcal{Z}_{x_{\eta}^{-}})) \subset \mathcal{F} \smallsetminus N_{4 \epsilon}(\mathcal{Z}_{x_{\gamma}^{-}}).
\end{align} Write $\gamma = h_{1} \cdots h_{m}$, where $h_{i} \in S$ for all $1 \leq i \leq m$. By assumption, we have $d(x_{\zeta}^{+}, \mathcal{Z}_{x_{h_{m}}^{-}}) \geq 6 \epsilon$, hence
\begin{align}
\label{inclusion2}
\overline{B_{\epsilon}(x_{\zeta}^{+})} \subset \mathcal{F} \smallsetminus N_{5 \epsilon}(\mathcal{Z}_{x_{h_{m}}^{-}}).
\end{align} By Proposition \ref{contractingsemigroup}, we also have
\begin{align}
\label{inclusion3}
d_{\mathrm{Haus}}(\mathcal{Z}_{x_{\gamma}^{-}}, \mathcal{Z}_{x_{h_{m}}^{-}}) < \epsilon.
\end{align} Combining (\ref{inclusion2}) and (\ref{inclusion3}) gives
\begin{align}
\label{inclusion4}
\overline{B_{\epsilon}(x_{\zeta}^{+})} \subset \mathcal{F} \smallsetminus N_{5 \epsilon}(\mathcal{Z}_{x_{h_{m}}^{-}}) \subset \mathcal{F} \smallsetminus N_{4 \epsilon}(\mathcal{Z}_{x_{\gamma}^{-}}).
\end{align} Since $\eta = \gamma \zeta$ with $\zeta \in S$, Proposition \ref{contractingsemigroup} gives $d_{\mathrm{Haus}}(\mathcal{Z}_{x_{\eta}^{-}}, \mathcal{Z}_{x_{\zeta}^{-}}) < \epsilon$, hence
\begin{align}
\label{inclusion5}
\mathcal{F} \smallsetminus N_{2 \epsilon}(\mathcal{Z}_{x_{\eta}^{-}}) \subset \mathcal{F} \smallsetminus  N_{\epsilon}(\mathcal{Z}_{x_{\zeta}^{-}}).
\end{align} Using (\ref{inclusion5}), (\ref{inclusion4}), and the fact that $\zeta \in S$ is $\epsilon$-contracting, we conclude that 
\begin{align*}
\zeta (\mathcal{F} \smallsetminus N_{2 \epsilon}(\mathcal{Z}_{x_{\eta}^{-}})) \subset \zeta (\mathcal{F} \smallsetminus  N_{\epsilon}(\mathcal{Z}_{x_{\zeta}^{-}})) \subset \overline{B_{\epsilon}(x_{\zeta}^{+})} \subset \mathcal{F} \smallsetminus N_{4 \epsilon}(\mathcal{Z}_{x_{\gamma}^{-}}). 
\end{align*} This verifies (\ref{inclusion1}), thereby completing the proof of the proposition.
\end{proof}
\section{Certain Asymptotic Phenomena of Discrete Subgroups}
\label{asymptoticphenomena}
Let $\Gamma < G$ be a Zariski dense discrete subgroup. For $\gamma \in \Gamma$, let 
\begin{align*}
\gamma = k_{\gamma} e^{\kappa(\gamma)} \ell_{\gamma} \in KA^{+}K
\end{align*}
be a Cartan decomposition.
Fix $0 < \delta < \delta + r < \delta(\Gamma)$. By Corollary \ref{LargeGrowthInd}, there exists a unit vector $u \in \mathfrak{a}^{++}$ so that
\begin{align*}
    \psi_{\Gamma}(u) = \inf_{\mathcal{C} \ni u} \tau_{\mathcal{C}} > \delta + r.
\end{align*}
By definition, we have
\begin{align}
\label{given1}
Q_{\Gamma_{\mathcal{C}}}(\delta + r) = \sum_{\gamma \in \Gamma_{\mathcal{C}}} e^{-(\delta + r) ||\kappa(\gamma)||} = \infty
\end{align} for \emph{every open cone} $\mathcal{C} \subset \mathfrak{a}^{++}$ containing $u$. We emphasize that the fact that this Poincar\'e series diverges for \emph{every} such open cone will be a crucial ingredient in our construction of asymptotically large free subsemigroups of $\Gamma$. Now given an open cone $\mathcal{C} \subset \mathfrak{a}^{++}$ containing $u \in \mathfrak{a}^{++}$, points $x \in \mathcal{F}$, $y \in \mathcal{F}^{-}$, and constants $\epsilon > 0$ and $n \geq 1$, define
\begin{align*}
\Gamma_{\mathcal{C}, x,y,n, \epsilon} := \{ \gamma \in \Gamma_{C} \ : \ ||\kappa(\gamma)|| \geq n, \ \ d(k_{\gamma}P, x)  < \epsilon, \ \ \mathrm{and} \ \ d_{\mathrm{Haus}}(\mathcal{Z}_{\ell_{\gamma}^{-1}P^{-}}, \mathcal{Z}_{y}) < \epsilon \}.
\end{align*}
\begin{obs}
\label{eventuallyloxodromic}
For every $\epsilon > 0$, there exists an integer $n_{0} = n_{0}(\epsilon) \geq 1$ so that for all $n \geq n_{0}$, every element of $\Gamma_{\mathcal{C}, x,y,n, \epsilon}$ is loxodromic.
\end{obs}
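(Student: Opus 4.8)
The plan is to argue by contradiction and reduce to the convergence criterion of Proposition~\ref{attractingrepellinglocation}. Suppose the observation fails for some $\epsilon > 0$: then for every $N \geq 1$ the set $\Gamma_{\mathcal{C}, x, y, n, \epsilon}$ contains a non-loxodromic element for some $n \geq N$. Iterating this, we obtain a sequence $(\gamma_{j})_{j \geq 1}$ of non-loxodromic elements with $\gamma_{j} \in \Gamma_{\mathcal{C}, x, y, n_{j}, \epsilon}$ and $n_{j} \to \infty$. Writing $\gamma_{j} = k_{\gamma_{j}} e^{\kappa(\gamma_{j})} \ell_{\gamma_{j}}$, we then have $\kappa(\gamma_{j}) \in \mathcal{C}$, $\|\kappa(\gamma_{j})\| \geq n_{j} \to \infty$, $d(k_{\gamma_{j}}P, x) < \epsilon$, and $d_{\mathrm{Haus}}(\mathcal{Z}_{\ell_{\gamma_{j}}^{-1}P^{-}}, \mathcal{Z}_{y}) < \epsilon$. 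The goal is to contradict the non-loxodromicity of the $\gamma_{j}$ by showing, through Proposition~\ref{attractingrepellinglocation}, that $\gamma_{j}$ is in fact loxodromic for all large $j$.

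First I would pass to a subsequence, which I continue to denote $(\gamma_{j})$, using compactness of $\mathcal{F}$ and $\mathcal{F}^{-}$, along which $k_{\gamma_{j}}P \to F^{+} \in \mathcal{F}$, $\ell_{\gamma_{j}}^{-1}P^{-} \to F^{-} \in \mathcal{F}^{-}$, and $\kappa(\gamma_{j})/\|\kappa(\gamma_{j})\| \to v$ for some $v \in \overline{\mathcal{C}} \cap \mathbb{S}$. Then $d(F^{+}, x) \leq \epsilon$, and since the assignment $F \mapsto \mathcal{Z}_{F}$ is $G$-equivariant, hence continuous for the Hausdorff metric, also $d_{\mathrm{Haus}}(\mathcal{Z}_{F^{-}}, \mathcal{Z}_{y}) \leq \epsilon$. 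Next I would verify the two hypotheses of Proposition~\ref{attractingrepellinglocation} for the sequence $(\gamma_{j})$. For the divergence $\alpha(\kappa(\gamma_{j})) \to \infty$ for every $\alpha \in \Delta$: one has $\alpha(\kappa(\gamma_{j})) = \|\kappa(\gamma_{j})\|\,\alpha\big(\kappa(\gamma_{j})/\|\kappa(\gamma_{j})\|\big)$, and $v \in \mathfrak{a}^{++}$ --- because $\mathcal{C}$ is taken to be a narrow enough cone about $u$ that $\overline{\mathcal{C}} \cap \mathbb{S}$ is a compact subset of the open chamber $\mathfrak{a}^{++}$ --- so $\alpha(v) > 0$ and $\alpha(\kappa(\gamma_{j})) \to \infty$. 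For transversality of $F^{+}$ and $F^{-}$, the triangle inequality gives
\begin{align*}
d(F^{+}, \mathcal{Z}_{F^{-}}) \;\geq\; d(x, \mathcal{Z}_{y}) - d(F^{+}, x) - d_{\mathrm{Haus}}(\mathcal{Z}_{F^{-}}, \mathcal{Z}_{y}) \;\geq\; d(x, \mathcal{Z}_{y}) - 2\epsilon,
\end{align*}
which is strictly positive because $\epsilon$ is small relative to $d(x, \mathcal{Z}_{y})$ (the regime in which these sets are used, with $x$ and $y$ transverse), so $F^{+}$ and $F^{-}$ are transverse.

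With both hypotheses in hand, Proposition~\ref{attractingrepellinglocation} gives that $\gamma_{j}$ is loxodromic for all $j$ sufficiently large --- and in fact $x_{\gamma_{j}}^{+} \to F^{+}$ and $x_{\gamma_{j}}^{-} \to F^{-}$ --- which contradicts the choice of the $\gamma_{j}$ and proves the observation. The extra conclusion about the attracting and repelling fixed points is precisely the information one wants when later upgrading these elements to $\epsilon$-contracting ones. The only genuinely delicate point, which I would treat with care, is the passage to the limit that feeds data into Proposition~\ref{attractingrepellinglocation}: one must be certain that the two closeness conditions built into $\Gamma_{\mathcal{C}, x, y, n, \epsilon}$, together with the narrowness of the cone $\mathcal{C}$, survive in the limit so that the limiting flags $F^{+}$ and $F^{-}$ are genuinely transverse and the limiting Cartan direction $v$ genuinely lies in $\mathfrak{a}^{++}$. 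Once those are secured, the rest is a routine compactness-and-continuity argument.
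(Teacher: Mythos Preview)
Your argument is essentially the paper's: both proceed by contradiction, pass to a subsequence with convergent $KA^{+}K$ data and $\min_{\alpha \in \Delta}\alpha(\kappa(\gamma_{j})) \to \infty$, and then use north--south dynamics on $\mathcal{F}$ to force an attracting fixed point, contradicting non-loxodromicity. The only difference is that you route through Proposition~\ref{attractingrepellinglocation} and therefore explicitly verify transversality of the limiting flags $F^{+}, F^{-}$ (under the standing assumptions that $x,y$ are transverse and $\epsilon$ is small), whereas the paper appeals directly to the ``moreover'' clause of Proposition~\ref{northsouthflagvars} without isolating that step; your treatment of this point is in fact more careful.
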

\begin{proof}
If not, then there exists $\epsilon > 0$, a sequence $\{m_{n}\}$ of integers with $m_{n} \rightarrow \infty$, and group elements $\gamma_{n} \in \Gamma_{\mathcal{C}, x,y, m_{n}, \epsilon}$ none of which are loxodromic. Passing to a subsequence, we can assume without loss of generality that 
\begin{align*}
k_{\gamma_{m_{n}}}P \rightarrow F^{+} \in \mathcal{F} \ \ \ \mathrm{and} \ \ \  \ell_{\gamma_{m_{n}}}^{-1}P^{-} \rightarrow F^{-} \in \mathcal{F}^{-}.
\end{align*}
Notice also that
\begin{align*}
\min_{\alpha \in \Delta} \alpha(\kappa(\gamma_{m_{n}})) \rightarrow \infty,
\end{align*}
since each $\kappa(\gamma_{m_{n}})$ is contained in the open cone $\mathcal{C} \subset \mathfrak{a}^{++}$ and $||\kappa(\gamma_{m_{n}})|| \geq m_{n}$. Then the ``moreover'' part of Proposition \ref{northsouthflagvars} implies that, for $n$ sufficiently large, $\gamma_{m_{n}}$ has an attracting fixed point in $\mathcal{F}$, and therefore $\gamma_{m_{n}}$ is loxodromic. This is a contradiction, which concludes the proof.
\end{proof}
Now let $\mathcal{C}'$ be an open cone so that
\begin{align*}
\overline{\mathcal{C}'} \subsetneq \mathcal{C} \subset \mathfrak{a}^{++}.
\end{align*}
Recall that $G$ acts on $\mathcal{F}$ by Lipschitz transformations; see for instance section 5 of \cite{Q2}. For each $g \in G$, let $L_{g}$ denote the Lipschitz constant for the action of $g$ on $\mathcal{F}$. 
\begin{lem}
\label{inclusionofomegas}
For every $g \in \Gamma$, $\epsilon > 0$, and $(x,y) \in \Lambda(\Gamma) \times \Lambda(\Gamma)^{-}$, there exist $N = N(x,y,g, \epsilon) \geq 1$ and $C = C(x,y,g,\epsilon) > 0$ so that 
\begin{align*}
g \cdot \Gamma_{\mathcal{C}', x, y, n+C, \frac{\epsilon}{2L_{g}}} \subset \Gamma_{\mathcal{C}, gx, y, n, \epsilon},
\end{align*}
for all $n \geq N$. 
\end{lem}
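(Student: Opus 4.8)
The plan is to argue by contradiction, reducing the four defining conditions of $\Gamma_{\mathcal{C}, gx, y, n, \epsilon}$ to the north--south dynamics of Proposition \ref{northsouthflagvars} together with the Cartan-projection estimate of Lemma \ref{differencecartanprojections}.

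First I would fix $C := ||\kappa(g)|| + 1$ once and for all. With this choice, Lemma \ref{differencecartanprojections} gives $||\kappa(g\gamma)|| \geq ||\kappa(\gamma)|| - ||\kappa(g)|| \geq n+1 > n$ whenever $||\kappa(\gamma)|| \geq n+C$, so the norm condition defining $\Gamma_{\mathcal{C}, gx, y, n, \epsilon}$ holds automatically (and of course $g\gamma \in \Gamma$). Assuming the lemma fails for this $C$, one obtains integers $n_m \to \infty$ and elements $\gamma_m \in \Gamma_{\mathcal{C}', x, y, n_m+C, \frac{\epsilon}{2L_g}}$ with $g\gamma_m \notin \Gamma_{\mathcal{C}, gx, y, n_m, \epsilon}$. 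Passing to a subsequence I may assume $k_{\gamma_m}P \to x_\infty$, $\ell_{\gamma_m}^{-1}P^- \to y_\infty$ and $\kappa(\gamma_m)/||\kappa(\gamma_m)|| \to v_\infty$, where $d(x_\infty, x) \leq \frac{\epsilon}{2L_g}$, $d_{\mathrm{Haus}}(\mathcal{Z}_{y_\infty}, \mathcal{Z}_y) \leq \frac{\epsilon}{2L_g}$ (using continuity of $F \mapsto \mathcal{Z}_F$), and $v_\infty \in \overline{\mathcal{C}'} \subset \mathcal{C} \subset \mathfrak{a}^{++}$; note also $||\kappa(\gamma_m)|| \to \infty$, whence $\alpha(\kappa(\gamma_m)) \to \infty$ for every $\alpha \in \Delta$ since $\alpha(v_\infty) > 0$.

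Next I would invoke Proposition \ref{northsouthflagvars} twice. The implication $(1)\Rightarrow(2)$, applied to $\gamma_m$, yields $\gamma_m(F) \to x_\infty$ uniformly on compact subsets of $\mathcal{F} \smallsetminus \mathcal{Z}_{y_\infty}$; post-composing with the $L_g$-Lipschitz map $g$ preserves this uniform convergence and gives $(g\gamma_m)(F) \to g x_\infty$ uniformly on compact subsets of $\mathcal{F} \smallsetminus \mathcal{Z}_{y_\infty}$. This is exactly condition $(2)$ of Proposition \ref{northsouthflagvars} for the sequence $g\gamma_m$ with $F^+ = g x_\infty$ and $F^- = y_\infty$, so $(2)\Rightarrow(1)$ yields $k_{g\gamma_m}P \to g x_\infty$, $\ell_{g\gamma_m}^{-1}P^- \to y_\infty$ and $\alpha(\kappa(g\gamma_m)) \to \infty$ for all $\alpha \in \Delta$ (so $\kappa(g\gamma_m) \in \mathfrak{a}^{++}$ eventually, making the relevant flags well-defined). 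Independently, $\kappa(g\gamma_m) = \kappa(\gamma_m) + e_m$ with $||e_m|| \leq ||\kappa(g)||$ by Lemma \ref{differencecartanprojections}, which forces $\kappa(g\gamma_m)/||\kappa(g\gamma_m)|| \to v_\infty$; since $v_\infty$ lies in the open cone $\mathcal{C}$, we get $\kappa(g\gamma_m) \in \mathcal{C}$ for all large $m$.

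Finally I would collect the contradiction: $d(k_{g\gamma_m}P, gx) \to d(g x_\infty, gx) \leq L_g\, d(x_\infty, x) \leq \frac{\epsilon}{2} < \epsilon$, and $d_{\mathrm{Haus}}(\mathcal{Z}_{\ell_{g\gamma_m}^{-1}P^-}, \mathcal{Z}_y) \to d_{\mathrm{Haus}}(\mathcal{Z}_{y_\infty}, \mathcal{Z}_y) \leq \frac{\epsilon}{2L_g} \leq \frac{\epsilon}{2} < \epsilon$ (using $L_g \geq 1$ and continuity of $F \mapsto \mathcal{Z}_F$); together with $\kappa(g\gamma_m) \in \mathcal{C}$ and $||\kappa(g\gamma_m)|| \geq n_m$ this says $g\gamma_m \in \Gamma_{\mathcal{C}, gx, y, n_m, \epsilon}$ for all large $m$, contradicting the choice of the $\gamma_m$. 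I expect the only delicate point to be the double bookkeeping with Proposition \ref{northsouthflagvars}: checking that uniform-on-compacts convergence survives composition with $g$, and noticing that transversality of $x_\infty$ and $y_\infty$ is never required, since $\mathcal{F} \smallsetminus \mathcal{Z}_{y_\infty}$ enters only as the dense domain on which Proposition \ref{northsouthflagvars} produces convergence.
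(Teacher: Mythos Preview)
Your proof is correct and follows essentially the same approach as the paper's: argue by contradiction, extract a subsequence with converging Cartan flags, use Lemma \ref{differencecartanprojections} to handle the norm and cone conditions, and use Proposition \ref{northsouthflagvars} to transfer the flag convergences from $\gamma_m$ to $g\gamma_m$. Your version is slightly more explicit in two respects: you fix $C = \|\kappa(g)\|+1$ at the outset (the paper leaves $C$ implicit in the contradiction setup), and you spell out the double application of Proposition \ref{northsouthflagvars} as $(1)\Rightarrow(2)$ for $\gamma_m$ followed by $(2)\Rightarrow(1)$ for $g\gamma_m$, whereas the paper just cites the proposition once for the conclusion $k_{g\gamma_{m_j}}P \to gF^+$, $\ell_{g\gamma_{m_j}}^{-1}P^- \to F^-$. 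One small remark: your final inequality $\tfrac{\epsilon}{2L_g}\le\tfrac{\epsilon}{2}$ tacitly assumes $L_g\ge 1$; the paper's proof makes the same implicit assumption, so this is not a discrepancy but something both versions could justify more carefully (e.g., by replacing $L_g$ with $\max(L_g,1)$ throughout).
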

\begin{proof}
Suppose not. Then there exist $g \in \Gamma$, $\epsilon > 0$, a pair $(x,y) \in \Lambda(\Gamma) \times \Lambda(\Gamma)^{-}$, and sequences $\{k_{m}\} \subset \mathbb{N}$ and $\{\gamma_{m}\} \subset \Gamma$ such that 
\begin{align*}
\gamma_{m} \in \Gamma_{\mathcal{C}', x, y, k_{m}+m, \frac{\epsilon}{2L_{g}}} \ \ \ \mathrm{but} \ \ \ g \gamma_{m} \notin \Gamma_{\mathcal{C}, gx, y, k_{m}, \epsilon}, 
\end{align*}
for all $m \geq 1$. Pass to a subsequence $\{\gamma_{m_{j}}\} \subset \{\gamma_{m}\}$ so that 
\begin{align*}
\min_{\alpha \in \Delta} \alpha(\kappa(\gamma_{m_{j}})) \rightarrow \infty, \ \ k_{\gamma_{m_{j}}}P \rightarrow F^{+} \in \mathcal{F} \ \ \mathrm{and} \ \  \ell_{\gamma_{m_{j}}}^{-1}P^{-} \rightarrow F^{-} \in \mathcal{F}^{-}.
\end{align*}

By Lemma \ref{differencecartanprojections}, we know that
\begin{align*}
||\kappa(g\gamma_{m_{j}}) - \kappa(\gamma_{m_{j}})|| \leq ||\kappa(g)||,
\end{align*}
for all $j \geq 1$. Since $\overline{\mathcal{C}'} \subsetneq \mathcal{C}$, we have
\begin{align}
\label{omegafact1}
\kappa(g \gamma_{m_{j}}) \in \mathcal{C}
\end{align}
for all $j$ sufficiently large. Moreover,
\begin{align}
\label{omegafact2}
\notag ||\kappa(g \gamma_{m_{j}})|| &\geq ||\kappa(g^{-1} g \gamma_{m_{j}})|| - ||\kappa(g^{-1})|| \\
\notag &= ||\kappa(\gamma_{m_{j}})|| - ||\kappa(g^{-1})|| \\
\notag &\geq k_{m_{j}} + m_{j} - ||\kappa(g^{-1})|| \\
&\geq k_{m_{j}}
\end{align} as soon as $m_{j} \geq ||\kappa(g^{-1})||$. By Proposition \ref{northsouthflagvars}, we have 
\begin{align*}
k_{g\gamma_{m_{j}}}P \rightarrow gF^{+} \ \ \ \mathrm{and} \ \ \ \ell_{g \gamma_{m_{j}}}^{-1} P^{-} \rightarrow F^{-}.
\end{align*}
Thus for all $j$ sufficiently large, both (\ref{omegafact1}) and (\ref{omegafact2}) hold, and also
\begin{align*}
d(k_{g \gamma_{m_{j}}} P, gF^{+}) < \frac{\epsilon}{2} \ \ \mathrm{and} \ \ d_{\mathrm{Haus}}(\mathcal{Z}_{\ell_{g \gamma_{m_{j}}}^{-1} P^{-}}, \mathcal{Z}_{F^{-}}) < \frac{\epsilon}{2}.
\end{align*} By assumption, we have 
\begin{align*}
d(F^{+}, x) < \frac{\epsilon}{2L_{g}} \ \ \mathrm{and} \ \ d_{\mathrm{Haus}}(\mathcal{Z}_{F^{-}}, \mathcal{Z}_{y}) < \frac{\epsilon}{2L_{g}}.
\end{align*}
But since $g$ acts on $\mathcal{F}$ by Lipschitz transformations, we obtain 
\begin{align*}
d(gF^{+}, gx) \leq L_{g} \cdot \frac{\epsilon}{2L_{g}} = \frac{\epsilon}{2}.
\end{align*} Hence for $j$ sufficiently large, both $(\ref{omegafact1})$ and $(\ref{omegafact2})$ hold, and moreover
\begin{align*}
d(k_{g \gamma_{m_{j}}}P, gx) \leq d(k_{g \gamma_{m_{j}}} P, gF^{+}) + d(gF^{+}, gx) < \epsilon,
\end{align*} and
\begin{align*}
d_{\mathrm{Haus}}(\mathcal{Z}_{\ell_{g \gamma_{m_{j}}}^{-1}P^{-}}, \mathcal{Z}_{y}) &\leq d_{\mathrm{Haus}}(\mathcal{Z}_{\ell_{g \gamma_{m_{j}}}^{-1} P^{-}}, \mathcal{Z}_{F^{-}}) + d_{\mathrm{Haus}}(\mathcal{Z}_{F^{-}}, \mathcal{Z}_{y}) \\
&< \frac{\epsilon}{2} + \frac{\epsilon}{2L_{g}} < \epsilon.
\end{align*} This shows that $g \gamma_{m_{j}} \in \Gamma_{\mathcal{C}, gx, y, k_{m_{j}}, \epsilon}$ for all $j$ large enough, which is a contradiction. This completes the proof.  
\end{proof}
An entirely analogous argument also shows the following, so we omit its proof.
\begin{lem}
\label{inclusionofomegas2}
For every $g \in \Gamma$, $\epsilon > 0$, and $(x,y) \in \Lambda(\Gamma) \times \Lambda(\Gamma)^{-}$, there exist $N = N(x,y,g, \epsilon) \geq 1$ and $C = C(x,y,g, \epsilon) > 0$ so that
\begin{align*}
\Big( \Gamma_{\mathcal{C}', x, y, n + C, \frac{\epsilon}{2 L_{g^{-1}}}} \Big) \cdot g \subset \Gamma_{\mathcal{C}, x, g^{-1}y, n, \epsilon},
\end{align*} for all $n \geq N$.
\end{lem}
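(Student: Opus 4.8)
The plan is to mirror the proof of Lemma \ref{inclusionofomegas} almost verbatim, interchanging the roles of left and right multiplication. First I would argue by contradiction: suppose the claim fails, so there exist $g \in \Gamma$, $\epsilon > 0$, a pair $(x,y) \in \Lambda(\Gamma) \times \Lambda(\Gamma)^{-}$, and sequences $\{k_m\} \subset \mathbb{N}$ and $\{\gamma_m\} \subset \Gamma$ with
\begin{align*}
\gamma_m \in \Gamma_{\mathcal{C}', x, y, k_m + m, \frac{\epsilon}{2L_{g^{-1}}}} \quad \text{but} \quad \gamma_m g \notin \Gamma_{\mathcal{C}, x, g^{-1}y, k_m, \epsilon}
\end{align*}
for all $m \geq 1$. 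Passing to a subsequence $\{\gamma_{m_j}\}$, I would arrange $\min_{\alpha \in \Delta} \alpha(\kappa(\gamma_{m_j})) \to \infty$ (which holds since $\kappa(\gamma_{m_j}) \in \mathcal{C}' \subset \mathfrak{a}^{++}$ and $\|\kappa(\gamma_{m_j})\| \geq k_{m_j} + m_j \to \infty$), together with $k_{\gamma_{m_j}} P \to F^+ \in \mathcal{F}$ and $\ell_{\gamma_{m_j}}^{-1} P^- \to F^-$.

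Next I would control the Cartan data of $\gamma_{m_j} g$. By Lemma \ref{differencecartanprojections}, $\|\kappa(\gamma_{m_j} g) - \kappa(\gamma_{m_j})\| \leq \|\kappa(g)\|$, so since $\overline{\mathcal{C}'} \subsetneq \mathcal{C}$ we get $\kappa(\gamma_{m_j} g) \in \mathcal{C}$ for $j$ large, and the norm bound $\|\kappa(\gamma_{m_j} g)\| \geq \|\kappa(\gamma_{m_j})\| - \|\kappa(g^{-1})\| \geq k_{m_j} + m_j - \|\kappa(g^{-1})\| \geq k_{m_j}$ once $m_j \geq \|\kappa(g^{-1})\|$, exactly as in the proof of Lemma \ref{inclusionofomegas}. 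The key structural point is the asymptotics of $k_{\gamma_{m_j} g} P$ and $\ell_{\gamma_{m_j} g}^{-1} P^-$: writing things in terms of inverses, $(\gamma_{m_j} g)^{-1} = g^{-1} \gamma_{m_j}^{-1}$, and since $\gamma_{m_j}^{-1}$ exhibits north--south dynamics toward $F^-$ (on the opposite flag variety side) with repelling data $F^+$, Proposition \ref{northsouthflagvars} applied to $\{\gamma_{m_j} g\}$ gives $k_{\gamma_{m_j} g} P \to F^+$ and $\ell_{\gamma_{m_j} g}^{-1} P^- \to g^{-1} F^-$. Then, using the hypothesis $d(F^+, x) < \frac{\epsilon}{2L_{g^{-1}}} < \frac{\epsilon}{2}$ directly (left side is unaffected by multiplying by $g$ on the right), and $d_{\mathrm{Haus}}(\mathcal{Z}_{F^-}, \mathcal{Z}_y) < \frac{\epsilon}{2L_{g^{-1}}}$ together with the Lipschitz bound $d_{\mathrm{Haus}}(\mathcal{Z}_{g^{-1}F^-}, \mathcal{Z}_{g^{-1}y}) \leq L_{g^{-1}} \cdot \frac{\epsilon}{2L_{g^{-1}}} = \frac{\epsilon}{2}$, the triangle inequality yields $d(k_{\gamma_{m_j} g} P, x) < \epsilon$ and $d_{\mathrm{Haus}}(\mathcal{Z}_{\ell_{\gamma_{m_j} g}^{-1} P^-}, \mathcal{Z}_{g^{-1}y}) < \epsilon$ for $j$ large. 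Combining with the Cartan estimates, $\gamma_{m_j} g \in \Gamma_{\mathcal{C}, x, g^{-1}y, k_{m_j}, \epsilon}$ for $j$ large, contradicting the choice of the sequence.

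The one place requiring slight care — and the analogue of the main (mild) subtlety in Lemma \ref{inclusionofomegas} — is verifying that the flag-variety convergence really does pass to $\gamma_{m_j} g$ in the correct form: one must check that $g$ sits in the complement of the relevant non-transversality sets so that Proposition \ref{northsouthflagvars} (parts (2) and (3)) applies to the tails $k_{\gamma_{m_j} g} P$ and $\ell_{\gamma_{m_j} g}^{-1} P^-$, rather than merely to the action on a fixed flag. Since $gP$ (resp.\ the image of $g$ on $\mathcal{F}^-$) is a single point, for $j$ large it avoids the shrinking neighborhoods of the non-transversality loci of the limiting data, so the convergence of the $KA^+K$ factors follows from the uniform-on-compacta statement in Proposition \ref{northsouthflagvars}; this is precisely the observation already used in the proof of Lemma \ref{inclusionofomegas}. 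As that proof remarks, "an entirely analogous argument" goes through, so I would simply record the symmetric statement and omit the repeated details.
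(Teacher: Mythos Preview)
Your proposal is correct and is exactly the ``entirely analogous argument'' the paper has in mind: the paper omits the proof of Lemma~\ref{inclusionofomegas2} precisely because it is the mirror image of the proof of Lemma~\ref{inclusionofomegas}, with right multiplication by $g$ replacing left multiplication and the roles of the $\mathcal{F}$- and $\mathcal{F}^{-}$-data interchanged. The key dynamical step you identify---that Proposition~\ref{northsouthflagvars} applied to $\{\gamma_{m_j} g\}$ yields $k_{\gamma_{m_j} g}P \to F^{+}$ and $\ell_{\gamma_{m_j} g}^{-1}P^{-} \to g^{-1}F^{-}$, together with the equivariance $\mathcal{Z}_{g^{-1}F^{-}} = g^{-1}\mathcal{Z}_{F^{-}}$ to push the Lipschitz bound through---is precisely the symmetric counterpart of the argument in Lemma~\ref{inclusionofomegas}.
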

\begin{lem}
\label{limitsetcrosslimitset1}
There exists $(x_{0}, y_{0}) \in \Lambda(\Gamma) \times \Lambda(\Gamma)^{-}$ so that 
\begin{align*}
\sum_{\gamma \in \Gamma_{\mathcal{C}, x_{0},y_{0},n, \epsilon}} e^{-(\delta + r)||\kappa(\gamma)||} = \infty
\end{align*} for all $\epsilon > 0$ and $n \geq 1$.
\end{lem}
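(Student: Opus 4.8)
The plan is to combine a double pigeonhole argument with a compactness/limiting step, the only counting input being (\ref{given1}): since $\Gamma$ is discrete, only finitely many $\gamma$ have $\|\kappa(\gamma)\|\le k$, so $\sum_{\gamma\in\Gamma_{\mathcal C},\,\|\kappa(\gamma)\|\ge k}e^{-(\delta+r)\|\kappa(\gamma)\|}=\infty$ for every $k\ge 1$. For each fixed $k$ I would cover $\mathcal F$ by finitely many balls $B_{1/(2k)}(x_i)$ and $\mathcal F^-$ by finitely many balls $B_{\rho_k}(y_j)$, where $\rho_k>0$ is taken small enough — using the (uniform) continuity of $F\mapsto\mathcal Z_F$ for the Hausdorff metric on the compact space $\mathcal F^-$ — that $d(F,F')<\rho_k$ forces $d_{\mathrm{Haus}}(\mathcal Z_F,\mathcal Z_{F'})<1/k$. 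Every $\gamma\in\Gamma_{\mathcal C}$ with $\|\kappa(\gamma)\|\ge k$ (note $k_\gamma P$ and $\ell_\gamma^{-1}P^-$ are well defined since $\kappa(\gamma)\in\mathcal C\subset\mathfrak a^{++}$) then lies in some $\Gamma_{\mathcal C,x_i,y_j,k,1/k}$, so pigeonholing the divergent series over the finite index set produces a pair $(x^{(k)},y^{(k)})$ with $\sum_{\gamma\in\Gamma_{\mathcal C,x^{(k)},y^{(k)},k,1/k}}e^{-(\delta+r)\|\kappa(\gamma)\|}=\infty$.

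Next I would extract, by compactness of $\mathcal F\times\mathcal F^-$, a subsequence with $(x^{(k_m)},y^{(k_m)})\to(x_0,y_0)$, and claim this single pair works for all $\epsilon$ and $n$. The first point to settle is $(x_0,y_0)\in\Lambda(\Gamma)\times\Lambda(\Gamma)^-$. Picking $\gamma_m\in\Gamma_{\mathcal C,x^{(k_m)},y^{(k_m)},k_m,1/k_m}$ (nonempty, being infinite), we have $\|\kappa(\gamma_m)\|\ge k_m\to\infty$, hence — exactly as in the proof of Observation~\ref{eventuallyloxodromic}, where $\kappa(\gamma_m)$ remaining in $\mathcal C$ forces $\min_{\alpha\in\Delta}\alpha(\kappa(\gamma_m))\to\infty$ — a further subsequence gives $k_{\gamma_m}P\to x'\in\Lambda(\Gamma)$ and $\ell_{\gamma_m}^{-1}P^-\to y'\in\Lambda(\Gamma)^-$, via the characterization of the limit set recalled after Definition~\ref{limitset} (applied to $\gamma_m^{-1}$ for the $\mathcal F^-$ statement, using $\kappa(\gamma_m^{-1})=\iota(\kappa(\gamma_m))$). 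Since $d(k_{\gamma_m}P,x^{(k_m)})<1/k_m\to0$ we get $x_0=x'$; and since $d_{\mathrm{Haus}}(\mathcal Z_{\ell_{\gamma_m}^{-1}P^-},\mathcal Z_{y^{(k_m)}})<1/k_m\to0$ while $\mathcal Z_{\ell_{\gamma_m}^{-1}P^-}\to\mathcal Z_{y'}$ and $\mathcal Z_{y^{(k_m)}}\to\mathcal Z_{y_0}$ by continuity, we obtain $\mathcal Z_{y_0}=\mathcal Z_{y'}$, hence $y_0=y'$ by injectivity of $F\mapsto\mathcal Z_F$.

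To finish, fix arbitrary $\epsilon>0$ and $n\ge1$. For all large $m$ we have $k_m\ge n$, $1/k_m+d(x^{(k_m)},x_0)<\epsilon$, and $1/k_m+d_{\mathrm{Haus}}(\mathcal Z_{y^{(k_m)}},\mathcal Z_{y_0})<\epsilon$; a routine triangle-inequality check (uniform in $\gamma$, since the bounds do not depend on $\gamma$) then gives $\Gamma_{\mathcal C,x^{(k_m)},y^{(k_m)},k_m,1/k_m}\subset\Gamma_{\mathcal C,x_0,y_0,n,\epsilon}$ for such $m$, and divergence of the Poincar\'e series over the smaller set forces $\sum_{\gamma\in\Gamma_{\mathcal C,x_0,y_0,n,\epsilon}}e^{-(\delta+r)\|\kappa(\gamma)\|}=\infty$. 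As $\epsilon$ and $n$ were arbitrary, this is the assertion.

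I expect the pigeonhole and triangle-inequality bookkeeping to be painless; the step most likely to require care is verifying that the limiting pair lands in $\Lambda(\Gamma)\times\Lambda(\Gamma)^-$ rather than merely in $\mathcal F\times\mathcal F^-$. This rests on the description of the limit set through Cartan decompositions of elements whose Cartan projections tend to infinity inside the open chamber, together with the continuity and injectivity of $F\mapsto\mathcal Z_F$, which is precisely what allows the Hausdorff-distance conditions in the definition of $\Gamma_{\mathcal C,x,y,n,\epsilon}$ to survive passage to the limit.
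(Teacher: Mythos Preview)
Your argument is correct. The underlying mechanism---compactness, a pigeonhole over a finite cover, and a diagonal limit---is the same as the paper's, but you organize it differently in two respects. First, the paper treats the two coordinates sequentially: it first produces $x_0\in\Lambda(\Gamma)$ with divergent series for all $\epsilon,n$ (via a contradiction argument covering the compact set $E=\overline{\{k_\gamma P:\gamma\in\Gamma_{\mathcal C}\}'}\subset\Lambda(\Gamma)$ by finitely many balls), and only then repeats the argument to find $y_0$; you instead run a single double pigeonhole over $\mathcal F\times\mathcal F^-$ at each scale $1/k$ and pass to the limit once. Second, because the paper centers its covering balls at points of $E\subset\Lambda(\Gamma)$, the limit $x_0$ is automatically in $\Lambda(\Gamma)$, whereas your centers are arbitrary points of $\mathcal F$ and $\mathcal F^-$, so you need the additional step---which you handle correctly using the Cartan-projection characterization of the limit set together with continuity and injectivity of $F\mapsto\mathcal Z_F$---to show $(x_0,y_0)\in\Lambda(\Gamma)\times\Lambda(\Gamma)^-$. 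Your route is a bit more self-contained (one pass, no repetition), at the cost of that extra verification; the paper's route gets limit-set membership for free by working inside $E$ from the start.
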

\begin{proof} 
For $x \in \mathcal{F}$, $\epsilon > 0$ and $n \geq 1$, define
\begin{align*}
\Gamma_{\mathcal{C}, x, n, \epsilon} := \{\gamma \in \Gamma_{\mathcal{C}} \ : \ ||\kappa(\gamma)|| \geq n, \ \ d(k_{\gamma}P, x) < \epsilon\}.
\end{align*}
We first show that there exists $x_{0} \in \Lambda(\Gamma)$ so that 
\begin{align}
\label{goal1}
\sum_{\gamma \in \Gamma_{\mathcal{C},x_{0},n, \epsilon}} e^{-(\delta + r) ||\kappa(\gamma)||} = \infty
\end{align} for all $\epsilon > 0$ and $n \geq 1$. Notice that since the limit set $\Lambda(\Gamma)$ is closed, it suffices to show that for all $m \geq 1$, there exists $x_{m} \in \Lambda (\Gamma)$ so that
\begin{align*}
\sum_{\gamma \in \Gamma_{\mathcal{C}, x_{m},n, \frac{1}{m}}} e^{-(\delta + r)||\kappa(\gamma)||} = \infty
\end{align*} for all $n \geq 1$. Indeed, if this holds, then the limit $x_{0} \in \mathcal{F}$ of any convergent subsequence $\{x_{m_{k}}\} \subset \{x_{m}\}$ is a point of $\Lambda(\Gamma)$ for which (\ref{goal1}) holds. 
\par 
Suppose for a contradiction that this does not hold. Then there exists an integer $m \geq 1$ so that for each $x \in \Lambda (\Gamma)$, we have
\begin{align}
\label{info1}
\sum_{\Gamma_{\mathcal{C}, x, \frac{1}{m}}} e^{-(\delta + r) ||\kappa(\gamma)||} < \infty, 
\end{align} where
\begin{align*}
\Gamma_{\mathcal{C}, x, \frac{1}{m}} := \{\gamma \in \Gamma_{\mathcal{C}} \ : \  d(k_{\gamma}P, x) < 1/m \}
\end{align*}
(note that we are using the discreteness of $\Gamma$ here). Let $E$ denote the closure of the set of accumulation points of $\{k_{\gamma}P : \gamma \in \Gamma_{\mathcal{C}}\}$. By definition it is a compact subset of $\Lambda(\Gamma)$. Thus there exist $x_{1}, \dots, x_{l} \in E$ so that $E \subset \bigcup_{i=1}^{l} B_{\frac{1}{m}}(x_{i})$. By construction, we have 
\begin{align}
\label{small}
\# \bigg( \Gamma_{\mathcal{C}} \setminus \bigg( \bigcup_{i=1}^{l} \Gamma_{\mathcal{C},x_{i}, \frac{1}{m}} \bigg) \bigg) < \infty.
\end{align} By (\ref{info1}), we know that 
\begin{align*}
\sum_{\Gamma_{\mathcal{C}, x_{i}, \frac{1}{m}}} e^{-(\delta + r) ||\kappa(\gamma)||} < \infty,
\end{align*} for all $1 \leq i \leq l$. Along with (\ref{small}), this implies that 
\begin{align*}
\sum_{\gamma \in \Gamma_{\mathcal{C}}} e^{-(\delta + r)||\kappa(\gamma)||} < \infty,
\end{align*} which contradicts (\ref{given1}). This shows that there exists some $x_{0} \in \Lambda(\Gamma)$ so that (\ref{goal1}) holds for all $\epsilon > 0$ and all $n \geq 1$. Now starting with this information, an entirely analogous argument shows that there exists $y_{0} \in \Lambda(\Gamma)^{-}$ so that 
\begin{align*}
\sum_{\gamma \in \Gamma_{\mathcal{C}, x_{0},y_{0},n,\epsilon}} e^{-(\delta + r)||\kappa(\gamma)||} = \infty
\end{align*} for all $\epsilon > 0$ and $n \geq 1$, as desired. 
\end{proof}
The following lemma shows that the conclusion of the previous lemma in fact holds for \emph{all} $(x,y) \in \Lambda(\Gamma) \times \Lambda(\Gamma)^{-}$.
\begin{lem}
\label{limitsetcrosslimitset2}
For all $(x,y) \in \Lambda(\Gamma) \times \Lambda(\Gamma)^{-}$, we have 
\begin{align*}
\sum_{\gamma \in \Gamma_{\mathcal{C}, x,y,n,\epsilon}} e^{-(\delta + r)||\kappa(\gamma)||} = \infty
\end{align*} for all $\epsilon > 0$ and $n \geq 1$.
\end{lem}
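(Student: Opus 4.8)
The plan is to bootstrap from Lemma \ref{limitsetcrosslimitset1}, which gives divergence of the relevant series for \emph{one} pair $(x_0, y_0) \in \Lambda(\Gamma) \times \Lambda(\Gamma)^{-}$, to divergence for \emph{every} pair $(x,y) \in \Lambda(\Gamma) \times \Lambda(\Gamma)^{-}$. The mechanism is the group action combined with the minimality of $\Gamma$ on its limit set: since $\Gamma$ acts minimally on $\Lambda(\Gamma)$ (and on $\Lambda(\Gamma)^{-}$), the $\Gamma$-orbit of $x_0$ is dense in $\Lambda(\Gamma)$ and the $\Gamma$-orbit of $y_0$ is dense in $\Lambda(\Gamma)^{-}$. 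So given an arbitrary target $(x,y)$, I would first find $g \in \Gamma$ with $gx_0$ close to $x$, then separately find $h \in \Gamma$ with $h^{-1}y_0$ close to $y$ (or handle both coordinates at once using $g$ acting on the left and some $h$ acting on the right); the two coordinates are essentially independent because the left factor $k_\gamma$ of the Cartan decomposition controls the $\mathcal{F}$-coordinate while the right factor $\ell_\gamma^{-1}$ controls the $\mathcal{F}^{-}$-coordinate.

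Concretely, here is the order of steps. First, fix $(x,y)$ and $\epsilon > 0$ and $n \geq 1$. Using minimality of the $\Gamma$-action on $\Lambda(\Gamma)$, choose $g \in \Gamma$ with $d(g x_0, x) < \epsilon/2$; using minimality on $\Lambda(\Gamma)^{-}$, choose $h \in \Gamma$ with $d_{\mathrm{Haus}}(\mathcal{Z}_{h^{-1} y_0}, \mathcal{Z}_x) $ suitably small — more precisely I want $h^{-1} y_0$ close to $y$, which by continuity of $z \mapsto \mathcal{Z}_z$ makes the corresponding $\mathcal{Z}$-sets Hausdorff-close. Second, apply Lemma \ref{inclusionofomegas} with this $g$ and the pair $(x_0, y_0)$: there are $N_1, C_1$ so that $g \cdot \Gamma_{\mathcal{C}', x_0, y_0, n'+C_1, \epsilon/(2L_g)} \subset \Gamma_{\mathcal{C}, g x_0, y_0, n', \epsilon/2}$ for $n' \geq N_1$. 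Third, apply Lemma \ref{inclusionofomegas2} with $h$: there are $N_2, C_2$ so that $\Gamma_{\mathcal{C}'', \cdot, \cdot, n''+C_2, \cdot} \cdot h \subset \Gamma_{\mathcal{C}', \cdot, h^{-1}(\cdot), n'', \cdot}$. Composing these two inclusions (with a third auxiliary cone $\overline{\mathcal{C}''} \subsetneq \mathcal{C}' \subsetneq \overline{\mathcal{C}'} \subsetneq \mathcal{C}$, which one is free to introduce) sends a tail of $\Gamma_{\mathcal{C}'', x_0, y_0, \cdot, \cdot}$, conjugated/translated by $g$ on the left and $h$ on the right, into $\Gamma_{\mathcal{C}, x, y, n, \epsilon}$ provided the starting cutoff is large enough and the $(x_0,y_0)$-neighborhoods are taken small enough (here Lemma \ref{limitsetcrosslimitset1}'s divergence for \emph{all} $\epsilon > 0$ and \emph{all} $n \geq 1$ is exactly what lets us absorb the shrinking of neighborhoods and the additive shifts $C_1, C_2$ of the cutoff). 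Fourth, conclude the divergence: left-multiplication by $g$ and right-multiplication by $h$ change $\|\kappa(\gamma)\|$ by at most $\|\kappa(g)\| + \|\kappa(h^{-1})\|$ (Lemma \ref{differencecartanprojections}), so $e^{-(\delta+r)\|\kappa(g \gamma h)\|} \geq e^{-(\delta+r)(\|\kappa(g)\| + \|\kappa(h^{-1})\|)} e^{-(\delta+r)\|\kappa(\gamma)\|}$; summing over the tail of $\Gamma_{\mathcal{C}'', x_0, y_0, \cdot, \cdot}$ — which diverges by Lemma \ref{limitsetcrosslimitset1} — forces $\sum_{\Gamma_{\mathcal{C}, x, y, n, \epsilon}} e^{-(\delta+r)\|\kappa(\gamma)\|} = \infty$. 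One should also note the map $\gamma \mapsto g\gamma h$ is injective, so no overcounting issue arises.

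The main obstacle I anticipate is bookkeeping the interaction between the two one-sided translations: Lemma \ref{inclusionofomegas} moves the $\mathcal{F}$-coordinate (left multiplication), Lemma \ref{inclusionofomegas2} moves the $\mathcal{F}^{-}$-coordinate (right multiplication), and I need to verify that applying them in sequence really does leave the \emph{other} coordinate controlled — i.e. that left-multiplying by $g$ does not spoil the $\mathcal{F}^{-}$-coordinate and right-multiplying by $h$ does not spoil the $\mathcal{F}$-coordinate. Inspecting the statements of Lemmas \ref{inclusionofomegas} and \ref{inclusionofomegas2}, this is built in (the $y$ coordinate is simply carried along unchanged in the first, the $x$ coordinate unchanged in the second), so the composition is clean; one just has to track three nested cones and choose the starting cutoff $\geq N_1 + N_2 + C_1 + C_2 + \|\kappa(g^{-1})\| + \|\kappa(h)\|$ or so, and choose the $(x_0, y_0)$-neighborhood radius small enough that after dividing by $2 L_g$ and $2 L_{h^{-1}}$ and adding the triangle-inequality errors one lands inside the $\epsilon$-neighborhood of $(x, y)$. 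Since Lemma \ref{limitsetcrosslimitset1} gives us divergence for \emph{every} radius and \emph{every} cutoff, there is enough room to absorb all of these constants, and the argument closes.
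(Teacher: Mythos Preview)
Your proposal is correct and follows essentially the same approach as the paper: bootstrap from the single pair $(x_0,y_0)$ of Lemma~\ref{limitsetcrosslimitset1} using minimality of the $\Gamma$-action on the limit sets together with Lemmas~\ref{inclusionofomegas} and~\ref{inclusionofomegas2}, and then control the Poincar\'e series via Lemma~\ref{differencecartanprojections}. The only cosmetic difference is that the paper treats the two coordinates sequentially (first pushing $x_0$ to an arbitrary $x$ while keeping $y_0$ fixed, then pushing $y_0$ to $y$), whereas you compose the left- and right-translations in a single step at the cost of introducing a third nested cone $\mathcal{C}''$; both arrangements work for the same reasons.
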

\begin{proof}
We first show that the conclusion of the lemma holds for all $(x,y_{0}) \in \Lambda(\Gamma) \times \{y_{0}\} \subset \Lambda(\Gamma) \times \Lambda(\Gamma)^{-}$. Then, for each fixed $x \in \Lambda(\Gamma)$, a very similar argument (using Lemma \ref{inclusionofomegas2} in place of Lemma \ref{inclusionofomegas}) shows that the lemma holds for all $(x,y) \in \{x\} \times \Lambda(\Gamma)^{-} \subset \Lambda(\Gamma) \times \Lambda(\Gamma)^{-}$, hence also for all pairs $(x,y) \in \Lambda(\Gamma) \times \Lambda(\Gamma)^{-}$. 
\par 
So now let $x \in \Lambda(\Gamma)$ and $\epsilon > 0$ be arbitrary. Since $\Gamma$ acts minimally on $\Lambda(\Gamma)$, there exists $g \in \Gamma$ so that $d(gx_{0}, x) < \epsilon/2$. By definition, we have
\begin{align*}
\Gamma_{\mathcal{C}, gx_{0}, y_{0}, n, \frac{\epsilon}{2}} \subset \Gamma_{\mathcal{C}, x, y_{0}, n, \epsilon}.
\end{align*} By Lemma \ref{inclusionofomegas}, there exist $N \geq 1$ and $C > 0$ so that 
\begin{align*}
g \cdot \Gamma_{\mathcal{C}', x_{0}, y_{0}, n+C, \frac{\epsilon}{4L_{g}}} \subset \Gamma_{\mathcal{C}, gx_{0}, y_{0}, n, \frac{\epsilon}{2}}
\end{align*} for all $n \geq N$. Fixing some $n_{0} \geq N$, we obtain
\begin{align*}
g \cdot \Gamma_{\mathcal{C}', x_{0}, y_{0}, n_{0}+C, \frac{\epsilon}{4L_{g}}} \subset \Gamma_{\mathcal{C}, x, y_{0}, n_{0}, \epsilon}. 
\end{align*} Along with Lemma \ref{limitsetcrosslimitset1} (recall that all the above lemmas hold for \emph{any} open cone in $\mathfrak{a}^{++}$ containing $u$, so in particular, for the cone $\mathcal{C}'$), this yields
\begin{align*}
\sum_{\gamma \in \Gamma_{\mathcal{C}, x, y_{0}, n_{0}, \epsilon}} e^{-(\delta + r) ||\kappa(\gamma)||} &\geq \sum_{\eta \in \Gamma_{\mathcal{C}', x_{0}, y_{0}, n_{0}+C, \frac{\epsilon}{4L_{g}}}} e^{-(\delta + r) ||\kappa(g \eta)||} \\
&\geq e^{-(\delta + r)||\kappa(g)||} \sum_{\eta \in \Gamma_{\mathcal{C}', x_{0}, y_{0}, n_{0}+C, \frac{\epsilon}{4L_{g}}}} e^{-(\delta + r) ||\kappa(\eta)||} = \infty. 
\end{align*} Lastly, for any $n \geq 1$, there are only finitely many elements of $\Gamma_{\mathcal{C}, x, y_{0}, n, \epsilon}$ that are not contained in $\Gamma_{\mathcal{C}, x, y_{0}, n_{0}, \epsilon}$, hence we also have 
\begin{align*}
\sum_{\gamma \in \Gamma_{\mathcal{C}, x, y_{0}, n, \epsilon}} e^{-(\delta + r) ||\kappa(\gamma)||} = \infty,
\end{align*} for all $n \geq 1$. By the discussion at the start of the proof, this concludes the argument.  
\end{proof}
The following result will be an essential ingredient in our construction of asymptotically large free subsemigroups of $\Gamma$. Indeed, it will enable us to use the notion of $\epsilon$-contracting elements introduced in section \ref{contractingelementssection}. It says that if $(x,y) \in \Lambda(\Gamma) \times \Lambda(\Gamma)^{-}$ is a transverse pair of limit points (that is, the flags $x \in \mathcal{F}$ and $y \in \mathcal{F}^{-}$ are transverse), then provided $\epsilon > 0$ is small enough and $n \geq 1$ is large enough, every element $\gamma \in \Gamma_{\mathcal{C},x,y,n,\epsilon}$ is $2 \epsilon$-contracting. Furthermore, it gives precise control on the location of $x_{\gamma}^{+} \in \mathcal{F}$ and $\mathcal{Z}_{x_{\gamma}^{-}} \subset \mathcal{F}$. 
\begin{prop}
\label{limitsetcrosslimitset3}
Let $(x,y) \in \Lambda(\Gamma) \times \Lambda(\Gamma)^{-}$ be a transverse pair of limit points. For all $0 < \epsilon < \frac{1}{8} d(x, \mathcal{Z}_{y})$, the following holds: there exists an integer $N= N(\epsilon) \geq 1$ such that if $n \geq N$, then $\gamma \in \Gamma_{\mathcal{C}, x,y,n,\epsilon}$ is $2 \epsilon$-contracting, 
\begin{align}
\label{attractingrepellingbounds}
d(x, x_{\gamma}^{+}) < 2 \epsilon \ \ \ \mathrm{and} \ \ \ d_{\mathrm{Haus}}(\mathcal{Z}_{y}, \mathcal{Z}_{x_{\gamma}^{-}}) < 2 \epsilon.
\end{align}
\end{prop}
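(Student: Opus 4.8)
The plan is to argue by contradiction via a compactness argument, in the spirit of the proof of Observation \ref{eventuallyloxodromic}, combining the north–south dynamics of Propositions \ref{northsouthflagvars} and \ref{attractingrepellinglocation} with the definition of an $\epsilon$-contracting element. Suppose the assertion fails for some fixed $0 < \epsilon < \frac{1}{8} d(x,\mathcal{Z}_y)$; then for every $N \geq 1$ there is an $n \geq N$ and some $\gamma \in \Gamma_{\mathcal{C},x,y,n,\epsilon}$ violating one of the conclusions, so I would extract a sequence $\gamma_j \in \Gamma_{\mathcal{C},x,y,n_j,\epsilon}$ with $n_j \to \infty$, none of which is simultaneously $2\epsilon$-contracting and subject to the bounds in (\ref{attractingrepellingbounds}). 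Write $\gamma_j = k_{\gamma_j} e^{\kappa(\gamma_j)} \ell_{\gamma_j}$. Since $\kappa(\gamma_j) \in \mathcal{C} \subset \mathfrak{a}^{++}$ and $\|\kappa(\gamma_j)\| \geq n_j \to \infty$, we have $\min_{\alpha \in \Delta}\alpha(\kappa(\gamma_j)) \to \infty$. Passing to a subsequence, I may assume $k_{\gamma_j}P \to F^+ \in \mathcal{F}$ and $\ell_{\gamma_j}^{-1}P^- \to F^- \in \mathcal{F}^-$; since $d(k_{\gamma_j}P,x) < \epsilon$ and $d_{\mathrm{Haus}}(\mathcal{Z}_{\ell_{\gamma_j}^{-1}P^-}, \mathcal{Z}_y) < \epsilon$, and the assignment $F \mapsto \mathcal{Z}_F$ is continuous for the Hausdorff metric (as used already in the proof of Lemma \ref{inclusionofomegas}), this yields $d(F^+,x) \leq \epsilon$ and $d_{\mathrm{Haus}}(\mathcal{Z}_{F^-},\mathcal{Z}_y) \leq \epsilon$.

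The next step is to observe that $F^+$ and $F^-$ are transverse: from $8\epsilon < d(x,\mathcal{Z}_y)$ one gets
\[
d(F^+,\mathcal{Z}_{F^-}) \;\geq\; d(x,\mathcal{Z}_y) - d(F^+,x) - d_{\mathrm{Haus}}(\mathcal{Z}_{F^-},\mathcal{Z}_y) \;>\; 6\epsilon \;>\; 0 .
\]
Hence the triple $k_{\gamma_j}P \to F^+$, $\ell_{\gamma_j}^{-1}P^- \to F^-$, $\alpha(\kappa(\gamma_j)) \to \infty$ meets the hypotheses of Proposition \ref{attractingrepellinglocation}, so for $j$ large $\gamma_j$ is loxodromic with $x_{\gamma_j}^+ \to F^+$ and $x_{\gamma_j}^- \to F^-$ (hence $\mathcal{Z}_{x_{\gamma_j}^-} \to \mathcal{Z}_{F^-}$ in the Hausdorff metric), while Proposition \ref{northsouthflagvars} gives that $\gamma_j \to F^+$ uniformly on the compact set $K := \mathcal{F} \smallsetminus N_\epsilon(\mathcal{Z}_{F^-}) \subset \mathcal{F} \smallsetminus \mathcal{Z}_{F^-}$ and that $\gamma_j|_K$ is $\epsilon$-Lipschitz for $j$ large.

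Finally I would take $j$ large enough that, in addition, $d(x_{\gamma_j}^+,F^+) < \frac{\epsilon}{2}$, $d_{\mathrm{Haus}}(\mathcal{Z}_{x_{\gamma_j}^-},\mathcal{Z}_{F^-}) < \frac{\epsilon}{2}$, and $\gamma_j(K) \subset B_{\epsilon/2}(F^+)$, and then verify the three conditions of Definition \ref{epsiloncontractingelement} for the parameter $2\epsilon$. Condition (a) holds since $d(x_{\gamma_j}^+,\mathcal{Z}_{x_{\gamma_j}^-}) \geq d(F^+,\mathcal{Z}_{F^-}) - \frac{\epsilon}{2} - \frac{\epsilon}{2} > 5\epsilon > 4\epsilon$. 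For (b) and (c), $d_{\mathrm{Haus}}(\mathcal{Z}_{x_{\gamma_j}^-},\mathcal{Z}_{F^-}) < \frac{\epsilon}{2} < \epsilon$ gives $\mathcal{F} \smallsetminus N_{2\epsilon}(\mathcal{Z}_{x_{\gamma_j}^-}) \subset \mathcal{F} \smallsetminus N_\epsilon(\mathcal{Z}_{F^-}) = K$, so the restriction of $\gamma_j$ there is $\epsilon$-Lipschitz (hence $2\epsilon$-Lipschitz) and maps this set into $\gamma_j(K) \subset B_{\epsilon/2}(F^+) \subset \overline{B_{2\epsilon}(x_{\gamma_j}^+)}$. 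Thus $\gamma_j$ is $2\epsilon$-contracting; moreover $d(x,x_{\gamma_j}^+) \leq d(x,F^+) + d(F^+,x_{\gamma_j}^+) < \epsilon + \frac{\epsilon}{2} < 2\epsilon$ and similarly $d_{\mathrm{Haus}}(\mathcal{Z}_y,\mathcal{Z}_{x_{\gamma_j}^-}) < 2\epsilon$, which contradicts the choice of the $\gamma_j$. (Alternatively, the $2\epsilon$-contraction can be read off from Lemma \ref{criterionforcontraction} with $x^+ = F^+$ and $x^- = F^-$.) The argument is mostly bookkeeping once the dynamical input is in place; the point demanding care is that Propositions \ref{northsouthflagvars} and \ref{attractingrepellinglocation} are asymptotic statements about sequences — which is why the uniform-over-$\Gamma_{\mathcal{C},x,y,n,\epsilon}$ conclusion must be obtained by contradiction — and that the slack between the constant ``$\epsilon$'' defining the set and the contraction constant ``$2\epsilon$'' has to absorb the loss incurred in passing to limits, e.g.\ the strict bound $d(k_{\gamma_j}P,x)<\epsilon$ only survives as $d(F^+,x)\leq\epsilon$.
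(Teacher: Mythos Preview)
Your argument is correct and follows essentially the same approach as the paper's proof: a compactness/contradiction argument feeding into Propositions \ref{northsouthflagvars} and \ref{attractingrepellinglocation}, with the transversality of the limit pair $(F^+,F^-)$ deduced from the inequality $d(x,\mathcal{Z}_y) > 8\epsilon$. The only difference is organizational: the paper runs two separate contradiction arguments (first for the bounds \eqref{attractingrepellingbounds}, then for the $2\epsilon$-contracting property), whereas you handle both simultaneously in a single pass, which is slightly more economical.
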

\begin{proof}
We begin by showing that (\ref{attractingrepellingbounds}) holds. If not, then there exists a pair $(x,y) \in \Lambda(\Gamma) \times \Lambda(\Gamma)^{-}$ with $x \notin \mathcal{Z}_{y}$, a constant $0 < \epsilon < \frac{1}{8} d(x, \mathcal{Z}_{y})$, and a sequence $\{m_{n}\}_{n \geq 1} \subset \mathbb{N}$ with $m_{n} \rightarrow \infty$ and $\gamma_{n} \in \Gamma_{\mathcal{C}, x,y, m_{n}, \epsilon}$ so that either  
\begin{align*}
d(x,x_{\gamma_{n}}^{+}) \geq 2 \epsilon \ \ \mathrm{or} \ \ d_{\mathrm{Haus}}(\mathcal{Z}_{y}, \mathcal{Z}_{x_{\gamma_{n}}^{-}}) \geq 2 \epsilon,
\end{align*}
for all $n \geq 1$. After passing to a subsequence if necessary, we can assume without loss of generality that 
\begin{align}
\label{convergencetotransverseflags}
k_{\gamma_{n}}P \rightarrow F^{+} \in \mathcal{F} \ \ \mathrm{and} \ \ \ell_{\gamma_{n}}^{-1}P^{-} \rightarrow F^{-} \in \mathcal{F}^{-},
\end{align} hence also $\mathcal{Z}_{\ell_{\gamma_{n}}^{-1}P^{-}} \rightarrow \mathcal{Z}_{F^{-}}$ in the Hausdorff topology. By definition, we have 
\begin{align*}
d(x, k_{\gamma_{n}}P) < \epsilon \ \ \mathrm{and} \ \  d_{\mathrm{Haus}}(\mathcal{Z}_{y}, \mathcal{Z}_{\ell_{\gamma_{n}}^{-1}P^{-}}) < \epsilon,
\end{align*} for all $n \geq 1$, so 
\begin{align}
\label{close1}
d(x, F^{+}) \leq \epsilon \ \ \mathrm{and} \ \ d_{\mathrm{Haus}}(\mathcal{Z}_{y}, \mathcal{Z}_{F^{-}}) \leq \epsilon.
\end{align} Since
\begin{align*}
d(F^{+}, \mathcal{Z}_{F^{-}}) \geq d(x, \mathcal{Z}_{y}) - d(x, F^{+}) - d_{\mathrm{Haus}}(\mathcal{Z}_{F^{-}}, \mathcal{Z}_{y}) > 8 \epsilon - \epsilon - \epsilon = 6 \epsilon > 0,
\end{align*} we have that $F^{+}$ and $F^{-}$ are transverse. Notice that, by construction, we have $\min_{\alpha \in \Delta} \alpha(\kappa(\gamma_{n})) \rightarrow \infty$ as $n \rightarrow \infty$. Recalling (\ref{convergencetotransverseflags}), we use Proposition \ref{attractingrepellinglocation} to conclude that
\begin{align}
\label{close2}
d(F^{+}, x_{\gamma_{n}}^{+}) < \epsilon \ \  \mathrm{and} \ \ d_{\mathrm{Haus}}(\mathcal{Z}_{F^{-}}, \mathcal{Z}_{x_{\gamma_{n}}^{-}}) < \epsilon, 
\end{align} for all $n$ sufficiently large. Combining (\ref{close1}) and (\ref{close2}), it follows that, 
\begin{align*}
d(x, x_{\gamma_{n}}^{+}) &\leq d(x, F^{+}) + d(F^{+}, x_{\gamma_{n}}^{+}) < 2 \epsilon, 
\end{align*} 
and
\begin{align*}
d_{\mathrm{Haus}}(\mathcal{Z}_{y}, \mathcal{Z}_{x_{\gamma_{n}}^{-}}) &\leq d_{\mathrm{Haus}}(\mathcal{Z}_{y}, \mathcal{Z}_{F^{-}}) + d_{\mathrm{Haus}}(\mathcal{Z}_{F^{-}}, \mathcal{Z}_{x_{\gamma_{n}}^{-}}) < 2 \epsilon,
\end{align*} for all $n$ sufficiently large, which is a contradiction. 
\par 
To summarize, given a transverse pair $(x,y) \in \Lambda(\Gamma) \times \Lambda(\Gamma)^{-}$, and given $0 < \epsilon < \frac{1}{8} d(x, \mathcal{Z}_{y})$, there exists $N = N(\epsilon) \geq 1$ so that for all $n \geq N$, every $\gamma \in \Gamma_{\mathcal{C}, x, y,n, \epsilon}$ satisfies (\ref{attractingrepellingbounds}). We will use this to show that, for all (possibly bigger) $n$ large enough, each element of $\Gamma_{\mathcal{C}, x, y,n, \epsilon}$ is $2 \epsilon$-contracting. 
\par 
Let $n \geq N$ and $\gamma \in \Gamma_{\mathcal{C}, x, y,n, \epsilon}$ be arbitrary. By (\ref{attractingrepellingbounds}) and the definition of $\epsilon_{0}$, we have
\begin{align}
\label{attrepdist}
d(x_{\gamma}^{+}, \mathcal{Z}_{x_{\gamma}^{-}}) &\geq d(x, \mathcal{Z}_{y}) - d_{\mathrm{Haus}}(\mathcal{Z}_{y}, \mathcal{Z}_{x_{\gamma}^{-}}) - d(x, x_{\gamma}^{+}) > 8 \epsilon - 2 \epsilon - 2 \epsilon = 4 \epsilon.
\end{align} Suppose for a contradiction that there exists a sequence $\{m_{n}\}_{n \geq N} \subset \mathbb{N}$ with $m_{n} \to \infty$ and $\gamma_{n} \in \Gamma_{\mathcal{C}, x ,y, m_{n}, \epsilon}$ which are not $2 \epsilon$-contracting. As before, after passing to a subsequence if necessary, we may assume without loss of generality that 
\begin{align*}
\min_{\alpha \in \Delta} \alpha(\kappa(\gamma_{n})) \rightarrow \infty, 
 \ \ k_{\gamma_{n}}P \rightarrow F^{+} \in \mathcal{F}, \ \ \mathrm{and} \ \ \ell_{\gamma_{n}}^{-1}P^{-} \rightarrow F^{-} \in \mathcal{F}^{-}. 
\end{align*}
By construction, we have 
\begin{align*}
d(F^{+}, x) \leq \epsilon \ \ \mathrm{and} \ \ d_{\mathrm{Haus}}(\mathcal{Z}_{y}, \mathcal{Z}_{F^{-}}) \leq \epsilon.
\end{align*}
Arguing as above, we see that $F^{+}$ and $F^{-}$ are transverse, whence Proposition \ref{attractingrepellinglocation} implies that $x_{\gamma_{n}}^{+} \rightarrow F^{+}$ and $\mathcal{Z}_{x_{\gamma_{n}}^{-}} \rightarrow \mathcal{Z}_{F^{-}}$ in the Hausdorff topology. But then Proposition \ref{northsouthflagvars} yields
\begin{align*}
\gamma_{n} \big(\mathcal{F} \smallsetminus N_{2 \epsilon} \big(\mathcal{Z}_{x_{\gamma_{n}}^{-}} \big) \big) \subset \gamma_{n} \big(\mathcal{F} \smallsetminus N_{\epsilon} \big(\mathcal{Z}_{F^{-}} \big) \big) \subset \overline{B_{\epsilon}(F^{+})} \subset \overline{B_{2 \epsilon}(x_{\gamma_{n}}^{+})}, 
\end{align*} for all $n$ sufficiently large. Moreover, the restriction $\gamma_{n}|_{\mathcal{F} \smallsetminus N_{2 \epsilon} (\mathcal{Z}_{x_{\gamma_{n}}^{-}})}$ is $2 \epsilon$-Lipschitz. Along with (\ref{attrepdist}), this shows that $\gamma_{n}$ is $2 \epsilon$-contracting for all $n$ large enough, which is a contradiction. This concludes the proof of the proposition.  
\end{proof}
As in the above proposition, let $(x,y) \in \Lambda(\Gamma) \times \Lambda(\Gamma)^{-}$ be a transverse pair of limit points. In order to establish that the free semigroups we will construct are also Zariski dense, we will first need to show that the sets $\Gamma_{\mathcal{C},x,y,n,\epsilon}$ are Zariski dense for all $n \geq 1$ and $\epsilon > 0$ sufficiently small. We first need some more notation. For an open cone $\mathcal{V} \subset \mathfrak{a}^{++}$, $n \geq 1$, and $\epsilon > 0$ sufficiently small, define
\begin{align*}
&\Xi_{\mathcal{V},x,y,\epsilon} := \{g \in \Gamma : \lambda(g) \in \mathcal{V}, \ d(x_{g}^{+}, x) < \epsilon, \ \mathrm{and} \ d_{\mathrm{Haus}}(\mathcal{Z}_{x_{g}^{-}}, \mathcal{Z}_{y}) < \epsilon \}, \\
&\Xi_{\mathcal{V},x,y,n,\epsilon} := \{g \in \Gamma : \lambda(g) \in \mathcal{V}, \ ||\lambda(g)|| \geq n, \ d(x_{g}^{+}, x) < \epsilon, \ \mathrm{and} \ d_{\mathrm{Haus}}(\mathcal{Z}_{x_{g}^{-}}, \mathcal{Z}_{y}) < \epsilon \},
\end{align*}
and
\begin{align*}
\Upsilon_{\mathcal{V},x,y,n,\epsilon} := \{g \in \Gamma_{\mathrm{lox}} : \kappa(g) \in \mathcal{V}, \ ||\kappa(g)|| \geq n, \ d(x_{g}^{+}, x) < \epsilon, \ \mathrm{and} \ d_{\mathrm{Haus}}(\mathcal{Z}_{x_{g}^{-}}, \mathcal{Z}_{y}) < \epsilon \}.
\end{align*}
The following result, while not formulated in exactly this way in Benoist's work \cite{B2}, follows immediately by unraveling the definitions in his result (in fact, Benoist proves an even stronger result, but the following less general version is sufficient for our purposes).
\begin{lem} [Lemma 4.2 of \cite{B2}]
\label{ZariskidensityJordan}
Let $\mathcal{V} \subset \mathfrak{a}^{++}$ be an open cone intersecting the limit cone $\mathcal{L}_{\Gamma}$ and let $(x,y) \in \Lambda(\Gamma) \times \Lambda(\Gamma)^{-}$ be a transverse pair of limit points. Then for all $n \geq 1$ and $\epsilon > 0$ sufficiently small, the subset $\Xi_{\mathcal{V},x,y,n,\epsilon}$ of $\Gamma$ is still Zariski dense in $G$.
\end{lem}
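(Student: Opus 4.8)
The plan is to deduce the statement from Benoist's Lemma~4.2 in \cite{B2} by translating his formulation, which is phrased via the proximal Tits representations, into the intrinsic language used here. Recall that Benoist fixes, for each $\alpha \in \Delta$, a proximal irreducible algebraic Tits representation $\rho_{\alpha} \colon G \to \mathrm{GL}(V_{\alpha})$ \cite{Ti}, which together furnish a $G$-equivariant embedding of $\mathcal{F}$ into $\prod_{\alpha \in \Delta} \mathbb{P}(V_{\alpha})$ sending a flag to the tuple of its highest-weight lines. Under this embedding a loxodromic $g \in \Gamma$ becomes $\rho_{\alpha}(g)$-proximal for every $\alpha$, the attracting fixed point $x_{g}^{+} \in \mathcal{F}$ corresponds to the tuple $\bigl(V^{+}_{\rho_{\alpha}(g)}\bigr)_{\alpha}$ of attracting lines, and the repelling flag $x_{g}^{-} \in \mathcal{F}^{-}$ is encoded by the tuple $\bigl(H^{-}_{\rho_{\alpha}(g)}\bigr)_{\alpha}$ of repelling hyperplanes. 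In this language Benoist proves (in fact a partial-flag strengthening of) the following: if $\mathcal{V} \subset \mathfrak{a}^{++}$ is an open cone meeting $\mathcal{L}_{\Gamma}$ and $(x,y) \in \Lambda(\Gamma) \times \Lambda(\Gamma)^{-}$ is a transverse pair of limit points, then for every $n \geq 1$ and every sufficiently small neighborhood $\mathcal{O}$ in $\prod_{\alpha} \mathbb{P}(V_{\alpha}) \times \prod_{\alpha} \mathbb{P}(V_{\alpha}^{*})$ of the point attached to $(x,y)$, the set of loxodromic $g \in \Gamma$ with $\lambda(g) \in \mathcal{V}$, $\|\lambda(g)\| \geq n$, and attracting/repelling data lying in $\mathcal{O}$ is Zariski dense in $G$.

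The second step is to match Benoist's neighborhood $\mathcal{O}$ with the metric conditions defining $\Xi_{\mathcal{V},x,y,n,\epsilon}$. Since the Tits embedding $\mathcal{F} \hookrightarrow \prod_{\alpha} \mathbb{P}(V_{\alpha})$ is a homeomorphism onto its compact image, and likewise $\mathcal{F}^{-} \hookrightarrow \prod_{\alpha} \mathbb{P}(V_{\alpha}^{*})$, the condition $d(x_{g}^{+}, x) < \epsilon$ is, for $\epsilon$ small, equivalent to simultaneous closeness of the lines $V^{+}_{\rho_{\alpha}(g)}$ to the corresponding lines of $x$. Moreover, for $F \in \mathcal{F}^{-}$ the non-transversality locus $\mathcal{Z}_{F} \subset \mathcal{F}$ is exactly the set of flags having, in some coordinate $\alpha$, a highest-weight line contained in the hyperplane of $F$ in $\mathbb{P}(V_{\alpha})$; since $F \mapsto \mathcal{Z}_{F}$ is injective (as already noted in the excerpt) and continuous from $\mathcal{F}^{-}$ into the space of closed subsets of $\mathcal{F}$ with the Hausdorff metric, by compactness it is a homeomorphism onto its image, and hence $d_{\mathrm{Haus}}(\mathcal{Z}_{x_{g}^{-}}, \mathcal{Z}_{y}) < \epsilon$ is, for $\epsilon$ small, equivalent to closeness of the repelling hyperplanes $H^{-}_{\rho_{\alpha}(g)}$ to the hyperplanes of $y$ for all $\alpha$. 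Thus for $\epsilon$ small enough $\Xi_{\mathcal{V},x,y,n,\epsilon}$ contains the Zariski-dense set produced by Benoist's lemma, and is therefore Zariski dense in $G$. One also needs the standard fact that Benoist's limit set inside $\prod_{\alpha} \mathbb{P}(V_{\alpha})$ corresponds under the embedding to $\Lambda(\Gamma) \subset \mathcal{F}$ (and the dual limit set to $\Lambda(\Gamma)^{-} \subset \mathcal{F}^{-}$), so that ``transverse pair of limit points'' is the same notion on both sides; this is contained in \cite{B2} and \cite{Q1}.

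I expect the only genuinely delicate point to be this translation of the non-transversality data: one must check that ``$d_{\mathrm{Haus}}(\mathcal{Z}_{x_{g}^{-}}, \mathcal{Z}_{y})$ small'' and ``each $H^{-}_{\rho_{\alpha}(g)}$ close to the corresponding hyperplane of $y$'' really do control one another, with uniform implied constants, and that the $KA^{+}K$-data appearing in the definition of $\Xi$ lines up with the attracting and repelling flags (for which one would invoke, e.g., Proposition~\ref{attractingrepellinglocation}). One could in principle avoid \cite{B2} and argue intrinsically: the hypothesis $\mathcal{V} \cap \mathcal{L}_{\Gamma} \neq \emptyset$ together with convexity of the limit cone (Theorem~\ref{convexnonemptyinterior}) forces $\mathcal{V} \cap \mathrm{int}\,\mathcal{L}_{\Gamma} \neq \emptyset$, and since by Definition~\ref{limitcone} the rays $\mathbb{R}^{+}\lambda(g)$, $g \in \Gamma_{\mathrm{lox}}$, are dense in $\mathcal{L}_{\Gamma}$, there is $h \in \Gamma_{\mathrm{lox}}$ with $\lambda(h) \in \mathcal{V}$; one would then relocate the attracting and repelling data near $(x,y)$ using minimality of the $\Gamma$-action on $\Lambda(\Gamma)$, Proposition~\ref{northsouthflagvars}, and Proposition~\ref{attractingrepellinglocation}. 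The sticking point in that route, however, is ensuring the resulting family of elements is Zariski dense in $G$ — a family of conjugates of a single loxodromic element lies in a proper subvariety, so one must produce elements from many conjugacy classes — and it is precisely here that Benoist's argument does the substantive work, which is why quoting it is the efficient route.
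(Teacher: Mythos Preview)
Your approach is essentially what the paper does: it gives no proof at all, simply stating that the lemma ``follows immediately by unraveling the definitions'' in Benoist's Lemma~4.2 of \cite{B2} (and remarking that Benoist's statement is in fact stronger). Your write-up carries out that unraveling in detail, which is fine; one small slip is that $\Xi_{\mathcal{V},x,y,n,\epsilon}$ is defined directly via the attracting and repelling flags $x_{g}^{\pm}$ and the Jordan projection $\lambda(g)$, not via any $KA^{+}K$-data, so the parenthetical concern about lining those up (and the appeal to Proposition~\ref{attractingrepellinglocation}) is unnecessary here.
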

Before we can apply this result, we need a few more observations.
\begin{lem}
\label{JordanCartanforBenoistsset}
With the same notations as before, there exists $C = C(\epsilon) > 0$ so that for all $g \in \Xi_{\mathcal{V},x,y,\epsilon}$, we have
\begin{align*}
||\lambda(g) - \kappa(g)|| \leq C.
\end{align*}
\end{lem}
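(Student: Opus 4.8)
The plan is to show that, for $g \in \Xi_{\mathcal{V},x,y,\epsilon}$, the vectors $\lambda(g)$ and $\kappa(g)$ differ by a uniformly bounded amount, with the bound depending only on $\epsilon$ (and, implicitly, on the fixed data $x,y,\mathcal{V}$). The key structural fact is that $g$ is loxodromic with an attracting fixed point $x_g^+ \in \mathcal{F}$ lying within $\epsilon$ of $x$ and a repelling fixed point $x_g^- \in \mathcal{F}^-$ with $\mathcal{Z}_{x_g^-}$ within Hausdorff distance $\epsilon$ of $\mathcal{Z}_y$. Since $(x,y)$ is transverse, $d(x,\mathcal{Z}_y) =: \delta_0 > 0$, and for $\epsilon$ small the flags $x_g^+$ and $x_g^-$ are transverse, uniformly so: $d(x_g^+, \mathcal{Z}_{x_g^-}) \geq \delta_0 - 2\epsilon$.

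First I would recall the standard fact relating the Jordan projection to the Cartan projection along powers: $\lambda(g) = \lim_{n\to\infty} \kappa(g^n)/n$, and more precisely that for a loxodromic element $g$ there is a bounded-distance comparison between $\kappa(g^n)$ and $n\lambda(g)$. The cleanest route is via the Iwasawa cocycle: one has $\lambda(g) = B(g, x_g^+) + (\text{a bounded correction})$, or more directly $\lambda(g)$ equals the "translation part" of $g$ acting near its attracting flag. Concretely, I would fix a $KA^+K$ decomposition $g = k_g e^{\kappa(g)} \ell_g$. By Proposition \ref{northsouthflagvars} (or rather its quantitative content), the flag $k_g P$ is close to $x_g^+$ and $\ell_g^{-1} P^-$ is close to $x_g^-$, both to within a function of $\epsilon$ that tends to $0$; hence $d(k_g P, x) $ and $d_{\mathrm{Haus}}(\mathcal{Z}_{\ell_g^{-1}P^-}, \mathcal{Z}_y)$ are controlled. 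In particular, since $x$ is transverse to $y$, we get $d(k_g P, \mathcal{Z}_{\ell_g^{-1} P^-}) \geq \delta_0/2$ say, for $\epsilon$ small enough, which means $d(x_g^+, \mathcal{Z}_{\ell_g^{-1}P^-}) > \epsilon'$ for a uniform $\epsilon' = \epsilon'(\epsilon) > 0$.

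The main step is then to invoke Lemma \ref{BusemannCartanEstimate}: applied with the element $g$ and the flag $F = x_g^+$, it gives $\|B(g, x_g^+) - \kappa(g)\| < C_1$ for a constant $C_1 = C_1(\epsilon')$, since $x_g^+$ is bounded away from $\mathcal{Z}_{\ell_g^{-1}P^-}$ by $\epsilon'$. On the other hand, because $x_g^+$ is the attracting fixed point of $g$, it is a fixed point of the cocycle in the sense that the sequence $B(g^n, x_g^+)$ is additive — $B(g^n, x_g^+) = n\, B(g, x_g^+)$ by the cocycle relation $B(gh,F) = B(g,hF) + B(h,F)$ together with $g \cdot x_g^+ = x_g^+$. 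Combining this with the standard estimate $\|\kappa(g^n) - B(g^n, x_g^+)\| \leq$ (something not growing in $n$, again by Lemma \ref{BusemannCartanEstimate} applied to $g^n$, whose $\ell$-factor still has $x_g^+$ bounded away from the corresponding $\mathcal{Z}$ — this needs a small argument that the relevant transversality persists for all powers, which follows from $g$-invariance of $x_g^+$ and $x_g^-$), we get $\|\kappa(g^n) - n B(g,x_g^+)\|$ bounded uniformly in $n$, so dividing by $n$ and letting $n \to \infty$ yields $\lambda(g) = B(g, x_g^+)$. Then $\|\lambda(g) - \kappa(g)\| = \|B(g,x_g^+) - \kappa(g)\| < C_1(\epsilon')$, which is the desired bound with $C = C_1(\epsilon'(\epsilon))$.

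The step I expect to be the main obstacle is establishing the uniform (in $n$) bound on $\|\kappa(g^n) - B(g^n, x_g^+)\|$: one must check that the $KA^+K$ decomposition of $g^n$, say $g^n = k_{g^n} e^{\kappa(g^n)} \ell_{g^n}$, has the property that $\ell_{g^n}^{-1} P^-$ stays at definite distance from $x_g^+$, uniformly in $n$, so that Lemma \ref{BusemannCartanEstimate} applies with a single constant $C$. This should follow from the north–south dynamics of $g$ on $\mathcal{F}^-$: since $x_g^-$ is the attracting fixed point of $g^{-1}$ and $\ell_{g^n}^{-1}P^- \to x_g^-$ as $n\to\infty$ (by Proposition \ref{northsouthflagvars} applied to the sequence $g^n$, using $\alpha(\kappa(g^n)) = \alpha(n\lambda(g)) + O(1) \to \infty$), the sequence $\ell_{g^n}^{-1}P^-$ lies in a compact subset of $\mathcal{F}^- \smallsetminus \mathcal{Z}_{x_g^+}$ for all large $n$, and the finitely many small-$n$ terms contribute only a larger constant. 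Care is needed because the constant in Lemma \ref{BusemannCartanEstimate} depends on the distance to the $\mathcal{Z}$-set, so one must fix, once and for all, a lower bound for that distance that is uniform over $g \in \Xi_{\mathcal{V},x,y,\epsilon}$ and over $n \geq 1$; the transversality of $(x,y)$ together with the defining inequalities of $\Xi_{\mathcal{V},x,y,\epsilon}$ provides exactly such a uniform bound for the $n=1$ case, and $g$-invariance of the attracting/repelling flags propagates it to all $n$.
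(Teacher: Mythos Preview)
Your route via the Iwasawa cocycle is genuinely different from the paper's, and the identity $\lambda(g) = B(g, x_g^+)$ is correct and is established cleanly by your cocycle argument. The gap is precisely at the point you yourself flag as the main obstacle: the $n=1$ application of Lemma~\ref{BusemannCartanEstimate}. To bound $\|\lambda(g)-\kappa(g)\| = \|B(g,x_g^+)-\kappa(g)\|$ you need $d\big(x_g^+, \mathcal{Z}_{\ell_g^{-1}P^-}\big)$ bounded below \emph{uniformly} over $g \in \Xi_{\mathcal{V},x,y,\epsilon}$. You assert this ``follows from the transversality of $(x,y)$ together with the defining inequalities of $\Xi_{\mathcal{V},x,y,\epsilon}$'', but those inequalities constrain only the dynamical flags $x_g^{\pm}$, not the Cartan flag $\ell_g^{-1}P^-$. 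Your appeal to Proposition~\ref{northsouthflagvars} to link $\ell_g^{-1}P^-$ with $x_g^-$ is circular: that proposition needs $\alpha(\kappa(g_n))\to\infty$ for every $\alpha\in\Delta$, whereas a priori nothing prevents $\kappa(g)$ from drifting toward a wall of $\mathfrak{a}^+$ even though $\lambda(g)\in\mathcal{V}\subset\mathfrak{a}^{++}$. Ruling that out is essentially equivalent to the lemma you are proving. (Discreteness of $\Gamma$ disposes of the elements with bounded $\|\kappa(g)\|$, but not of a hypothetical sequence with $\|\kappa(g_m)\|\to\infty$ along a wall; the spectral-radius inequalities $\omega_\alpha(\lambda(g))\le\omega_\alpha(\kappa(g))$ do not by themselves force $\alpha(\kappa(g))$ to be large.)

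The paper sidesteps this circularity with a short structural argument that directly exploits the hypothesis on $(x_g^+,x_g^-)$. Since these pairs range over a compact subset $Q$ of the open $G$-orbit in $\mathcal{F}\times\mathcal{F}^-$, there is a compact set $Q'\subset G$ so that each $g\in\Xi_{\mathcal{V},x,y,\epsilon}$ can be written as $g = h_g^{-1}(m_g a_g)h_g$ with $h_g\in Q'$, $m_g\in M$, $a_g\in A$; then $\lambda(g)=\kappa(a_g)$, while Lemma~\ref{differencecartanprojections} gives $\|\kappa(g)-\kappa(a_g)\|\le 2\sup_{h\in Q'}\|\kappa(h)\|$. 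This conjugation-by-a-bounded-element trick is exactly what converts control on $x_g^\pm$ into control on $\kappa(g)$, and it (or an equivalent device) is the missing ingredient in your argument.
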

\begin{proof}
By definition, we have $d(x_{g}^{+},x) < \epsilon$ and $d_{\mathrm{Haus}}(\mathcal{Z}_{x_{g}^{-}}, \mathcal{Z}_{y}) < \epsilon$ for all $g \in \Xi_{\mathcal{V},x,y,\epsilon}$. Thus there exists a compact set $Q \subset G \cdot (P,P^{-})$ so that $(x_{g}^{+}, x_{g}^{-}) \in Q$ for all $g \in \Xi_{\mathcal{V},x,y,\epsilon}$. Notice that there exists a compact subset $Q' \subset G$ so that for all $(a,b) \in Q$, there exists $h \in Q'$ such that $ha = P$ and $hb = P^{-}$. In particular, for every $g \in \Xi_{\mathcal{V},x,y,\epsilon}$, there exists $h_{g} \in Q'$ so that $h_{g} x_{g}^{+} = P$ and $h_{g} x_{g}^{-} = P^{-}$. Therefore $h_{g}gh_{g}^{-1}$ fixes $(P,P^{-}) \in \mathcal{F} \times \mathcal{F}^{-}$, hence 
\begin{align*}
h_{g}gh_{g}^{-1} = m_{g} a_{g}
\end{align*} for some $m_{g} \in M$ and $a_{g} \in F \subset A$, where $F$ is a compact subset of $A$ depending only on $Q'$. Writing $g = h_{g}^{-1} m_{g} a_{g} h_{g}$, we see that there is a compact subset $V \subset A$ so that
\begin{align*}
\kappa (g) \subset \kappa (a_{g}) + V,
\end{align*} and moreover $\lambda(g) = \kappa(a_{g})$, for all $g \in \Xi_{\mathcal{V},x,y,\epsilon}$. Hence, for some $C = C(\epsilon) > 0$ large enough, we have
\begin{align*}
||\lambda(g) - \kappa(g)|| \leq C
\end{align*} for all $g \in \Xi_{\mathcal{V},x,y,\epsilon}$, as desired. 
\end{proof}
\begin{lem}
\label{InclusionCartanSets}
For all $n \geq 1$ sufficiently large and $\epsilon > 0$ small enough, we have
\begin{align*}
\Upsilon_{\mathcal{V},x,y,n,\epsilon} \subset \Gamma_{\mathcal{V},x,y,n, 2 \epsilon}.
\end{align*}
\end{lem}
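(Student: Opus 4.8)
The plan is to run a compactness-and-contradiction argument in the spirit of Lemmas \ref{inclusionofomegas}--\ref{limitsetcrosslimitset3}, but the essential new device is to conjugate the loxodromic elements in question into $MA^{++}$ by a uniformly bounded amount; this lets one recover the Cartan data of such an element from north--south dynamics rather than the other way around, thereby breaking an apparent circularity.

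Fix $\epsilon > 0$ small enough that $8\epsilon < d(x,\mathcal{Z}_{y})$ — possible since $(x,y)$ is a transverse pair — and suppose the inclusion fails for arbitrarily large $n$. Then there are $m_{n} \to \infty$ and $g_{n} \in \Upsilon_{\mathcal{V},x,y,m_{n},\epsilon} \smallsetminus \Gamma_{\mathcal{V},x,y,m_{n},2\epsilon}$. Each $g_{n}$ is loxodromic with $d(x_{g_{n}}^{+},x) < \epsilon$ and $d_{\mathrm{Haus}}(\mathcal{Z}_{x_{g_{n}}^{-}},\mathcal{Z}_{y}) < \epsilon$; since $8\epsilon < d(x,\mathcal{Z}_{y})$ this forces $d(x_{g_{n}}^{+},\mathcal{Z}_{x_{g_{n}}^{-}}) > 6\epsilon$, so the pairs $(x_{g_{n}}^{+},x_{g_{n}}^{-})$ all lie in a fixed compact subset of the open $G$-orbit $G \cdot (P,P^{-})$. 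Exactly as in the proof of Lemma \ref{JordanCartanforBenoistsset}, one may therefore write $g_{n} = h_{n}^{-1} m_{n} \exp(\lambda(g_{n})) h_{n}$ with $h_{n}$ in a fixed compact set $Q' \subset G$ and $m_{n} \in M$; consequently $x_{g_{n}}^{+} = h_{n}^{-1}P$, $x_{g_{n}}^{-} = h_{n}^{-1}P^{-}$, and $\|\lambda(g_{n}) - \kappa(g_{n})\|$ is bounded by a constant depending only on $\epsilon$. In particular $\|\lambda(g_{n})\| \to \infty$, and since $\kappa(g_{n}) \in \mathcal{V}$ (taken, as in all applications of this lemma, with $\overline{\mathcal{V}} \smallsetminus \{0\} \subset \mathfrak{a}^{++}$) we also get $\min_{\alpha \in \Delta}\alpha(\lambda(g_{n})) \to \infty$. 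Passing to a subsequence, let $h_{n} \to h_{\infty}$ and $m_{n} \to m_{\infty} \in M$, and set $F^{+} := h_{\infty}^{-1}P$ and $F^{-} := h_{\infty}^{-1}P^{-}$; these form a transverse pair and are the limits of $x_{g_{n}}^{+}$ and $x_{g_{n}}^{-}$.

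The key point is to show $k_{g_{n}}P \to F^{+}$ and $\ell_{g_{n}}^{-1}P^{-} \to F^{-}$. Consider the sequence $a_{n} := m_{n}\exp(\lambda(g_{n}))$: its $KA^{+}K$ decomposition $a_{n} = m_{n}\exp(\lambda(g_{n}))e$ has $k$-part $m_{n} \to m_{\infty} \in M \subset P$ (so $m_{n}P \to P$), $\ell$-part $e$, and $\min_{\alpha}\alpha(\kappa(a_{n})) = \min_{\alpha}\alpha(\lambda(g_{n})) \to \infty$; hence by Proposition \ref{northsouthflagvars} the maps $a_{n}$ converge to the constant $P$ uniformly on compact subsets of $\mathcal{F} \smallsetminus \mathcal{Z}_{P^{-}}$, and $a_{n}^{-1}$ converge to the constant $P^{-}$ uniformly on compact subsets of $\mathcal{F}^{-} \smallsetminus \mathcal{Z}_{P}$. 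Writing $g_{n} = h_{n}^{-1}a_{n}h_{n}$ and using that the $h_{n}$ (being in a fixed compact set) act on $\mathcal{F}$ and $\mathcal{F}^{-}$ with uniformly bounded Lipschitz constants and satisfy $h_{n}^{\pm 1} \to h_{\infty}^{\pm 1}$, one deduces $g_{n}(F) \to F^{+}$ for every $F$ in the open set $\mathcal{F} \smallsetminus \mathcal{Z}_{h_{\infty}^{-1}P^{-}} = \mathcal{F} \smallsetminus \mathcal{Z}_{F^{-}}$, and $g_{n}^{-1}(F) \to F^{-}$ for every $F$ in the open set $\mathcal{F}^{-} \smallsetminus \mathcal{Z}_{h_{\infty}^{-1}P} = \mathcal{F}^{-} \smallsetminus \mathcal{Z}_{F^{+}}$. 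By the implication (4)$\Rightarrow$(1) of Proposition \ref{northsouthflagvars}, this forces $k_{g_{n}}P \to F^{+}$ and $\ell_{g_{n}}^{-1}P^{-} \to F^{-}$ (together with $\min_{\alpha}\alpha(\kappa(g_{n})) \to \infty$).

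Finally, since $x_{g_{n}}^{+} \to F^{+}$ with $d(x_{g_{n}}^{+},x) < \epsilon$ for all $n$, we obtain $d(k_{g_{n}}P,x) \to d(F^{+},x) \leq \epsilon < 2\epsilon$; similarly $d_{\mathrm{Haus}}(\mathcal{Z}_{\ell_{g_{n}}^{-1}P^{-}},\mathcal{Z}_{y}) \to d_{\mathrm{Haus}}(\mathcal{Z}_{F^{-}},\mathcal{Z}_{y}) \leq \epsilon < 2\epsilon$. Combined with $\kappa(g_{n}) \in \mathcal{V}$ and $\|\kappa(g_{n})\| \geq m_{n}$, this shows $g_{n} \in \Gamma_{\mathcal{V},x,y,m_{n},2\epsilon}$ for all large $n$ along the subsequence, contradicting $g_{n} \notin \Gamma_{\mathcal{V},x,y,m_{n},2\epsilon}$. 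The main obstacle is exactly the circularity that makes a direct approach fail: bounding $d(k_{g_{n}}P,x)$ seems to require knowing a priori that $k_{g_{n}}P$ is close to $x_{g_{n}}^{+}$, which is precisely what one is trying to prove. Conjugating into $MA^{++}$ dissolves this — the residual drift $m_{n}P \to P$ of the $k$-part is harmless precisely because $m_{\infty} \in M \subset P \cap P^{-}$ — and then the implication (4)$\Rightarrow$(1) of Proposition \ref{northsouthflagvars} reads the Cartan data off the north--south dynamics.
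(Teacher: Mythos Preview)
Your proof is correct and takes a genuinely different route from the paper's. The paper argues from the Cartan side: it passes to a subsequence where $k_{g_{n}}P \to F^{+}$, $\ell_{g_{n}}^{-1}P^{-} \to F^{-}$, and (say) $d(k_{g_{n}}P,x) \geq 2\epsilon$, then invokes Proposition~\ref{northsouthflagvars} to conclude $d(x_{g_{n}}^{+},x) > \tfrac{3\epsilon}{2}$, contradicting membership in $\Upsilon$. That last step is terse --- it implicitly needs $x_{g_{n}}^{+} \to F^{+}$, which in turn wants $F^{+}$ and $F^{-}$ to be transverse (so that the contraction statement in the ``moreover'' clause, or Proposition~\ref{attractingrepellinglocation}, applies), and that transversality is not established from the Cartan data alone.

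Your approach sidesteps this by going in the opposite direction: you start from the dynamical data $(x_{g_{n}}^{+},x_{g_{n}}^{-})$, which you already know lies in a fixed compact set of transverse pairs, and use the conjugation device of Lemma~\ref{JordanCartanforBenoistsset} to write $g_{n} = h_{n}^{-1}m_{n}\exp(\lambda(g_{n}))h_{n}$ with $h_{n}$ bounded. The north--south dynamics of $m_{n}\exp(\lambda(g_{n}))$ is then transparent, and after conjugating back you recover the Cartan data via the implication $(4)\Rightarrow(1)$ of Proposition~\ref{northsouthflagvars}. This is longer but more self-contained: it makes explicit why the limits of $k_{g_{n}}P$ and of $x_{g_{n}}^{+}$ coincide (both equal $h_{\infty}^{-1}P$), whereas the paper leaves that identification to the reader. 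One small caveat: you need $\overline{\mathcal V}\smallsetminus\{0\}\subset\mathfrak a^{++}$ to force $\min_{\alpha}\alpha(\lambda(g_{n}))\to\infty$; this is not literally in the lemma's hypotheses, but it holds in the only application (Proposition~\ref{ZariskidensityCartanset}), and the paper's own proof tacitly uses the same assumption when asserting $\min_{\alpha}\alpha(\kappa(g_{n_j}))\to\infty$.
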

\begin{proof}
Recall first that, by Proposition \ref{limitsetcrosslimitset3}, the elements of $\Gamma_{\mathcal{V},x,y,n, 2 \epsilon}$ are in particular loxodromic provided $n \geq 1$ is large enough and $\epsilon > 0$ is small enough. If the lemma does not hold, then there exists a sequence $\{m_{n}\}$ of integers with $m_{n} \rightarrow \infty$ and a sequence of elements $\{g_{n}\}$ with $g_{n} \in \Upsilon_{\mathcal{V},x,y,n, \epsilon}$ so that either $d(k_{g_{n}}P, x) \geq 2 \epsilon$ or $d_{\mathrm{Haus}}(\mathcal{Z}_{\ell_{g_{n}}^{-1}P^{-}}, \mathcal{Z}_{y}) \geq 2 \epsilon$ for all $n$. Without loss of generality, after passing to a subsequence $\{g_{n_{j}}\}$, we may assume that  
\begin{align*}
d(k_{g_{n_{j}}}P, x) \geq 2 \epsilon \ \ \mathrm{for \ all} \ \ j \geq 1, \ \ k_{g_{n_{j}}}P \rightarrow F^{+} \in \mathcal{F}, \ \ \ell_{g_{n_{j}}}^{-1}P^{-} \rightarrow F^{-},
\end{align*} and also $\min_{\alpha \in \Delta} \alpha (\kappa(g_{n_{j}})) \rightarrow \infty$ (a similar argument applies if instead we assume $d_{\mathrm{Haus}}(\mathcal{Z}_{\ell_{g_{n_{j}}}^{-1}P^{-}}, \mathcal{Z}_{y}) \geq 2 \epsilon$ for all $j \geq 1$). But then $d(F^{+},x) \geq 2 \epsilon$, hence by Proposition \ref{northsouthflagvars} we have $d(x_{g_{n_{j}}}^{+},x) > \frac{3 \epsilon}{2}$ for all $j$ sufficiently large. This contradicts the fact that $g_{n_{j}} \in \Upsilon_{\mathcal{V},x,y,n_{j}, \epsilon}$, which concludes the proof. 
\end{proof}
We now prove that the sets $\Gamma_{\mathcal{C},x,y,n,\epsilon}$ are Zariski dense for all $n \geq 1$ sufficiently large and $\epsilon > 0$ small enough.
\begin{prop}
\label{ZariskidensityCartanset}
For all $n \geq 1$ sufficiently large and $\epsilon > 0$ sufficiently small, the subset $\Gamma_{\mathcal{C},x,y,n,\epsilon}$ of $\Gamma$ is still Zariski dense in $G$. 
\end{prop}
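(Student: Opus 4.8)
The plan is to chain together the three lemmas just established. Lemma \ref{ZariskidensityJordan} gives Zariski density of the ``Jordan'' sets $\Xi_{\mathcal{V},x,y,m,\epsilon'}$, Lemma \ref{JordanCartanforBenoistsset} says that on such sets the Jordan and Cartan projections agree up to a bounded additive error, and Lemma \ref{InclusionCartanSets} says that the ``loxodromic Cartan'' sets $\Upsilon$ are contained in the sets $\Gamma_{\mathcal{C},x,y,n,\epsilon}$. Combining these, we will exhibit some $\Xi_{\mathcal{V},x,y,m,\epsilon'}$ inside the target set $\Gamma_{\mathcal{C},x,y,n,\epsilon}$, so that the latter inherits Zariski density. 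First I would shrink the cone: since $u \in \mathcal{C}$ and $\psi_{\Gamma}(u) > \delta + r > 0$, the open cone $\mathcal{C}$ meets $\mathcal{L}_{\Gamma}$, and since $\mathcal{L}_{\Gamma}$ is convex with non-empty interior by Theorem \ref{convexnonemptyinterior}, it in fact meets $\mathrm{int}(\mathcal{L}_{\Gamma})$. Hence I may fix an open cone $\mathcal{V} \subset \mathfrak{a}^{++}$ with $\overline{\mathcal{V}} \smallsetminus \{0\} \subset \mathcal{C}$ and $\mathcal{V} \cap \mathrm{int}(\mathcal{L}_{\Gamma}) \neq \emptyset$. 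Because $\overline{\mathcal{V}} \cap \mathbb{S}$ is a compact subset of the (relatively) open set $\mathcal{C} \cap \mathbb{S}$, there is an angular margin $\theta > 0$ such that any nonzero vector making an angle less than $\theta$ with some element of $\mathcal{V}$ still lies in $\mathcal{C}$.

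Next I fix $\epsilon > 0$ and set $\epsilon' := \epsilon/2$; shrinking $\epsilon$ if necessary, we may assume $\epsilon'$ is small enough that Lemma \ref{ZariskidensityJordan} applies to the cone $\mathcal{V}$, that Lemma \ref{JordanCartanforBenoistsset} applies (yielding a constant $C = C(\epsilon') > 0$), and that Lemma \ref{InclusionCartanSets} applies to the cone $\mathcal{C}$, say for all $n \geq N_1 = N_1(\epsilon')$. Fix $n \geq N_1$ and choose $m = m(n,\epsilon') \geq 1$ large enough that $m \geq n + C$ and $C/(m - C) < \sin\theta$. The key claim is
\[
\Xi_{\mathcal{V},x,y,m,\epsilon'} \subset \Gamma_{\mathcal{C},x,y,n,\epsilon}.
\]
Indeed, let $g \in \Xi_{\mathcal{V},x,y,m,\epsilon'}$. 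Then $g$ is loxodromic, since $\lambda(g) \in \mathcal{V} \subset \mathfrak{a}^{++}$, and $\|\lambda(g)\| \geq m$, $d(x_g^+,x) < \epsilon'$, $d_{\mathrm{Haus}}(\mathcal{Z}_{x_g^-},\mathcal{Z}_y) < \epsilon'$. By Lemma \ref{JordanCartanforBenoistsset} we have $\|\kappa(g) - \lambda(g)\| \leq C$; since $\|\lambda(g)\| \geq m$, the estimate $C/(m-C) < \sin\theta$ forces $\kappa(g)$ to make angle less than $\theta$ with $\lambda(g) \in \mathcal{V}$, so $\kappa(g) \in \mathcal{C}$, and moreover $\|\kappa(g)\| \geq \|\lambda(g)\| - C \geq m - C \geq n$. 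Therefore $g \in \Upsilon_{\mathcal{C},x,y,n,\epsilon'}$, and by Lemma \ref{InclusionCartanSets} (with the cone $\mathcal{C}$) we get $g \in \Gamma_{\mathcal{C},x,y,n,2\epsilon'} = \Gamma_{\mathcal{C},x,y,n,\epsilon}$, proving the claim.

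To finish, apply Lemma \ref{ZariskidensityJordan} to the cone $\mathcal{V}$, which intersects $\mathcal{L}_{\Gamma}$, the transverse pair $(x,y)$, the integer $m$, and the small constant $\epsilon'$: it follows that $\Xi_{\mathcal{V},x,y,m,\epsilon'}$ is Zariski dense in $G$. Since this set is contained in $\Gamma_{\mathcal{C},x,y,n,\epsilon}$, the latter is Zariski dense in $G$ as well, which is the assertion of the proposition. The only genuinely non-formal point in this argument is the cone bookkeeping inside the claim: the Cartan projection of an element of $\Xi_{\mathcal{V},x,y,m,\epsilon'}$ is controlled only up to the additive error $C(\epsilon')$ of Lemma \ref{JordanCartanforBenoistsset}, so one must take $\mathcal{V}$ strictly inside $\mathcal{C}$ and then require the lower norm bound $m$ (hence $n$) to be large relative to $C(\epsilon')$ via the angular margin $\theta$; this is precisely why the conclusion holds only for $n$ sufficiently large and $\epsilon$ sufficiently small.
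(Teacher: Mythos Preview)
Your proof is correct and follows essentially the same route as the paper: choose a smaller cone $\mathcal{V}$ strictly inside $\mathcal{C}$, use Lemma~\ref{JordanCartanforBenoistsset} to pass from the Jordan-projection set $\Xi_{\mathcal{V},x,y,m,\epsilon'}$ to the Cartan-projection set $\Upsilon_{\mathcal{C},x,y,n,\epsilon'}$, then invoke Lemma~\ref{InclusionCartanSets} and Lemma~\ref{ZariskidensityJordan}. Your version is in fact a bit more careful than the paper's, which glosses over the norm shift by writing $\Xi_{\mathcal{V},x,y,n,\epsilon/2} \subset \Upsilon_{\mathcal{C},x,y,n,\epsilon/2}$ directly; your explicit introduction of the auxiliary parameter $m \geq n + C$ and the angular margin $\theta$ makes the cone and norm bookkeeping transparent.
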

\begin{proof}
Fix an open cone $\mathcal{V} \subset \mathfrak{a}^{++}$ containing $u \in \mathfrak{a}^{++}$ and such that $\overline{\mathcal{V}} \subsetneq \mathcal{C}$. By Lemma \ref{ZariskidensityJordan} and Lemma \ref{InclusionCartanSets}, for $n \geq 1$ large enough and $\epsilon > 0$ small enough, we know that:
\begin{itemize}
    \item[(1)] The set $\Xi_{\mathcal{V},x,y,n, \frac{\epsilon}{2}}$ is Zariski dense in $G$.
    \item[(2)] We have the inclusion of sets $\Upsilon_{\mathcal{C},x,y,n, \frac{\epsilon}{2}} \subset \Gamma_{\mathcal{C},x,y,n, \epsilon}$.
\end{itemize} By Lemma \ref{JordanCartanforBenoistsset}, there exists $C = C(\epsilon/2) > 0$ so that $||\lambda(g) - \kappa (g)|| \leq C$ for all $g \in \Xi_{\mathcal{V},x,y, \frac{\epsilon}{2}}$. Since $\overline{\mathcal{V}}$ is a proper subset of $\mathcal{C}$, we obtain
\begin{align*}
\Xi_{\mathcal{V},x,y,n, \frac{\epsilon}{2}} \subset \Upsilon_{\mathcal{C},x,y,n, \frac{\epsilon}{2}} \subset \Gamma_{\mathcal{C},x,y,n,\epsilon},     
\end{align*} provided $n \geq 1$ is large enough. By item (1) above, it follows that $\Gamma_{\mathcal{C},x,y,n,\epsilon}$ is Zariski dense in $G$, as desired.
\end{proof}
The elements in the generating sets of our free semigroups will come from examining certain annular regions of the sets $\Gamma_{\mathcal{C},x,y, n, \epsilon}$. Namely, we will study sets of the form
\begin{align*}
\mathcal{A}_{\mathcal{C},x,y,n,w,\epsilon} := \{\gamma \in \Gamma_{\mathcal{C},x,y,n,\epsilon} : n \leq ||\kappa(\gamma)|| < n+w \},
\end{align*} where $n,w \geq 1$ are integers.
\begin{lem}
\label{AnnularPoincDiv}
For any $0 < \delta_{0} < \delta + r$ and any integer $w \geq 1$, we have
\begin{align*}
\limsup_{n \to \infty} \sum_{\gamma \in \mathcal{A}_{\mathcal{C},x,y,n,w,\epsilon}} e^{-\delta_{0} ||\kappa(\gamma)||} = \infty.
\end{align*}
\end{lem}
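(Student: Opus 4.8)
The plan is to argue by contradiction, using the divergence of the full Poincar\'e series at the exponent $\delta + r$ furnished by Lemma \ref{limitsetcrosslimitset2}. Suppose the asserted $\limsup$ equals some finite number $L$. Since $\Gamma$ is discrete, the set $\{\gamma \in \Gamma : ||\kappa(\gamma)|| < n + w\}$ is finite, so each annulus $\mathcal{A}_{\mathcal{C},x,y,n,w,\epsilon}$ is finite and the annular sum $\sum_{\gamma \in \mathcal{A}_{\mathcal{C},x,y,n,w,\epsilon}} e^{-\delta_0 ||\kappa(\gamma)||}$ is finite for every $n \geq 1$. Combined with the finiteness of the $\limsup$, this upgrades to a genuine uniform bound
\begin{align*}
M := \sup_{n \geq 1} \ \sum_{\gamma \in \mathcal{A}_{\mathcal{C},x,y,n,w,\epsilon}} e^{-\delta_0 ||\kappa(\gamma)||} < \infty.
\end{align*}

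Next I would note that the intervals $[1+kw,\, 1+(k+1)w)$ for $k \geq 0$ partition $[1,\infty)$, and that the remaining defining conditions of $\mathcal{A}_{\mathcal{C},x,y,n,w,\epsilon}$ (cone membership of $\kappa(\gamma)$, proximity of $k_{\gamma}P$ to $x$, proximity of $\mathcal{Z}_{\ell_{\gamma}^{-1}P^{-}}$ to $\mathcal{Z}_y$) are exactly those of $\Gamma_{\mathcal{C},x,y,1,\epsilon}$. Hence one has the disjoint decomposition
\begin{align*}
\Gamma_{\mathcal{C},x,y,1,\epsilon} = \bigsqcup_{k=0}^{\infty} \mathcal{A}_{\mathcal{C},x,y,1+kw,w,\epsilon}.
\end{align*}

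Finally, on the $k$-th annulus every element satisfies $||\kappa(\gamma)|| \geq 1+kw$, so, writing $\delta+r = \delta_0 + (\delta+r-\delta_0)$ with $\delta+r-\delta_0 > 0$ and bounding the factor $e^{-(\delta+r-\delta_0)||\kappa(\gamma)||}$ by its largest value $e^{-(\delta+r-\delta_0)(1+kw)}$ on that annulus, I obtain
\begin{align*}
\sum_{\gamma \in \mathcal{A}_{\mathcal{C},x,y,1+kw,w,\epsilon}} e^{-(\delta+r)||\kappa(\gamma)||} &\leq e^{-(\delta+r-\delta_0)(1+kw)} \sum_{\gamma \in \mathcal{A}_{\mathcal{C},x,y,1+kw,w,\epsilon}} e^{-\delta_0 ||\kappa(\gamma)||} \\
&\leq M\, e^{-(\delta+r-\delta_0)(1+kw)}.
\end{align*}
Summing over $k \geq 0$ gives a convergent geometric series, so $\sum_{\gamma \in \Gamma_{\mathcal{C},x,y,1,\epsilon}} e^{-(\delta+r)||\kappa(\gamma)||} < \infty$, contradicting Lemma \ref{limitsetcrosslimitset2} applied with $n=1$. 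Therefore the $\limsup$ is infinite, as claimed.

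I do not anticipate a serious obstacle: the argument is essentially an interpolation between the two exponents $\delta_0$ and $\delta+r$, exploiting that divergence at $\delta+r$ forces the annular masses (measured at the smaller exponent $\delta_0$) to grow without bound infinitely often. The only two points requiring a little care are promoting the finiteness of $\limsup_n$ to the uniform bound $M$ over all $n$ — which is where discreteness of $\Gamma$ is used, to absorb the finitely many small values of $n$ — and checking that the chosen annuli exhaust $\Gamma_{\mathcal{C},x,y,1,\epsilon}$ without overlap; both are routine.
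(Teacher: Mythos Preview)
Your proposal is correct and follows essentially the same approach as the paper: assume the $\limsup$ is finite, upgrade to a uniform bound $M$, decompose $\Gamma_{\mathcal{C},x,y,n,\epsilon}$ into disjoint annuli of width $w$, and compare the series at exponents $\delta_0$ and $\delta+r$ to obtain a convergent geometric series contradicting Lemma~\ref{limitsetcrosslimitset2}. The only cosmetic differences are that the paper uses annuli $\mathcal{A}_{\mathcal{C},x,y,nw,w,\epsilon}$ for $n \geq 1$ (exhausting $\Gamma_{\mathcal{C},x,y,w,\epsilon}$) rather than your $\mathcal{A}_{\mathcal{C},x,y,1+kw,w,\epsilon}$, and is terser about why the uniform bound $M$ exists.
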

\begin{proof}
Suppose to the contrary that there exists some $0 < \delta_{0} < \delta + r$ and $w \geq 1$ so that
\begin{align*}
\limsup_{n \to \infty} \sum_{\gamma \in \mathcal{A}_{\mathcal{C},x,y,n,w,\epsilon}} e^{-\delta_{0} ||\kappa(\gamma)||} < \infty.
\end{align*} This implies that there exists a constant $M > 0$ so that 
\begin{align*}
\sum_{\gamma \in \mathcal{A}_{\mathcal{C},x,y,n,w,\epsilon}} e^{-\delta_{0} ||\kappa(\gamma)||} \leq M
\end{align*} for all $n \geq 1$. Hence,
\begin{align*}
\sum_{\gamma \in \Gamma_{\mathcal{C}, x, y, w, \epsilon}} e^{-(\delta + r) ||\kappa(\gamma)||} &= \sum_{n=1}^{\infty} \sum_{\gamma \in \mathcal{A}_{\mathcal{C},x,y,nw,w,\epsilon}} e^{-(\delta + r)||\kappa(\gamma)||} \\
&\leq \sum_{n=1}^{\infty} \bigg( e^{-(\delta + r - \delta_{0})nw} \sum_{\gamma \in \mathcal{A}_{\mathcal{C},x,y,nw,w,\epsilon}} e^{-\delta_{0} ||\kappa(\gamma)||} \bigg) \\
&\leq M \sum_{n=1}^{\infty} e^{-w(\delta + r - \delta_{0}) n} < \infty,
\end{align*} which contradicts Lemma \ref{limitsetcrosslimitset2}. This concludes the proof.
\end{proof}
\section{Relations Between Two Different Notions of Shadows}
\label{symspaceshadowssection}
It will be important in our main construction to be able to relate the shadows we have defined for $\epsilon$-contracting elements to another notion of higher-rank shadows which has already been used extensively in the literature (see for instance \cite{KOW1}, \cite{KOW2}, and \cite{LO}). Recall that $X = G/K$ denotes the symmetric space of $G$. In what follows, for $q \in X$ and $R > 0$, we let $B_{R}(q) := \{x \in X : d_{X}(x,q) < R\}$.
\begin{defn} [symmetric space shadows]
\label{symspshadows}
For $p \in X$, the \emph{shadow} $\mathcal{O}_{R}(p,q) \subset \mathcal{F}$ \emph{of the ball} $B_{R}(q)$ \emph{viewed from} $p$ is defined to be
\begin{align*}
\mathcal{O}_{R}(p,q) := \{gP \in \mathcal{F} : g \in G, \ go = p, \ \mathrm{and} \ gA^{+}o \cap B_{R}(q) \neq \emptyset \}.
\end{align*} We also define the shadow $\mathcal{O}_{R}(\eta, p) \subset \mathcal{F}$, viewed from $\eta \in \mathcal{F}^{-}$, by
\begin{align*}
\mathcal{O}_{R}(\eta,p) := \{gP \in \mathcal{F} : g \in G, \ gk_{0}P^{-} = \eta, \ \mathrm{and} \ go \in B(p,R)\}.
\end{align*}
\end{defn}
Notice that, by definition, every point of $\mathcal{O}_{R}(\eta, p)$ is transverse to $\eta$, that is, $\mathcal{O}_{R}(\eta, p) \subset \mathcal{F} \smallsetminus \mathcal{Z}_{\eta}$. We will use the following result of Lee--Oh.
\begin{prop} [Lemma 5.6 of \cite{LO}]
\label{continuityonviewpoints}
Let $p \in X$, $\eta \in \mathcal{F}^{-}$, and $R > 0$ be arbitrary. If $\{g_{n}\} \subset G$ is such that $\alpha(\kappa(g_{n})) \rightarrow \infty$ for all $\alpha \in \Delta$ and $k_{g_{n}}P^{-} \rightarrow \eta$, then for all $0 < \epsilon < R$, we have 
\begin{align*}
\mathcal{O}_{R - \epsilon}(g_{n}o,p) \subset \mathcal{O}_{R}(\eta, p) \subset \mathcal{O}_{R + \epsilon}(g_{n}o, p),
\end{align*} for all $n$ sufficiently large.
\end{prop}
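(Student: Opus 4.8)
I would prove the two inclusions by a single dictionary between the two ways of exhibiting a flag $F$ in a shadow of a ball — ``seen from the point $g_{n}o$'' versus ``seen from the boundary flag $\eta$'' — converting a representative of one into a representative of the other at the cost of the $\epsilon$ in the radius. Two elementary moves do the bookkeeping. First, if $F=hP$ and $a\in A^{+}$, then $haP=hP=F$ and $(ha)P^{-}=hP^{-}$, because $A\subset P\cap P^{-}$; thus ``flowing'' along the positive Weyl chamber slides the base point $ho$ along the maximal flat $hAo$ without disturbing $F$ or its transverse companion $hP^{-}\in\mathcal{F}^{-}$. Second, since $\mathrm{Stab}_{G}(F)$ is a parabolic acting transitively on the flags of $\mathcal{F}^{-}$ transverse to $F$, whenever $\zeta\in\mathcal{F}^{-}$ is transverse to $F$ and close to $hP^{-}$ there is $u\in\mathrm{Stab}_{G}(F)$ close to the identity with $u\cdot hP^{-}=\zeta$; and because the points $u$ will be applied to all lie in the fixed compact ball $\overline{B_{R}(p)}$, one can arrange $u$ to move them by less than $\epsilon$, uniformly. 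The bridge relating the two descriptions is the uniform north--south dynamics of Proposition \ref{northsouthflagvars}.

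For $\mathcal{O}_{R-\epsilon}(g_{n}o,p)\subset\mathcal{O}_{R}(\eta,p)$, I would take $F$ in the left-hand side, pick $h$ with $ho=g_{n}o$ and $hP=F$, and $a\in A^{+}$ with $hao\in B_{R-\epsilon}(p)$; writing $h=g_{n}\kappa_{n}$ with $\kappa_{n}\in K$, this says $\kappa_{n}P=g_{n}^{-1}F$ and the cone $\kappa_{n}A^{+}o$ meets $B_{R-\epsilon}(g_{n}^{-1}p)$. Now $g_{n}^{-1}p$ is a regular point escaping to infinity — its Cartan projection stays within $d_{X}(o,p)$ of $\iota\kappa(g_{n})$ by Lemma \ref{differencecartanprojections} — and chasing the decomposition $g_{n}=k_{g_{n}}e^{\kappa(g_{n})}\ell_{g_{n}}$ identifies the direction of $g_{n}^{-1}p$, up to $o(1)$, with the attracting flag of $g_{n}^{-1}$ on $\mathcal{F}$; since the point-shadow of a bounded ball about a far regular point collapses to the single direction of that point (a consequence of Lemma \ref{differencecartanprojections} and standard distance estimates in $X$), the flag $\kappa_{n}P$ converges to that same attracting flag, uniformly in $F$. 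Feeding this back through the $KA^{+}K$ decomposition of $g_{n}$ and through the north--south dynamics of $e^{\kappa(g_{n})}$ acting on $\mathcal{F}^{-}$ near its attracting flag — where I would invoke the uniform-on-compacts ``moreover'' clause of Proposition \ref{northsouthflagvars} — should show that the transverse companion of $h$ converges, uniformly in $F$, to $\eta$, using the hypothesis $k_{g_{n}}P^{-}\to\eta$. Then I would flow $h$ to $ha$ (first move) so its base point lies in $B_{R-\epsilon}(p)$, apply a uniformly small $u\in\mathrm{Stab}_{G}(F)$ (second move) sending the transverse companion exactly to $\eta$ while keeping the base point in $B_{R}(p)$, and read off that $u(ha)$ witnesses $F\in\mathcal{O}_{R}(\eta,p)$.

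The reverse inclusion $\mathcal{O}_{R}(\eta,p)\subset\mathcal{O}_{R+\epsilon}(g_{n}o,p)$ runs along the same dictionary backwards: from an exact representative $g$ for $\eta$ (so $gP=F$, $gP^{-}=\eta$, $go\in B_{R}(p)$) one builds a representative based at $g_{n}o$ with the same flag $F$ whose transverse companion is, for $n$ large, within $\epsilon$ of $\eta=gP^{-}$, then flows along $A^{+}$ to drive the base point into $B_{R+\epsilon}(p)$. The hard part of the whole argument is the uniform convergence asserted in the previous paragraph: that every maximal-flat cone issuing from the distant point $g_{n}o$ and dipping into $B_{R}(p)$ points in essentially a single direction and carries essentially a single transverse flag, namely $\eta$. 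Making this precise requires the uniform-on-compacts refinements of Proposition \ref{northsouthflagvars} (both to control $e^{\kappa(g_{n})}$ near its attracting flag on $\mathcal{F}^{-}$ and to get the collapse of distant point-shadows), a uniform lower bound on the transversality between the flags of $\mathcal{O}_{R-\epsilon}(g_{n}o,p)$ and $\eta$ (so that the parabolic correction is defined and uniformly small once $d_{X}(g_{n}o,p)$ is large), and patient tracking of error terms with Lemma \ref{differencecartanprojections} and the cocycle identities; granting these, the $\pm\epsilon$ in the radius absorbs every resulting discrepancy.
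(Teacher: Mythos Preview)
The paper does not prove this proposition: it is quoted as Proposition~3.4 of Kim--Oh--Wang \cite{KOW1} and used as a black box (the acknowledgements even record that D.~M.~Kim pointed the author to it). So there is no in-paper argument to compare your proposal against.

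On the substance of your sketch: the strategy of converting between a witness based at $g_n o$ and a witness based at $\eta$ via a small parabolic correction in $\mathrm{Stab}_G(F)$ is sound, and the two ``moves'' you isolate (flowing along $A^+$ preserves both flags; $\mathrm{Stab}_G(F)$ acts transitively on flags transverse to $F$) are the right bookkeeping tools. Two points deserve care. First, watch the definition of $\mathcal{O}_R(\eta,p)$ in this paper: a witness $g$ must satisfy $gk_0P^-=\eta$, not $gP^-=\eta$, so your ``transverse companion of $h$'' should be $hk_0P^-$ throughout; this is harmless but the identities you invoke need to be checked with the $k_0$ in place. Second, and more seriously, the heart of your argument is the assertion that the transverse companion of any witness $h$ for $F\in\mathcal{O}_{R-\epsilon}(g_no,p)$ converges to $\eta$ \emph{uniformly in $F$}. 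You gesture at this via Proposition~\ref{northsouthflagvars} and Lemma~\ref{differencecartanprojections}, but the uniformity is exactly the content of the Kim--Oh--Wang result and is not immediate from those tools alone: one needs a uniform transversality bound between $F$ and $\eta$ over all $F$ in the shadow (so that the parabolic correction stays uniformly small on the fixed compact $\overline{B_R(p)}$), and establishing that bound in turn requires a quantitative version of the shadow-collapse you describe. Your proposal acknowledges this as ``the hard part'' and defers it; a complete proof would have to supply it, which is essentially what \cite{KOW1} does.
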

Using this result, we are able to deduce that for well-behaved sequences of elements $\{g_{n}\}_{n \geq 1}$ in $G$, the diameters of shadows of balls centered at $g_{n}o \in X$ and of uniform radii tend to zero as $n \rightarrow \infty$. The precise statement is the following:
\begin{lem}
\label{diametertozero} 
Let $\{g_{n}\}$ be a sequence in $G$ such that $\alpha(\kappa(g_{n})) \rightarrow \infty$ for all $\alpha \in \Delta$, $k_{g_{n}}P \rightarrow F^{+} \in \mathcal{F}$, and $\ell_{g_{n}}^{-1}P^{-} \rightarrow F^{-} \in \mathcal{F}^{-}$. Then, for any $R > 0$, as $n \rightarrow \infty$ we have
\begin{align*}
\mathcal{O}_{R}(o,g_{n}o) \rightarrow F^{+}
\end{align*} in the Hausdorff topology. In particular,
\begin{align*}
\lim_{n \to \infty} \mathrm{diam} \ \mathcal{O}_{R}(o, g_{n} o) = 0.
\end{align*}
\end{lem}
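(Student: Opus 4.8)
The plan is to deduce this from Proposition \ref{continuityonviewpoints} together with a ``north-south'' argument showing that the symmetric space shadows $\mathcal{O}_R(\eta, o)$ shrink to the attracting flag as the viewpoint $\eta \in \mathcal{F}^-$ is fixed but... wait, actually the roles here need care. First I would reconcile the two notions of shadow appearing in Proposition \ref{continuityonviewpoints} (where the second argument of $\mathcal{O}_{R}(\cdot, p)$ is viewed-from a boundary point of $\mathcal{F}^-$, built from $k_{g_n}P^-$) with the shadow $\mathcal{O}_R(o, g_n o)$ appearing in the statement of Lemma \ref{diametertozero} (which is the shadow of the ball $B_R(g_n o)$ viewed from the \emph{basepoint} $o$). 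Using the symmetry/duality between $\mathcal{F}$ and $\mathcal{F}^-$ and the relation $\iota(\kappa(g)) = \kappa(g^{-1})$, I would rewrite $\mathcal{O}_R(o, g_n o)$ in the form to which Proposition \ref{continuityonviewpoints} applies: namely, apply that proposition to a sequence $\{h_n\}$ with $h_n o = g_n o$ and $k_{h_n} P^- \to F^-$, so that $\mathcal{O}_{R-\epsilon}(g_n o, o) \subset \mathcal{O}_R(F^-, o) \subset \mathcal{O}_{R+\epsilon}(g_n o, o)$ for $n$ large (matching $p = o$, $\eta = F^-$, and the viewpoint $g_n o$, after the appropriate bookkeeping between the two shadow definitions).

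Next I would show $\mathcal{O}_R(F^-, o) \subset \overline{B_\rho(F^+)}$ for an arbitrarily small $\rho > 0$, \emph{provided} we also know $\mathcal{O}_R(o, g_n o) \to F^+$; but this is circular, so instead I would run the following direct argument. Suppose the conclusion fails: then there is $R > 0$, a subsequence (not relabeled), and points $\xi_n \in \mathcal{O}_R(o, g_n o)$ with $d(\xi_n, F^+) \geq \rho_0 > 0$ for all $n$. By compactness of $\mathcal{F}$, pass to a further subsequence so that $\xi_n \to \xi$ with $d(\xi, F^+) \geq \rho_0$. By definition of the shadow, for each $n$ there is $a_n P = \xi_n$ with $a_n o = o$ and $a_n A^+ o \cap B_R(g_n o) \neq \emptyset$; i.e. there exist $t_n \in \mathfrak{a}^+$ with $d_X(a_n e^{t_n} o, g_n o) < R$. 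Since $d_X(o, g_n o) = \|\kappa(g_n)\| \to \infty$ (because $\alpha(\kappa(g_n)) \to \infty$ for all $\alpha \in \Delta$), we get $\|t_n\| \to \infty$ and in fact $\alpha(t_n) \to \infty$ for all $\alpha \in \Delta$ by Lemma \ref{differencecartanprojections} applied to $a_n e^{t_n} = (g_n)(\text{bounded})$; moreover, reading off the $KA^+K$ decomposition, $a_n$ plays the role of the left $K$-factor so $a_n P \to F^+$. This contradicts $\xi_n = a_n P \to \xi \neq F^+$ once we have made this precise, so in fact no such subsequence exists and $\mathcal{O}_R(o, g_n o) \to \{F^+\}$ in the Hausdorff topology. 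The ``diameter tends to zero'' claim is then immediate.

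The main obstacle I expect is the bookkeeping in the previous paragraph: from $d_X(a_n e^{t_n} o, g_n o) < R$ with $a_n \in K$, I need to conclude that the left $K$-component of $a_n e^{t_n}$ in its own $KA^+K$-decomposition converges to $F^+$ and that $\alpha(t_n) \to \infty$. The cleanest route is: $a_n e^{t_n} = g_n b_n$ with $d_X(o, b_n o) = \|\kappa(b_n)\| < R$ bounded, so by Lemma \ref{differencecartanprojections}, $\|\kappa(a_n e^{t_n}) - \kappa(g_n)\| = \|t_n - \kappa(g_n)\| \leq R$ (since $a_n \in K$ means $\kappa(a_n e^{t_n}) = t_n$ when $t_n \in \mathfrak{a}^+$); hence $\alpha(t_n) \geq \alpha(\kappa(g_n)) - R\|\alpha\| \to \infty$ for each $\alpha \in \Delta$, so the $KA^+K$ datum $(a_n, t_n, \mathrm{id})$ of $a_n e^{t_n}$ has well-defined flag, and combined with $k_{g_n b_n}P \to F^+$ (from Proposition \ref{northsouthflagvars} or Proposition \ref{attractingrepellinglocation}, since $\ell_{g_n}^{-1}P^- \to F^-$ and $b_n$ is bounded, one checks $k_{g_n b_n} P \to F^+$ as well), we get $a_n P \to F^+$. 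The one subtlety to handle carefully is that the minimizing $t_n$ need not be literally in $\mathfrak{a}^+$ if $a_n$ is only a left $K$-factor rather than chosen from $\mathcal{O}_R$'s defining data optimally; but replacing $a_n$ by $a_n w$ for an appropriate $w \in N_K(\mathfrak a)$ (absorbing the Weyl chamber ambiguity) fixes this at the cost of harmless constants, and the conclusion $a_n P \to F^+$ is unaffected since the flag only depends on the $KA^+K$ data once $\alpha(t_n) \to \infty$.
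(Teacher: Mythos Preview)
Your direct contradiction argument in the second and third paragraphs works. The one step you wave at---that $k_{g_n b_n} P \to F^+$ when the $b_n$ stay in a compact set---is true and can be justified by passing to a further subsequence with $b_n \to b$ and verifying item (4) of Proposition \ref{northsouthflagvars} for the sequence $g_n b_n$ (with attracting flag $F^+$ and repelling flag $b^{-1} F^-$), using the already-known dynamics of $g_n$; since every subsequential limit of $k_{g_n b_n} P$ is then $F^+$, the whole sequence converges. Your worry in the last paragraph about $t_n \notin \mathfrak a^+$ is a non-issue: the definition of $\mathcal{O}_R(o,g_n o)$ already forces $t_n \in \mathfrak a^+$.

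The paper takes a shorter and genuinely different route. It uses the equivariance identity
\[
\mathcal{O}_R(o,g_n o) = g_n \cdot \mathcal{O}_R(g_n^{-1}o, o)
\]
(which your first paragraph circles around but never writes down), applies Proposition \ref{continuityonviewpoints} to the sequence $g_n^{-1}$ (noting $k_{g_n^{-1}} P^- \to F^-$ and, via the opposition involution, $\alpha(\kappa(g_n^{-1})) \to \infty$ for all $\alpha \in \Delta$) to obtain $\mathcal{O}_R(g_n^{-1}o, o) \subset \mathcal{O}_{R+\epsilon}(F^-, o)$ for $n$ large, observes that the right-hand side is a single \emph{fixed} compact subset of $\mathcal{F} \smallsetminus \mathcal{Z}_{F^-}$, and then invokes Proposition \ref{northsouthflagvars} once to get $g_n \cdot \mathcal{O}_{R+\epsilon}(F^-, o) \to F^+$. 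What the paper's approach buys is brevity and a clean use of the cited Proposition \ref{continuityonviewpoints}; what your approach buys is that Proposition \ref{continuityonviewpoints} is never actually needed---you extract everything directly from the definition of the shadow, Lemma \ref{differencecartanprojections}, and north-south dynamics.
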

\begin{proof}
Fix $\epsilon > 0$ and notice first that $\ell_{g_{n}}^{-1} P^{-} \rightarrow F^{-}$ is equivalent to the statement that $k_{g_{n}^{-1}}P^{-} \rightarrow F^{-}$. Since the opposition involution preserves the set $\Delta$ of simple restricted roots, we have 
\begin{align*}
\min_{\alpha \in \Delta} \alpha(\kappa(g_{n}^{-1})) = \min_{\alpha \in \Delta}(\alpha(\kappa(g_{n})) \rightarrow \infty,
\end{align*} as $n \to \infty$. By Proposition \ref{continuityonviewpoints}, we have
\begin{align*}
\mathcal{O}_{R}(g_{n}^{-1}o,o) \subset \mathcal{O}_{R+\epsilon}(F^{-}, o),
\end{align*} for all $n$ large enough. Recall (see the comment following Definition \ref{symspshadows}) that 
\begin{align*}
\mathcal{O}_{R+\epsilon}(F^{-},o) \subset \mathcal{F} \smallsetminus \mathcal{Z}_{F^{-}}.
\end{align*} By Proposition \ref{northsouthflagvars}, we obtain
\begin{align*}
\mathcal{O}_{R}(o, g_{n}o) = g_{n} \cdot \mathcal{O}_{R}(g_{n}^{-1}o,o) \subset g_{n} \cdot \mathcal{O}_{R+\epsilon}(F^{-},o) \rightarrow F^{+},
\end{align*} where the convergence is in the Hausdorff topology (the above inclusion only holds for $n$ sufficiently large, but that suffices in order to establish the convergence to $F^{+}$). In particular,
\begin{align*}
\lim_{n \to \infty} \mathrm{diam} \ \mathcal{O}_{R}(o, g_{n} o) = 0,
\end{align*} which concludes the proof.
\end{proof}

The following is a special case of a result of Kim--Zimmer.
\begin{lem} [Lemma 9.10 in \cite{KZ1}]
\label{relcptshadowinclusion}
For any relatively compact subset $V \subset N^{-}$, there exists $R > 0$ so that, if $g \in G$ has a Cartan decomposition $g = ka \ell \in KA^{+}K$, then 
\begin{align*}
\ell^{-1}VP \subset \mathcal{O}_{R}(g^{-1}o, o).
\end{align*}
\end{lem}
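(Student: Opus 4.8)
The plan is to reduce the assertion to an inclusion of shadows based at the point $o$ via equivariance, and then to exploit that $\mathrm{Ad}(a)$ is non-expanding on $\mathfrak{n}^{-}$ and $\mathrm{Ad}(a^{-1})$ on $\mathfrak{n}$ for every $a \in A^{+}$, together with the commutativity of $A$. First I would record the $G$-equivariance of the first kind of symmetric space shadow: directly from Definition \ref{symspshadows} and the fact that $G$ acts on $X$ by isometries, $\mathcal{O}_{R}(hp, hq) = h\cdot\mathcal{O}_{R}(p,q)$ for all $h \in G$ and $p,q \in X$. Applying this with $h = g^{-1}$, $p = o$, $q = go$ gives $\mathcal{O}_{R}(g^{-1}o, o) = g^{-1}\mathcal{O}_{R}(o, go)$, so, since $g\ell^{-1} = ke^{\kappa(g)} =: ka$, the desired inclusion $\ell^{-1}VP \subset \mathcal{O}_{R}(g^{-1}o,o)$ is equivalent to
\begin{align*}
ka\,VP \subset \mathcal{O}_{R}(o, go),
\end{align*}
and it is this that I would establish, with $R$ depending only on $V$.

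The key uniformity input is the following. Fix a norm on $\mathfrak{n}^{-} = \bigoplus_{\alpha \in \Sigma^{-}} \mathfrak{g}_{\alpha}$ adapted to the restricted root space decomposition. Since $\alpha(\log a) \leq 0$ for all $\alpha \in \Sigma^{-}$ and $a \in A^{+}$, the operator $\mathrm{Ad}(a)$ is non-expanding on $\mathfrak{n}^{-}$, so $V' := \overline{\{ava^{-1} : a \in A^{+},\ v \in V\}}$ is a \emph{compact} subset of $N^{-}$ depending only on $V$. Writing the Iwasawa decomposition $v' = k_{v'}a_{v'}n_{v'}$ for $v' \in V'$, continuity of the Iwasawa map shows $a_{v'}$ and $n_{v'}$ range over compact sets $A_{0} \subset A$, $N_{0} \subset N$; the same contraction argument for $\mathrm{Ad}(a^{-1})$ on $\mathfrak{n}$ shows $a^{-1}n_{v'}a$ ranges over a compact set $N_{1} \subset N$. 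All of these depend only on $V$.

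Now fix $v \in V$ and write $av = (ava^{-1})a = v'a$ with $v' = ava^{-1} \in V'$. Then
\begin{align*}
kav = kk_{v'}a_{v'}n_{v'}a = kk_{v'}\,(a_{v'}a)\,(a^{-1}n_{v'}a),
\end{align*}
which is an Iwasawa decomposition of $kav$ because $(a_{v'}a)(a^{-1}n_{v'}a) \in AN \subset P$; hence $kavP = uP$ with $u := kk_{v'} \in K$, so $uo = o$. To see $uP \in \mathcal{O}_{R}(o, go)$ it remains to put some point of $uA^{+}o$ within distance $R$ of $go$, and I would take $uao = kk_{v'}ao$ (legitimate since $a \in A^{+}$). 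Using $G$-invariance of $d_{X}$, the identity $go = ka\ell o = kao$, and $d_{X}(go,ho) = \|\kappa(g^{-1}h)\|$, one gets $d_{X}(uao, go) = d_{X}(k_{v'}ao, ao) = \|\kappa(a^{-1}k_{v'}^{-1}a)\|$. Since $k_{v'}^{-1} = a_{v'}n_{v'}(v')^{-1}$, $(v')^{-1}a = av^{-1}$, $n_{v'}a = a(a^{-1}n_{v'}a)$, and $A$ is abelian (so $a^{-1}a_{v'}a = a_{v'}$), one obtains
\begin{align*}
a^{-1}k_{v'}^{-1}a = a_{v'}\,(a^{-1}n_{v'}a)\,v^{-1} \in A_{0}N_{1}V^{-1},
\end{align*}
a compact set depending only on $V$. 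By Lemma \ref{differencecartanprojections}, $\|\kappa(a^{-1}k_{v'}^{-1}a)\| \leq \sup_{b \in A_{0}}\|\kappa(b)\| + \sup_{w \in N_{1}V^{-1}}\|\kappa(w)\| =: R < \infty$, depending only on $V$. Hence $uao \in uA^{+}o \cap B_{R}(go)$, so $kavP = uP \in \mathcal{O}_{R}(o, go)$; since $v \in V$ was arbitrary, $ka\,VP \subset \mathcal{O}_{R}(o, go)$, which by the first paragraph proves the lemma.

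The main obstacle is exactly the uniformity over all $g \in G$: although $a = e^{\kappa(g)}$ can be arbitrarily large, the auxiliary elements $ava^{-1}$, $a^{-1}n_{v'}a$, and $a^{-1}k_{v'}^{-1}a$ must all stay in fixed compact sets. This is forced by the contraction properties of $\mathrm{Ad}(a^{\pm 1})$ on the two nilradicals together with the commutativity of $A$, the latter being what allows the choice $a' = a$ in the definition of the shadow to absorb the large part of $a$. Some care is also needed to confirm that the "point of $uA^{+}o$" one chooses really does lie in $A^{+}$ rather than merely in $A$, which is why one multiplies by $a$ itself rather than by the Iwasawa part $a_{v'}$.
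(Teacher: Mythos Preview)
The paper does not supply its own proof of this lemma; it is quoted verbatim as Lemma~9.10 of \cite{KZ1}, so there is no in-paper argument to compare against. Your proof is correct and self-contained. The reduction via $G$-equivariance to the inclusion $kaVP \subset \mathcal{O}_{R}(o,go)$ is clean, and the heart of the matter---that the auxiliary elements $ava^{-1}$, $a^{-1}n_{v'}a$, and hence $a^{-1}k_{v'}^{-1}a$ all remain in fixed compacta independent of $a \in A^{+}$---is handled correctly by the contraction of $\mathrm{Ad}(a)$ on $\mathfrak{n}^{-}$ and of $\mathrm{Ad}(a^{-1})$ on $\mathfrak{n}$, together with the commutativity of $A$. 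The algebraic identity $a^{-1}k_{v'}^{-1}a = a_{v'}(a^{-1}n_{v'}a)v^{-1}$ checks out, and the final bound via Lemma~\ref{differencecartanprojections} (or simply compactness of $A_{0}N_{1}V^{-1}$) gives an $R$ depending only on $V$. One cosmetic point: the shadow is defined with an open ball $B_{R}(q)$, so the displayed bound $d_{X}(uao,go) \leq R$ should be made strict by taking any $R' > R$; this does not affect the argument.
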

Using this lemma, we can show that the shadows of $\epsilon$-contracting elements (in the sense of Definition \ref{contractingshadow}) whose Cartan projections are sufficiently deep inside of the positive Weyl chamber are included inside of symmetric space shadows of balls of uniform radii. 
\begin{lem}
\label{contractsymspaceinclusion}
Let $(x,y) \in \Lambda(\Gamma) \times \Lambda(\Gamma)^{-}$ be such that $x \notin \mathcal{Z}_{y}$. For all $\epsilon > 0$ sufficiently small, there exists $M = M(\epsilon) \geq 1$ and $R = R(\epsilon) > 0$ so that, if $n \geq M$ and $\gamma \in \Gamma_{\mathcal{C}, x, y,n, \epsilon}$, then
\begin{align*}
\mathcal{S}_{2 \epsilon}(\gamma) \subset \mathcal{O}_{R}(o, \gamma o). 
\end{align*}
\end{lem}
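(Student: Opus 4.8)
The plan is to reduce the desired inclusion to a statement about $\gamma^{-1}$ using the $G$-equivariance of the symmetric-space shadows, and then invoke the Kim--Zimmer Lemma \ref{relcptshadowinclusion}. First I would note that, by Proposition \ref{limitsetcrosslimitset3} (applicable since we take $\epsilon < \frac{1}{8} d(x,\mathcal{Z}_y)$), there is $N = N(\epsilon)$ so that every $\gamma \in \Gamma_{\mathcal{C},x,y,n,\epsilon}$ with $n \geq N$ is $2\epsilon$-contracting, hence $\mathcal{S}_{2\epsilon}(\gamma) = \gamma\big(\mathcal{F}\smallsetminus N_{2\epsilon}(\mathcal{Z}_{x_\gamma^-})\big)$ is defined. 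One checks directly from Definition \ref{symspshadows} that $h\cdot\mathcal{O}_R(p,q) = \mathcal{O}_R(hp,hq)$ for $h \in G$; in particular $\mathcal{O}_R(o,\gamma o) = \gamma\cdot\mathcal{O}_R(\gamma^{-1}o,o)$, so, $\gamma$ being a bijection of $\mathcal{F}$, the claim $\mathcal{S}_{2\epsilon}(\gamma)\subset\mathcal{O}_R(o,\gamma o)$ is equivalent to
\[
\mathcal{F}\smallsetminus N_{2\epsilon}(\mathcal{Z}_{x_\gamma^-}) \subset \mathcal{O}_R(\gamma^{-1}o,o).
\]
By Lemma \ref{relcptshadowinclusion}, for every relatively compact $V\subset N^-$ there is $R(V)>0$ with $\ell_\gamma^{-1}VP\subset\mathcal{O}_{R(V)}(\gamma^{-1}o,o)$ for every $\gamma$ (with Cartan decomposition $\gamma = k_\gamma e^{\kappa(\gamma)}\ell_\gamma$). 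Hence it suffices to produce a single relatively compact $V = V(\epsilon)\subset N^-$ such that $\ell_\gamma\big(\mathcal{F}\smallsetminus N_{2\epsilon}(\mathcal{Z}_{x_\gamma^-})\big)\subset VP/P$ uniformly over $\gamma\in\Gamma_{\mathcal{C},x,y,n,\epsilon}$ once $n$ is large, using the standard identification of the open Bruhat cell $\mathcal{F}\smallsetminus\mathcal{Z}_{P^-}$ with $N^-$.

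The crux is that $\ell_\gamma$ pushes $\mathcal{Z}_{x_\gamma^-}$ uniformly close to $\mathcal{Z}_{P^-}$. Since $\ell_\gamma\mathcal{Z}_{\ell_\gamma^{-1}P^-} = \mathcal{Z}_{P^-}$ and $K$ acts on $\mathcal{F}$ by isometries (the metric on $\mathcal{F}=K/M$ being $K$-invariant; uniform bi-Lipschitzness would also suffice, as $K$ is compact), it is enough to show that for $n$ large, $d_{\mathrm{Haus}}(\mathcal{Z}_{x_\gamma^-},\mathcal{Z}_{\ell_\gamma^{-1}P^-}) < \epsilon/2$ uniformly over $\gamma\in\Gamma_{\mathcal{C},x,y,n,\epsilon}$. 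The naive triangle inequality through $\mathcal{Z}_y$ (combining $d_{\mathrm{Haus}}(\mathcal{Z}_{\ell_\gamma^{-1}P^-},\mathcal{Z}_y)<\epsilon$ from the definition of $\Gamma_{\mathcal{C},x,y,n,\epsilon}$ with $d_{\mathrm{Haus}}(\mathcal{Z}_y,\mathcal{Z}_{x_\gamma^-})<2\epsilon$ from Proposition \ref{limitsetcrosslimitset3}) only yields $<3\epsilon$, which is too weak; instead I would argue by contradiction and compactness as in the other proofs of this section. If the bound failed, extract $\gamma_n\in\Gamma_{\mathcal{C},x,y,m_n,\epsilon}$ with $m_n\to\infty$, $k_{\gamma_n}P\to F^+$, $\ell_{\gamma_n}^{-1}P^-\to F^-$, and $d_{\mathrm{Haus}}(\mathcal{Z}_{x_{\gamma_n}^-},\mathcal{Z}_{\ell_{\gamma_n}^{-1}P^-})\geq\epsilon/2$; then $F^+$ and $F^-$ are transverse (because $\epsilon<\frac{1}{8}d(x,\mathcal{Z}_y)$) and $\min_{\alpha\in\Delta}\alpha(\kappa(\gamma_n))\to\infty$, so Proposition \ref{attractingrepellinglocation} gives $x_{\gamma_n}^-\to F^-$; since $F\mapsto\mathcal{Z}_F$ is continuous for the Hausdorff topology (as used already in the proof of Proposition \ref{limitsetcrosslimitset3}), both $\mathcal{Z}_{x_{\gamma_n}^-}$ and $\mathcal{Z}_{\ell_{\gamma_n}^{-1}P^-}$ converge to $\mathcal{Z}_{F^-}$, a contradiction. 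This yields a threshold $M_1 = M_1(\epsilon)$; set $M := \max\{N, M_1\}$.

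With this in hand, for $n\geq M$ and $\gamma\in\Gamma_{\mathcal{C},x,y,n,\epsilon}$, applying the isometry $\ell_\gamma$ gives $d_{\mathrm{Haus}}(\ell_\gamma\mathcal{Z}_{x_\gamma^-},\mathcal{Z}_{P^-})<\epsilon/2$, hence $\mathcal{Z}_{P^-}\subset N_{\epsilon/2}(\ell_\gamma\mathcal{Z}_{x_\gamma^-})$ and so $N_{3\epsilon/2}(\mathcal{Z}_{P^-})\subset N_{2\epsilon}(\ell_\gamma\mathcal{Z}_{x_\gamma^-})$. Taking complements,
\[
\ell_\gamma\big(\mathcal{F}\smallsetminus N_{2\epsilon}(\mathcal{Z}_{x_\gamma^-})\big) = \mathcal{F}\smallsetminus N_{2\epsilon}(\ell_\gamma\mathcal{Z}_{x_\gamma^-}) \subset \mathcal{F}\smallsetminus N_{3\epsilon/2}(\mathcal{Z}_{P^-}),
\]
and the right-hand side is a compact subset of the open Bruhat cell $\mathcal{F}\smallsetminus\mathcal{Z}_{P^-}$ depending only on $\epsilon$. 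Choosing $V = V(\epsilon)\subset N^-$ relatively compact and open containing its image under the identification $N^-\cong\mathcal{F}\smallsetminus\mathcal{Z}_{P^-}$, and setting $R := R(V)$ from Lemma \ref{relcptshadowinclusion}, we get $\mathcal{F}\smallsetminus N_{2\epsilon}(\mathcal{Z}_{x_\gamma^-})\subset\ell_\gamma^{-1}VP\subset\mathcal{O}_R(\gamma^{-1}o,o)$; applying $\gamma$ yields $\mathcal{S}_{2\epsilon}(\gamma)\subset\mathcal{O}_R(o,\gamma o)$, as desired.

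The step I expect to be the main obstacle is precisely the middle one: upgrading the Hausdorff bound $d_{\mathrm{Haus}}(\mathcal{Z}_{x_\gamma^-},\mathcal{Z}_{\ell_\gamma^{-1}P^-})<3\epsilon$ coming from the triangle inequality to the sharper $<\epsilon/2$ — this is what forces the hypothesis $n\geq M$ and requires the compactness argument above. Everything else is either equivariance bookkeeping for the shadows or the routine fact that the compact group $K$ acts benignly on the metric space $\mathcal{F}$.
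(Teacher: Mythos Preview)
Your proposal is correct and follows essentially the same route as the paper: reduce via $G$-equivariance to $\mathcal{F}\smallsetminus N_{2\epsilon}(\mathcal{Z}_{x_\gamma^-})\subset\mathcal{O}_R(\gamma^{-1}o,o)$, use a compactness/contradiction argument (which the paper abbreviates as ``by Proposition \ref{limitsetcrosslimitset3} and its proof'') to get the sharp Hausdorff bound $d_{\mathrm{Haus}}(\mathcal{Z}_{x_\gamma^-},\mathcal{Z}_{\ell_\gamma^{-1}P^-})<c\epsilon$ with $c<2$, push forward by the isometry $\ell_\gamma$ to land in the fixed compact set $\mathcal{F}\smallsetminus N_{c'\epsilon}(\mathcal{Z}_{P^-})$ of the open Bruhat cell, and then invoke Lemma \ref{relcptshadowinclusion}. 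The only cosmetic differences are your choice of constant ($\epsilon/2$ versus the paper's $\epsilon$) and that you spell out the compactness step that the paper leaves implicit.
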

\begin{proof}
Note that the desired inclusion is equivalent to $\gamma^{-1}\mathcal{S}_{2 \epsilon}(\gamma) \subset \mathcal{O}_{R}(\gamma^{-1}o, o)$. For $\gamma \in \Gamma$, let $\gamma = k_{\gamma}a_{\gamma}\ell_{\gamma} \in KA^{+}K$ be a Cartan decomposition. Let $0 < \epsilon < \frac{1}{8} d(x, \mathcal{Z}_{y})$. By Proposition \ref{limitsetcrosslimitset3} and its proof, there exists $M = M(\epsilon)$ so that if $n \geq M$, then any $\gamma \in \Gamma_{\mathcal{C},x,y,n,\epsilon}$, then $\gamma$ is $2 \epsilon$-contracting and 
\begin{align*}
d_{\mathrm{Haus}}(\mathcal{Z}_{x_{\gamma}^{-}}, \mathcal{Z}_{\ell_{\gamma}^{-}P^{-}}) < \epsilon.
\end{align*} Hence 
\begin{align*}
\gamma^{-1}S_{2\epsilon}(\gamma) = \mathcal{F} \smallsetminus N_{2\epsilon} (\mathcal{Z}_{x_{\gamma}^{-}}) \subset \mathcal{F} \smallsetminus N_{\epsilon}(\mathcal{Z}_{\ell_{\gamma}^{-1}P^{-}}).
\end{align*} Since $K$ acts by isometries on $\mathcal{F}$, it therefore suffices to show that there exists $R > 0$, depending only on $\epsilon$, so that
\begin{align*}
\ell_{\gamma}^{-1} \big( \mathcal{F} \smallsetminus N_{\epsilon}(\mathcal{Z}_{P^{-}}) \big) \subset \mathcal{O}_{R}(\gamma^{-1}o, o).
\end{align*} By Lemma \ref{relcptshadowinclusion}, we just need to show that there is a relatively compact subset $V \subset N^{-}$, depending only on $\epsilon$, so that $\mathcal{F} \smallsetminus N_{\epsilon}(\mathcal{Z}_{P^{-}}) \subset VP$. To see why this holds, recall that the exponential map gives a diffeomorphism 
\begin{align*}
\exp : \mathfrak{n}^{-} \rightarrow N^{-} = \exp(\mathfrak{n}^{-}).
\end{align*}
Moreover, the \emph{Langlands decomposition} states that the map 
\begin{align*}
N^{-} \times L &\rightarrow P^{-}, \\
(n,\ell) &\mapsto n \ell
\end{align*} is a diffeomorphism, where we recall that $L := P \cap P^{-}$ is the Levi subgroup (see for instance Theorem $1.2.4.8$ of \cite{Wa}). It follows that $N^{-}$ acts simply transitively on $\mathcal{F} \smallsetminus \mathcal{Z}_{P^{-}}$, hence the map 
\begin{align*}
T : \mathfrak{n}^{-} &\rightarrow \mathcal{F} \smallsetminus \mathcal{Z}_{P^{-}}, \\
X &\mapsto e^{X}P
\end{align*} is a diffeomorphism. Thus
\begin{align*}
V := \exp \big(T^{-1} \big(\mathcal{F} \smallsetminus N_{\epsilon}(\mathcal{Z}_{P^{-}}) \big) \big)
\end{align*}
is a compact subset of $N^{-}$, depending only on $\epsilon$, satisfying
\begin{align*}
VP = \mathcal{F} \smallsetminus N_{\epsilon}(\mathcal{Z}_{P^{-}}).
\end{align*}
This concludes the proof.
\end{proof}
Before proceeding further, we record some elementary observations about symmetric space shadows.
\begin{lem}
\label{shadowobservations} 
For all $\gamma, \gamma_{1}, \gamma_{2} \in G$ and $R > 0$, the following hold:
\begin{itemize}
    \item[(1)] If we have $\ell P \in \mathcal{O}_{R}(o, \gamma o)$ for some $\ell \in K$, then 
    \begin{align*}
        d_{X} \big(\ell e^{\kappa(\gamma)} o, \gamma o \big) \leq 2 R.
    \end{align*}
    \item[(2)] If 
    $\mathcal{O}_{R}(o, \gamma_{1} o) \cap \mathcal{O}_{R}(o, \gamma_{2} o) \neq \emptyset$, then 
    \begin{align*}
    d_{X}(\gamma_{1}o, \gamma_{2}o) \leq 4R + ||\kappa(\gamma_{1}) - \kappa(\gamma_{2})||.
    \end{align*}
\end{itemize}
\end{lem}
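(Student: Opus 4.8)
The plan is to reduce both parts to estimates involving the single auxiliary point $\ell e^{\kappa(\gamma)}o$, which sits on the maximal flat $\ell Ao$ at ``radial coordinate'' $\kappa(\gamma)$. Two standard facts serve as the workhorses. First, the image of $Ao$ is totally geodesic and isometric to the Euclidean space $(\mathfrak{a},||\cdot||)$, so that $d_{X}(\ell e^{H_{1}}o,\ell e^{H_{2}}o)=||H_{1}-H_{2}||$ for all $\ell\in K$ and $H_{1},H_{2}\in\mathfrak{a}$; equivalently this follows from $d_{X}(go,ho)=||\kappa(g^{-1}h)||$ and the Weyl-invariance of $||\cdot||$ (since $||\cdot||$ is induced from the Killing form). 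Second, the ``radial projection'' $X\to\mathfrak{a}^{+}$, $go\mapsto\kappa(g)$, is $1$-Lipschitz: $||\kappa(g)-\kappa(h)||\leq||\kappa(g^{-1}h)||=d_{X}(go,ho)$, which is exactly Lemma \ref{differencecartanprojections} applied to the pair $g$, $g^{-1}h$.

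For part (1): from $\ell P\in\mathcal{O}_{R}(o,\gamma o)$ with $\ell\in K$, unwinding Definition \ref{symspshadows} — and noting that replacing $\ell$ by another $K$-representative of the flag changes it only by an element of $M=K\cap P$, which centralizes $A$ and fixes $o$, hence leaves $\ell A^{+}o$ unchanged — produces some $H\in\mathfrak{a}^{+}$ with $d_{X}(\ell e^{H}o,\gamma o)<R$. Since $\ell\in K$ and $H\in\mathfrak{a}^{+}$ we have $\kappa(\ell e^{H})=H$, so the $1$-Lipschitz property gives $||H-\kappa(\gamma)||\leq d_{X}(\ell e^{H}o,\gamma o)<R$. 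Then
\begin{align*}
d_{X}(\ell e^{\kappa(\gamma)}o,\gamma o)\leq d_{X}(\ell e^{\kappa(\gamma)}o,\ell e^{H}o)+d_{X}(\ell e^{H}o,\gamma o)=||\kappa(\gamma)-H||+d_{X}(\ell e^{H}o,\gamma o)<2R.
\end{align*}

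For part (2): choose $\ell P$ in $\mathcal{O}_{R}(o,\gamma_{1}o)\cap\mathcal{O}_{R}(o,\gamma_{2}o)$ with $\ell\in K$, apply part (1) to each of $\gamma_{1},\gamma_{2}$ to obtain $d_{X}(\ell e^{\kappa(\gamma_{i})}o,\gamma_{i}o)\leq 2R$ for $i=1,2$, and run the triangle inequality through the two points $\ell e^{\kappa(\gamma_{1})}o$ and $\ell e^{\kappa(\gamma_{2})}o$, whose mutual distance is $||\kappa(\gamma_{1})-\kappa(\gamma_{2})||$:
\begin{align*}
d_{X}(\gamma_{1}o,\gamma_{2}o)\leq 2R+||\kappa(\gamma_{1})-\kappa(\gamma_{2})||+2R.
\end{align*}

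The one place to be careful is the bound $||H-\kappa(\gamma)||<R$ in part (1): the naive triangle inequality at the basepoint only controls the difference of norms $\bigl|\,||H||-||\kappa(\gamma)||\,\bigr|$, which in higher rank can be much smaller than the vector difference $||H-\kappa(\gamma)||$ that we actually need. What makes the argument work is precisely that Lemma \ref{differencecartanprojections} bounds the full vector difference of Cartan projections by the distance in $X$; everything else is a triangle inequality combined with the flat $Ao$ being Euclidean.
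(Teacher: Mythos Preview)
Your proof is correct and follows essentially the same route as the paper's: extract $H\in\mathfrak{a}^{+}$ from the shadow definition, use Lemma \ref{differencecartanprojections} to bound $||H-\kappa(\gamma)||$ by $R$, and finish with the triangle inequality through $\ell e^{H}o$; part (2) is then the same triangle-inequality chaining through $\ell e^{\kappa(\gamma_{1})}o$ and $\ell e^{\kappa(\gamma_{2})}o$. Your added remark on the $M$-ambiguity in the $K$-representative of the flag is a valid clarification that the paper leaves implicit.
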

\begin{proof}[Proof of (1)]
If $\ell P \in \mathcal{O}_{R}(o, \gamma o)$, then by definition there exists $H \in \mathfrak{a}^{+}$ so that
\begin{align*}
\big| \big|\kappa(\gamma^{-1} \ell e^{H}) \big| \big| = d_{X} \big(\ell e^{H}o, \gamma o \big) \leq R.
\end{align*} Then by Lemma \ref{differencecartanprojections}, we obtain
\begin{align*}
||H - \kappa(\gamma)|| = \big| \big|\kappa(\ell e^{H}) - \kappa(\gamma) \big| \big| = \big| \big|\kappa \big(\gamma \cdot \big(\gamma^{-1} \ell e^{H} \big) \big) - \kappa(\gamma) \big| \big| \leq \big| \big|\kappa(\gamma^{-1} \ell e^{H}) \big| \big| \leq R.
\end{align*} Hence,
\begin{align*}
d_{X} \big(\ell e^{\kappa(\gamma)}o, \gamma o \big) \leq d_{X} \big(\ell e^{\kappa(\gamma)}o, \ell e^{H} o \big) + d_{X} \big(\ell e^{H}o, \gamma o \big) \leq ||\kappa(\gamma) - H|| + R \leq 2R,
\end{align*} which is what we wanted to show.
\end{proof}
\begin{proof} [Proof of (2).]
By assumption, we have 
\begin{align*}
\ell P \in \mathcal{O}_{R}(o, \gamma_{1} o) \cap \mathcal{O}_{R}(o, \gamma_{2} o),
\end{align*}
for some $\ell \in K$. Then part (1) implies that 
\begin{align*}
d_{X} \big( \ell e^{\kappa(\gamma_{1})} o, \gamma_{1} o \big) \leq 2 R \ \ \mathrm{and} \ \ d_{X} \big( \ell e^{\kappa(\gamma_{2})} o, \gamma_{2} o \big) \leq 2 R.
\end{align*}
Hence,
\begin{align*}
d_{X}(\gamma_{1} o, \gamma_{2} o) &\leq d_{X} \big( \gamma_{1}o, \ell e^{\kappa(\gamma_{1})} o \big) + d_{X} \big( \ell e^{\kappa(\gamma_{1})} o, \ell e^{\kappa(\gamma_{2})} o \big) + d_{X} \big( \ell e^{\kappa(\gamma_{2})} o, \gamma_{2} o \big) \\
&\leq 4R + ||\kappa(\gamma_{1}) - \kappa(\gamma_{2})||,
\end{align*} as desired. 
\end{proof}
As before, let $0 < \delta < \delta + r < \delta (\Gamma)$ and let $u \in \mathfrak{a}^{++}$ be a unit vector so that $\psi_{\Gamma}(u) > \delta + r$. Notice that, for any open cone $\mathcal{C} \subset \mathfrak{a}^{++}$ containing $u$, there exists a constant $\lambda = \lambda(\mathcal{C}) > 0$ such that: if $H_{1}, H_{2} \in \mathcal{C}$, $n,w \geq 1$ are any integers, and $||H_{1}||, ||H_{2}|| \in [n,n+w)$, then 
\begin{align}
\label{annularbound}
||H_{1} - H_{2}|| \leq \lambda (n+w).
\end{align} 
Moreover,
\begin{align*}
\inf_{\mathcal{C} \ni u} \lambda(\mathcal{C}) = 0,
\end{align*}
where the infimum is taken over all open cones $\mathcal{C} \subset \mathfrak{a}^{++}$ containing $u$. Hence, we may fix an open cone $\mathcal{C} \subset \mathfrak{a}^{++}$ containing $u$ for which $\lambda = \lambda(\mathcal{C}) > 0$ is sufficiently small so that
\begin{align}
\label{needed1}
0 < \delta < \delta (1 + \lambda) < \delta_{0} < \delta + r < \delta (\Gamma),
\end{align} and also
\begin{align}
\label{needed2}
(\delta(\Gamma) + 1)\lambda - \bigg( \frac{\delta_{0}}{1+\lambda} - \delta \bigg) < 0.
\end{align} These ad hoc inequalities come up naturally during the proof of Proposition \ref{annupoincdivwithdisjtshadows}.
\par 
Now let $(x,y) \in \Lambda(\Gamma) \times \Lambda(\Gamma)^{-}$ be a transverse pair of limit points, and let $R > 0$ be arbitrary. First fix $0 < \epsilon < \frac{1}{8} d(x, \mathcal{Z}_{y})$ and then fix $0 < \epsilon' < \frac{1}{3} \epsilon$. Since $\Lambda(\Gamma)$ and $\Lambda(\Gamma)^{-}$ are perfect sets, we can find limit points $a \in \Lambda(\Gamma) \smallsetminus \{x\}$ and $b \in \Lambda(\Gamma)^{-} \smallsetminus \{y\}$ so that 
\begin{align}
\label{firstminimality} 2\epsilon' &< d(a,x) < \epsilon - \epsilon',  \ \ \ \mathrm{and} \\
\label{secondminimality} 0 &< d_{\mathrm{Haus}}(\mathcal{Z}_{b}, \mathcal{Z}_{y}) < \epsilon - \epsilon'.
\end{align} By (\ref{firstminimality}), we have $\overline{B_{\epsilon'}(a)} \cap \overline{B_{\epsilon'}(x)} = \emptyset$. Then using Lemma \ref{diametertozero} and an argument by contradiction using Proposition \ref{northsouthflagvars} (which we have already done several times), the following holds: there exists $n_{0} \geq 1$ so that for all $n \geq n_{0}$ and all $\gamma_{1} \in \Gamma_{\mathcal{C},x,y,n,\epsilon'}$ and $\gamma_{2} \in \Gamma_{\mathcal{C},a,b,n, \epsilon'}$, we have
\begin{align}
\label{disjointwhenlargeenough}
\mathcal{O}_{R}(o, \gamma_{1}o) \cap \mathcal{O}_{R}(o, \gamma_{2}o) = \emptyset.
\end{align} By assumption, $0 < \epsilon < \frac{1}{8} d(x, \mathcal{Z}_{y})$. Furthermore the inequalities (\ref{firstminimality}) and (\ref{secondminimality}) give
\begin{align}
\label{Cartansetswithdifferentlimitpoints}
\Gamma_{\mathcal{C},a,b,n,\epsilon'} \subset \Gamma_{\mathcal{C},x,y,n,\epsilon}.
\end{align} So let $n \geq n_{0}$ and let $\eta \in \mathcal{A}_{\mathcal{C},x,y,n,1,\epsilon'} \subset \mathcal{A}_{\mathcal{C},x,y,n,1,\epsilon}$ be arbitrary. Increasing $n_{0}$ if necessary, Proposition \ref{limitsetcrosslimitset3} tells us that the element $\eta$ is $2 \epsilon$-contracting. In particular, it is loxodromic. Now denote by $F_{\eta} \subset G$ the union of the Zariski closed and Zariski connected proper subgroups of $G$ that contain $\eta$. By Proposition 4.4 of \cite{Ti2}, the set $F_{\eta}$ is a proper Zariski closed subset of $G$. Denote by $F_{\eta}^{c}$ its complement in $G$, which is therefore a Zariski open subset of $G$. By Proposition \ref{ZariskidensityCartanset}, (\ref{disjointwhenlargeenough}), and (\ref{Cartansetswithdifferentlimitpoints}), there exists an element
\begin{align*}
\zeta \in \Gamma_{\mathcal{C},a,b,n,\epsilon'} \subset \Gamma_{\mathcal{C},x,y,n, \epsilon}
\end{align*}
with the following properties: 
\begin{align}
\label{findingeltdisjtshadow}
\mathcal{O}_{R}(o, \eta o) \cap \mathcal{O}_{R}(o, \zeta o) &= \emptyset, \ \ \mathrm{and} \ \ \\
\label{inthecomplement}
\zeta &\in F_{\eta}^{c}.
\end{align} The fact that (\ref{findingeltdisjtshadow}) holds is essential in order for our semigroups to be free, as we will soon see. Moreover, by (\ref{inthecomplement}), we have the following observation, which will allow us to conclude that the semigroups we will construct are Zariski dense in $G$.
\begin{obs}
\label{canconcludeZariskidensity}
Any semigroup whose generating set contains elements $\eta$ and $\zeta$ where $\zeta \in F_{\eta}^{c}$ as in (\ref{inthecomplement}) is Zariski dense in $G$. 
\end{obs}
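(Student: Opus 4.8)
The idea is to reduce the statement, via the fact that the Zariski closure of a subsemigroup of the algebraic group $G$ is a Zariski closed subgroup (Lemma 6.15 of \cite{BQ1}), to a statement about the \emph{group} generated by $\eta$ and $\zeta$, and then to invoke the defining property of $F_{\eta}$ together with Tits' Proposition 4.4 of \cite{Ti2}. So let $\Omega$ be a semigroup whose generating set contains $\eta$ and $\zeta$ with $\zeta \in F_{\eta}^{c}$, and let $H$ be the Zariski closure of $\Omega$ in $G$. By Lemma 6.15 of \cite{BQ1}, $H$ is a Zariski closed \emph{subgroup} of $G$, and $\eta, \zeta \in H$. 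We claim $H = G$.

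The key point --- and this is where the construction of $\zeta$ matters beyond (\ref{inthecomplement}) --- is that \emph{both} $\eta$ and $\zeta$ are loxodromic: $\eta$ is $2\epsilon$-contracting by Proposition \ref{limitsetcrosslimitset3}, and $\zeta \in \Gamma_{\mathcal{C},a,b,n,\epsilon'} \subseteq \Gamma_{\mathcal{C},x,y,n,\epsilon}$ is $2\epsilon$-contracting for the same reason, hence both are loxodromic. Suppose for contradiction that $H \neq G$. Then $H$ is a proper Zariski closed subgroup of $G$ containing two loxodromic elements, one of them ($\zeta$) with $x_{\zeta}^{+} \neq x_{\eta}^{+}$ and $x_{\zeta}^{-} \neq x_{\eta}^{-}$ --- these inequalities already follow from $\zeta \notin F_{\eta}$, since otherwise $\eta$ and $\zeta$ would lie in a common conjugate of $P$ or of $P^{-}$, which is a proper connected Zariski closed subgroup. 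Tits' analysis of such configurations --- carried out in the proof of Proposition 4.4 of \cite{Ti2} via the proximal Tits representations of $G$, in which, once $\rho(H)$ is proper (which is forced by $H \neq G$, as $G$ is Zariski connected), the attracting data of $\eta$ span proper nonzero $\rho(H)$-invariant subspaces --- produces a proper, Zariski closed, \emph{Zariski connected} subgroup $L \subseteq G$ (the $G$-stabilizer of such an invariant subspace) containing both $\eta$ and $\zeta$. Then $\zeta \in L \subseteq F_{\eta}$, contradicting $\zeta \in F_{\eta}^{c}$; hence $H = G$, which is what we wanted.

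The step I expect to be the main obstacle is extracting the \emph{connected} subgroup $L$: a priori the identity component of $\overline{\langle \eta, \zeta \rangle}^{\mathrm{Zar}}$ need not contain $\eta$ or $\zeta$, and a proper Zariski closed subgroup containing a loxodromic element need not be contained in any proper \emph{connected} subgroup (for instance the normalizer of a maximal torus is not). It is precisely here that one uses that $\zeta$ is loxodromic too --- this rules out the degenerate ``Weyl element''-type possibilities --- and, through Tits' representation-theoretic argument \cite{Ti2} combined with the structure theory recalled in Section \ref{preliminariesonLiegroups}, one obtains the common invariant structure whose stabilizer supplies $L$. Finally, for completeness one should recall that a $\zeta$ as above exists at all: this follows from the Zariski density of $\Gamma_{\mathcal{C},a,b,n,\epsilon'}$ (Proposition \ref{ZariskidensityCartanset}) together with $F_{\eta}^{c}$ being a nonempty Zariski open, hence Zariski dense, subset of $G$, exactly as in the paragraph preceding the statement.
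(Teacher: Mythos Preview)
The paper does not prove this observation; it is asserted without argument. Your reduction via Lemma 6.15 of \cite{BQ1} to the Zariski closure $H$ being a \emph{group} is correct and is surely the intended first step, and you have correctly isolated the crux: passing from the proper closed subgroup $H$ to a proper closed \emph{connected} subgroup $L$ containing both $\eta$ and $\zeta$, so that $\zeta \in L \subset F_{\eta}$ yields the contradiction. You are also right that loxodromicity of $\zeta$ (not only of $\eta$) is essential and is part of what ``as in (\ref{inthecomplement})'' encodes: with $G = \mathrm{PSL}_2(\mathbb{R})$, $\eta$ a regular diagonal element, and $\zeta$ the nontrivial Weyl element, one has $\zeta \in F_{\eta}^{c} = G \smallsetminus (B^{+} \cup B^{-})$ but $\overline{\langle \eta, \zeta \rangle}^{\mathrm{Zar}} = N_{G}(T) \subsetneq G$. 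So the qualifier carries real content, and your instinct to use that $\zeta$ is loxodromic is on target.

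Your proposed resolution of the connectedness step, however, does not work as written. You claim that in a proximal Tits representation $\rho$, ``the attracting data of $\eta$ span proper nonzero $\rho(H)$-invariant subspaces'' once $\rho(H)$ is proper. But a proper closed subgroup $H \subsetneq G$ can perfectly well act irreducibly under $\rho$ --- properness of $H$ in $G$ is not the same as reducibility of $\rho|_{H}$ --- and then the $\rho(H)$-span of the attracting line of $\rho(\eta)$ is the whole representation space, so no proper invariant subspace and hence no stabilizer $L$ is produced. Tits' Proposition 4.4 establishes that $F_{\eta}$ is closed by a finiteness and dimension argument over the conjugacy classes of maximal proper connected subgroups; it does not proceed by manufacturing an invariant subspace for an arbitrary proper $H$, so invoking ``Tits' analysis'' here does not close the gap you yourself flagged. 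An argument that genuinely exploits the loxodromicity of \emph{both} elements to force $H$ into a proper connected (e.g.\ parabolic) subgroup is still required, and you have not supplied one.
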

Now let $w \geq 1$ be sufficiently large so that $\eta, \zeta \in \mathcal{A}_{\mathcal{C},x,y,n,w,\epsilon}$. By Lemma \ref{AnnularPoincDiv} and (\ref{needed1}), we have
\begin{align}
\label{annulardivergence}
\limsup_{n \to \infty} \sum_{\sigma \in \mathcal{A}_{\mathcal{C},x,y,n,w,\epsilon}} e^{-\delta_{0}||\kappa(\sigma)||} = \infty.
\end{align}
By part (2) of Lemma \ref{shadowobservations} and (\ref{annularbound}), we obtain
\begin{align*}
&\# \{ \gamma \in \mathcal{A}_{\mathcal{C},x,y,n,w,\epsilon} : \mathcal{O}_{R}(o, \gamma o) \cap \mathcal{O}_{R}(o, \eta o) \neq \emptyset \} \\
&\leq \# \{\gamma \in \Gamma : d_{X}(\gamma o, \eta o) \leq 4R + \lambda (n+w) \} \\
&\lesssim e^{(\delta(\Gamma) + 1)(4R + \lambda (n+w))} \\
&\lesssim e^{(\delta(\Gamma) + 1) \lambda n},
\end{align*} where the implicit constants in the above inequalities are independent of $n$. Therefore, there exists a constant $B \geq 1$ such that, for all $n \geq 1$, the following holds: we can find a subset 
\begin{align}
\label{constructionoftheannulargenset}
\mathcal{A}_{\mathcal{C}, x,y,n,w,\epsilon}' \subset \mathcal{A}_{\mathcal{C},x,y,n,w,\epsilon}
\end{align}
of cardinality
\begin{align}
\label{cardinalityestimate}
\# \mathcal{A}_{\mathcal{C},x,y,n,w,\epsilon}^{'} \geq \frac{1}{B} \cdot e^{-(\delta(\Gamma) + 1) \lambda n} \cdot \# \mathcal{A}_{\mathcal{C},x,y,n,w,\epsilon},  
\end{align}
such that
\begin{align}
\label{disjointusedinmainprop}
\mathcal{O}_{R}(o, \gamma_{1}o) \cap \mathcal{O}_{R}(o, \gamma_{2}o) = \emptyset \ \ \ \mathrm{for \ all} \ \ \ \gamma_{1}, \gamma_{2} \in \mathcal{A}_{\mathcal{C},x,y,n,w,\epsilon}^{'}.
\end{align}
Furthermore, we may assume that $\eta, \zeta \in \mathcal{A}_{\mathcal{C},x,y,n,w,\epsilon}'$, where $\zeta$ is as in Observation \ref{canconcludeZariskidensity}. Using (\ref{annulardivergence}) and (\ref{cardinalityestimate}), we obtain the following estimate on the exponential growth rates of the norms of the Cartan projections of elements in the sets $\mathcal{A}_{\mathcal{C},x,y,n,w,\epsilon}^{'}$.
\begin{prop}
\label{annupoincdivwithdisjtshadows}
We have
\begin{align*}
\limsup_{n \to \infty} \sum_{\gamma \in \mathcal{A}_{\mathcal{C},x,y,n,w,\epsilon}^{'}} e^{-\delta ||\kappa(\gamma)||} = \infty.
\end{align*}
\end{prop}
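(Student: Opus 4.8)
The plan is to combine the divergence of the $\delta_{0}$-weighted annular sums, (\ref{annulardivergence}), with the cardinality lower bound (\ref{cardinalityestimate}) for the sparsified sets $\mathcal{A}_{\mathcal{C},x,y,n,w,\epsilon}^{'}$, and to check that the resulting exponential rate in $n$ is nonnegative. The mechanism is that on an annulus $n\le ||\kappa(\gamma)||<n+w$ of fixed width $w$, replacing the weight $e^{-\delta_{0}||\kappa(\gamma)||}$ by $e^{-\delta||\kappa(\gamma)||}$ costs only a factor $e^{-(\delta_{0}-\delta)||\kappa(\gamma)||}$, which is bounded below by $e^{-(\delta_{0}-\delta)(n+w)}$; this gain of roughly $e^{\delta_{0}n}$ over $e^{\delta n}$ must overcome the loss $e^{-(\delta(\Gamma)+1)\lambda n}$ incurred by passing to $\mathcal{A}_{\mathcal{C},x,y,n,w,\epsilon}^{'}$, and it does precisely because $\lambda$ was chosen to satisfy (\ref{needed1})--(\ref{needed2}).

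First I would note that $0<\delta_{0}<\delta+r$ by (\ref{needed1}), so Lemma \ref{AnnularPoincDiv} gives (\ref{annulardivergence}); fix a sequence $n_{k}\to\infty$ along which $\sum_{\sigma\in\mathcal{A}_{\mathcal{C},x,y,n_{k},w,\epsilon}}e^{-\delta_{0}||\kappa(\sigma)||}\to\infty$. Since every $\sigma$ in the annulus has $||\kappa(\sigma)||\ge n$, each summand is at most $e^{-\delta_{0}n}$, whence $\#\mathcal{A}_{\mathcal{C},x,y,n,w,\epsilon}\ge e^{\delta_{0}n}\sum_{\sigma}e^{-\delta_{0}||\kappa(\sigma)||}$. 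Substituting this into (\ref{cardinalityestimate}) yields $\#\mathcal{A}_{\mathcal{C},x,y,n,w,\epsilon}^{'}\ge \tfrac{1}{B}\,e^{(\delta_{0}-(\delta(\Gamma)+1)\lambda)n}\sum_{\sigma}e^{-\delta_{0}||\kappa(\sigma)||}$.

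Next, since every $\gamma\in\mathcal{A}_{\mathcal{C},x,y,n,w,\epsilon}^{'}$ satisfies $||\kappa(\gamma)||<n+w$, we have $e^{-\delta||\kappa(\gamma)||}>e^{-\delta(n+w)}$, so summing and using the previous bound,
\[
\sum_{\gamma\in\mathcal{A}_{\mathcal{C},x,y,n,w,\epsilon}^{'}}e^{-\delta||\kappa(\gamma)||}\;\ge\;e^{-\delta(n+w)}\,\#\mathcal{A}_{\mathcal{C},x,y,n,w,\epsilon}^{'}\;\ge\;\frac{e^{-\delta w}}{B}\,e^{(\delta_{0}-\delta-(\delta(\Gamma)+1)\lambda)n}\sum_{\sigma\in\mathcal{A}_{\mathcal{C},x,y,n,w,\epsilon}}e^{-\delta_{0}||\kappa(\sigma)||}.
\]
By (\ref{needed2}) one has $(\delta(\Gamma)+1)\lambda<\tfrac{\delta_{0}}{1+\lambda}-\delta\le\delta_{0}-\delta$, so the exponent $\delta_{0}-\delta-(\delta(\Gamma)+1)\lambda$ is strictly positive and the factor $e^{(\delta_{0}-\delta-(\delta(\Gamma)+1)\lambda)n}$ is at least $1$ for all $n\ge 0$. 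Evaluating along $n=n_{k}$ and invoking (\ref{annulardivergence}) forces $\sum_{\gamma\in\mathcal{A}_{\mathcal{C},x,y,n_{k},w,\epsilon}^{'}}e^{-\delta||\kappa(\gamma)||}\to\infty$, which proves $\limsup_{n\to\infty}\sum_{\gamma\in\mathcal{A}_{\mathcal{C},x,y,n,w,\epsilon}^{'}}e^{-\delta||\kappa(\gamma)||}=\infty$.

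The whole computation is elementary bookkeeping, and the only point that requires care — and the reason the ad hoc inequalities (\ref{needed1}) and (\ref{needed2}) were imposed when fixing the cone $\mathcal{C}$ — is that the sparsification loss $e^{-(\delta(\Gamma)+1)\lambda n}$ from (\ref{cardinalityestimate}) must be beaten by the $e^{(\delta_{0}-\delta)n}$ gained from lowering the exponent on a bounded-width annulus. The genuine content, namely the shadow-overlap count against $\delta(\Gamma)$ producing (\ref{cardinalityestimate}) and the annular divergence (\ref{annulardivergence}) from Lemma \ref{AnnularPoincDiv}, is already in hand.
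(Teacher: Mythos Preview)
Your proof is correct and in fact slightly more streamlined than the paper's. Both arguments combine the same two ingredients --- the divergence (\ref{annulardivergence}) and the cardinality bound (\ref{cardinalityestimate}) --- but they organize the bookkeeping differently. The paper argues by contradiction: assuming the $\delta$-weighted sums over $\mathcal{A}_{n}'$ are bounded, it uses the annular estimate (\ref{annularbound}), namely $||\kappa(\gamma)-\kappa(\sigma)||\le\lambda(n+w)$, to compare $||\kappa(\gamma)||$ with $||\kappa(\sigma)||$ for $\gamma\in\mathcal{A}_{n}'$ and $\sigma\in\mathcal{A}_{n}$, and then shows this forces the $\delta_{0}$-weighted sums over $\mathcal{A}_{n}$ to be bounded as well, contradicting (\ref{annulardivergence}). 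You instead argue directly, using only the crude bounds $n\le ||\kappa(\cdot)||<n+w$ on each annulus to pass between weights and cardinalities; the annular estimate (\ref{annularbound}) never enters, and you extract from (\ref{needed2}) only the weaker consequence $(\delta(\Gamma)+1)\lambda<\delta_{0}-\delta$, which already suffices. Your route is a bit more transparent about why the choice of $\lambda$ works; the paper's route makes more visible exactly where the sharper inequality (\ref{needed2}) sits in the estimate.
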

\begin{proof}
To simplify notation during the proof, we will set $\mathcal{A}_{n} := \mathcal{A}_{\mathcal{C},x,y,n,w,\epsilon}$ and $\mathcal{A}_{n}^{'} := \mathcal{A}_{\mathcal{C},x,y,n,w,\epsilon}^{'}$. Suppose to the contrary that 
\begin{align*}
\limsup_{n \to \infty} \sum_{\gamma \in \mathcal{A}_{n}^{'}} e^{-\delta ||\kappa(\gamma)||} < \infty.
\end{align*} This means that there exists a constant $M > 0$ so that, 
\begin{align}
\label{annupperbd}
\sum_{\gamma \in \mathcal{A}_{n}^{'}} e^{-\delta ||\kappa(\gamma)||} \leq M
\end{align} for all $n \geq 1$. Notice that, for all $\gamma \in \mathcal{A}_{n}^{'}$ and $\sigma \in \mathcal{A}_{n}$, we have
\begin{align*}
 ||\kappa(\gamma)|| &= ||\kappa(\sigma) - \kappa(\sigma) + \kappa(\gamma)|| \\
 &\leq ||\kappa(\sigma)|| + ||\kappa(\gamma) - \kappa(\sigma)|| \\
 &\leq ||\kappa(\sigma)|| + \lambda (n+w) \\
 &\leq ||\kappa(\sigma)|| + \lambda ||\kappa(\sigma)|| + \lambda w \\
&= (1+\lambda) ||\kappa(\sigma)|| + \lambda w.
\end{align*} Therefore,
\begin{align}
\label{relationofPoincareterms}
\notag -\delta_{0} ||\kappa(\sigma)|| &\leq \frac{-\delta_{0}}{1+\lambda} ||\kappa(\gamma)|| + \frac{\delta_{0}\lambda w}{1+\lambda} \\
&= -\delta ||\kappa(\gamma)|| - \Big( \frac{\delta_{0}}{1+\lambda} - \delta \Big) ||\kappa(\gamma)|| + \frac{\delta_{0} \lambda w}{1+\lambda}
\end{align} By (\ref{cardinalityestimate}), we have
\begin{align*}
\frac{\# \mathcal{A}_{n}}{\# \mathcal{A}_{n}^{'}} \leq B e^{(\delta(\Gamma)+1) \lambda n}.
\end{align*} Along with (\ref{annupperbd}) and (\ref{relationofPoincareterms}), this gives
\begin{align}
\label{upperboundPoincare}
\sum_{\sigma \in \mathcal{A}_{n}} e^{-\delta_{0}||\kappa(\sigma)||} &\leq B e^{\frac{\delta_{0} \lambda w}{1+\lambda}} \sum_{\gamma \in \mathcal{A}_{n}^{'}} e^{(\delta(\Gamma)+1)\lambda n - \Big( \frac{\delta_{0}}{1+\lambda} - \delta \Big) ||\kappa(\gamma)||} \cdot e^{-\delta ||\kappa(\gamma)||},
\end{align} for all $n \geq 1$. To bound the sum on the right uniformly in $n$, it remains to estimate the first exponential term inside the sum on the right-hand side. We have
\begin{align*}
(\delta(\Gamma)+1)\lambda n - \Big( \frac{\delta_{0}}{1+\lambda} - \delta \Big) ||\kappa(\gamma)|| &\leq (\delta(\Gamma)+1)\lambda n - \Big( \frac{\delta_{0}}{1+\lambda} - \delta \Big) n \\
&= \Bigg[(\delta(\Gamma)+1)\lambda - \Big( \frac{\delta_{0}}{1+\lambda} - \delta \Big)\Bigg] n \\
&< 0,
\end{align*} where the first inequality holds since $\gamma \in \mathcal{A}_{n}^{'}$ and the last inequality holds by (\ref{needed2}). Hence,
\begin{align*}
e^{(\delta(\Gamma)+1)\lambda n - \Big( \frac{\delta_{0}}{1+\lambda} - \delta \Big) ||\kappa(\gamma)||} \leq 1,   
\end{align*} for all $\gamma \in \mathcal{A}_{n}^{'}$ and all $n \geq 1$. By (\ref{annupperbd}), we see that (\ref{upperboundPoincare}) becomes
\begin{align*}
\sum_{\sigma \in \mathcal{A}_{n}} e^{-\delta_{0}||\kappa(\sigma)||} &\leq B e^{\frac{\delta_{0} \lambda w}{1+\lambda}} \sum_{\gamma \in \mathcal{A}_{n}^{'}} e^{-\delta ||\kappa(\gamma)||} \leq M B e^{\frac{\delta_{0} \lambda w}{1+\lambda}},
\end{align*} for all $n \geq 1$. But this contradicts (\ref{annulardivergence}), so must indeed have 
\begin{align*}
\limsup_{n \to \infty} \sum_{\gamma \in \mathcal{A}_{n}^{'}} e^{-\delta ||\kappa(\gamma)||} = \infty,
\end{align*} as desired.
\end{proof}
\section{The Proof of Theorem \ref{MainTheorem1}}
\label{culminationsection}
In this section, we prove our main result, Theorem \ref{MainTheorem1}, which we restate below.
\begin{thm} [Theorem \ref{MainTheorem1}]
\label{MainTheorem2}
Let $G$ be a connected algebraic semisimple real Lie group with finite center and no compact factors, and let $\Gamma < G$ be a Zariski dense discrete subgroup. For every $0 < \delta < \delta (\Gamma)$, $\epsilon > 0$ sufficiently small, and $B \geq 1$, there exists a free, finitely generated subsemigroup $\Omega = \Omega_{\delta, \epsilon,B} \subset \Gamma$ with the following properties:
\begin{itemize}
    \item[(1)] Every element of $\Omega$ is either $\epsilon$-contracting or $2 \epsilon$-contracting.
    \item[(2)] The semigroup $\Omega$ is Zariski dense in $G$. 
    \item[(3)] The critical exponent of $\Omega$ satisfies 
    \begin{align*}
    \delta(\Omega) \geq \delta.
    \end{align*}
    \item[(4)] The semigroup $\Omega$ is $P$-Anosov. In fact,
    \begin{align*}
    \min_{\alpha \in \Delta} \alpha (\kappa (g)) \geq B |g|_{S},
    \end{align*} for all $g \in \Omega$, where $|\cdot|_{S}$ denotes the word-length with respect to the finite generating set $S$ that freely generates $\Omega$.
\end{itemize}
\end{thm}
We will deduce this theorem from Proposition \ref{keyproposition} below, Lemma \ref{technicallemunifsubadd}, and Lemma \ref{uniformsubadditivity}, which in turn follow from the results of the preceding sections.
\begin{prop}
\label{keyproposition}
For every $0 < \delta < \delta(\Gamma)$ and $\epsilon > 0$ sufficiently small, there exists a finite subset $S = S(\delta, \epsilon) \subset \Gamma$ so that the following holds: for any $m \geq 1$, if $\gamma \in S^{m}$, then $\gamma$ is either $\epsilon$-contracting or $2 \epsilon$-contracting and the set $\gamma \cdot S \subset S^{m+1}$ has the following properties:
\begin{itemize}
\item[(a)] If $\eta \in \gamma \cdot S$, then $\mathcal{S}_{2 \epsilon}(\eta) \subset \mathcal{S}_{4 \epsilon}(\gamma)$.
\item[(b)] The shadows $\{\mathcal{S}_{2 \epsilon}(\zeta)\}_{\zeta \in S}$ are pairwise disjoint.
\item[(c)] We have 
\begin{align*}
\sum_{\eta \in \gamma \cdot S} e^{-\delta ||\kappa(\eta)||} \geq e^{-\delta ||\kappa(\gamma)||}. 
\end{align*}
\end{itemize}
\end{prop}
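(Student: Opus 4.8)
The plan is to take the finite set $S$ to be one of the sets $\mathcal{A}_{\mathcal{C},x,y,n,w,\epsilon_{0}}'$ produced by the construction of Section~\ref{symspaceshadowssection}, with $\epsilon_{0}:=\epsilon/2$ and with $n$ chosen both large and so that the associated Poincar\'e sum exceeds $1$; the three properties then follow by assembling results from the preceding sections. Concretely, I would first fix, depending only on $\Gamma$, a transverse pair of limit points $(x,y)\in\Lambda(\Gamma)\times\Lambda(\Gamma)^{-}$, and then, given $0<\delta<\delta(\Gamma)$, choose $r$, $\delta_{0}$ and the open cone $\mathcal{C}\subset\mathfrak{a}^{++}$ exactly as in Section~\ref{symspaceshadowssection} so that (\ref{needed1}) and (\ref{needed2}) hold. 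I would require $\epsilon>0$ small enough that $\epsilon<\frac{1}{8}d(x,\mathcal{Z}_{y})$ and that Lemma~\ref{contractsymspaceinclusion} and Proposition~\ref{limitsetcrosslimitset3} both apply with parameter $\epsilon_{0}=\epsilon/2$; letting $R=R(\epsilon_{0})$ be the radius given by Lemma~\ref{contractsymspaceinclusion}, I would run the construction preceding Proposition~\ref{annupoincdivwithdisjtshadows} at scale $\epsilon_{0}$ with this $R$ (and some fixed $w\geq 1$), obtaining for each large $n$ a finite subset $\mathcal{A}_{\mathcal{C},x,y,n,w,\epsilon_{0}}'\subset\mathcal{A}_{\mathcal{C},x,y,n,w,\epsilon_{0}}$ satisfying (\ref{disjointusedinmainprop}). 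Since $\limsup_{n\to\infty}\sum_{\gamma\in\mathcal{A}_{\mathcal{C},x,y,n,w,\epsilon_{0}}'}e^{-\delta||\kappa(\gamma)||}=\infty$ by Proposition~\ref{annupoincdivwithdisjtshadows}, I may fix one $n$ exceeding every one of the finitely many ``$n$ sufficiently large'' thresholds invoked above and with $\sum_{\gamma\in\mathcal{A}_{\mathcal{C},x,y,n,w,\epsilon_{0}}'}e^{-\delta||\kappa(\gamma)||}\geq 1$, and set $S:=\mathcal{A}_{\mathcal{C},x,y,n,w,\epsilon_{0}}'$.

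With this choice, Proposition~\ref{limitsetcrosslimitset3} applied at scale $\epsilon_{0}$ shows that every $\zeta\in S$ is $\epsilon$-contracting (recall $\epsilon=2\epsilon_{0}$) with $d(x,x_{\zeta}^{+})<\epsilon$ and $d_{\mathrm{Haus}}(\mathcal{Z}_{y},\mathcal{Z}_{x_{\zeta}^{-}})<\epsilon$, so for distinct $\zeta,\zeta'\in S$,
\begin{align*}
d(x_{\zeta}^{+},\mathcal{Z}_{x_{\zeta'}^{-}})\geq d(x,\mathcal{Z}_{y})-d(x,x_{\zeta}^{+})-d_{\mathrm{Haus}}(\mathcal{Z}_{y},\mathcal{Z}_{x_{\zeta'}^{-}})>8\epsilon-\epsilon-\epsilon=6\epsilon,
\end{align*}
using $\frac{1}{8}d(x,\mathcal{Z}_{y})>\epsilon$. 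Thus $S$ satisfies the hypotheses of Proposition~\ref{contractingsemigroup} with constant $\epsilon$, so every $\gamma\in S^{m}$ is $\epsilon$-contracting or $2\epsilon$-contracting, and Proposition~\ref{shadowinclusion} gives property~(1): $\mathcal{S}_{2\epsilon}(\eta)\subset\mathcal{S}_{4\epsilon}(\gamma)$ whenever $\eta=\gamma\zeta$ with $\gamma\in S^{m}$ and $\zeta\in S$. For property~(2), Lemma~\ref{contractsymspaceinclusion} at scale $\epsilon_{0}$ gives $\mathcal{S}_{2\epsilon}(\zeta)\subset\mathcal{S}_{\epsilon}(\zeta)=\mathcal{S}_{2\epsilon_{0}}(\zeta)\subset\mathcal{O}_{R}(o,\zeta o)$ for each $\zeta\in S\subset\Gamma_{\mathcal{C},x,y,n,\epsilon_{0}}$ (using that $\mathcal{S}_{r}$ shrinks as $r$ grows), whence (\ref{disjointusedinmainprop}) forces $\mathcal{S}_{2\epsilon}(\zeta)\cap\mathcal{S}_{2\epsilon}(\zeta')\subset\mathcal{O}_{R}(o,\zeta o)\cap\mathcal{O}_{R}(o,\zeta' o)=\emptyset$ for $\zeta\neq\zeta'$.

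Property~(3) is then immediate: by Lemma~\ref{differencecartanprojections}, $||\kappa(\gamma\zeta)||\leq||\kappa(\gamma)||+||\kappa(\zeta)||$, and since $\zeta\mapsto\gamma\zeta$ is injective (as $\gamma$ is invertible in $G$),
\begin{align*}
\sum_{\eta\in\gamma\cdot S}e^{-\delta||\kappa(\eta)||}=\sum_{\zeta\in S}e^{-\delta||\kappa(\gamma\zeta)||}\geq e^{-\delta||\kappa(\gamma)||}\sum_{\zeta\in S}e^{-\delta||\kappa(\zeta)||}\geq e^{-\delta||\kappa(\gamma)||},
\end{align*}
the last inequality being the defining property of our chosen $n$. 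The one point requiring care is the bookkeeping of scales: it is essential to run the machinery of Sections~\ref{asymptoticphenomena} and~\ref{symspaceshadowssection} at scale $\epsilon/2$ so that the generators come out exactly $\epsilon$-contracting (matching the hypotheses of Proposition~\ref{contractingsemigroup}) rather than merely $2\epsilon$-contracting, and to check that the separation bound $6\epsilon$ and the shadow inclusions still close up; beyond that, every step is a direct appeal to a result established above.
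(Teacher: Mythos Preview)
Your proposal is correct and follows essentially the same argument as the paper's own proof: you take $S=\mathcal{A}_{\mathcal{C},x,y,n_{0},w,\epsilon/2}'$ for a suitably large $n_{0}$ making the Poincar\'e sum at least $1$, use Proposition~\ref{limitsetcrosslimitset3} at scale $\epsilon/2$ to get $\epsilon$-contracting generators with the separation $d(x_{\zeta}^{+},\mathcal{Z}_{x_{\zeta'}^{-}})>6\epsilon$ needed for Proposition~\ref{contractingsemigroup}, and then read off properties~(1)--(3) from Proposition~\ref{shadowinclusion}, the inclusion $\mathcal{S}_{2\epsilon}(\zeta)\subset\mathcal{O}_{R}(o,\zeta o)$ combined with (\ref{disjointusedinmainprop}), and subadditivity of $\|\kappa(\cdot)\|$ respectively. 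Your emphasis on the scale bookkeeping (running everything at $\epsilon/2$ so the generators are exactly $\epsilon$-contracting rather than $2\epsilon$-contracting) is precisely the point, and matches the paper's approach.
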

\begin{proof}
Let $0 < \delta < \delta(\Gamma)$ be arbitrary, fix a pair $(x,y) \in \Lambda(\Gamma) \times \Lambda(\Gamma)^{-}$ such that $x \notin \mathcal{Z}_{y}$, and fix $0 < \epsilon < \frac{1}{8} d(x, \mathcal{Z}_{y})$. Let $r > 0$ be so that $\delta < \delta + r < \delta(\Gamma)$ and let $\mathcal{C} \subset \mathfrak{a}^{++}$ be an open cone so that, by Lemma \ref{limitsetcrosslimitset2}, we have 
\begin{align*}
\sum_{\gamma \in \Gamma_{\mathcal{C},x,y,n,\epsilon}} e^{-(\delta + r)||\kappa(\gamma)||} = \infty
\end{align*} for all $\epsilon > 0$ and $n \geq 1$. By Proposition \ref{limitsetcrosslimitset3} and Lemma \ref{contractsymspaceinclusion}, there exist constants $M = M(\epsilon/2) > 0$ and $R = R(\epsilon/2) > 0$ so that: if $n \geq M$ and $\gamma \in \Gamma_{\mathcal{C},x,y,n, \frac{\epsilon}{2}}$, then $\gamma$ is $\epsilon$-contracting and
\begin{align}
\label{shadowinclusiontogetdisjointness}
\mathcal{S}_{2 \epsilon}(\gamma) \subset \mathcal{S}_{\epsilon}(\gamma) \subset \mathcal{O}_{R}(o, \gamma o). 
\end{align} Let $\mathcal{A}_{\mathcal{C},x,y,n,w,\frac{\epsilon}{2}}$ and its subset $\mathcal{A}_{\mathcal{C},x,y,n,w,\frac{\epsilon}{2}}^{'}$ be chosen as before (see the discussion preceding (\ref{constructionoftheannulargenset})). By Proposition \ref{annupoincdivwithdisjtshadows}, we have
\begin{align*}
\limsup_{n \to \infty} \sum_{\gamma \in \mathcal{A}_{\mathcal{C},x,y,n,w,\frac{\epsilon}{2}}^{'}} e^{-\delta ||\kappa(\gamma)||} = \infty.
\end{align*} In particular, there exists $n_{0} \geq M$ so that, defining $S := \mathcal{A}_{\mathcal{C},x,y,n_{0},w,\frac{\epsilon}{2}}^{'}$, we have
\begin{align}
\label{biggerthanone}
\sum_{\zeta \in S} e^{-\delta ||\kappa(\zeta)||} \geq 1.
\end{align} We claim that this set $S$ has all the properties in the statement of the proposition.
\par 
We first show that, for any $m \geq 1$, if $\gamma \in S^{m}$, then $\gamma$ is either $\epsilon$-contracting or $2 \epsilon$-contracting. By construction, every element of $S$ is $\epsilon$-contracting. Moreover, by Proposition \ref{limitsetcrosslimitset3}, for any pair of distinct elements $g \neq h \in S$ we have
\begin{align}
\label{gensetestimates}
d(x,x_{g}^{+}) < \epsilon, \ \ d_{\mathrm{Haus}}(\mathcal{Z}_{y}, \mathcal{Z}_{x_{h}^{-}}) < \epsilon, \ \ \mathrm{and} \ \ d(x,\mathcal{Z}_{y}) > 8 \epsilon,
\end{align} and therefore
\begin{align}
\label{gensetest2}
d(x_{g}^{+}, \mathcal{Z}_{x_{h}^{-}}) \geq d(x, \mathcal{Z}_{y}) - d(x,x_{g}^{+}) - d(\mathcal{Z}_{y}, \mathcal{Z}_{x_{h}^{-}}) > 6 \epsilon.
\end{align} Hence, by Proposition \ref{contractingsemigroup}, every element of the semigroup generated by $S$ is either $\epsilon$-contracting or $2 \epsilon$-contracting. We now prove the remaining assertions of the proposition.
\begin{proof} [Proof of Property (a)]
This is exactly the content of Proposition \ref{shadowinclusion}.
\end{proof}
\begin{proof} [Proof of Property (b)]
Since $S = \mathcal{A}_{\mathcal{C},x,y,n_{0},w, \frac{\epsilon}{2}}^{'} \subset \Gamma_{\mathcal{C},x,y,n_{0},\frac{\epsilon}{2}}$, the inclusions of (\ref{shadowinclusiontogetdisjointness}) give
\begin{align*}
\mathcal{S}_{2 \epsilon}(\zeta) \subset \mathcal{O}_{R}(o, \zeta o),
\end{align*} for all $\zeta \in S$. By the definition of the set $S$ (see (\ref{disjointusedinmainprop})), we conclude that the shadows $\mathcal{S}_{2 \epsilon}(\zeta_{1})$ and $\mathcal{S}_{2 \epsilon}(\zeta_{2})$ are disjoint for every pair of distinct elements $\zeta_{1}, \zeta_{2} \in S$, as desired.
\end{proof}
\begin{proof} [Proof of Property (c)]
Let $m \geq 1$ and $\gamma \in S^{m}$ be arbitrary. Let $\eta = \gamma \zeta$ for some $\zeta \in S$. Then 
\begin{align*}
||\kappa(\eta)|| \leq ||\kappa(\gamma)|| + ||\kappa(\zeta)||,
\end{align*}
hence
\begin{align*}
\sum_{\eta \in \gamma \cdot S} e^{-\delta ||\kappa(\eta)||} \geq e^{-\delta ||\kappa(\gamma)||} \sum_{\zeta \in S} e^{-\delta ||\kappa(\zeta)||} \geq e^{-\delta ||\kappa(\gamma)||},
\end{align*} where the last inequality follows from (\ref{biggerthanone}).
\end{proof}
This concludes the proof of all of the properties, and so also the proof of the proposition.
\end{proof}
In the above proof, the generating set of our semigroup was $S := \mathcal{A}_{\mathcal{C},x,y,n_{0},w, \frac{\epsilon}{2}}^{'}$. After possibly choosing a larger integer $n_{0}$ in the definition of $S$, the following lemma will allow us to conclude that the Cartan projections of elements of our asymptotically large semigroups are ``coarsely subadditive''; see Lemma \ref{uniformsubadditivity} for a precise formulation of this statement.
\begin{lem}
\label{technicallemunifsubadd}
For $j \in \mathbb{N}$, define $S_{j} := \mathcal{A}_{\mathcal{C},x,y,j,w,\frac{\epsilon}{2}}^{'}$ and let $\Omega_{j} := \bigcup_{m=1}^{\infty} S_{j}^{m}$ be the semigroup it generates. Then for all $j$ sufficiently large, we have
\begin{itemize}
    \item[(i)] $d(x, \mathcal{Z}_{\ell_{g}^{-1}P^{-}}) > 5 \epsilon$, \ \ and
    \item[(ii)] $d(hx, \mathcal{Z}_{\ell_{g}^{-1}P^{-}}) > \epsilon$ 
\end{itemize} for all $g,h \in \Omega_{j}$.
\end{lem}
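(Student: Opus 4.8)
The plan is to study the Cartan decompositions of elements of $\Omega_{j}$ uniformly in $j$. Write $c_{0}:=d(P,\mathcal{Z}_{P^{-}})>0$; by $G$-equivariance $d(kP,\mathcal{Z}_{kP^{-}})=c_{0}$ for every $k\in K$, and (shrinking $\epsilon$) I assume $\epsilon<c_{0}/3$. Recall $d(x,\mathcal{Z}_{y})>8\epsilon$, that the cone $\mathcal{C}$ has $\overline{\mathcal{C}}\subset\mathfrak{a}^{++}$, and (Propositions \ref{contractingsemigroup}, \ref{limitsetcrosslimitset3}, exactly as in the proof of Proposition \ref{keyproposition}) that for $j$ large every $g\in\Omega_{j}$ is $\epsilon$- or $2\epsilon$-contracting with $d(x_{g}^{+},x)<2\epsilon$ and $d_{\mathrm{Haus}}(\mathcal{Z}_{x_{g}^{-}},\mathcal{Z}_{y})<2\epsilon$. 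The whole argument hinges on the following \emph{Claim}: for every $\eta>0$ there is $J(\eta)$ so that, for $j\ge J(\eta)$ and all $g\in\Omega_{j}$, one has $g\big(\mathcal{F}\smallsetminus N_{\epsilon}(\mathcal{Z}_{y})\big)\subset\overline{B_{\epsilon/2+\eta}(x)}$ and $g|_{\mathcal{F}\smallsetminus N_{\epsilon}(\mathcal{Z}_{y})}$ is $\eta$-Lipschitz.

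Granting the Claim, I would first deduce that $\min_{\alpha\in\Delta}\alpha(\kappa(g))\to\infty$ as $j\to\infty$, uniformly over $g\in\Omega_{j}$: since $B_{\epsilon}(x_{g}^{+})\subset\mathcal{F}\smallsetminus N_{\epsilon}(\mathcal{Z}_{y})$ (using $d(x_{g}^{+},\mathcal{Z}_{y})\ge 2\epsilon$, from $2\epsilon$-contraction), the Claim gives $\|Dg_{x_{g}^{+}}\|\le\eta$; the differential of a loxodromic $g$ at its attracting fixed point has eigenvalue moduli $\{e^{-\alpha(\lambda(g))}:\alpha\in\Sigma^{+}\}$, so $\min_{\alpha\in\Delta}\alpha(\lambda(g))\ge\log(1/\eta)$, and Lemma \ref{JordanCartanforBenoistsset} (applicable with parameter $2\epsilon$) bounds $\|\kappa(g)-\lambda(g)\|$ by a constant depending only on $\epsilon$, whence $\min_{\alpha\in\Delta}\alpha(\kappa(g))\ge\log(1/\eta)-C(\epsilon)$. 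In particular $g$ is $\Delta$-regular for $j$ large, so $k_{g}P$ and $\ell_{g}^{-1}P^{-}$ are well defined; a short contradiction argument using the Claim together with Proposition \ref{northsouthflagvars}(2) (applied to a bad sequence $g_{n}\in\Omega_{j_{n}}$, $j_{n}\to\infty$, after choosing $F$ in the nonempty open set $(\mathcal{F}\smallsetminus N_{\epsilon}(\mathcal{Z}_{y}))\smallsetminus\mathcal{Z}_{F^{-}}$ where $F^{-}=\lim\ell_{g_{n}}^{-1}P^{-}$) shows $d(k_{g}P,x)<\epsilon$ for $j$ large. Now the key point: for such $g$ and a Cartan decomposition $g=ke^{\kappa(g)}\ell$, if some $F$ with $d(F,\mathcal{Z}_{y})>\epsilon$ were not transverse to $\ell^{-1}P^{-}$, then $\ell F\in\mathcal{Z}_{P^{-}}$, which is $A$-invariant, so $gF=k(e^{\kappa(g)}\ell F)\in\mathcal{Z}_{kP^{-}}$; but $gF\in\overline{B_{\epsilon/2+\eta}(x)}$ by the Claim, forcing $d(x,\mathcal{Z}_{kP^{-}})\le\epsilon/2+\eta$, which contradicts $d(x,\mathcal{Z}_{kP^{-}})\ge c_{0}-d(x,kP)>c_{0}-\epsilon>\epsilon/2+\eta$ for $\eta$ small. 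Hence $\mathcal{Z}_{\ell_{g}^{-1}P^{-}}\subset N_{\epsilon}(\mathcal{Z}_{y})$, which gives (1): $d(x,\mathcal{Z}_{\ell_{g}^{-1}P^{-}})\ge d(x,\mathcal{Z}_{y})-\epsilon>7\epsilon>5\epsilon$. For (2), any $h\in\Omega_{j}$ satisfies $d(x,\mathcal{Z}_{x_{h}^{-}})>6\epsilon$, so $x\notin N_{2\epsilon}(\mathcal{Z}_{x_{h}^{-}})$ and $hx\in\overline{B_{2\epsilon}(x_{h}^{+})}$, giving $d(hx,x)<4\epsilon$ and therefore $d(hx,\mathcal{Z}_{\ell_{g}^{-1}P^{-}})\ge d(x,\mathcal{Z}_{y})-d(hx,x)-\epsilon>8\epsilon-4\epsilon-\epsilon=3\epsilon>\epsilon$.

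It remains to prove the Claim. I would prove it first for a single generator $\zeta\in S_{j}$ and then compose along words (the intermediate images lie in $\overline{B_{\epsilon/2+\eta}(x)}\subset\mathcal{F}\smallsetminus N_{\epsilon}(\mathcal{Z}_{y})$ since $d(x,\mathcal{Z}_{y})>8\epsilon$, and a composite of $\eta$-Lipschitz maps is $\eta$-Lipschitz). For generators I argue by contradiction and compactness: if the statement fails there are $\eta>0$, $j_{n}\to\infty$, and $\zeta_{n}\in S_{j_{n}}\subset\Gamma_{\mathcal{C},x,y,j_{n},\epsilon/2}$ violating it; since $\kappa(\zeta_{n})\in\mathcal{C}$ with $\overline{\mathcal{C}}\subset\mathfrak{a}^{++}$ and $\|\kappa(\zeta_{n})\|\ge j_{n}$, we get $\min_{\alpha\in\Delta}\alpha(\kappa(\zeta_{n}))\to\infty$; passing to a subsequence, $k_{\zeta_{n}}P\to F^{+}\in\overline{B_{\epsilon/2}(x)}$ and $\ell_{\zeta_{n}}^{-1}P^{-}\to F^{-}$ with $\mathcal{Z}_{F^{-}}\subset\overline{N_{\epsilon/2}(\mathcal{Z}_{y})}$; since $d(x,\mathcal{Z}_{y})>8\epsilon$ the flags $F^{+},F^{-}$ are transverse, so $\mathcal{F}\smallsetminus N_{\epsilon}(\mathcal{Z}_{y})$ is a compact subset of $\mathcal{F}\smallsetminus\mathcal{Z}_{F^{-}}$, and the ``moreover'' part of Proposition \ref{northsouthflagvars} gives, for $n$ large, $\zeta_{n}(\mathcal{F}\smallsetminus N_{\epsilon}(\mathcal{Z}_{y}))\subset N_{\eta}(F^{+})\subset\overline{B_{\epsilon/2+\eta}(x)}$ and $\zeta_{n}|_{\mathcal{F}\smallsetminus N_{\epsilon}(\mathcal{Z}_{y})}$ $\eta$-Lipschitz --- a contradiction.

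\textbf{Main obstacle.} The hard part is the Claim, and inside it the passage from ``$\zeta$ is $\epsilon$-contracting'' (which yields only a \emph{fixed} Lipschitz bound) to ``$\zeta$ is $\eta$-Lipschitz on the prescribed set'' with $\eta\to0$; this is exactly where one must exploit that the generators in $S_{j}$ are \emph{deep} in the Weyl chamber ($\|\kappa(\zeta)\|\ge j$ with $\kappa(\zeta)$ in the fixed cone $\overline{\mathcal{C}}\subset\mathfrak{a}^{++}$), not merely $\epsilon$-contracting, and funnel this through the north--south dynamics of Proposition \ref{northsouthflagvars}. A secondary technical point is the eigenvalue computation for the differential at the attracting fixed point, used to convert quantitative contraction into the lower bound on $\min_{\alpha}\alpha(\lambda(g))$, hence (via Lemma \ref{JordanCartanforBenoistsset}) on $\min_{\alpha}\alpha(\kappa(g))$ and the $\Delta$-regularity of $g$.
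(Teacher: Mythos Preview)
Your proof is correct but takes a genuinely different route from the paper's. For part~(1) the paper runs a short contradiction: given a bad sequence $g_{n}\in\Omega_{j_{n}}$ with $j_{n}\to\infty$, it passes to subsequences so that $k_{g_{n}}P\to F^{+}$, $\ell_{g_{n}}^{-1}P^{-}\to F^{-}$, and $\min_{\alpha}\alpha(\kappa(g_{n}))\to\infty$; then Proposition~\ref{attractingrepellinglocation} gives $x_{g_{n}}^{-}\to F^{-}$, whence $d_{\mathrm{Haus}}(\mathcal{Z}_{\ell_{g_{n}}^{-1}P^{-}},\mathcal{Z}_{x_{g_{n}}^{-}})\to 0$, and the bound $d_{\mathrm{Haus}}(\mathcal{Z}_{x_{g_{n}}^{-}},\mathcal{Z}_{y})<\tfrac{3\epsilon}{2}$ (coming from Propositions~\ref{contractingsemigroup} and~\ref{limitsetcrosslimitset3}) forces $d(x,\mathcal{Z}_{\ell_{g_{n}}^{-1}P^{-}})>6\epsilon$, a contradiction. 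Part~(2) is then deduced exactly as you do it.

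Your approach is more constructive and carries more information. The Claim produces a uniform $\eta$-contraction over all of $\Omega_{j}$; you convert this via the spectrum of $Dg_{x_{g}^{+}}$ and Lemma~\ref{JordanCartanforBenoistsset} into a lower bound on $\min_{\alpha}\alpha(\kappa(g))$, and you pin down $\mathcal{Z}_{\ell_{g}^{-1}P^{-}}\subset N_{\epsilon}(\mathcal{Z}_{y})$ by the clean observation that $\mathcal{Z}_{P^{-}}$ is $A$-invariant, so a non-transverse flag is carried by $g=ka\ell$ into $\mathcal{Z}_{kP^{-}}$, which is far from $x$. This is longer and imports the extra hypothesis $\epsilon<c_{0}/3$ together with the eigenvalue computation for $Dg_{x_{g}^{+}}$, but it makes fully explicit the one point the paper glosses over---namely why one may arrange $\min_{\alpha}\alpha(\kappa(g_{n}))\to\infty$ along the bad sequence (the paper simply asserts this as part of ``passing to sufficiently many subsequences''). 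Your route also yields uniform $\Delta$-regularity of $\Omega_{j}$ as a byproduct, and in fact your inclusion $\mathcal{Z}_{\ell_{g}^{-1}P^{-}}\subset N_{\epsilon}(\mathcal{Z}_{y})$ gives the sharper bounds $d(x,\mathcal{Z}_{\ell_{g}^{-1}P^{-}})>7\epsilon$ and $d(hx,\mathcal{Z}_{\ell_{g}^{-1}P^{-}})>3\epsilon$.
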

\begin{proof} [Proof of (i)]
Suppose that item (i) does not hold. Then, after passing to sufficiently many subsequences, we can find a sequence $\{g_{n}\}_{n \geq 1}$ with $g_{n} \in \Omega_{j_{n}}$, $j_{n} \rightarrow \infty$, such that 
\begin{align*}
\min_{\alpha \in \Delta} \alpha (\kappa(g_{n})) \rightarrow \infty,  \ \ k_{g_{n}}P \rightarrow F^{+}, \ \ \mathrm{and} \ \ \ell_{g_{n}}^{-1}P^{-} \rightarrow F^{-},
\end{align*} but
\begin{align*}
d(x, \mathcal{Z}_{\ell_{g_{n}}^{-1}P^{-}}) \leq 5 \epsilon
\end{align*} for all $n \geq 1$. Since each of the $g_{n}$ are loxodromic, we also have $x_{g_{n}}^{+} \rightarrow F^{+}$ and $x_{g_{n}}^{-} \rightarrow F^{-}$. Equivalently, 
\begin{align*}
d(k_{g_{n}}P, x_{g_{n}}^{+}) \rightarrow 0 \ \ \ \mathrm{and} \ \ \ d_{\mathrm{Haus}}(\mathcal{Z}_{\ell_{g_{n}}^{-1}P^{-}}, \mathcal{Z}_{x_{g_{n}}^{-}}) \rightarrow 0.
\end{align*} By Proposition \ref{contractingsemigroup} and the definition of the generating sets $S_{j_{n}}$, we have
\begin{align*}
d(x_{g_{n}}^{+}, x) < \frac{3 \epsilon}{2} \ \ \mathrm{and} \ \ d_{\mathrm{Haus}}(\mathcal{Z}_{x_{g_{n}}^{-}}, \mathcal{Z}_{y}) < \frac{3 \epsilon}{2},
\end{align*} and so for all $n$ sufficiently large, we obtain
\begin{align*}
d(k_{g_{n}}P, x) < 2 \epsilon \ \ \mathrm{and} \ \ d_{\mathrm{Haus}}(\mathcal{Z}_{\ell_{g_{n}}^{-1}P^{-}}, \mathcal{Z}_{y}) < 2 \epsilon.
\end{align*}
Therefore, 
\begin{align*}
d(x, \mathcal{Z}_{\ell_{g_{n}}^{-1}P^{-}}) \geq d(x, \mathcal{Z}_{y}) -  d_{\mathrm{Haus}}(\mathcal{Z}_{y}, \mathcal{Z}_{\ell_{g_{n}}^{-1}P^{-}}) > 8 \epsilon - 2 \epsilon = 6 \epsilon,
\end{align*} for $n$ large enough, which is a contradiction. This completes the proof of item (i).
\end{proof}
\begin{proof} [Proof of (ii)]
Let $j \in \mathbb{N}$ be sufficiently large so that item (i) holds and let $g,h \in \Omega_{j}$ be arbitrary. By Proposition \ref{keyproposition}, $h$ is either $\epsilon$-contracting or $2 \epsilon$-contracting, hence in particular
\begin{align*}
h \big( \mathcal{F} \smallsetminus N_{2 \epsilon}\big(\mathcal{Z}_{x_{h}^{-}} \big) \big) \subset \overline{B_{2 \epsilon}(x_{h}^{+})}.
\end{align*}
Moreover, arguing as before, we have $d(x_{h}^{+}, x) < 2 \epsilon$ and $d_{\mathrm{Haus}}(\mathcal{Z}_{x_{h}^{-}}, \mathcal{Z}_{y}) < 2 \epsilon$. Therefore,
\begin{align*}
hx \in h \big( \mathcal{F} \smallsetminus N_{2 \epsilon}\big(\mathcal{Z}_{x_{h}^{-}} \big) \big) \subset \overline{B_{2 \epsilon}(x_{h}^{+})} \subset B_{4 \epsilon}(x).
\end{align*} By part (i), we conclude that 
\begin{align*}
d(hx, \mathcal{Z}_{\ell_{g}^{-1}P^{-}}) \geq d(x, \mathcal{Z}_{\ell_{g}^{-1}P^{-}}) - d(hx,x) > 5 \epsilon - 4 \epsilon = \epsilon,
\end{align*} as desired. 
\end{proof}
\begin{lem}
\label{uniformsubadditivity}
With the same notation as in Lemma \ref{technicallemunifsubadd}, there exists $C_{0} = C_{0} (\epsilon) > 0$ so that the following holds: for all $j \in \mathbb{N}$ sufficiently large and all $g,h \in \Omega_{j}$, we have 
\begin{align*}
||\kappa(gh) - \kappa(g) - \kappa(h)|| \leq C_{0}.
\end{align*}
\begin{proof}
By Lemma \ref{BusemannCartanEstimate}, there exists $C = C(\epsilon) > 0$ so that if $g = k_{g}a_{g}l_{g}$ is a $KA^{+}K$ decomposition and $F \in \mathcal{F}$ satisfies $d(F, \mathcal{Z}_{\ell_{g}^{-1}P^{-}}) > \epsilon$, then 
\begin{align*}
||B(g,F) - \kappa (g)|| < C.
\end{align*} Let $j$ be sufficiently large so that Lemma \ref{technicallemunifsubadd} holds. Then for all $g,h \in \Omega_{j}$, we have
\begin{align*}
d(x, \mathcal{Z}_{\ell_{h}^{-1}P^{-}}) > 5 \epsilon > \epsilon, \ \ d(hx, \mathcal{Z}_{\ell_{g}^{-1}P^{-}}) > \epsilon, \ \ \mathrm{and} \ \ d(x, \mathcal{Z}_{\ell_{gh}^{-1}P^{-}}) > 5 \epsilon > \epsilon.
\end{align*} Therefore,
\begin{align}
\label{subadd1}
||B(h,x) - \kappa(h)|| < C, \ \ ||B(g,hx) - \kappa(g)|| < C, \ \ \mathrm{and} \ \ ||B(gh,x) - \kappa(gh)|| < C.
\end{align} Since $B : G \times \mathcal{F} \rightarrow \mathfrak{a}$ is a cocycle, we have
\begin{align}
\label{subadd2}
B(gh,x) = B(g,hx) + B(h,x).
\end{align} Combining (\ref{subadd1}) and (\ref{subadd2}), we obtain
\begin{align*}
||\kappa(gh) - \kappa(g) - \kappa(h)|| &= ||\kappa(gh) - B(gh,x) + B(g,hx) + B(h,x) - \kappa(g) - \kappa(h)|| \\
&\leq ||B(gh,x) - \kappa(gh)|| + ||B(g,hx) - \kappa(g)|| + ||B(h,x) - \kappa(h)|| \\
&\leq 3C,
\end{align*} hence the claim holds with $C_{0} := 3C$.
\end{proof}
\end{lem}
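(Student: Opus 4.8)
The plan is to reduce the statement to Quint's estimate relating the Iwasawa cocycle to the Cartan projection (Lemma \ref{BusemannCartanEstimate}) together with the cocycle identity for $B$. The key observation is that Lemma \ref{technicallemunifsubadd} was set up precisely so that the fixed viewpoint $x \in \Lambda(\Gamma)$, and its translates $hx$ with $h \in \Omega_{j}$, stay uniformly far from the non-transverse loci $\mathcal{Z}_{\ell_{g}^{-1}P^{-}}$ attached to the Cartan decompositions of elements $g \in \Omega_{j}$; this uniform transversality is exactly the hypothesis that Lemma \ref{BusemannCartanEstimate} requires.

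First I would fix $C = C(\epsilon) > 0$ as furnished by Lemma \ref{BusemannCartanEstimate} for the parameter $\epsilon$, and choose $j$ large enough that Lemma \ref{technicallemunifsubadd} applies to $\Omega_{j}$. Given $g, h \in \Omega_{j}$, note that $gh \in \Omega_{j}$ as well, since $\Omega_{j}$ is a semigroup; hence part (1) of Lemma \ref{technicallemunifsubadd}, applied to $h$ and to $gh$, gives $d(x, \mathcal{Z}_{\ell_{h}^{-1}P^{-}}) > 5\epsilon > \epsilon$ and $d(x, \mathcal{Z}_{\ell_{gh}^{-1}P^{-}}) > 5\epsilon > \epsilon$, while part (2), applied to the pair $(g,h)$, gives $d(hx, \mathcal{Z}_{\ell_{g}^{-1}P^{-}}) > \epsilon$.

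I would then apply Lemma \ref{BusemannCartanEstimate} three times: to $h$ with viewpoint $x$, to $g$ with viewpoint $hx$, and to $gh$ with viewpoint $x$. This yields $||B(h,x) - \kappa(h)|| < C$, $||B(g,hx) - \kappa(g)|| < C$, and $||B(gh,x) - \kappa(gh)|| < C$. Combining these three bounds with the cocycle relation $B(gh,x) = B(g,hx) + B(h,x)$ and the triangle inequality gives $||\kappa(gh) - \kappa(g) - \kappa(h)|| \leq 3C$, so the lemma holds with $C_{0} := 3C$.

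There is no serious obstacle at this stage: the only content is verifying the transversality hypotheses of Lemma \ref{BusemannCartanEstimate}, and this is immediate from Lemma \ref{technicallemunifsubadd} — the substantive work (establishing uniform transversality via the north-south dynamics of Proposition \ref{northsouthflagvars} and the contracting-element machinery) was already carried out there. The one point to be careful about is that the viewpoint for the factor $g$ must be $hx$ rather than $x$, which is exactly why part (2) of Lemma \ref{technicallemunifsubadd} is needed in addition to part (1); everything else is a bookkeeping application of the cocycle property.
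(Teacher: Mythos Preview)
Your proposal is correct and follows essentially the same argument as the paper: invoke Lemma~\ref{BusemannCartanEstimate} with the transversality bounds supplied by Lemma~\ref{technicallemunifsubadd}, apply it to $h$, $g$, and $gh$ at the viewpoints $x$, $hx$, $x$ respectively, then combine with the cocycle identity and the triangle inequality to obtain $C_{0}=3C$. Your explicit remark that $gh\in\Omega_{j}$ by the semigroup property is a helpful clarification that the paper leaves implicit.
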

We now give the proof of Theorem \ref{MainTheorem2}.
\begin{proof} [Proof of Theorem \ref{MainTheorem2}]
Given $0 < \delta < \delta(\Gamma)$, $\epsilon > 0$ sufficiently small, and $B \geq 1$, first let 
\begin{align*}
S = \mathcal{A}_{\mathcal{C},x,y,n_{0},w, \frac{\epsilon}{2}}^{'}
\end{align*} be the finite subset of $\Gamma$ furnished by Proposition \ref{keyproposition}. As in the comment above Lemma \ref{technicallemunifsubadd}, we may assume $n_{0} \geq 1$ is sufficiently large so that Lemma \ref{technicallemunifsubadd} and Lemma \ref{uniformsubadditivity} both hold. Moreover, since the constant $C_{0}$ of Lemma \ref{uniformsubadditivity} only depends on $\epsilon$, we may further assume that $n_{0}$ is sufficiently large so that
\begin{align}
\label{Cartanisbigenough}
\min_{\alpha \in \Delta} \min_{g \in S} \alpha(\kappa(g)) > \Big( C_{0} \cdot \max_{\alpha \in \Delta}  ||\alpha||_{\mathrm{op}} \Big) + B.
\end{align} Now let 
\begin{align*}
\Omega = \Omega_{\delta, \epsilon, B} := \bigcup_{m=1}^{\infty} S^{m}
\end{align*} be the semigroup generated by $S$. By Proposition \ref{keyproposition}, we know if $\gamma \in S^{m}$, $m \geq 1$, then $\gamma$ is either $\epsilon$-contracting or $2 \epsilon$-contracting. This establishes item (1) in the statement of the theorem. That the semigroup $\Omega$ is Zariski dense in $G$ follows from Observation \ref{canconcludeZariskidensity} and the line right below (\ref{disjointusedinmainprop}). This establishes item (2) in the statement of the theorem.
\par 
We now show that $\Omega$ is in fact a free semigroup, being freely generated by $S$. Suppose there exists some element $\gamma \in \Omega$ which can be written as 
\begin{align*}
\gamma = g_{1} \cdots g_{k} = h_{1} \cdots h_{j},
\end{align*} where $g_{i},h_{l} \in S$ for all $1 \leq i \leq k$ and $1 \leq l \leq j$. We need to show that $k = j$ and $g_{i} = h_{i}$ for all $1 \leq i \leq k=j$. There are two cases to consider:
\par 
$\textbf{Case 1.}$ In this case, we have $k = j$. There exists a largest integer $0 \leq m \leq k$
so that $g_{i} = h_{i}$ for all $0 \leq i \leq m$ (where we define $g_{0} := h_{0} := \mathrm{id}$). Suppose that $m < k$ (as otherwise we are done). Then
\begin{align}
\label{sameword}
g_{m+1} \cdots g_{k} = h_{m+1} \cdots h_{j}
\end{align} and $g_{m+1} \neq h_{m+1}$. Repeatedly applying property (a) of Proposition \ref{keyproposition}, we obtain
\begin{align*}
\mathcal{S}_{2 \epsilon}(g_{m+1} \cdots g_{k-1} g_{k}) \subset \mathcal{S}_{4 \epsilon}(g_{m+1} \cdots g_{k-1}) \subset \mathcal{S}_{2 \epsilon}(g_{m+1} \cdots g_{k-1}) \subset \cdots \subset \mathcal{S}_{2 \epsilon}(g_{m+1}),
\end{align*} and likewise
\begin{align*}
\mathcal{S}_{2 \epsilon}(h_{m+1} \cdots h_{j}) \subset \mathcal{S}_{2 \epsilon}(h_{m+1}).
\end{align*} But since $g_{m+1} \neq h_{m+1}$, property (b) of Proposition \ref{keyproposition} implies that 
\begin{align*}
\mathcal{S}_{2 \epsilon}(g_{m+1}) \cap \mathcal{S}_{2 \epsilon}(h_{m+1}) = \emptyset,
\end{align*} hence also
\begin{align*}
\mathcal{S}_{2 \epsilon}(g_{m+1} \cdots g_{k}) \cap \mathcal{S}_{2 \epsilon}(h_{m+1} \cdots h_{j}) = \emptyset.
\end{align*} This contradicts (\ref{sameword}).
\par 
\textbf{Case 2.} It remains to consider the case when $k \neq j$. Without loss of generality, assume $k < j$. There exists a smallest integer $m \geq 1$ so that $g_{m} \neq h_{m}$. If $m < k$, then we can argue as in the first case. If $g_{i} = h_{i}$ for all $1 \leq i \leq k$, we obtain
\begin{align*}
\mathrm{id} = h_{k+1} \cdots h_{j}.
\end{align*} But this is impossible, since the product $h_{k+1} \cdots h_{j}$ is either $\epsilon$-contracting or $2 \epsilon$-contracting; in particular, it is a loxodromic element, whereas the identity element is not. This concludes the second case, and therefore also the proof that $\Omega$ is a free subsemigroup of $\Gamma$, freely generated by the set $S \subset \Gamma$.
\par 
We now show that $\delta(\Omega) \geq \delta$, which will establish item (3) of the theorem. Inductively applying property (c) of Proposition \ref{keyproposition}, we obtain
\begin{align*}
\sum_{\eta \in g \cdot S^{m}} e^{-\delta ||\kappa(\eta)||} \geq e^{-\delta ||\kappa(g)||} > 0.
\end{align*} for all $g \in S$ and $m \geq 1$. Since $\Omega$ is a free semigroup, we compute
\begin{align*}
\sum_{\gamma \in \Omega} e^{-\delta ||\kappa(\gamma)||} &= \sum_{m=0}^{\infty} \sum_{g \in S} \sum_{\eta \in g \cdot S^{m}} e^{-\delta ||\kappa(\eta)||} \geq \sum_{m=0}^{\infty} \bigg( \sum_{g \in S} e^{-\delta ||\kappa (g)||} \bigg) = \infty.
\end{align*} Hence $\delta(\Omega) \geq \delta$, as desired.
\par 
It remains to establish item (4). Since $\Omega$ is a free subsemigroup, it suffices to show that for all $m \geq 1$ and any collection $g_{1}, \dots, g_{m} \in S$, we have
\begin{align}
\label{toshowAnosovness}
\min_{\alpha \in \Delta} \alpha(\kappa(g_{1} \cdots g_{m})) \geq Bm.
\end{align} Define
\begin{align*}
C := \min_{\alpha \in \Delta} \min_{g \in S} \alpha(\kappa(g)) - C_{0} \cdot \max_{\alpha \in \Delta}  ||\alpha||_{\mathrm{op}},
\end{align*} and notice that $C > B$ by (\ref{Cartanisbigenough}). Inductively applying Lemma \ref{uniformsubadditivity}, we obtain
\begin{align*}
\Big| \Big| \kappa(g_{1} \cdots g_{m}) - \sum_{i=1}^{m} \kappa(g_{i}) \Big| \Big| \leq (m-1) C_{0},
\end{align*} and therefore, for any $\alpha \in \Delta$,
\begin{align*}
\alpha(\kappa(g_{1} \cdots g_{m})) &\geq \bigg( \sum_{i=1}^{m} \alpha (\kappa(g_{i})) \bigg) - (m-1)C_{0} ||\alpha||_{\mathrm{op}} \\
&\geq \Big( \min_{g \in S} \alpha(\kappa(g)) - C_{0} ||\alpha||_{\mathrm{op}} \Big) m \\
&\geq Cm \\
&> Bm.
\end{align*} This verifies (\ref{toshowAnosovness}), which concludes the proof of the theorem.
\end{proof}
\section{Lower Semicontinuity of the Critical Exponent}
\label{lowersemicontsection}
In this section, we apply Theorem \ref{MainTheorem1} to prove Theorem \ref{lowersemicontinuity1}, which establishes that the critical exponent is lower semicontinuous in the Chabauty topology if the Chabauty limit of a sequence of Zariski dense discrete subgroups is again a Zariski dense discrete subgroup. Precisely, we prove the following:
\begin{thm} [Theorem \ref{lowersemicontinuity1}]
\label{lowersemicontinuity2}
Let $G$ be a connected algebraic semisimple real Lie group with finite center and no compact factors. Let $\{\Gamma_{n}\}$ be a sequence of Zariski dense discrete subgroups of $G$ which converges in the Chabauty topology to a Zariski dense discrete subgroup $\Gamma$ of $G$. Then,
\begin{align*}
\liminf_{n \to \infty} \delta(\Gamma_{n}) \geq \delta(\Gamma).
\end{align*}
\end{thm}
We consider the space Sub$(G)$ of all closed subgroups of $G$ equipped with the \emph{Chabauty topology}. Precisely, a sequence of closed subgroups $H_{n}$ of $G$ converges to a subgroup $H$ of $G$ as $n \to \infty$ in the Chabauty topology if, for any $h \in H$, there exists a sequence $h_{n} \in H_{n}$ with $h_{n} \rightarrow h$ and, moreover, the limit points of any sequence of elements $g_{n} \in H_{n}$ belong to $H$. If a sequence of closed subgroups $H_{n}$ converges to a closed subgroup $H$, then we say that $H$ is the \emph{Chabauty limit of the} $H_{n}$. We remark that the Chabauty limit of a sequence of discrete subgroups need \emph{not} be a discrete subgroup. For more background and interesting applications of the Chabauty topology, we refer the reader to \cite{ABBGNRS}, \cite{FHR}, and \cite{FO}. We now give the proof of Theorem \ref{lowersemicontinuity2}.
\begin{proof} [Proof of Theorem \ref{lowersemicontinuity2}]
Let $\{\Gamma_{n}\} \subset \mathrm{Sub}(G)$ be a sequence of Zariski dense discrete subgroups of $G$ which converges in the Chabauty topology to a Zariski dense discrete subgroup $\Gamma$ of $G$. We will show that 
\begin{align*}
\liminf_{n \to \infty} \delta(\Gamma_{n}) \geq \delta(\Gamma)
\end{align*} by showing that, for all $0 < \delta_{0} < \delta(\Gamma)$ and any subsequence $\{\Gamma_{n_{k}}\} \subset \{\Gamma_{n}\}$, there exist free subsemigroups $\Omega_{n_{k}} \subset \Gamma_{n_{k}}$ with $\delta(\Omega_{n_{k}}) \geq \delta_{0}$. 

Let $0 < \delta_{0} < \delta(\Gamma)$ be arbitrary and fix a transverse pair $(x,y) \in \Lambda(\Gamma) \times \Lambda(\Gamma)^{-}$. Fix $0 < \epsilon < \frac{1}{8} d(x, \mathcal{Z}_{y})$ and let $C_{0} := C_{0}(\epsilon)$ be the consant furnished by Lemma \ref{uniformsubadditivity}. Now fix $0 < \delta_{0} < \delta < \delta(\Gamma)$ and set $B := \frac{3 \delta_{0} C_{0}}{\delta - \delta_{0}}$. By Theorem \ref{MainTheorem2} and (\ref{gensetest2}), there exists a finitely generated free semigroup $\Omega \subset \Gamma$ with finite generating set $S \subset \Gamma$ so that 
\begin{itemize}
    \item[(a)] $\sum_{\gamma \in \Omega} e^{-\delta ||\kappa(\gamma)||} = \infty$,
    \item[(b)] $\min_{\alpha \in \Delta} \alpha(\kappa(\gamma)) \geq B |\gamma|_{S} 
    \ \Rightarrow \ ||\kappa(\gamma)|| \geq B|\gamma|_{S}$, for all $\gamma \in \Omega$, and 
    \item[(c)] Each element of $S$ is $\epsilon$-contracting and moreover, for any two distinct elements $g,h \in S$, we have $d(x_{g}^{+}, \mathcal{Z}_{x_{h}^{-}}) > 6 \epsilon$.
\end{itemize}
Since the sequence $\{\Gamma_{n}\}$ Chabauty converges to $\Gamma$, so does any subsequence $\{\Gamma_{n_{k}}\}$. Hence it suffices to show that, for all $n \geq 1$ sufficiently large, we can find free subsemigroups $\Omega_{n} \subset \Gamma_{n}$ with $\delta(\Omega_{n}) \geq \delta_{0}$. Write $S = \{\omega_{1}, \dots, \omega_{l}\}$. By definition, there exist sequences $\{\omega_{n,j}\}_{n \geq 1}$, $1 \leq j \leq l$, with $\omega_{n,j} \in \Gamma_{n}$ such that $\omega_{n,j} \rightarrow \omega_{j}$ as $n \to \infty$. In particular, we have 
\begin{align}
\label{need1}
||\kappa(\omega_{n,j}) - \kappa(\omega_{j})|| \leq C_{0},
\end{align} for all $1 \leq j \leq l$ and all $n$ large enough. Furthermore, for all $n$ sufficiently large and all $1 \leq j \leq l$, the elements $\omega_{n,j}$ are all loxodromic. Moreover, for all $j$, 
\begin{align*}
d(x_{\omega_{n,j}}^{+}, x_{\omega_{j}}^{+}) \rightarrow 0 \ \ \ \mathrm{and} \ \ \ d_{\mathrm{Haus}} \big( \mathcal{Z}_{x_{\omega_{n,j}}^{-}}, \mathcal{Z}_{x_{\omega_{j}}^{-}} \big) \rightarrow 0 \ \ \ \mathrm{as} \ \ \ n \to \infty.
\end{align*} Hence, for $n \geq 1$ sufficiently large, the sets $S_{n} := \{\omega_{n,1}, \dots, \omega_{n,l} \} \subset \Gamma_{n}$ consist of $\epsilon$-contracting elements such that
\begin{align*}
d \big(x_{\omega_{n,i}}^{+}, \mathcal{Z}_{x_{\omega_{n,j}}^{-}} \big) \geq 6 \epsilon,
\end{align*}
for $1 \leq i \neq j \leq l$. By Proposition \ref{contractingsemigroup} and  Proposition \ref{shadowinclusion}, every element of the subsemigroup $\Omega_{n}$ of $\Gamma_{n}$ generated by the finite set $S_{n}$ is either $\epsilon$-contracting or $2 \epsilon$-contracting, and, if $m \geq 1$ and $\eta = \gamma \zeta$ where $\gamma \in S_{n}^{m}$ and $\zeta \in S_{n}$, then $\mathcal{S}_{2 \epsilon}(\eta) \subset \mathcal{S}_{4 \epsilon}(\gamma)$. Furthermore, since $\omega_{n,j} \rightarrow \omega_{j}$ as $n \to \infty$ and $\mathcal{Z}_{x_{\omega_{n,j}}^{-}} \rightarrow \mathcal{Z}_{x_{\omega_{j}}^{-}}$ in the Hausdorff topology as $n \to \infty$, we see that $\mathcal{S}_{2 \epsilon}(\omega_{n,j}) \to \mathcal{S}_{2 \epsilon}(\omega_{j})$ in the Hausdorff topology. Hence, for $n \geq 1$ sufficiently large, we can additionally guarantee that the shadows $\{\mathcal{S}_{2 \epsilon}(\omega_{n,j})\}_{\omega_{n,j} \in S_{n}}$ are pairwise disjoint. Then, arguing just as in the proof of Theorem \ref{MainTheorem2}, we see that $\Omega_{n} \subset \Gamma_{n}$ is a free subsemigroup, freely generated by the finite set $S_{n}$. 

We now show that each such semigroup $\Omega_{n}$ satisfies $\delta(\Omega_{n}) \geq \delta_{0}$. Let $\Omega' := \Omega_{n}$ be an arbitrary such semigroup. Arguing as in Lemma \ref{technicallemunifsubadd}, we see that the conclusions of this lemma hold for the semigroup $\Omega'$. Hence for all $g',h' \in \Omega'$, we have the result of Lemma \ref{uniformsubadditivity}, that is
\begin{align}
\label{need2}
||\kappa(g'h') - \kappa(g') - \kappa(h')|| \leq C_{0}.
\end{align} Now, denote by $g' \in \Omega'$ the element corresponding to $g \in \Omega$ under the natural homomorphism of free semigroups $\Omega \rightarrow \Omega'$ defined by $\omega_{j} \mapsto \omega_{n,j}$ for $1 \leq j \leq l$. An inductive argument using (\ref{need1}), (\ref{need2}), and Lemma \ref{uniformsubadditivity} applied to the subsemigroup $\Omega \subset \Gamma$ shows that 
\begin{align}
\label{boundcartanofsemigroups}
||\kappa(g) - \kappa(g')|| \leq 2 (|g|_{S} - 1)C_{0} + |g|_{S}C_{0} < 3C_{0}|g|_{S}.
\end{align}
Hence,
\begin{align*}
||\kappa(g')|| \leq ||\kappa(g)|| + 3C_{0}|g|_{S},
\end{align*} hence, by item (b) above, we obtain
\begin{align*}
-\delta_{0} ||\kappa(g')|| &\geq -\delta_{0}||\kappa(g)|| - \delta_{0} \cdot 3 C_{0}|g|_{S} \\
&= -\delta ||\kappa(g)|| + (\delta - \delta_{0})||\kappa(g)|| - 3 \delta_{0}C_{0}|g|_{S} \\
&\geq -\delta ||\kappa(g)|| + (\delta - \delta_{0})B |g|_{S} - 3 \delta_{0}C_{0}|g|_{S} \\
&= -\delta ||\kappa(g)|| + 3 \delta_{0}C_{0}|g|_{S} - 3 \delta_{0}C_{0}|g|_{S} \\
&= -\delta ||\kappa(g)||.
\end{align*}
Hence, along with item (a) above, we conclude that 
\begin{align*}
\sum_{g' \in \Omega'} e^{-\delta_{0}||\kappa(g')||} \geq \sum_{g \in \Omega} e^{-\delta ||\kappa(g)||} = \infty,
\end{align*} hence $\delta(\Omega') \geq \delta_{0}$. This concludes the proof. 
\end{proof} 
As an application of this theorem, we provide a new proof of a result concerning the Chabauty convergence of infinite covolume Zariski dense discrete subgroups in semisimple Lie groups with Kazhdan's property (T). The author's advisors Andrew Zimmer and Sebastian Hurtado-Salazar informed him that the following result was already known. However, the proof that Hurtado-Salazar informed the author of relies on the deep superrigidity theorems of Margulis \cite{Mar} (in the higher rank case) and of Corlette \cite{Cor2} and Gromov-Schoen \cite{GS} (in the exotic rank one cases). Our proof, by contrast, only relies on Theorem \ref{lowersemicontinuity2} and Leuzinger's Theorem \ref{Leuzingergapphenomenon}, providing a new, simpler, proof.
\begin{cor}
\label{Infinitecovolumeconvergence}
Let $G$ be a connected semisimple real Lie group with finite center, no compact factors, and with Kazhdan's property (T). Let $\{\Gamma_{n}\}$ be a sequence of infinite covolume Zariski dense discrete subgroups of $G$ which converges in the Chabauty topology to a Zariski dense discrete subgroup $\Gamma$ of $G$. Then $\Gamma$ also has infinite covolume in $G$.
\end{cor}
\begin{proof}
Since $G$ has property (T) and each discrete subgroup $\Gamma_{n}$ has infinite covolume in $G$, Leuzinger's Theorem \ref{Leuzingergapphenomenon} implies that there is a constant $\epsilon = \epsilon(G) > 0$ so that 
\begin{align*}
h_{\mathrm{vol}}(X) - \epsilon \geq \delta (\Gamma_{n}),  
\end{align*} for all $n \geq 1$. By Theorem \ref{lowersemicontinuity2}, we obtain
\begin{align*}
h_{\mathrm{vol}}(X) - \epsilon \geq \liminf_{n \to \infty} \delta(\Gamma_{n}) \geq \delta(\Gamma).
\end{align*} Hence $\Gamma$ must have infinite covolume in $G$.
\end{proof}

\end{document}